\documentclass[11pt,a4paper]{article}
\usepackage[T1]{fontenc}
\usepackage[margin=28mm]{geometry}
\usepackage{amsmath,amssymb,amsthm,booktabs,array}
\usepackage[colorlinks=true,linkcolor=blue,citecolor=blue,urlcolor=blue]{hyperref}
\newtheorem{theorem}{Theorem}[section]
\newtheorem{lemma}[theorem]{Lemma}
\newtheorem{proposition}[theorem]{Proposition}
\newtheorem{corollary}[theorem]{Corollary}
\theoremstyle{remark}

\newcommand{\cl}[1]{\overline{#1}}
\newcommand{\bG}{\overline G}
\newcommand{\Hc}{\mathcal H}
\title{Hamilton-connected cores and five cycle--wheel Ramsey numbers}
\author{Zehui Shao, Hanxin Jiang\\[0.5ex]
\small Institute of Computing Science and Technology,\\
\small Guangzhou University, Guangzhou 510006, China}
\date{}
\begin{document}
\maketitle
\begin{abstract}
Let $W_s=K_1+C_{s-1}$ denote the wheel on $s$ vertices.
We give structural proofs that $R(C_{14},W_{11})=27$ and
$R(C_{15},W_{11})=29$.
Together with the theorem of Chen et al. for $n\ge16$,
these equalities give $R(C_n,W_{11})=2n-1$ for every $n\ge14$.
The two boundary values were included in an earlier survey announcement.
We also give structural proofs of $R(C_8,W_7)=15$,
$R(C_9,W_7)=17$, and $R(C_8,W_9)=15$.
The common starting point is a Hamilton-connected core lemma.
For the eleven-vertex wheel, bounds on vertex connectivity and on the
matching number of a bipartite graph associated with a local cycle
yield a vertex cut of order nine. Paths with prescribed endpoints
then rule out every possible pair of orders of the two remaining
vertex sets.
For the smaller wheels, we use the structure of critical cycle
colorings and local cycle-shortening arguments. We also give complete
structural classifications of the $(C_8,C_6)$- and $(C_9,C_6)$-critical
colorings, recovering the previously reported counts 24 and 26.
All proofs are combinatorial and use no exhaustive graph enumeration.
\end{abstract}

\section{Introduction}
For graphs $F$ and $J$, the Ramsey number $R(F,J)$ is the least integer
$N$ such that every red--blue edge coloring of $K_N$ contains a red
copy of $F$ or a blue copy of $J$. All graphs are finite and simple,
and copies need not be induced. We write $G$ for the red graph and
$\cl G$ for the blue graph. Throughout, $W_s=K_1+C_{s-1}$ denotes
the wheel on $s$ vertices, following Radziszowski's
survey~\cite{survey18}. Thus our $W_s$ is denoted by $W_{s-1}$ in
papers that index wheels by their rim length.

For an odd-order wheel, the disjoint union of two copies of $K_{n-1}$
gives the natural lower bound $R(C_n,W_{2k+1})\ge2n-1$:
the red graph contains no $C_n$, and its blue complement is bipartite
and contains no wheel. Chen et al.~\cite{chen2009}
proved that
\[
 R(C_n,K_1+C_{2k})=2n-1,
 \qquad k\ge2,\quad n\ge3k+1.
\]
For $W_7$, $W_9$, and $W_{11}$, their theorem gives the respective
ranges $n\ge10$, $n\ge13$, and $n\ge16$.
We give structural proofs of the two cases immediately preceding
the last range.

\begin{theorem}\label{thm:eleven}
\[
 R(C_{14},W_{11})=27,\qquad R(C_{15},W_{11})=29.
\]
\end{theorem}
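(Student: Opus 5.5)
The lower bounds come from the construction in the introduction: $2K_{n-1}$ with $n=14,15$ has $26$ resp.\ $28$ vertices, no red $C_n$, and a bipartite blue graph, hence no blue $W_{11}$, whose rim $C_{10}$ plus a hub forces a triangle. For the upper bound we fix $N=2n-1$ and a red graph $G$ on $N$ vertices with no $C_n$, and we assume $\bG$ has no $W_{11}$. The aim is a contradiction.

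The first step is local. For every vertex $v$, the blue neighbourhood $N_{\bG}(v)$ contains no blue $C_{10}$. If $\deg_{\bG}(v)\ge R(C_n,C_{10})$, then $N_{\bG}(v)$ carries a red $C_n$. This gives an upper bound on blue degrees, hence a lower bound $\delta(G)\ge N-1-(\text{bound})$. Next, by induction from the known value $R(C_{n-1},W_{11})$ (or from a general Ramsey bound for long cycles), $G$ contains a red $C_{n-1}$ but no $C_n$. With the core lemma we extract a large Hamilton-connected red subgraph $H$: any two of its vertices are joined by red paths of every needed length. Since $C_n$ is absent, each outside vertex has few red neighbours in $H$, and no two outside vertices can be linked through $H$ in a way that closes a cycle of length exactly $n$.

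The second step is connectivity. We show $\kappa(G)\le 9$. If $G$ were $10$-connected, the red degree bound, a long cycle $C$, and a Bondy-type argument would let us lengthen or shorten $C$ to length exactly $n$. To control the chords we form a bipartite graph between the vertices of $C$ and the vertices off $C$, joining pairs that are blue-adjacent. A matching of size $10$ in it, together with a blue common neighbour, would produce a blue $W_{11}$ with hub $v$ and a rim alternating through the matching. Bounding the matching number therefore either gives the wheel or forces red adjacencies that give $C_n$. On the other side, a cut of order less than $9$ would leave two large sides, all edges between which are blue. Then the blue graph contains $K_{a,b}$ with $a,b$ large, and any blue edge inside one side already yields $W_{11}$. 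This forces both sides to be red cliques, and we obtain a red $C_n$ by counting. The conclusion is a cut $S$ with $|S|=9$ exactly.

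The third step is the case analysis on $G-S$ with sides $A,B$, $|A|+|B|=N-9$. All $A$–$B$ pairs are blue. Whenever $|A|,|B|\ge 5$, a blue edge inside a side, together with the complete bipartite blue graph, gives a blue $W_{11}$. So both sides are red cliques, or nearly so, and Hamilton-connectedness of the core lets us route red paths with prescribed endpoints through $S$. Combining a path in $A$, a path in $B$, and vertices of $S$, we build a red cycle of length exactly $n$. The small cases, $|A|\le 4$, are handled by degree: vertices of $A$ have red neighbours only in $A\cup S$, and this contradicts the lower bound on red degree from the first step.

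I expect the main obstacle to be the middle range of $(|A|,|B|)$, where neither side is large enough to carry a Hamilton path of the right length by itself. There the cycle must pass through $S$ several times, and we need paths with prescribed endpoints of every length. For each such pair of orders I would first try to use the bipartite matching bound to find two red edges from $S$ into each side with distinct endpoints, and then close the cycle using Hamilton-connectivity.
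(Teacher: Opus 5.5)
Your outline (a blue-degree bound from $R(C_n,C_{10})$, a nine-vertex separator with blue cross-edges, red paths joined through $S$) resembles the paper's skeleton, but the steps that carry the weight are false or missing.

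First, your side lemma is false: a blue edge inside a side, together with complete blue cross-edges, need not give a blue $W_{11}$. Take the red graph $(K_{13}-e)\cup K_{13}$. It has no red $C_{14}$. On the blue neighbourhood of a vertex of the first clique the blue graph is a star or edgeless, and for a vertex of the second clique it has a single edge. So there is no blue $W_{11}$. What does follow (Lemma~\ref{lem:blue-sides}) is only that each vertex has at most four blue neighbours in its own side, i.e.\ $\delta(G[A])\ge|A|-5$. On sides of order $9$ or $10$ this means minimum degree only $4$ or $5$, nowhere near a clique. The paper's final case analysis therefore needs the Ore-type characterization of non-Hamilton-connected graphs (exceptions $H_2\vee(K_4\cup K_4)$ and $H_5\vee\overline K_5$), Lemma~\ref{lem:independent-five-separated}, and the Chv\'atal argument of Lemma~\ref{lem:nine-six-marked}.

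Second, small sides are not excluded by degree. Next to a nine-vertex cut, a vertex in a side of order $3$ or $4$ can have $11$ or $12$ red neighbours, more than $\delta(G)\in\{9,10\}$. Nor do cuts of order at most $8$ leave two large sides. The orders $(17,4),(17,3),(16,4)$ at $n=14$ and $(18,4),(18,3),(17,4)$ at $n=15$ survive all degree and blue-wheel counting. Excluding them takes most of Section~\ref{sec:w11-connectivity}: a red $C_{12}$ or $C_{14}$ in the large component, forbidden path orders between matched attachments routed through the small component, and contradictions via a blue wheel, a low-degree vertex, or a red $C_n$ alternating through $S$. These bounds $\kappa(G)\ge8$ and $\kappa(G)\ge9$ are exactly what give the matchings from $S$ into each side that you want at the end.

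Third, and most importantly, you have no mechanism producing the nine-vertex cut. No Bondy-type argument is given for $\kappa(G)\le9$. Your blue bipartite graph does not work either: a matching is not a cycle, and vertices off $C$ need not be blue neighbours of $v$.

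In the paper, $\delta(G)$ is shown to equal $9$ or $10$ exactly, via Brandt et al.\ and Dirac's circumference bound. Then $v$ has minimum red degree, $X=N_G(v)$, and $C$ is a red $C_{n-2}$ \emph{inside the blue neighbourhood $H$ of $v$}, obtained from $R(C_{12},C_{10})=16$ or $R(C_{14},C_{10})=18$. Your alternative source of a long red cycle, induction from $R(C_{13},W_{11})$, is unavailable because that value is open. A cycle elsewhere in $G$ would not interact with $v$ anyway.

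The key observation is that two vertices of $C$ with distinct red partners $x,y\in X$ cannot be joined by a red path on $n-3$ vertices inside $V(C)$, since that path closes through $xvy$ to a red $C_n$. This forces every red matching in $G[V(C),X]$ to have size at most $4$ (for $n=14$) or $5$ (for $n=15$). A larger one yields a cycle vertex of red degree below $\delta(G)$, or, in the equal-parity case at $n=15$, the configuration of Lemma~\ref{lem:seven-clique-two-exceptions}. K\"onig's theorem then gives a cover $Z$, padded to four vertices when $n=14$. The set $S=Z\cup(V(H)\setminus V(C))$ has exactly nine vertices and separates $V(C)\setminus Z$ from $\{v\}\cup(X\setminus Z)$. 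This construction is what fixes the side orders at $(12-a,6+a)$ or $(14-a,6+a)$ and keeps both sides of order at least six.

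Finally, you use the core lemma backwards. Lemma~\ref{lem:core} forbids a red Hamilton-connected subgraph of order $n-1$ rather than producing one; in the paper it only excludes dense components and sides.
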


\begin{corollary}\label{cor:eleven-range}
For every integer $n\ge14$,
\[
 R(C_n,W_{11})=2n-1.
\]
\end{corollary}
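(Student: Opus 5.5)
The corollary follows by combining Theorem~\ref{thm:eleven} with the theorem of Chen et al.~\cite{chen2009}. Write $W_{11}=K_1+C_{10}$, so that in the indexing of~\cite{chen2009} we have $2k=10$, that is $k=5$. Their result gives $R(C_n,K_1+C_{10})=2n-1$ for all $n\ge 3k+1=16$. The remaining values are $n=14$ and $n=15$. There Theorem~\ref{thm:eleven} gives $R(C_{14},W_{11})=27=2\cdot14-1$ and $R(C_{15},W_{11})=29=2\cdot15-1$.

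I would check two points explicitly.

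First, the indexing conventions must match. The paper's $W_s$ is $K_1+C_{s-1}$, and~\cite{chen2009} is stated for $K_1+C_{2k}$. So $W_{11}$ corresponds to $k=5$, and the hypothesis $k\ge2$ holds.

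Second, the threshold $3k+1$ evaluates to $16$, as already noted in the introduction. The lower bound $2n-1$ comes from the two disjoint copies of $K_{n-1}$, but it is not needed separately here. Both cited results are stated as equalities.

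There is no real obstacle in this step. All the work lies in Theorem~\ref{thm:eleven} itself, and the corollary is just this case split on $n\in\{14,15\}$ versus $n\ge16$.
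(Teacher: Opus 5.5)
Your proposal is correct and is exactly the paper's argument: Theorem~\ref{thm:eleven} supplies the cases $n=14,15$, and the theorem of Chen et al.\ with $k=5$ supplies the cases $n\ge16$. Your explicit check that $W_{11}=K_1+C_{10}$ corresponds to $k=5$, with threshold $3k+1=16$, is the only bookkeeping needed.
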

\begin{proof}
The two boundary cases are Theorem~\ref{thm:eleven}, and the cases
$n\ge16$ follow from the theorem of Chen et al.~\cite{chen2009}.
\end{proof}

The two equalities in Theorem~\ref{thm:eleven} were included in a
broader formula reported in the 2009 revision of Radziszowski's
survey~\cite[Section~3.4(e)]{survey12}. That revision recorded
$R(C_n,W_s)=2n-1$ for odd $s$ under the condition $2n\ge3s-5$,
attributing the result to the then-forthcoming paper of Chen et al.
For $s=11$, this condition gives $n\ge14$.
The published theorem~\cite{chen2009} establishes the range
$2n\ge3s-1$, and this is also the range recorded in the 2011 and
2026 survey revisions~\cite{survey13,survey18}. Thus the formula
stated there for $W_{11}$ starts at $n=16$ and does not cover the
two boundary cases $n=14,15$. We have not located published proofs
of these two cases. The present paper provides complete structural
proofs, extending the range covered by the published theorem to
$n\ge14$. The numerical values themselves were already implicit
in the 2009 survey report.

The proof of Theorem~\ref{thm:eleven} follows the same approach in
both cases. Suppose that a coloring of $K_{2n-1}$ has no red $C_n$
and no blue $W_{11}$. We first determine the minimum degree of the
red graph $G$ and obtain a lower bound on its vertex connectivity.
Choose a vertex $v$ of minimum red degree and put $X=N_G(v)$.
The blue neighborhood of $v$ contains a long red cycle $C$.
Restrictions on paths with endpoints on $C$ bound the matching
number $\nu(G[V(C),X])$ of the red bipartite graph with parts
$V(C)$ and $X$. By K\"onig's theorem, this bipartite graph has a
vertex cover of the corresponding size. This cover, enlarged if
necessary, together with the vertices of the blue neighborhood
outside $C$, yields a vertex cut of order nine. The absence of a
blue wheel gives minimum-degree bounds within the two remaining
vertex sets. We then construct red paths with suitable endpoints
in these sets and join them through the cut to obtain a red $C_n$.
The relevant parameters are summarized below.
\[
\begin{array}{c|c|c|c|c|c}
n&|G|&\delta(G)&\kappa(G)&\text{local cycle}
 &\nu(G[V(C),X])\\\hline
14&27&9&\ge8&C_{12}&\le4\\
15&29&10&\ge9&C_{14}&5
\end{array}
\]
The connectivity reductions are proved in
Section~\ref{sec:w11-connectivity}; the local matching arguments
and the completion of the two cases are given in
Sections~\ref{sec:w11-fourteen} and~\ref{sec:w11-fifteen}.

Our second result concerns smaller wheels.
\begin{theorem}\label{thm:main}
\[
 R(C_8,W_7)=15,\qquad R(C_9,W_7)=17,\qquad R(C_8,W_9)=15.
\]
\end{theorem}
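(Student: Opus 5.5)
The lower bounds are immediate from the construction in the introduction. The red graph $2K_{n-1}$ on $2n-2$ vertices has no red $C_n$, and its blue complement $K_{n-1,n-1}$ is bipartite. An odd-order wheel contains a triangle (hub plus a rim edge), so there is no blue $W_7$ or $W_9$. This gives $R(C_8,W_7)\ge15$, $R(C_9,W_7)\ge17$ and $R(C_8,W_9)\ge15$. The work lies in the upper bounds. Fix a coloring of $K_{2n-1}$ ($n=8$ or $9$) with red graph $G$ containing no $C_n$ and blue graph $\bG$ containing no $W_s$.

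The plan runs through the blue neighbourhoods. If a vertex $v$ has blue degree $d$, then the blue graph on $N_{\bG}(v)$ contains no $C_{s-1}$ (here $C_6$ or $C_8$), while the red graph on it contains no $C_n$. We will use the known values $R(C_8,C_6)=R(C_9,C_6)=\ldots$ together with the explicit critical colorings. The paper announces a classification of the $(C_8,C_6)$- and $(C_9,C_6)$-critical colorings, and we will assume or prove that classification first. Counting then goes as follows. If $\delta(G)$ is small, some vertex has blue degree at least $R(C_n,C_{s-1})$, and we are done. Otherwise, let $v$ be a vertex of minimum red degree. Its blue neighbourhood has order exactly $R(C_n,C_{s-1})-1$, or close to it, so the coloring induced there is critical or near-critical. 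Its red graph then contains a long cycle $C$ (length $n-1$ or $n-2$) with a rigid structure.

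Next comes local cycle shortening. Let $X=N_G(v)$. A red $C_n$ would arise from a red path through $v$: for example $x\,v\,x'$ with $x,x'\in X$ adjacent in red to vertices of $C$ at suitable distance along $C$. Such a configuration replaces an arc of $C$ by the detour through $v$. The absence of red $C_n$ therefore forbids many red edges between $X$ and $V(C)$, forcing those pairs to be blue. We then look for a blue $W_s$. Its hub is a vertex of $X$ (or $v$'s partner outside $C$), and its rim alternates between $C$ and the blue side. The rim is built from the structure of the critical coloring, typically from its near-bipartite blue part. For $W_9$ we also need the absence of red $C_8$ inside a $14$-vertex setting, so the same procedure is run with $C_8$-critical structure.

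The main obstacle is the case analysis over the critical colorings. Since there are $24$ and $26$ of them, the argument must be organized by structural families rather than one by one. I would first try to show that every critical coloring contains a red $K_{n-1}$-like block or a red cycle with many chords, which makes the shortening lemma uniform. Only the few exceptional families would then be handled directly. The case $R(C_8,W_9)$ needs $R(C_8,C_8)$-type input and may instead follow by a connectivity argument similar to the $W_{11}$ approach.
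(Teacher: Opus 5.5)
There is a genuine gap. Your lower bounds are correct, but the upper-bound part is only a plan, and it omits the steps that actually close the argument.

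\emph{The case $W_7$.} Two ingredients are missing here.

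First, to make the blue neighbourhood of a minimum-degree vertex \emph{exactly} critical you need an upper bound on $\delta(G)$. The values $R(C_n,C_6)=n+2$ for $n=8,9$ give only $\delta(G)\ge n-3$. The paper proves $\delta(G)\le n-3$ in Lemma~\ref{lem:globaldegree}. There $G$ is shown to be nonbipartite and $2$-connected. If $\delta(G)\ge n-2\ge(2n+1)/3$, Brandt et al.\ make $G$ weakly pancyclic with girth three or four, and Dirac gives circumference at least $2n-4\ge n$, so $G$ contains a red $C_n$. Without this bound, your ``or close to it'' leaves you with non-critical colourings of order $n$, for which no classification is available.

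Second, your forbidden configurations are detours $c\,x\,v\,x'\,c'$ through two \emph{distinct} red neighbours of $v$. These only forbid certain pairs of attachments. They do not show that a single vertex has at most one red neighbour on the dense part, they do not produce a wheel rim, and you give no alternative when the prospective hub has too few blue neighbours.

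The paper's mechanism is different. Propositions~\ref{prop:core8} and~\ref{prop:core9} extract a red Hamilton-connected subgraph on $n-1$ vertices inside $G[N_{\bG}(v)]\in\Hc_n$. Lemma~\ref{lem:core} then finishes. Each of the $n$ vertices outside this core, including $v$, has at most one red neighbour in it, since two would close a spanning path into a red $C_n$. So one of two things happens:
\begin{itemize}
\item some outside vertex has three blue neighbours outside the core, and Hall's theorem chooses distinct core connectors forming a blue $C_6$ around it; or
\item the red graph on the $n$ outside vertices has minimum degree at least $n-3\ge n/2$, and Dirac's theorem gives a red $C_n$.
\end{itemize}
The bipartite critical colourings $K_{5,5}$ and $K_{5,5}-e$ at $n=9$ have no such core and need the separate argument at the end of the proof.

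\emph{The case $W_9$.} Here your plan does not apply as stated. $R(C_8,C_8)=11$ gives only $\delta(G)\ge4$, and one shows $\delta(G)\in\{4,5\}$ (Corollary~\ref{w9:cor:degrees}).

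When $\delta(G)=5$, the blue neighbourhood has nine vertices and is not a critical colouring, so there is no critical structure to exploit. The paper excludes this case globally. It forces a red $C_9$ (Lemma~\ref{w9:lem:forced-nine-cycle}) and proves that a $15$-vertex graph of minimum degree five containing $C_9$ contains $C_8$ (Lemma~\ref{w9:lem:nine-degree-five}).

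When $\delta(G)=4$, the paper does not classify $(C_8,C_8)$-critical colourings. Instead it uses Kopylov's theorem and the stability analysis of F\"uredi et al.\ to prove Theorem~\ref{w9:thm:local}: every colouring of $K_{10}$ without a monochromatic $C_8$ has a monochromatic Hamilton-connected $7$-vertex subgraph. It then rules out red and blue cores separately.

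Your remark that this case ``may instead follow by a connectivity argument similar to the $W_{11}$ approach'' does not identify any of these steps.
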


The numerical history of the first two equalities requires care.
The 2009 revision of the survey~\cite[Table~VI]{survey12} already
reported $R(C_8,W_7)=15$ and the formula $R(C_n,W_7)=2n-1$ for
$n\ge8$, attributing them to then-unpublished work of Chen et al. The 2011 and 2026 revisions~\cite{survey13,survey18}
give the range $n\ge10$, in agreement with the published
theorem~\cite{chen2009}; see also Wang and
Zhang~\cite[Section~1 and Lemma~2.1]{wangzhang2025}.
Our contribution to these two equalities is therefore their
structural proofs, and we do not claim priority for the numerical
values. For $R(C_8,W_9)$, Table~VI of the 2026
survey~\cite{survey18} has a blank entry, while the bound of
Zhang and Chen~\cite[Lemma~3.7]{zc2026} gives
$R(C_8,W_9)\le18$. The same lemma gives only
$R(C_{14},W_{11})\le31$ and does not apply to the odd cycle $C_{15}$.

\begin{corollary}\label{cor:range}
For every integer $n\ge6$,
\[
 R(C_n,W_7)=2n-1.
\]
\end{corollary}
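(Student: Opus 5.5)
The plan is to split the range $n\ge6$ into three parts and cover each with a result that is already available. Throughout, write $W_7=K_1+C_6$, so that $W_7$ is the case $k=3$ of $K_1+C_{2k}$.

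First, the lower bound $R(C_n,W_7)\ge 2n-1$ holds for every $n$. This is the construction from the introduction: colour two disjoint copies of $K_{n-1}$ red and all edges between them blue. The red graph has no $C_n$, since each red component has only $n-1$ vertices. The blue graph is bipartite, so it contains no odd wheel $W_7$. It therefore suffices to prove the matching upper bound for each $n\ge6$.

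For $n\ge10$ the upper bound is the theorem of Chen et al.~\cite{chen2009} with $k=3$. Its hypothesis is $n\ge3k+1=10$, which is exactly this range. For $n=8$ and $n=9$, the required equalities $R(C_8,W_7)=15$ and $R(C_9,W_7)=17$ are two of the three parts of Theorem~\ref{thm:main}, so nothing further is needed there.

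The main obstacle is the two initial cases $n=6$ and $n=7$, which are not covered by anything stated so far. For these I would invoke the known small values $R(C_6,W_7)=11$ and $R(C_7,W_7)=13$ recorded in Table~VI of the survey~\cite{survey18}, together with the original references cited there. If a self-contained argument is preferred, I would run the structural scheme used for Theorem~\ref{thm:main} on $K_{11}$ and $K_{13}$, respectively. Take a coloring with no red $C_n$ and no blue $W_7$, and pick a vertex $v$ of minimum red degree. The blue neighbourhood of $v$ has no blue $C_6$, so by the relevant cycle Ramsey values it contains a long red cycle. Shortening or extending that cycle through red edges at $v$, or through the red neighbourhood $N_G(v)$, should then produce a red $C_n$.

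I expect this last step to be the only genuinely new work. Since the surveys already record these values, the corollary should simply cite them, and the proof then assembles the three ranges $n\in\{6,7\}$, $n\in\{8,9\}$ and $n\ge10$.
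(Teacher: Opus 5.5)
Your proposal is correct and follows the paper's proof: the paper likewise takes $n=6,7$ from Table~VI of the survey (values $11$ and $13$, due to Luo et al., whose $W_6$ is $K_1+C_6$, i.e.\ $W_7$ here), $n=8,9$ from Theorem~\ref{thm:main}, and $n\ge10$ from Chen et al.\ with $k=3$. Your optional self-contained sketch for $n=6,7$ is not needed, and it would not transfer verbatim at $n=6$, because Lemma~\ref{lem:core} with $k=3$ requires $n\ge k+4=7$.
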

\begin{proof}
The values at $n=6,7$ are reported in the survey~\cite[Table~VI]{survey18},
with attribution to the computations of Luo et al.~\cite{luoll}.
Luo et al. index wheels by rim length: their $W_6$ is the graph
$K_1+C_6$, denoted by $W_7$ here. Their values 11 and 13 therefore
give the two cases in our notation.
The cases $n=8,9$ follow from Theorem~\ref{thm:main}, and those
with $n\ge10$ follow from Chen et al.~\cite{chen2009}.
\end{proof}

The common structural tool is Lemma~\ref{lem:core}: in a graph on
$2n-1$ vertices, a Hamilton-connected subgraph on $n-1$ vertices
forces a $C_n$ or a wheel in the complement, provided the parameter
conditions of the lemma hold. We formulate it as a common auxiliary
reduction based on spanning paths and wheel constructions; its relation
to earlier arguments is discussed in Section~\ref{sec:prelim}.
For $W_7$, suitable cores occur in the critical cycle colorings.
For $n\in\{8,9\}$, let $\Hc_n$ consist of the isomorphism classes
of graphs $H$ of order $n+1$ with $C_n\not\subseteq H$ and
$C_6\not\subseteq\cl H$. Since $R(C_8,C_6)=10$ and
$R(C_9,C_6)=11$, these are the critical colorings of order one less
than the corresponding Ramsey number; no edge-maximality is required,
and isomorphisms preserve the color roles. We prove the core
descriptions needed for the Ramsey argument in
Section~\ref{sec:wheel-seven}. Appendix~\ref{sec:classification}
gives the complete classifications, recovering structurally the
counts $|\Hc_8|=24$ and $|\Hc_9|=26$ previously reported by
Dzido~\cite[proof of Theorem~20]{dzido}.

For $W_9$, a local lemma shows that every red--blue coloring of
$K_{10}$ without a monochromatic $C_8$ contains a monochromatic
Hamilton-connected subgraph on seven vertices. Together with
cycle-shortening lemmas, this proves the third equality of
Theorem~\ref{thm:main} in Section~\ref{sec:wheel-nine}.
The complete arguments use explicit path constructions; no step
depends on a computer enumeration of graphs.

\section{Preliminaries and a core lemma}\label{sec:prelim}
For a graph $F$, we write $|F|$ for its order, $e(F)$ for its number
of edges, and $F[U]$ for the subgraph induced by $U\subseteq V(F)$.
For disjoint vertex sets $A,B\subseteq V(F)$, let $F[A,B]$ denote
the bipartite graph with parts $A$ and $B$ and all edges of $F$
joining the two parts. A matching is a set of pairwise
vertex-disjoint edges. The \emph{matching number} $\nu(F)$ is the
maximum size of a matching in $F$. A matching \emph{saturates} a vertex set if every
vertex of that set is incident with an edge of the matching.
A vertex cover is a set of vertices meeting every edge.
We write $\kappa(F)$ for the vertex connectivity of $F$,
$H_j$ for an arbitrary graph on $j$ vertices, and $\vee$ for the join.

A graph $G$ is \emph{Hamilton-connected} if every two distinct
vertices are the endpoints of a spanning path. We specify paths
by their orders; thus $P_r$ has $r$ vertices and $r-1$ edges.
The girth and circumference of $G$ are denoted by $g(G)$ and $c(G)$.
A graph $G$ is \emph{weakly pancyclic} if it contains a cycle of
every length from $g(G)$ to $c(G)$.

Dirac's theorem states that a graph of order $r\ge3$ with minimum
degree at least $r/2$ is Hamiltonian, and his circumference theorem states that a
2-connected graph satisfies $c(G)\ge\min\{2\delta(G),|G|\}$~\cite{dirac}.
The theorem of Brandt et al.~\cite{bfg} states that a
nonbipartite graph $G$ with $\delta(G)\ge(|G|+2)/3$ is weakly pancyclic
and has girth three or four.
For the cycle Ramsey numbers and the elementary monochromatic-cycle
lemmas used below, we refer to K\'arolyi and Rosta~\cite{kr}.

We recall a standard degree condition for Hamilton-connectedness.
\begin{lemma}\label{lem:hc}
  If $Q$ has $r\ge3$ vertices and $\delta(Q)\ge(r+1)/2$, then $Q$ is
  Hamilton-connected.
\end{lemma}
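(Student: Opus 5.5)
The plan is to reduce Lemma~\ref{lem:hc} to Dirac's theorem by the standard trick of adding one auxiliary vertex. Fix two distinct vertices $x,y$ of $Q$. We want a Hamilton $x$--$y$ path.

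\emph{Case $xy\notin E(Q)$.} Add a new vertex $z$ adjacent exactly to $x$ and $y$. A Hamilton $x$--$y$ path in $Q$ is the same as a Hamilton cycle in $Q+z$, since such a cycle must pass through $z$ via the edges $zx$ and $zy$. Because $z$ has degree $2$, Dirac's theorem does not apply to $Q+z$ directly. We therefore argue with the usual Ore-type closure on $Q$ itself. Suppose there is no Hamilton $x$--$y$ path. Take a longest $x$--$y$ path $P=x=p_1,\dots,p_k=y$ with $k<r$.

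We first check that every neighbour of $x$ and of $y$ can be taken to lie on $P$. If some $w\notin P$ is adjacent to $p_i$ and to $p_{i+1}$, the path can be extended by inserting $w$. More generally, let $w\notin P$. Then $w$ has at least $(r+1)/2$ neighbours. At most $r-k-1$ of them lie off $P$, so at least $(r+1)/2-(r-k-1)$ lie on $P$.

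\emph{Cleaner route.} It is simplest to use the characterisation ``$Q$ is Hamilton-connected iff $Q+z$ with $z$ adjacent to $x,y$ is Hamiltonian for all $x,y$'' together with the Bondy--Chvátal closure. For $u,v\in V(Q)$ we have $d(u)+d(v)\ge r+1$. If $d_Q(u)+d_Q(v)\ge r+1$ with $uv\notin E$, then adding $uv$ preserves the property ``there is no Hamilton $x$--$y$ path''. The proof is the standard crossing argument on a Hamilton path $u\cdots v$ in $Q+uv$ through the edge $uv$, rerouted so that its ends are $x$ and $y$.

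This crossing step is the main obstacle. One has to handle the bookkeeping of where $x$ and $y$ sit on the path. The count $r+1$ rather than $r$ is exactly what absorbs the one forbidden index created by the fixed ends.

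After all such edges are added we obtain $K_r$, which trivially has a Hamilton $x$--$y$ path, a contradiction. Alternatively, one may cite the result directly: Ore's theorem states that $d(u)+d(v)\ge r+1$ for all nonadjacent $u,v$ implies Hamilton-connectedness, and the minimum-degree condition gives precisely this.

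Since the paper calls the lemma standard, we will present it as the special case of Ore's Hamilton-connectedness theorem. If a self-contained argument is desired, we will include the closure sketch above.
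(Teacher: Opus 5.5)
Your ``cleaner route'' (adjoin a vertex $z$ adjacent only to the two prescribed ends, then close up to a complete graph) is exactly the paper's proof, and citing Ore's Hamilton-connectedness theorem directly is also a valid shortcut. The crossing step you flag as the main obstacle needs no endpoint bookkeeping: apply the ordinary Hamiltonian closure rule to the $(r+1)$-vertex graph $Q+z$, where $d_{Q+z}(u)+d_{Q+z}(v)\ge r+1=|Q+z|$ for nonadjacent $u,v\in V(Q)$, as the paper does. This also makes your case restriction to $xy\notin E(Q)$ and the longest-path digression unnecessary.
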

\begin{proof}
  We use the elementary closure observation that if $u,v$ are nonadjacent
  vertices of an $s$-vertex graph $F$ and $d(u)+d(v)\ge s$, then $F+uv$ is
  Hamiltonian if and only if $F$ is Hamiltonian.
  Indeed, a Hamilton cycle using $uv$ yields a spanning path
  $x_1\cdots x_s$ in $F$, where $x_1=u$ and $x_s=v$.
  Among the $s-1$ indices $1\le i\le s-1$, the conditions
  $ux_{i+1}\in E(F)$ and $vx_i\in E(F)$ hold on index sets whose
  cardinalities sum to at least $s$.
  An index satisfying both gives the Hamilton cycle
  $x_1\cdots x_i x_s x_{s-1}\cdots x_{i+1}x_1$ in $F$.

  Fix distinct $a,b\in V(Q)$ and add a new vertex $w$ adjacent only to $a,b$.
  Every nonadjacent pair of original vertices has degree sum at least $r+1$.
  The closure step allows all missing edges among the original vertices to
  be added without changing Hamiltonicity. The resulting graph is a clique
  on the original vertices together with $w$, and is Hamiltonian.
  Deleting $w$ from a Hamilton cycle in the original augmented graph gives
  the required spanning $a$--$b$ path in $Q$.
\end{proof}

\paragraph{Motivation and related constructions.}
The motivation for the next lemma is to combine two familiar
constructions: closing a spanning path through an outside vertex,
and extending a star to a wheel using edges between two vertex sets.
In the low-connectivity part of their proof of Theorem~4,
Zhang, Broersma and Chen~\cite[Section~3.1]{zbc2015} use
Hamilton-connectedness on $m-1$ vertices to obtain a spanning path
between two neighbors of an outside vertex, and hence a cycle $C_m$.
The same argument also constructs a wheel from a star and complete
bipartite connections between two parts.
Chng et al.~\cite[Observation~4.3 and Lemma~4.4]{trees} give related
constructions for a disjoint union: a star in the complement of one
part, together with the complete cross-edges in the complement, yields
a wheel or a structural restriction on the other part.
Their wheel $W_8=K_1+C_8$ is denoted by $W_9$ here.

The lemma below assumes Hamilton-connectedness directly on a set $X$
of $n-1$ vertices in a graph of order $2n-1$. The core may have missing
edges and need not be a component. If a red $C_n$ is forbidden,
each outside vertex has at most one red neighbor in $X$: two such
neighbors would be the endpoints of a spanning path that closes
through that vertex. Thus the blue edges between the two parts may
be incomplete, with at most one missing edge at each outside vertex.
This is the distinction from the complete cross-connections in the
constructions cited above. Hall's theorem selects distinct connectors
for an alternating blue rim, and Dirac's theorem handles the case
where no outside vertex has enough blue neighbors outside $X$.
We record the resulting cycle-or-wheel alternative for general
$K_1+C_{2k}$ in the explicit range below, with a self-contained proof.

The two numerical conditions in the next lemma serve different steps
of this construction. The inequality $n\ge k+4$ leaves at least
$n-4\ge k$ choices in the core for each rim connector, after imposing
blue adjacency to the hub and the two neighboring rim vertices.
If no hub has $k$ blue neighbors outside the core, the red graph on
the $n$ outside vertices has minimum degree at least $n-k$;
the inequality $n\ge2k$ then permits an application of Dirac's theorem.
\begin{lemma}\label{lem:core}
  Let $k\ge2$ and $n\ge\max\{2k,k+4\}$. If a graph $G$ of order $2n-1$ contains a
  Hamilton-connected subgraph of order $n-1$, then $G$ contains $C_n$ or
  $\cl G$ contains $K_1+C_{2k}$.
\end{lemma}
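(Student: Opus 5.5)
Throughout, write $X$ for the vertex set of the Hamilton-connected subgraph of order $n-1$ and $Y=V(G)\setminus X$, so that $|Y|=n$. Assume $G$ contains no $C_n$; we show that $\cl G$ contains $K_1+C_{2k}$. The proof splits according to whether some vertex of $Y$ has $k$ blue neighbours inside $Y$.

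\textbf{Step 1 (one red neighbour in the core).} Every $y\in Y$ has at most one neighbour in $X$ in $G$. Indeed, suppose $y$ had two distinct neighbours $a,b\in X$. Hamilton-connectedness gives a spanning $a$--$b$ path of $G[X]$, which has $n-1$ vertices. Closing this path through $y$ gives a $C_n$ in $G$, a contradiction. Consequently each $y\in Y$ is adjacent in $\cl G$ to at least $n-2$ vertices of $X$.

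\textbf{Step 2 (a hub with $k$ blue neighbours in $Y$).} Suppose some $y\in Y$ has $k$ distinct neighbours $y_1,\dots,y_k\in Y$ in $\cl G$. Take indices modulo $k$. For each $i$, let $S_i$ be the set of $x\in X$ that are blue-adjacent to all three of $y$, $y_i$ and $y_{i+1}$. By Step 1, each of these three vertices excludes at most one vertex of $X$, so
\[
|S_i|\ge (n-1)-3=n-4\ge k,
\]
using $n\ge k+4$.

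A family of $k$ sets, each of size at least $k$, satisfies Hall's condition trivially: any $j$ of the sets have union of size at least $k\ge j$. Hence there are distinct representatives $x_i\in S_i$ for $i=1,\dots,k$. The closed walk $y_1x_1y_2x_2\cdots y_kx_ky_1$ is then a cycle of length $2k$ in $\cl G$. Its vertices are distinct because the $y_i$ lie in $Y$, the $x_i$ lie in $X$, and the $x_i$ are pairwise distinct. The hub $y$ is blue-adjacent to every $y_i$ by choice and to every $x_i$ by the definition of $S_i$. This gives $K_1+C_{2k}\subseteq\cl G$. For $k=2$ the rim $y_1x_1y_2x_2$ is still a genuine $4$-cycle, since $x_1\ne x_2$.

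\textbf{Step 3 (no such hub: Dirac on $Y$).} Otherwise every $y\in Y$ has at most $k-1$ blue neighbours in $Y$. Then
\[
\delta(G[Y])\ge (n-1)-(k-1)=n-k\ge n/2,
\]
using $n\ge2k$. Since $n\ge k+4\ge 6\ge3$, Dirac's theorem makes $G[Y]$ Hamiltonian. This gives a $C_n$ in $G$, contrary to the assumption.

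The only delicate point is Step 2. There we must check that each connector set is large enough for a system of distinct representatives, which is exactly where the condition $n\ge k+4$ enters, and that the alternating rim closes up as a genuine cycle. Both checks are straightforward.
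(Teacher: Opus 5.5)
Your proposal is correct and follows essentially the same argument as the paper: each outside vertex has at most one red neighbour in the core, then either Hall's theorem on the connector sets $S_i$ (each of size at least $n-4\ge k$) builds the blue wheel, or Dirac's theorem on the outside set gives the red $C_n$. Your explicit check that the rim is a genuine $4$-cycle when $k=2$ is a small detail the paper leaves implicit.
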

\begin{proof}
  Let $X$ be the vertex set of the Hamilton-connected subgraph and put
  $T=V(G)\setminus X$, so $|T|=n$. Suppose $G$ contains no $C_n$.
  Each $t\in T$ has at most one neighbor in $X$, since two such neighbors
  can be joined by a spanning path in $X$ and closed through $t$.

  If some $t\in T$ has at least $k$ blue neighbors in $T$, choose
  $t_1,\ldots,t_k$ among them and put
  $X_0=X\cap N_{\cl G}(t)$, so $|X_0|\ge n-2$.
  For $1\le i\le k$, with $t_{k+1}=t_1$, set
  \[
  S_i=X_0\cap N_{\cl G}(t_i)\cap N_{\cl G}(t_{i+1}).
  \]
  Each $t_i$ has at most one red neighbor in $X$, so
  $|S_i|\ge n-4\ge k$.
  These $k$ sets satisfy Hall's condition. Choose distinct representatives
  $x_i\in S_i$. The blue cycle
  $t_1x_1t_2x_2\cdots t_kx_kt_1$, together with its blue hub $t$, forms
  $K_1+C_{2k}$.
  Otherwise $\Delta(\cl G[T])\le k-1$, so
  $\delta(G[T])\ge n-k\ge n/2$. Dirac's theorem gives a red $C_n$ in $T$.
\end{proof}

\paragraph{Role in the proofs.}
Lemma~\ref{lem:core} excludes a Hamilton-connected subgraph of order
$n-1$ from any putative $(C_n,W_{2k+1})$ counterexample of order
$2n-1$ in its parameter range. For $W_{11}$, the cases
$(n,k)=(14,5),(15,5)$ rule out dense components and local
configurations in the connectivity reductions. For $W_7$, the cases
$(n,k)=(8,3),(9,3)$ apply to the cores supplied by the critical-cycle
analysis in Section~\ref{sec:wheel-seven}; the two bipartite exceptions
at $n=9$ are treated separately there. For $W_9$, the case
$(n,k)=(8,4)$ gives Corollary~\ref{w9:cor:no-core}, which starts the
structural reductions in Section~\ref{sec:wheel-nine}.
The lemma therefore lets the later arguments focus on finding a core
in a component or a local configuration, while using a single proof
to obtain the required cycle-or-wheel contradiction.

We also use the following restriction on attachments to a cycle.
\begin{lemma}\label{lem:successors}
  Let $C=x_0x_1\cdots x_{r-1}x_0$ be a red cycle, and let $t\notin V(C)$.
  If there is no red $C_{r+1}$, then the red neighbors of $t$ on $C$ form
  an independent set in $C$. Moreover, the successors of these neighbors
  on $C$ form a blue clique, and so do their predecessors.
  Indices are taken modulo $r$.
\end{lemma}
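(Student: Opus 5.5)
The plan is to prove both parts by the usual rotation argument, assuming throughout that there is no red $C_{r+1}$. The graph here is the red graph $G$.

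\emph{Independence.} Suppose $t$ has red neighbors $x_i$ and $x_{i+1}$ that are consecutive on $C$. Replacing the edge $x_ix_{i+1}$ of $C$ by the path $x_i t x_{i+1}$ gives a red cycle through all $r$ vertices of $C$ and through $t$. This is a red $C_{r+1}$, a contradiction. So no two red neighbors of $t$ are consecutive, and they form an independent set in $C$.

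\emph{Successors.} Let $x_i$ and $x_j$ be distinct red neighbors of $t$ on $C$. By independence, their successors $x_{i+1}$ and $x_{j+1}$ are distinct and differ from $x_i$ and $x_j$. Suppose $x_{i+1}x_{j+1}$ were red. Consider the closed walk
\[
 t\,x_i\,x_{i-1}\cdots x_{j+1}\,x_{i+1}\,x_{i+2}\cdots x_j\,t .
\]
It leaves $t$ to $x_i$ and traverses $C$ backwards from $x_i$ down to $x_{j+1}$. It then jumps along the chord $x_{j+1}x_{i+1}$ and traverses $C$ forwards from $x_{i+1}$ up to $x_j$. Finally it returns to $t$. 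The two arcs, from $x_{j+1}$ to $x_i$ and from $x_{i+1}$ to $x_j$, partition $V(C)$. All edges used are red: the two edges at $t$, the edges of $C$, and the chord. So this is a red cycle of length $r+1$, a contradiction. Hence $x_{i+1}x_{j+1}$ is blue for every pair, and the successors form a blue clique.

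\emph{Predecessors.} Reversing the orientation of $C$ turns predecessors into successors, so the same argument applies.

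The only point needing care is checking that the index bookkeeping of the two arcs covers every cycle vertex exactly once, taking indices modulo $r$. It also requires the distinctness noted above, which follows from independence. No earlier lemma is needed.
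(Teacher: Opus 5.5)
Your proof is correct and follows essentially the same argument as the paper. Independence comes from inserting $t$ between consecutive neighbors. The successor clique comes from the same spanning path $x_i x_{i-1}\cdots x_{j+1}x_{i+1}\cdots x_j$ of $V(C)$ closed through $t$, and the predecessor case follows by reversing the orientation of $C$.
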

\begin{proof}
  Two consecutive red neighbors allow $t$ to be inserted into $C$.
  If $tx_i,tx_j$ and $x_{i+1}x_{j+1}$ were all red, traverse $C$ from
  $x_i$ backwards to $x_{j+1}$, use the chord to $x_{i+1}$, and then
  traverse forwards to $x_j$. This is a red spanning path on $V(C)$;
  closing it through $t$ gives $C_{r+1}$. The predecessor statement follows
  by reversing the orientation of $C$.
\end{proof}

Bondy's theorem states that a graph of order $q\ge3$ with minimum
degree at least $q/2$ is pancyclic unless it is $K_{q/2,q/2}$~\cite{Bondy}.
We use the following sufficient condition for Hamiltonicity from
Chv\'atal's degree-sequence theorem~\cite{Chvatal}: a graph of order
$q\ge3$ with degree sequence $d_1\le\cdots\le d_q$ is Hamiltonian
if $d_i>i$ for every integer $1\le i<q/2$. We also use Hall's theorem and K\"onig's theorem, which states that
the matching number of a bipartite graph equals the minimum size
of a vertex cover. Williamson's panconnectedness theorem~\cite{Williamson}
states that minimum degree at least $(q+2)/2$ guarantees, between
any two distinct vertices, a path of every order from their distance
plus one to $q$.
The cycle--cycle Ramsey formula~\cite{FSR} gives
\[
R(C_{12},C_{10})=16,\qquad R(C_{14},C_{10})=18,
\qquad R(C_{15},C_{10})=19.
\]

\section{Preliminary structure for the eleven-vertex wheel}
\label{sec:w11-setup}
In Sections~\ref{sec:w11-setup}--\ref{sec:w11-fifteen}, a
\emph{counterexample at $n$}, for $n\in\{14,15\}$, means a red
graph $G$ on $2n-1$ vertices with no $C_n$ and with no
$W_{11}=K_1+C_{10}$ in its blue complement. All degrees and vertex cuts refer to $G$, unless a color or
another graph is specified. The lemmas stated for arbitrary sets or graphs are
independent of the choice of $n$.

\subsection{Degree and blue separation}
\begin{lemma}\label{lem:blue-sides}
In a coloring with no blue $W_{11}$, if $A,B$ are disjoint sets with all cross-edges blue and $|B|\ge5$,
then each vertex of $A$ has at most four blue neighbors within $A$.
Consequently $\delta(G[A])\ge |A|-5$.
\end{lemma}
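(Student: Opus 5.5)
The plan is to argue by contradiction. Suppose some $a\in A$ has at least five blue neighbours $a_1,\dots,a_5\in A$, and build a blue $W_{11}=K_1+C_{10}$ with hub $a$. Since every edge between $A$ and $B$ is blue, $a$ is blue-adjacent to all of $B$. The rim $C_{10}$ will alternate between the five vertices $a_i$ and five vertices of $B$.

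Choose distinct $b_1,\dots,b_5\in B$, which is possible because $|B|\ge5$, and take the rim
\[
a_1b_1a_2b_2a_3b_3a_4b_4a_5b_5a_1 .
\]
Each rim edge joins a vertex of $A$ to a vertex of $B$, so it is blue by hypothesis. The spokes $aa_i$ are blue by the choice of the $a_i$. The spokes $ab_j$ are blue because they are cross-edges. All eleven vertices $a,a_1,\dots,a_5,b_1,\dots,b_5$ are distinct, since $A\cap B=\emptyset$ and $a\notin\{a_i\}$. This gives a blue $W_{11}$, contradicting the hypothesis, so every vertex of $A$ has at most four blue neighbours within $A$.

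For the consequence, each $a\in A$ has $|A|-1$ neighbours in $A$ in the complete graph, and at most four of them are blue. Hence $\deg_{G[A]}(a)\ge |A|-1-4=|A|-5$.

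There is no real obstacle here. The only point to check is that no blue edges inside $B$ or among the $a_i$ are needed, which holds because the alternating rim uses only cross-edges. This is why $|B|\ge5$, and not any condition on the structure of $B$, suffices.
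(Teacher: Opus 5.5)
Your proof is correct and follows the paper's argument exactly: five blue neighbours of $a$ in $A$, alternated with five vertices of $B$, form a blue $C_{10}$ with hub $a$. The bound $\delta(G[A])\ge|A|-5$ then follows by the same count you give.
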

\begin{proof}
Five blue neighbors of a vertex $a$ in $A$, alternating with five
vertices of $B$, would form a blue $C_{10}$ with blue hub $a$.
\end{proof}

\begin{corollary}
\label{cor:small14}
For a counterexample at $n=14$, the red graph $G$ is
nonbipartite, $2$-connected, and has
minimum degree exactly nine.
\end{corollary}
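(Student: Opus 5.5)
The plan is to first obtain the lower bound $\delta(G)\ge9$ from the cycle--cycle Ramsey number, then derive nonbipartiteness and $2$-connectivity from it, and finally rule out $\delta(G)\ge10$ with weak pancyclicity.

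\emph{Lower bound.} Let $v$ be any vertex and let $B$ be its blue neighborhood, so $|B|=26-d_G(v)$. If $|B|\ge18=R(C_{14},C_{10})$, then $B$ contains a red $C_{14}$, which is forbidden, or a blue $C_{10}$. A blue $C_{10}$ in $B$ together with the blue hub $v$ is a blue $W_{11}$. Hence $|B|\le17$, so $\delta(G)\ge9$.

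\emph{Nonbipartite.} Suppose $G$ is bipartite with parts $A,B$. One part, say $A$, has at least $14$ vertices. That part is a blue clique $K_{14}$, which contains $W_{11}$, a contradiction.

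\emph{$2$-connected.} Suppose $G-w$ is disconnected for some vertex $w$, or $G$ itself is disconnected (then take $w$ absent). Split the remaining vertices into two nonempty sides $A,B$ with no red edges between them. A vertex of $A$ has all its red neighbors in $A\cup\{w\}$, so $\delta(G)\ge9$ gives $|A|,|B|\ge9$. All $A$--$B$ edges are blue and both sides have at least five vertices. Lemma~\ref{lem:blue-sides} therefore gives $\delta(G[A])\ge|A|-5$ and $\delta(G[B])\ge|B|-5$. Since $|A|+|B|\ge26$, we may assume $|A|\ge13$, and we split into two cases.
\begin{itemize}
\item If $|A|\ge14$, then $\delta(G[A])\ge|A|-5\ge|A|/2$. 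Since $G[A]$ is not $K_{|A|/2,|A|/2}$ (its minimum degree exceeds $|A|/2$ once $|A|\ge11$), Bondy's theorem makes $G[A]$ pancyclic. This gives a red $C_{14}$, a contradiction.
\item Otherwise $|A|=|B|=13$ and $w$ exists. Then $\delta(G[A])\ge8\ge(13+1)/2$, so $G[A]$ is Hamilton-connected by Lemma~\ref{lem:hc}. Lemma~\ref{lem:core} with $(n,k)=(14,5)$ applies, since $14\ge\max\{10,9\}$. It yields a red $C_{14}$ or a blue $K_1+C_{10}=W_{11}$, a contradiction either way.
\end{itemize}

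\emph{Exactly nine.} Suppose $\delta(G)\ge10$. We have $10\ge(27+2)/3$, and $G$ is nonbipartite, so the theorem of Brandt et al.\ makes $G$ weakly pancyclic with girth $3$ or $4$. Since $G$ is $2$-connected, Dirac's circumference theorem gives $c(G)\ge\min\{20,27\}=20$. Hence $G$ contains cycles of every length from $4$ to $20$, including a red $C_{14}$, a contradiction. Thus $\delta(G)=9$.

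The only step needing care is the balanced split $|A|=|B|=13$ in the connectivity argument. There Bondy's theorem is unavailable because $13<14$, and the core lemma is exactly the tool that closes the case. The rest is direct citation of the stated theorems.
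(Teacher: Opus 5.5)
Your proof is correct and follows essentially the same route as the paper: $R(C_{14},C_{10})=18$ gives $\delta(G)\ge9$, a large independent class gives nonbipartiteness, Bondy's theorem versus the core lemma gives $2$-connectivity, and Brandt et al.\ with Dirac's circumference bound excludes $\delta(G)\ge10$. Grouping the components into two sides and applying Lemma~\ref{lem:blue-sides} neatly absorbs the paper's separate exclusion of three components; just take $A$ to be the larger side, so that your second case really forces $|A|=|B|=13$ with $w$ present.
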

\begin{proof}
Every blue neighborhood has no blue $C_{10}$ and no red $C_{14}$.
Since $R(C_{14},C_{10})=18$~\cite{FSR}, it has at most seventeen
vertices. Thus $\delta(G)\ge9$. A bipartition of $G$ would have
an independent part of at least fourteen vertices, whose blue
clique contains a wheel. Hence $G$ is nonbipartite.

If $G$ were disconnected, every component would have at least ten
vertices. There cannot be three components, since a hub in one
and five vertices from each of two others give a blue wheel.
With two components, the larger has order $a\ge14$ and minimum
red degree at least $a-5>a/2$, by Lemma~\ref{lem:blue-sides}.
Bondy's theorem~\cite{Bondy} supplies a red $C_{14}$.
If $u$ were a cut vertex, every component of $G-u$ would have
at least nine vertices. Again there would be two components.
A component of order at least fourteen gives the same contradiction.
Otherwise both components have thirteen vertices and internal
minimum red degree at least eight, so either is a
Hamilton-connected core, contrary to Lemma~\ref{lem:core}.
Therefore $G$ is $2$-connected.

If $\delta(G)\ge10$, the theorem of Brandt et al.~\cite{bfg} applies because $10\ge(27+2)/3$ and $G$ is
nonbipartite. It gives weak pancyclicity and girth three or four.
Dirac's circumference bound~\cite{dirac} is at least twenty,
so $G$ contains a red $C_{14}$. Thus $\delta(G)=9$.
\end{proof}

\begin{corollary}\label{cor:small}
For a counterexample at $n=15$, $G$ is nonbipartite,
$2$-connected, and $\delta(G)=10$.
\end{corollary}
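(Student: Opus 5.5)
I will follow the proof of Corollary~\ref{cor:small14} step by step, now with $n=15$ and $|G|=29$.

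\emph{Minimum degree.} The blue neighborhood of any vertex contains no blue $C_{10}$, since with the vertex as hub it would give $W_{11}$. It also contains no red $C_{15}$. Since $R(C_{15},C_{10})=19$~\cite{FSR}, every blue neighborhood has at most $18$ vertices. Hence every red degree is at least $28-18=10$, so $\delta(G)\ge10$.

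\emph{Nonbipartite.} If $G$ were bipartite, one part would have at least $15$ vertices. That part induces a blue clique, which contains $W_{11}$.

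\emph{Connectivity.} First suppose $G$ is disconnected. Each component has at least $11$ vertices, because $\delta(G)\ge10$. Three components are impossible, since $3\cdot 11>29$. So there are exactly two components, and the larger one has order $a\ge15$. All cross-edges are blue and the other side has at least $5$ vertices, so Lemma~\ref{lem:blue-sides} gives
\[
\delta(G[A])\ge a-5>a/2 .
\]
Then $G[A]$ is pancyclic by Bondy's theorem~\cite{Bondy}; it is not $K_{a/2,a/2}$ because that has minimum degree only $a/2$. This yields a red $C_{15}$, a contradiction.

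Next suppose $u$ is a cut vertex. Every component of $G-u$ has at least $10$ vertices, and three components would need at least $30$ vertices while $|G-u|=28$. So $G-u$ has exactly two components, with orders $a+b=28$. If $a\ge15$, the same argument via Lemma~\ref{lem:blue-sides} and Bondy's theorem gives a red $C_{15}$.

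Otherwise both components have exactly $14$ vertices. Lemma~\ref{lem:blue-sides} gives each of them internal red minimum degree at least $9$. This is not enough for Lemma~\ref{lem:hc}, which needs $\delta\ge(14+1)/2=7.5$, i.e.\ $\delta\ge8$; since $9\ge8$, it actually applies. So each component is Hamilton-connected of order $14=n-1$. Lemma~\ref{lem:core} with $(n,k)=(15,5)$ is applicable, as $15\ge\max\{10,9\}$, and it contradicts the absence of both a red $C_{15}$ and a blue $W_{11}$. Hence $G$ is $2$-connected.

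\emph{Exact minimum degree.} Suppose $\delta(G)\ge11$. Since $11\ge(29+2)/3$ and $G$ is nonbipartite, the theorem of Brandt et al.~\cite{bfg} shows that $G$ is weakly pancyclic with girth $3$ or $4$. As $G$ is $2$-connected, Dirac's circumference bound gives
\[
c(G)\ge\min\{22,29\}=22\ge15 .
\]
Weak pancyclicity then yields a red $C_{15}$, a contradiction. Therefore $\delta(G)=10$.

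The only delicate step is the balanced $14+14$ split at a cut vertex, where one has to check the degree threshold of Lemma~\ref{lem:hc} and the parameter range of Lemma~\ref{lem:core}. Everything else is a direct transcription of the $n=14$ proof.
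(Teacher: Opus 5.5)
Your proof is correct and follows the paper's argument step by step: the degree bound from $R(C_{15},C_{10})=19$, Lemma~\ref{lem:blue-sides} with Bondy's theorem for a component of order at least $15$, Lemma~\ref{lem:core} for the $14+14$ split, and Brandt et al.\ with Dirac's circumference bound to force $\delta(G)=10$. The only deviation is that you exclude three components by counting ($3\cdot11>29$ and $3\cdot10>28$) instead of by the paper's blue-wheel argument, which is equally valid here; do reword the sentence beginning ``This is not enough for Lemma~\ref{lem:hc}'', which contradicts itself even though the conclusion $9\ge8$ is right.
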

\begin{proof}
A bipartition of $G$ would have an independent part of order at
least fifteen, whose blue clique contains a wheel. Thus $G$ is
nonbipartite. Every blue neighborhood has no red $C_{15}$ and no blue $C_{10}$.
Since $R(C_{15},C_{10})=19$, it has at most eighteen vertices,
and therefore $\delta(G)\ge10$.
If $G$ were disconnected, every component would have at least eleven
vertices. Three components give a blue wheel by using a hub in one
and alternating five vertices of each of two others. Thus there
would be two components. The larger has order $a\ge15$ and,
by Lemma~\ref{lem:blue-sides}, minimum red degree at least $a-5>a/2$.
Bondy's theorem supplies a red $C_{15}$, a contradiction.
If $u$ were a cut vertex, each component of $G-u$ would have at
least ten vertices. Again there are exactly two components. If the
larger has at least fifteen vertices, the same argument applies.
Otherwise both have fourteen vertices and minimum internal degree
at least nine. Either is Hamilton-connected, contrary to
Lemma~\ref{lem:core}. Hence $G$ is $2$-connected.

If $\delta(G)\ge11$, the weak pancyclicity theorem of
Brandt et al.~\cite{bfg} applies, since
$11\ge(29+2)/3$ and $G$ is nonbipartite. Dirac's circumference
bound is at least 22. The same weak pancyclicity theorem gives
girth three or four, so $G$ contains $C_{15}$.
Therefore $\delta(G)=10$.
\end{proof}

\begin{lemma}\label{prop:local15}
For a counterexample at $n=15$ and every vertex $v$ of red degree ten, its blue neighborhood has
order eighteen and contains a red $C_{14}$.
\end{lemma}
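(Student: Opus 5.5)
Here $v$ has red degree ten, so its blue neighborhood is simply the complement of $\{v\}\cup N_G(v)$. The order follows by counting, and the red $C_{14}$ comes from one Ramsey value already quoted in Section~\ref{sec:prelim}.

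\textbf{Order.} A counterexample at $n=15$ has $2n-1=29$ vertices. The vertex $v$ has red degree ten, so the set $B=N_{\cl G}(v)$ of its blue neighbors has exactly $29-1-10=18$ vertices. This is the first assertion. It agrees with the bound of at most eighteen from the proof of Corollary~\ref{cor:small}, so a degree-ten vertex attains that bound.

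\textbf{Red $C_{14}$.} Consider the red--blue coloring of the complete graph on $B$ induced by the coloring of $K_{29}$. Since $|B|=18=R(C_{14},C_{10})$ by the cycle--cycle Ramsey formula~\cite{FSR}, this coloring contains a red $C_{14}$ or a blue $C_{10}$. Suppose it contains a blue $C_{10}$. Every vertex of $B$ is joined to $v$ by a blue edge, so adding $v$ as a hub gives a blue $K_1+C_{10}=W_{11}$. This contradicts the definition of a counterexample. Hence $G[B]$ contains a red $C_{14}$.

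\textbf{Main obstacle.} There is no substantive obstacle. The only points to check are the count $|B|=18$ and that $R(C_{14},C_{10})=18$ is exactly the quoted value. The exclusion of a red $C_{15}$ is not needed for this lemma, though it is what later limits how this $C_{14}$ can extend through $X=N_G(v)$.
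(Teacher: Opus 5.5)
Your proposal is correct and is essentially the paper's proof: you count $29-1-10=18$ blue neighbors, note that a blue $C_{10}$ among them would form a blue $W_{11}$ with hub $v$, and invoke $R(C_{14},C_{10})=18$. Nothing further is needed.
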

\begin{proof}
There are $29-1-10=18$ vertices in the blue neighborhood. Its blue
graph has no $C_{10}$, because $v$ would be a blue hub.
The equality $R(C_{14},C_{10})=18$ proves the assertion.
\end{proof}

\begin{lemma}
\label{lem:independent-five-separated}
Suppose disjoint vertex sets $A,B$ have all cross-edges blue,
$|B|\ge6$, and $A$ contains five mutually blue-adjacent vertices.
If there is no blue $W_{11}=K_1+C_{10}$, the blue graph on $B$
has at most one edge.
\end{lemma}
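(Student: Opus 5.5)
Suppose the blue graph on $B$ has two distinct edges $e_1,e_2$. We will build a blue $C_{10}$ with a blue hub, which contradicts the hypothesis. Let $a_1,\dots,a_5$ be the five mutually blue-adjacent vertices of $A$. Every edge between $A$ and $B$ is blue, so each $a_i$ is a blue neighbour of every vertex of $B$.

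The hub will be $a_1$. It is blue-adjacent to $a_2,\dots,a_5$ and to all of $B$, so it suffices to find a blue $C_{10}$ on $\{a_2,\dots,a_5\}$ together with six vertices of $B$. Let the rim use the four vertices $a_2,\dots,a_5$ and six vertices of $B$, with the two blue edges of $B$ as the only $B$--$B$ rim edges. Then the rim consists of blocks of $B$-vertices separated by single $A$-vertices, or by runs of $A$-vertices joined by the blue edges $a_ia_j$.

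Case 1: $e_1$ and $e_2$ are disjoint, say $e_1=b_1b_2$ and $e_2=b_3b_4$. Choose two further vertices $b_5,b_6\in B$; this is possible because $|B|\ge6$. Take the rim
\[
 b_1b_2\,a_2\,b_5\,a_3\,b_3b_4\,a_4\,b_6\,a_5\,b_1.
\]
Its ten vertices are distinct. Every edge on it is either a cross edge between $A$ and $B$ or one of $e_1,e_2$, so every edge is blue.

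Case 2: $e_1$ and $e_2$ share a vertex, say $e_1=b_1b_2$ and $e_2=b_2b_3$. This gives a blue path $b_1b_2b_3$ in $B$. Choose $b_4,b_5,b_6\in B$ distinct from $b_1,b_2,b_3$, again using $|B|\ge6$. Take the rim
\[
 b_1b_2b_3\,a_2\,b_4\,a_3\,b_5\,a_4\,b_6\,a_5\,b_1.
\]
It has $3+1+1+1+1+1+1+1=10$ vertices, and every edge on it is blue.

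In both cases $a_1$ is blue-adjacent to all ten rim vertices, so $a_1$ together with the rim forms a blue $K_1+C_{10}=W_{11}$. This contradicts the hypothesis, so the blue graph on $B$ has at most one edge.

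The only point needing care is the vertex count. In Case 1 the rim uses exactly six vertices of $B$ and four of $A$, so the hypotheses $|B|\ge6$ and five blue-clique vertices in $A$ are exactly what is required. The blue edges among the $a_i$ are used only to connect the hub $a_1$ to the rim vertices $a_2,\dots,a_5$. Had more $B$-vertices been needed, we could have replaced some of them by $A$-runs using the blue clique, but that is not necessary here.
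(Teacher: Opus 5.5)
Your proposal is correct and is essentially the paper's proof. The paper also puts the hub at one clique vertex, pads the blue $P_3$ or $2K_2$ in $B$ with singletons to get four vertex-disjoint blue paths on six vertices of $B$, and joins them cyclically through the other four clique vertices; your two explicit rims simply write this construction out.
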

\begin{proof}
Fix a blue hub among those five vertices of $A$. If $B$ had
two distinct blue edges, their union would contain either a
blue $P_3$ or two vertex-disjoint blue $P_2$'s. Add singleton
paths to obtain four vertex-disjoint blue paths covering six
vertices of $B$. The other four vertices of the blue clique
in $A$ join these four paths cyclically into a blue $C_{10}$.
Every vertex of this cycle is blue-adjacent to the chosen hub,
a contradiction.
\end{proof}

\begin{proposition}\label{prop:local-disconnected}
Let $n\in\{14,15\}$, and let $G$ satisfy the counterexample assumptions
at $n$. Choose a vertex $v$ of minimum red degree and put
$H=G[N_{\bG}(v)]$. If the red graph $H$ is disconnected, one red
component has all but at most one vertex of $H$.
\end{proposition}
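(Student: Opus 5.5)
We argue by contradiction: suppose the red graph $H$ is disconnected and no red component contains all but at most one vertex of $H$. Recall that $|H|=2n-2-\delta(G)$, which is $17$ for $n=14$ and $18$ for $n=15$ by Corollaries~\ref{cor:small14} and~\ref{cor:small}. The blue graph on $V(H)$ contains no $C_{10}$, because $v$ would then be a blue hub. Group the red components into two sets $A,B$ with $|A|\le|B|$. All edges of $G[A,B]$ are blue. We choose the grouping so that $|A|\ge2$; this is possible because otherwise all but one vertex would lie in a single component.

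\emph{First step: $|A|\le4$.} If $|A|\ge5$, then $|B|\ge5$, and the complete blue bipartite graph between $A$ and $B$ contains a blue $C_{10}$. Every vertex of this cycle is blue-adjacent to $v$, so we get a blue $W_{11}$. Hence $2\le|A|\le4$ and $|B|\ge|H|-4\ge13$.

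\emph{Second step: a blue wheel with hub in $A$.} Fix $a\ne a'$ in $A$. The vertex $a$ is blue-adjacent to $v$, to $a'$ if $aa'$ is blue, and to every vertex of $B$. Both $v$ and $a'$ are blue-adjacent to all of $B$. Suppose the blue graph $\bG[B]$ contains two vertex-disjoint paths (singletons allowed) with eight vertices in total. Then we can form the blue cycle
\[
v\,P_1\,a'\,P_2\,v
\]
of length $10$, all of whose vertices are blue neighbors of $a$, which is a contradiction. If there is a third vertex $a''\in A$, it can serve as a further connector, and the requirement weakens to three disjoint blue paths covering seven vertices of $B$. So $\bG[B]$ has a very restricted linear-forest structure. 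I expect this to force the following: after deleting at most one or two vertices $b_0$ (for example, the centre of a blue star), the blue graph on the rest of $B$ has maximum degree at most one or two. Equivalently, red degrees inside $B$ are at least $|B|-3$ for all but at most two vertices.

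\emph{Third step: a red $C_n$ or a core.} Let $B'$ be $B$ with the exceptional vertices removed. Then $|B'|\ge11$ and $G[B']$ has minimum degree close to $|B'|$. Suppose $|B|\ge n$. The exceptional vertices still have red neighbors in $B$ unless they are isolated red components. This follows from red connectivity of the components together with the degree bound $\delta(G)\ge n-5$ inside $B$ given by Lemma~\ref{lem:blue-sides}, applied with $A\cup\{v\}$ playing the role of the far side, which has at least five vertices only if $|A|\ge4$; otherwise we use the second step directly. With this, Chv\'atal's condition or Bondy's theorem gives a red $C_n$ in $B$. Now suppose $|B|<n$; this happens only for $|B|=13$ with $n=14$, or $|B|\in\{14\}$ with $n=15$. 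Then $G[B]$ minus at most one vertex satisfies Lemma~\ref{lem:hc}. We then either find a Hamilton-connected subgraph of order exactly $n-1$ inside $B$, contradicting Lemma~\ref{lem:core}, or close a red cycle through $v$'s red neighborhood $X$ using a vertex of $A$. For the latter, each $a\in A$ has at most three red neighbors in $A$, so it has at least $\delta(G)-3$ red neighbors in $X$.

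The main obstacle is the second step. There we must classify exactly which blue graphs on $B$ avoid the two-path (or three-path) configuration and still leave $G[B]$ dense enough for the third step. The delicate case is a blue star (or a star plus one edge) in $B$, whose centre has small red degree within $B$. I would handle that centre by moving it into the role of the removed vertex and invoking Lemma~\ref{lem:core} on a core of order $n-1$ inside the remaining red-dense set.
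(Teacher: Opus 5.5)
There is a genuine gap. The decisive case $|A|=2$ is never proved, and $|A|=3$ is only gestured at. Your second step correctly reduces $|A|=2$ to the statement that $\bG[B]$ has no two disjoint paths with eight vertices in total. But the structure you then \emph{expect} from this is false.

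For $n=14$ and $|A|=2$ we have $|B|=15$. The blue graph $3K_{1,4}$ on $B$ satisfies your restriction, since every blue path has at most three vertices. Yet after deleting any two vertices some vertex still has blue degree four, and three vertices of $B$ have red degree only $|B|-5$ in $B$. So the structure you anticipate does not hold, and the third step, which rests on it, has no foundation.

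That step is also unreliable on its own terms. A red $C_n$ inside $B$ via Chv\'atal or Bondy need not exist, because an exceptional vertex can have red degree one in $B$. For instance, take $|A|=3$, $|B|=14$, and $\bG[B]$ a star $K_{1,13}$ minus an edge; this has no two disjoint blue $P_3$'s. Your alternative of closing a cycle through $X$ via a vertex of $A$ is never specified. The only conclusion that works uniformly is a red Hamilton-connected subgraph of order exactly $n-1$.

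The missing idea is a degree extraction rather than a classification. In the paper the large side is called $A$ and the small side $B$. For a two-vertex small side, the argument runs as follows.
\begin{itemize}
\item Bondy's theorem on the red graph of the large side, which has order $n+1$, gives a vertex $x$ with at least seven blue neighbors there. At $n=15$ the $K_{8,8}$ exception yields a blue $K_8$ and hence a blue $C_{10}$.
\item Dirac's theorem on the remaining $n$ vertices gives a vertex $y$ with at least seven blue neighbors there.
\item Lemma~\ref{lem:hc} must fail on the remaining $n-1$ vertices, since otherwise Lemma~\ref{lem:core} applies. This gives a vertex $z$ with at least six blue neighbors there.
\end{itemize}
Each of $x,y,z$ then has at least five blue neighbors among the last $12$ or $13$ vertices, so two of them share one. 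This produces disjoint blue paths $P_5$ and $P_3$, which close through the two small-side vertices into a blue $C_{10}$ with hub $v$.

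For a three-vertex small side the needed step is short. Two vertices of blue degree at least five in $B$ would give two disjoint blue $P_3$'s. So deleting one vertex leaves $n-1$ vertices of blue maximum degree at most four, hence a core.

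Two smaller remarks. First, your wheel with hub $a\in A$ needs $aa'$ blue, which fails when $A$ is a single red edge. Use $v$ as hub and $a,a'$ as connectors instead; the requirement on $\bG[B]$ is unchanged. Second, your use of Lemma~\ref{lem:blue-sides} with far side $A\cup\{v\}$ does settle $|A|=4$ at once: then $|B|=n-1$ and $\delta(G[B])\ge n-6\ge n/2$. This is shorter than the paper's two-blue-edge argument, but you placed it in the wrong branch of your third step.
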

\begin{proof}
Write $r=|H|$, so $r=17$ or $18$, and let $A$ be a largest red
component. All edges between distinct red components are blue.
If $|A|\leq r-5$ and $|A|\geq5$, the sets $A$ and
$V(H)\setminus A$ each have at least five vertices, so their blue
cross-edges contain a $K_{5,5}$ and hence a blue $C_{10}$.
If instead every red component has at most four vertices, collect
whole components until their union $U$ first has at least five
vertices. Then $5\leq|U|\leq8$, while
$|V(H)\setminus U|\geq r-8\geq9$. Again the blue cross-edges
contain a $C_{10}$. Therefore, with $B=V(H)\setminus A$, we have
$|B|\leq4$.

Suppose $|B|=4$. Every edge from $A$ to $B$ is blue. If the blue
graph within $A$ contained two distinct edges, their union would
either be a blue path on three vertices or two disjoint blue edges.
In the first case, choose three further vertices of $A$; in the
second, choose two further vertices. Arrange these six vertices
in four blue paths, using the given two blue edges, and join the
four paths cyclically through the four vertices of $B$. This is a
blue $C_{10}$. Thus the red graph on $A$ is complete apart from
at most one edge. Since $|A|=r-4=n-1\geq13$, its minimum red
degree is at least $|A|-2\geq(|A|+1)/2$; by the standard
Hamilton-connectedness degree criterion, it is Hamilton-connected.
The core lemma, Lemma~\ref{lem:core}, then contradicts the
counterexample assumption.

Now suppose $|B|=3$ and let $F=\bG[A]$. If $F$ contained two
vertex-disjoint paths on three vertices, those paths, one further
vertex of $A$, and the three vertices of $B$ could be joined into
a blue $C_{10}$. Thus $F$ has no two vertex-disjoint $P_3$'s.
Two vertices of $F$ of degree at least five would supply such
paths: choose two neighbors of the first, and then two neighbors
of the second avoiding the first path. Even when the two centers
are adjacent, each has at least four other neighbors, so this
choice is possible. Hence at most one vertex of $F$ has degree
at least five. Delete that vertex if it exists, or an arbitrary
vertex otherwise, to obtain $S\subseteq A$ with
$|S|=r-4=n-1$ and $\Delta(F[S])\leq4$. The red graph $H[S]$
has minimum degree at least $|S|-5\geq(|S|+1)/2$, so it is
Hamilton-connected. Lemma~\ref{lem:core} again gives a contradiction.

Suppose finally that $|B|=2$. Then $|A|=15$ for $n=14$ and
$|A|=16$ for $n=15$. We find three vertices $x,y,z\in A$
with many blue neighbors. For $n=14$, Bondy's pancyclicity theorem
implies that $\delta(H[A])\leq7$: the only possible exception at
minimum degree at least eight is a balanced complete bipartite
graph, which has even order, whereas $|A|=15$. Choose $x$ of red
degree at most seven, so $d_{\overline{H[A]}}(x)\geq7$. The red graph on
$A-x$ has 14 vertices and no $C_{14}$; by Dirac's theorem it has
a vertex $y$ of red degree at most six. Thus $y$ has at least
seven blue neighbors in $A-x$. If the red graph on $A-\{x,y\}$
had minimum degree at least seven, it would be Hamilton-connected
on 13 vertices and Lemma~\ref{lem:core} would finish the proof.
Otherwise it has a vertex $z$ with at least six blue neighbors
in $A-\{x,y\}$.

For $n=15$, Bondy's theorem gives $\delta(H[A])\leq7$ as well:
minimum degree at least eight would give a red $C_{15}$ unless
$H[A]=K_{8,8}$. In that exception, either red bipartition class
is a blue $K_8$, and those eight vertices together with the two
vertices of $B$ form a blue $C_{10}$. Hence choose $x$ with at
least eight blue neighbors in $A$. Dirac's theorem on the
15-vertex red graph $H[A-x]$ gives a vertex $y$ with at least
seven blue neighbors in $A-x$. Finally, failure of the
Hamilton-connectedness degree criterion on the 14-vertex red
graph $H[A-\{x,y\}]$ supplies a vertex $z$ with at least six
blue neighbors there; otherwise Lemma~\ref{lem:core} applies.

Put $T=A\setminus\{x,y,z\}$. In either case, each of $x,y,z$
has at least five blue neighbors in $T$, while $|T|$ is 12 or 13.
Since the sum of the three blue-neighborhood sizes exceeds $|T|$,
two of these vertices, say $x,y$ after relabeling, have a common
blue neighbor $t\in T$. Choose two blue neighbors of the third
vertex $z$ in $T-t$, and then distinct additional blue neighbors
of $x$ and $y$ avoiding those two vertices and $t$. This yields
vertex-disjoint blue paths $P_5$ and $P_3$ within $A$:
the $P_5$ goes through $x,t,y$, and the $P_3$ is centered at $z$.
The two vertices of $B$ join their four endpoints cyclically,
forming a blue $C_{10}$. This final contradiction proves
$|B|\leq1$.
\end{proof}

\subsection{Paths through a separation}
\begin{lemma}
\label{lem:four-side-routing}
Let $B,S$ be disjoint vertex sets with $|B|=4$ and $|S|=s$.
Suppose every red neighbor of a vertex of $B$ lies in
$B\cup S$, every vertex of $B$ has red degree at least
$s+2$, and every vertex of $S$ has a red neighbor in $B$.
For any distinct $x,y\in S$ and each $q\in\{2,3,4\}$,
there is a red $x$--$y$ path with exactly $q$ internal
vertices, all in $B$.
\end{lemma}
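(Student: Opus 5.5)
The plan is to extract the structure of $G[B]$ and of the cross-edges $G[B,S]$ from the degree hypothesis, and then to write down explicit paths.

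\emph{Counting.} Each $b\in B$ has all its red neighbors in $B\cup S$, and $b$ has at most three of them in $B$. Since $d_G(b)\ge s+2$ and $b$ has at most $s$ red neighbors in $S$, it has at least two red neighbors in $B$. If $b$ has exactly two red neighbors in $B$, then it is red-adjacent to every vertex of $S$. If it has three, then it misses at most one vertex of $S$. Hence the red graph $G[B]$ has minimum degree at least two on four vertices. Up to relabeling $B=\{u_1,u_2,u_3,u_4\}$, it is therefore the cycle $u_1u_2u_3u_4u_1$, that cycle with the chord $u_1u_3$, or $K_4$. The task is then to find a red path $a\cdots b$ inside $B$ on $q$ vertices with $xa$ and $yb$ red. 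The resulting $x$--$y$ path has exactly $q$ internal vertices, all in $B$.

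\emph{Case $C_4$.} All four vertices of $B$ have $B$-degree two, so all are complete to $S$. The paths $u_1u_2$, $u_1u_2u_3$ and $u_1u_2u_3u_4$ serve for $q=2,3,4$.

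\emph{Case $K_4$.} Any ordering of any $q$ vertices of $B$ is a red path. So it suffices to find distinct $a\in N_G(x)\cap B$ and $b\in N_G(y)\cap B$, and then insert $q-2$ further vertices between them. Such $a,b$ fail to exist only if $x$ and $y$ have the same unique red neighbor in $B$. In that situation three vertices of $B$ miss $x$. Each vertex of $B$ misses at most one vertex of $S$, so none of those three misses $y$, and $y$ is adjacent to all of $B$. We then take $a$ to be the neighbor of $x$ and $b$ any other vertex. Here we use the hypothesis that $x$ has a red neighbor in $B$.

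\emph{Case chord $u_1u_3$.} Here $u_2$ and $u_4$ have $B$-degree two, so both are complete to $S$. Take $x$ to $u_2$ and $y$ to $u_4$. For $q=4$ use $u_2u_1u_3u_4$, and for $q=3$ use $u_2u_1u_4$. For $q=2$, the vertices $u_2$ and $u_4$ are not adjacent, so we need another edge. Each of $u_1,u_3$ misses at most one vertex of $S$, so at most one of $x,y$ misses both $u_1$ and $u_3$.
\begin{itemize}
\item If $x$ has a red neighbor $u_i$ with $i\in\{1,3\}$, use the edge $u_iu_2$ with $x\to u_i$ and $y\to u_2$.
\item Otherwise $y$ has such a neighbor $u_j$, and we use $u_2u_j$ with $x\to u_2$ and $y\to u_j$.
\end{itemize}

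The only delicate point is the bookkeeping in the case $q=2$ and in the $K_4$ degenerate case. It is exactly here that two facts are needed: each vertex of $B$ misses at most one vertex of $S$, and every vertex of $S$ has a red neighbor in $B$. I expect no real obstacle beyond this case check.
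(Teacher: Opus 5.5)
Your proof is correct and follows the paper's argument essentially step for step. It uses the same degree count to force $G[B]\in\{C_4,K_4-e,K_4\}$, with degree-two vertices red-complete to $S$ and degree-three vertices missing at most one vertex of $S$, followed by the same explicit paths, including routing through a degree-three vertex of $B$ for $q=2$ in the $K_4-e$ case. The only difference is cosmetic: in the $K_4$ case the degenerate situation you treat cannot occur, because each vertex of $B$ misses at most one of $x,y$, so $N_G(x)\cap B$ and $N_G(y)\cap B$ together cover $B$; this is how the paper phrases it.
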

\begin{proof}
The red graph on $B$ has minimum degree at least two,
so it is $C_4$, $K_4$ minus an edge, or $K_4$. A vertex
of red degree two in $B$ is red-complete to $S$, while
one of degree three has at most one blue neighbor in $S$.
For $C_4$, every vertex of $B$ is red-complete to $S$,
and red paths of all three orders are available.
For $K_4-uv$, the nonadjacent vertices $u,v$ are
red-complete to $S$ and are joined within $B$ by paths
on three and four vertices. To obtain a two-vertex path,
use $u$ and a vertex $r\notin\{u,v\}$: at least one of
$x,y$ is red-adjacent to $r$, since $r$ has at most one
blue neighbor in $S$. Orient the path accordingly.
For $K_4$, the red neighborhoods of $x,y$ in $B$ are
nonempty and cover $B$, so distinct endpoints can be
chosen from them and joined by paths on two, three,
or four vertices.
\end{proof}

\begin{lemma}
\label{lem:dense-eight}
Let $S,U$ be disjoint vertex sets with $|U|=8$ and
$|S|=s\in\{7,8\}$. Suppose every vertex of $U$ has at
least $s-1$ red neighbors in $S$. If $s=7$ and $G[U]$
contains a red $P_3$, then $G[S\cup U]$ contains a red
$C_{14}$. If $s=8$ and $G[U]$ contains a red edge, then
$G[S\cup U]$ contains a red $C_{15}$.
\end{lemma}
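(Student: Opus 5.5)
The plan is to build the required red cycle as an alternating cycle. It passes through $s-1$ vertices of $S$, and between consecutive vertices of $S$ it uses short red blocks from $U$. Call $u\in U$ \emph{bad} for $w\in S$ if $uw$ is blue; by hypothesis each $u\in U$ is bad for at most one vertex of $S$. The counting is as follows.

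\begin{itemize}
\item For $s=7$ we aim for a cycle on $6$ vertices of $S$ and all $8$ vertices of $U$, so $6+8=14$. It has six $U$-blocks: one block is the given red $P_3$ $a\,b\,c$, and the other five are singletons.
\item For $s=8$ we aim for a cycle on $7$ vertices of $S$ and all $8$ of $U$, so $7+8=15$. It has seven $U$-blocks: one is the given red edge $ab$, and the other six are singletons.
\end{itemize}

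In both cases exactly one vertex $w_0\in S$ is discarded. We choose $w_0$ to be a vertex of $S$ with the largest number of bad $U$-vertices, and put $S'=S\setminus\{w_0\}$. The total number of bad pairs is at most $|U|=8$. Hence every $w\in S'$ is bad for at most four vertices of $U$, and if some $w\in S'$ is bad for four, then $w$ and $w_0$ absorb all eight bad pairs.

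Next we fix a cyclic order $w_1,\dots,w_{s-1}$ of $S'$, which creates $s-1$ gaps $(w_i,w_{i+1})$. A singleton block $u$ fits a gap if $u$ is red to both $w_i$ and $w_{i+1}$. The special block, with end vertices $a$ and $c$ (or $a$ and $b$ when $s=8$), fits a gap in some orientation if its ends are red to the two gap vertices.

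Each end vertex is bad for at most one vertex of $S'$. So the special block is excluded from at most four of the $s-1\ge6$ gaps, and we place it first. Each remaining $U$-vertex is bad for at most one $w_j$, so it is excluded from at most two gaps. The remaining assignment asks for a system of distinct representatives: the remaining $s-2$ gaps must receive distinct singleton vertices from the $s-2$ unused vertices of $U$. We verify Hall's condition for it.

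A set of $j$ gaps fails Hall's condition only if many of the unused $U$-vertices are bad for vertices of $S'$ at the ends of these gaps. Because an $S'$-vertex is bad for at most four $U$-vertices and the total number of bad pairs is at most $8$, this can only happen in a few explicit configurations. In each of them one or two vertices of $S'$ carry almost all the bad pairs. There we have two remedies. We can choose the cyclic order of $S'$ so that the heavily bad vertices are not adjacent. We can also let the special block occupy a gap at such a vertex, which is allowed whenever the ends of the special block are red to it.

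The main obstacle is this Hall-condition verification, together with the freedom in choosing the cyclic order and the gap of the special block. I would first try to dispose of the generic case, where every vertex of $S'$ is bad for at most two $U$-vertices. There every gap has at least $s-2-4\ge1$ candidates, and a simple greedy or Hall count suffices. I would then handle the remaining concentrated cases, where some vertex of $S'$ is bad for three or four $U$-vertices, directly. In those cases few other bad pairs remain, so almost all of $S'\times U$ is red and a cycle can be written down explicitly. If the Hall route becomes messy, a fallback is a bipartite Hamiltonicity criterion of Moon–Moser type. It would be applied to the auxiliary bipartite graph whose sides are $S'$ and the $U$-blocks after the special block is contracted to a single vertex with two ends.
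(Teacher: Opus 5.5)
Your setup is sound: discarding a vertex $w_0\in S$ with the most blue neighbors in $U$, the block and gap counts, and the bound that every $w\in S'$ is bad for at most four vertices of $U$ are all correct. But the proposal stops exactly where the content of the lemma lies, because you never verify Hall's condition. You only assert that failures occur ``in a few explicit configurations'' repaired by ``two remedies'', and you defer those cases. This is a genuine gap, and in your formulation it is not routine. Your gaps lie between two vertices of $S'$, and a single vertex of $S'$ can be bad for four vertices of $U$. So a gap can lose most of its candidates, and the position of the special block interacts with Hall's condition. For the same reason the Moon--Moser fallback does not apply as stated: a vertex of $S'$ can be red to as few as two of the $s-1$ blocks. (A minor point: the special block is excluded from at most two gaps, not four. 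If its ends have different bad vertices in $S'$, one orientation fits every gap.)

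The paper avoids this by swapping the roles of $S$ and $U$. It fixes a cyclic order of the $m=s-1$ blocks in $U$ and fills each gap between consecutive blocks with a vertex of $S$ red to both block ends. Each end has at most one blue neighbor in $S$, so every gap has at least $s-2=m-1$ candidates, and Hall's condition can fail only for the whole family. The paper prevents this by placing a red-complete singleton, or two singletons with a common blue neighbor, consecutively, which creates a gap with $m$ candidates. Otherwise the $s-2\ge5$ singletons have distinct blue neighbors. A failure would then force all gaps to omit the same two vertices of $S$, and those two vertices would have to contain all of these blue neighbors, which is impossible.

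Your route can also be completed, but the case check must be written out. Write $B(w)$ for the set of $U$-vertices bad for $w$.
\begin{itemize}
\item For $s=8$, Hall's condition holds automatically. A failure needs five singletons in one $B(w)$, or a gap $(w_i,w_{i+1})$ with all six singletons in $B(w_i)\cup B(w_{i+1})$. Either forces more than eight bad pairs.
\item For $s=7$, there are only two possible failures. Either four singletons lie in one $B(w)$ and the special block is in neither gap at $w$, or some gap $(w_i,w_{i+1})$ has all five singletons in $B(w_i)\cup B(w_{i+1})$.
\end{itemize}
In both $s=7$ cases the count forces $\{a,b,c\}\subseteq B(w_0)$, so the special block is red to all of $S'$. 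Moving it to a gap at $w$ in the first case, or into $(w_i,w_{i+1})$ in the second, restores Hall's condition.
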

\begin{proof}
Use the given red path as one block and each unused vertex
of $U$ as a singleton block. In either case there are
$m=s-1$ blocks, of which $s-2$ are singletons. Order and
orient the blocks cyclically. To join consecutive blocks,
a gap can use any vertex of $S$ red-adjacent to its two
endpoints. Each gap has at least $s-2=m-1$ candidates.

If a singleton is red-complete to $S$, or two singletons
have the same unique blue neighbor in $S$, place a suitable
pair of singleton blocks consecutively. That gap then has
at least $s-1=m$ candidates. Hall's condition holds: any
set of at most $m-1$ gaps has at least $m-1$ candidates in
its union, and all $m$ gaps have at least $m$.

Otherwise every singleton has a unique blue neighbor in
$S$, and these neighbors are all distinct. Use any cyclic
order of the blocks. Hall's condition could fail only for
the full set of $m$ gaps, and then every gap must have the
same $m-1$ candidates, omitting the same two vertices of
$S$. Each singleton would consequently have its blue
neighbor among those two omitted vertices, impossible
because there are $s-2\ge5$ singletons with distinct blue
neighbors. Thus Hall's condition again holds.

Choose distinct vertices of $S$ for the $m$ gaps. Joining
the blocks through these vertices gives a red cycle using
all eight vertices of $U$ and $s-1$ vertices of $S$, of
order $s+7$, as required.
\end{proof}

\begin{lemma}
\label{lem:two-dirac-components}
Let $h\ge3$. Suppose disjoint sets $A,B$ each have $2h$
vertices, with $\delta(G[A]),\delta(G[B])\ge h$. Suppose
there are distinct vertices $s_1,\ldots,s_{h+1}$ outside
$A\cup B$, distinct $a_1,\ldots,a_{h+1}\in A$, and
distinct $b_1,\ldots,b_{h+1}\in B$, such that every
$a_i s_i$ and $s_i b_i$ is an edge. Then $G$ contains
$C_{2h+4}$.
\end{lemma}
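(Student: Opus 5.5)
The plan is to close the cycle through two of the connectors. For indices $i\ne j$ we take the red cycle
\[
s_i\,a_i\,P_A\,a_j\,s_j\,b_j\,P_B\,b_i\,s_i,
\]
where $P_A$ is an $a_i$--$a_j$ path in $G[A]$ and $P_B$ is a $b_j$--$b_i$ path in $G[B]$. Its order is $2+|P_A|+|P_B|$. So it suffices to find $i\ne j$ and such paths with
\[
|P_A|+|P_B|=2h+2 .
\]
The $s_i$, $a_i$, $b_i$ are distinct and $A$, $B$ and the connectors are disjoint, so the cycle is automatically simple.

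\textbf{Step 1: lengths from Hamilton cycles.} Since $|A|=2h\ge6$ and $\delta(G[A])\ge h$, Dirac's theorem gives a Hamilton cycle $C_A$ of $G[A]$; similarly we get $C_B$. We identify each cycle with $\mathbb Z_{2h}$ and write $f(i)$ for the position of $a_i$ on $C_A$ and $g(i)$ for the position of $b_i$ on $C_B$. The two arcs of $C_A$ between $a_i$ and $a_j$ give paths of orders $\alpha+1$ and $2h-\alpha+1$, where $\alpha\equiv f(i)-f(j)$. The same holds in $B$ with $\beta\equiv g(i)-g(j)$. The order condition $|P_A|+|P_B|=2h+2$ then holds exactly when $\alpha\equiv\pm\beta \pmod{2h}$. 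Equivalently, we need a collision in one of the maps
\[
i\mapsto f(i)-g(i),\qquad i\mapsto f(i)+g(i) \pmod{2h}.
\]
There are $h+1$ indices but $2h$ residues, so pigeonhole alone does not force this. We therefore need extra flexibility beyond a single fixed pair of Hamilton cycles.

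\textbf{Step 2: extra flexibility from chords.} First we handle the case where $G[A]$ (say) is not $K_{h,h}$. In that case, any path-length set for a pair $a_i,a_j$ should contain the two arc lengths together with neighbouring values obtained from chords. A vertex of degree $\ge h$ on $C_A$ has $h-2$ chords, and the standard crossing-chord argument used in the proof of Lemma~\ref{lem:hc} should show the following: for $a_i,a_j$ at cycle distance $\alpha$, there is also an $a_i$--$a_j$ path of order $\alpha+2$ or $2h-\alpha$, or else $G[A]$ is bipartite balanced. This is the key point. With lengths in $A$ available in a window of two consecutive values, it suffices to have $f(i)\pm g(i)$ agree up to $\pm1$ for two indices. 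Grouping the $2h$ residues into $h$ consecutive pairs, pigeonhole on $h+1$ indices then forces such a near-collision. In the regime $h\ge3$, one could alternatively try to apply Bondy's pancyclicity theorem to $G[A]$ together with a path-extension argument.

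\textbf{Step 3: the bipartite exceptions, expected to be the main obstacle.} If $G[A]=K_{h,h}$, all $a_i$--$a_j$ paths have fixed parity. The available orders are then every order of the correct parity up to $2h$, provided the endpoints lie on the correct sides. In this case the flexibility is large, but it is parity-restricted. The condition $|P_A|+|P_B|=2h+2$ becomes a parity constraint on the pair $(i,j)$. Among $h+1$ indices, at least two have $a_i$'s in the same class of $A$. The side of the corresponding $b$'s then decides whether the parities match. If both $A$ and $B$ are $K_{h,h}$, a two-colouring of the $h+1$ indices by (side in $A$, side in $B$) has only four classes. Some pair agrees in both coordinates or disagrees in both, and either case yields matching parity, with the orders then chosen freely (e.g.\ $|P_A|=h+1$ up to parity adjustment, $|P_B|=2h+2-|P_A|$). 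The mixed case, with one side bipartite and the other not, is where I expect the real work: Step 2 must deliver enough lengths of both parities in the nonbipartite side. I would first try to show that a nonbipartite graph of order $2h$ with $\delta\ge h$ has, between any two vertices, paths of all orders from $h+1$ to $2h$ apart from at most one value. This would follow by iterating the chord-rotation argument of Step 2.
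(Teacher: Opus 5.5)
Your reduction to $|P_A|+|P_B|=2h+2$ is correct. The two steps that do the real work, however, are left unproved, and one of them is false as stated.

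In Step~2 the ``window'' claim (an $a_i$--$a_j$ path of order $\alpha+2$ or $2h-\alpha$ unless $G[A]=K_{h,h}$) is only asserted. The closure argument of Lemma~\ref{lem:hc} gives no control over path orders. Your pigeonhole uses the window for the directed difference $\alpha\equiv f(i)-f(j)$, because the sign of a near-collision is not yours to choose. In that form it fails for nonbipartite graphs.

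Take $G[A]=H_h\vee\overline K_h$ with parts $X,I$ and a single edge $x_1x_2$ inside $X$. Every Hamilton cycle alternates between $X$ and $I$, since the $h$ vertices of $I$ already meet all $2h$ cycle edges. On $x_1y_1x_2y_2x_3\cdots$ we have $f(x_2)-f(x_3)\equiv 2h-2$, so the required orders are $2$ and $2h$. But $x_2x_3$ is a nonedge. An even-order $x_2$--$x_3$ path must use $x_1x_2$ exactly once, so it has two more vertices in $X$ than in $I$ and order at most $2h-2$.

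In Step~3 the mixed case is left open, and the statement you hope to prove there is false. The graph $H_2\vee(K_{h-1}\cup K_{h-1})$ is nonbipartite of order $2h$ with $\delta\ge h$. Yet every path between its two join vertices has all internal vertices in one clique, so the orders $h+2,\dots,2h$ are all missing. The real obstructions are the non-Hamilton-connected Ore graphs, and the split ``bipartite versus nonbipartite'' does not capture them.

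The missing idea is to stop balancing two arcs and take $|P_B|=2$, $|P_A|=2h$.
\begin{itemize}
\item Since $G[B]$ has $2h$ vertices and $\delta\ge h$, its independence number is at most $h$. So some $b_ib_j$ with $i\ne j$ is an edge, and a Hamilton $a_i$--$a_j$ path in $G[A]$ then closes to $C_{2h+4}$. By symmetry, neither $G[A]$ nor $G[B]$ can be Hamilton-connected.
\item Both graphs satisfy Ore's condition, so Theorem~3 of Shih et al.~\cite{SSK} leaves only the forms $H_2\vee(2K_{h-1})$ and $H_h\vee\overline K_h$.
\item The first form is excluded as follows. A marked $a_i$ outside $H_2$ is the end of Hamilton paths to every other vertex. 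This forces $b_i$ to be nonadjacent to the other $h$ marked vertices of $B$, one more than $\delta\ge h$ allows.
\item Suppose both sides have the second form. Hamilton paths in the spanning $K_{h,h}$ of $A$ make every marked vertex of $B$ matched into $X$ nonadjacent to every marked vertex matched into $I$. Since $B$ consists of an independent $h$-set completely joined to the remaining $h$ vertices, this forces all $h+1$ marked vertices of $B$ into that independent set, a contradiction.
\end{itemize}
Your parity count for two copies of $K_{h,h}$ is correct, but it covers only a special case of this last configuration.
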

\begin{proof}
Whenever $G[A]$ has a Hamilton path from $a_i$ to $a_j$
and $b_i b_j$ is an edge, the path, that edge, and the
four attachment edges form $C_{2h+4}$. The same statement
holds with $A,B$ interchanged. Suppose no such cycle exists.

The minimum-degree bounds imply that both induced graphs
satisfy Ore's degree-sum condition. We use Theorem~3 of
Shih et al.~\cite{SSK}: a graph of order $q$
satisfying $d(x)+d(y)\ge q$ for every nonadjacent pair
is Hamilton-connected unless it has one of the forms
\[
 H_2\vee(K_p\cup K_{q-2-p})\quad\hbox{or}\quad
 H_{q/2}\vee \overline{K}_{q/2},
\]
where $H_j$ is an arbitrary graph on $j$ vertices and
$\vee$ denotes the join. In the first family the two
cliques are nonempty, and the second family has even order.
At order $2h$ with minimum degree at least $h$, the first
form reduces to $H_2\vee(2K_{h-1})$.

If $G[A]$ is Hamilton-connected, all pairs among
$b_1,\ldots,b_{h+1}$ must be nonedges. This is impossible:
an independent set in a $2h$-vertex graph of minimum degree
$h$ has at most $h$ vertices. Thus neither $G[A]$ nor $G[B]$ is Hamilton-connected.

Suppose $G[A]=H_2\vee(2K_{h-1})$. Since $h\ge3$, any
vertex in either clique is the initial vertex of a Hamilton
path ending at any other vertex. Indeed, the two vertices
of $H_2$ can join the two clique paths; if the endpoints
lie in different cliques, split the terminal clique into
two nonempty paths. At least one marked vertex $a_i$ lies
outside $H_2$. Hence $b_i$ is nonadjacent to each of the
other $h$ marked vertices of $B$. But every vertex of
$G[B]$ has at most $h-1$ nonneighbors, a contradiction.
The first exceptional form is therefore impossible on
either side.

Consequently $A$ has a partition $X\cup I$ and $B$ a
partition $Y\cup J$, with all four parts of order $h$,
such that $I,J$ are independent and all $X$--$I$ and
$Y$--$J$ edges are present. The marked vertices in $A$
meet both $X$ and $I$. A Hamilton path of the spanning
$K_{h,h}$ joins every marked vertex in $X$ to every
marked vertex in $I$. Let $B_X,B_I$ be the corresponding
sets of marked vertices in $B$. Thus all $B_X$--$B_I$
pairs are nonedges. Their union has $h+1$ vertices and
therefore meets $J$. If, say, $B_X$ meets $J$, the complete
$Y$--$J$ adjacency forces $B_I\subseteq J$, and then
forces $B_X\subseteq J$, since $B_I$ is nonempty. This
would put $h+1$ distinct vertices in $J$, a contradiction.
The case where $B_I$ meets $J$ is symmetric.
\end{proof}

\section{Connectivity for the eleven-vertex wheel}
\label{sec:w11-connectivity}
We prove that a counterexample satisfies $\kappa(G)\ge8$ when
$n=14$ and $\kappa(G)\ge9$ when $n=15$. These bounds will be
combined with K\"onig's theorem in the next two sections.

The argument proceeds by excluding vertex cuts in increasing order of size. The two
cases have different degree bounds: $|G|=27$ and $\delta(G)=9$
when $n=14$, whereas $|G|=29$ and $\delta(G)=10$ when $n=15$.
For a cut $S$ of order $s$, each component $D$ of $G-S$ satisfies
$|D|\ge\delta(G)-s+1$. If at least five vertices lie in the other
components, the blue-wheel observation also gives
$\delta(G[D])\ge|D|-5$. These bounds restrict the possible component orders;
small components are handled by the path constructions from
Section~\ref{sec:w11-setup}.
Table~\ref{tab:w11-cut-stages} records the resulting cases and
where each is excluded. The proofs are given in the cited propositions.

\begin{table}[tbp]
\centering
\small
\caption{Successive cut exclusions in Section~\ref{sec:w11-connectivity}.
The entries are component orders after deleting a cut $S$ of order
$s$, following the size reductions in the cited propositions.}
\label{tab:w11-cut-stages}
\begin{tabular}{@{}c c p{0.43\textwidth} p{0.39\textwidth}@{}}
\toprule
$n$ & $s$ & Possible component orders & Exclusion \\
\midrule
14 & $\le5$ &
$s\le2$: none;
$s=3$: $(12,12)$;
$s=4$: $(12,11)$;
$s=5$: $(12,10),(11,11)$.
& Proposition~\ref{prop:global-six-connected}. \\
\addlinespace
14 & $6$ &
$(17,4),(12,9)$
(Proposition~\ref{prop:six-cut14}).
& $(12,9)$: Proposition~\ref{prop:six-cut14-remaining};
$(17,4)$: Proposition~\ref{prop:global-seven-connected14}. \\
\addlinespace
14 & $7$ &
$(17,3),(16,4),(11,9),(10,10)$
(Proposition~\ref{prop:next-cut-sizes14}).
& All but $(10,10)$:
Proposition~\ref{prop:seven-cut14-balanced};
$(10,10)$: Proposition~\ref{prop:global-eight-connected14}. \\
\midrule
15 & $\le6$ &
$s\le2$: none;
$s=3$: $(13,13)$;
$s=4$: $(13,12)$;
$s=5$: $(13,11),(12,12)$;
$s=6$: $(13,10),(12,11)$.
& Proposition~\ref{prop:global-six-connected15}. \\
\addlinespace
15 & $7$ &
$(18,4),(13,9),(11,11)$
(Proposition~\ref{prop:seven-cut15}).
& $(18,4)$: Proposition~\ref{prop:seven-cut18};
$(13,9),(11,11)$:
Proposition~\ref{prop:global-eight-connected15}. \\
\addlinespace
15 & $8$ &
$(18,3),(17,4)$
(Proposition~\ref{prop:next-cut-sizes}).
& $(17,4)$: Proposition~\ref{prop:eight-cut17};
$(18,3)$: Proposition~\ref{prop:global-nine-connected15}. \\
\bottomrule
\end{tabular}
\end{table}

\subsection{Eight-connectivity at \texorpdfstring{$n=14$}{n=14}}
Throughout this subsection, $G$ is a counterexample at $n=14$.
Thus $|G|=27$ and $\delta(G)=9$ by Corollary~\ref{cor:small14}.

\begin{proposition}
\label{prop:global-six-connected}
Under the counterexample assumptions for $n=14$, the red graph
$G$ is at least $6$-connected.
\end{proposition}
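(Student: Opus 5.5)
We argue by contradiction: let $S$ be a minimum vertex cut with $s=|S|\le5$. By Corollary~\ref{cor:small14}, $s\ge2$, $|G|=27$ and $\delta(G)=9$. Each component $D$ of $G-S$ satisfies $|D|\ge 9-s+1=10-s$. If $G-S$ had three components, we would take a hub in one and five vertices from each of two others, provided those two have at least five vertices each. Since $10-s\ge5$, this gives a blue $W_{11}$. Hence $G-S$ has exactly two components $D_1,D_2$ with $|D_1|\ge|D_2|$ and $|D_1|+|D_2|=27-s$. Both have at least five vertices, so Lemma~\ref{lem:blue-sides} gives $\delta(G[D_i])\ge|D_i|-5$.

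The first step bounds the component orders, reproducing the table.

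\begin{itemize}
\item If $|D_1|\ge14$, then $\delta(G[D_1])\ge|D_1|-5>|D_1|/2$ once $|D_1|\ge11$. Bondy's theorem then gives a red $C_{14}$ inside $D_1$, since $D_1$ has odd order or minimum degree above half and so is not balanced complete bipartite.
\item If $|D_1|=13$, then $\delta(G[D_1])\ge8\ge 7=(13+1)/2$, so $G[D_1]$ is Hamilton-connected of order $13=n-1$, contradicting Lemma~\ref{lem:core}.
\end{itemize}

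So $|D_1|\le12$, which forces $27-s\le24$, i.e.\ $s\ge3$. The remaining possibilities are $(12,12)$ for $s=3$, $(12,11)$ for $s=4$, and $(12,10),(11,11)$ for $s=5$. The total is $|D_1|+|D_2|=27-s$, and $|D_2|\ge 10-s$ is automatic here.

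The main step kills these cases by building a red $C_{14}$ through $S$. Since $S$ is a minimum cut, $G$ is $s$-connected, and each vertex of $S$ has red neighbours in both $D_1$ and $D_2$. By Menger's theorem (or simply minimality), there are two disjoint $D_1$--$D_2$ paths through distinct vertices $u,w\in S$, say $a_1uv_2$ and $a_1'wb_2'$ with $a_1\ne a_1'\in D_1$ and $b_2\ne b_2'\in D_2$. These can be obtained from Hall/König on the bipartite red attachments, using $s\ge3$.

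The components are very dense: $\delta(G[D_i])\ge|D_i|-5\ge(|D_i|+1)/2$ for $|D_i|\ge11$. For $|D_i|=10$ we get $\delta\ge5$, which is not quite enough, so there we use Williamson's panconnectedness or the weaker Ore-type path statements. By Lemma~\ref{lem:hc}, $G[D_1]$ is Hamilton-connected when $|D_1|\ge11$.

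Moreover, with minimum degree $\ge|D|-5$ and $|D|\ge10$, Williamson's theorem applies once $|D|-5\ge(|D|+2)/2$, i.e.\ $|D|\ge12$. It gives $a$--$a'$ paths of every order from $3$ to $|D|$. For $|D|=11$ or $10$ we instead take a Hamilton path in a subgraph obtained by deleting vertices while keeping the Lemma~\ref{lem:hc} degree condition. Deleting $j$ vertices from an $m$-vertex graph with $\delta\ge m-5$ leaves $\delta\ge m-5-j\ge(m-j+1)/2$ provided $m-j\ge 9+j$. This gives $a$--$a'$ paths of every order $p$ with roughly $p\ge 9$. Paths of small order come from the near-completeness directly, since most pairs have many common neighbours.

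We then choose path orders $p_1$ in $D_1$ and $p_2$ in $D_2$ with $p_1+p_2+2=14$, e.g.\ $p_1=p_2=6$, or $(7,5)$, etc. Closing them through $u$ and $w$ gives a red $C_{14}$, a contradiction in all four cases.

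The obstacle I expect is guaranteeing paths of the needed intermediate orders (around $6$ or $7$) between prescribed endpoints in a graph of order $10$--$12$ with $\delta\ge|D|-5$. For this I would argue directly: the endpoints $a,a'$ each miss at most $4$ vertices, so we can greedily build a path through common-neighbour structure, extending one vertex at a time inside the dense graph. Alternatively, we take a subset $Y\ni a,a'$ of order $p$ chosen so that $G[Y]$ satisfies Lemma~\ref{lem:hc}; a random-greedy choice of $Y$ avoiding low-degree vertices should work since non-neighbourhoods have size at most $4$.
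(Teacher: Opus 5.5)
\paragraph{Verdict.}
Your reduction to the four order pairs $(12,12)$, $(12,11)$, $(12,10)$, $(11,11)$ is correct and matches the paper. The main step, however, has a genuine gap in the case $(11,11)$.

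\paragraph{Where the gap is.}
Your deletion inequality is miscomputed. The condition $m-5-j\ge (m-j+1)/2$ is equivalent to $m-j\ge 11$, not to $m-j\ge 9+j$; for example, $m=11$, $j=1$ gives only $\delta\ge5<11/2$. So Lemma~\ref{lem:hc} applied to subsets yields paths on at least eleven vertices only. Williamson's theorem on an eleven-vertex side needs $\delta\ge7$, but you only have $\delta\ge6$.

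In the $(11,11)$ case you need $p_1+p_2=12$ with distinct prescribed endpoints on both sides. Hence $2\le p_i\le 10$, and at least one of the two paths must have order between $6$ and $10$, with prescribed endpoints, in an eleven-vertex graph of minimum degree six. None of your stated tools supply this, and ``a random-greedy choice of $Y$ should work'' is not an argument.

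\paragraph{The other three cases.}
These do go through in your framework. The twelve-vertex side has $\delta\ge7=(12+2)/2$, and any two of its vertices are at distance at most two. So Williamson gives paths of every order from $3$ to $12$ between the prescribed endpoints. On the other side, a path of order two or three between the prescribed endpoints exists by counting common neighbours.

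\paragraph{The missing ingredient.}
You use only $\delta(G[D_i])\ge|D_i|-5$. You discard the total red degree $\delta(G)=9$ of component vertices, and you never use the blue-wheel restriction on the cut vertices. The paper routes the cycle through one side only.

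Each $z\in S$ has at most four blue neighbours in one of the two components; otherwise $z$ is the hub of a blue $C_{10}$ alternating between them. Assign $z$ to such a component. For $(11,11)$ with $s=5$, three cut vertices are assigned to the same side $D$. In the fourteen-vertex graph on $D$ and these three vertices:
\begin{itemize}
\item each added vertex has at least $11-4=7$ red neighbours in $D$;
\item each vertex of $D$ keeps red degree at least $9-2=7$, because all its red neighbours lie in $D\cup S$.
\end{itemize}
Dirac's theorem then gives a red $C_{14}$. The same assignment, combined with Dirac's theorem or Chv\'atal's condition, disposes of the remaining pairs.

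\paragraph{Staying with your approach.}
If you keep the two-sided routing, you must separately prove the following: in an eleven-vertex graph of minimum degree six, any two vertices are joined by a path of order nine or ten. One route is the Shih et al.\ characterization applied to $G[D_1]-x$ for a suitable $x$, together with a treatment of its exceptional forms. This is real additional work.
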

\begin{proof}
Suppose that a set $S$ of $s\leq5$ vertices disconnects $G$.
Every component $A$ of $G-S$ has order at least $10-s\geq5$,
because $\delta(G)=9$ and its vertices have no red neighbors
outside $A\cup S$. Three components would give a blue
$W_{11}$: take a hub in one and alternate five vertices from
each of two others around the rim. Thus $G-S$ has exactly two
components $A,B$.

If some vertex of $A$ had five blue neighbors in $A$, those
five and five vertices of $B$ would form a blue $C_{10}$ with
that vertex as hub. Hence $\delta(G[A])\geq |A|-5$; the same
holds for $B$. A component of order $a\geq14$ would have
$\delta\geq a-5\geq a/2$. Bondy's pancyclicity theorem then
gives a red $C_{14}$, including in its exceptional complete
bipartite case. A component of order $13$ has minimum degree
at least eight and is Hamilton-connected, contrary to
Lemma~\ref{lem:core}. Therefore both components have order
at most $12$. Since $|A|+|B|=27-s$, we have $s\geq3$.

For any $z\in S$, the vertex $z$ cannot have five blue
neighbors in each of $A$ and $B$, since they would form a
blue rim with hub $z$. Assign $z$ to a component where it has
at most four blue neighbors; if assigned to a component of
order $a$, it has at least $a-4$ red neighbors there.

If $s=3$, the component orders are $12,12$. Two vertices of
$S$ are assigned to one component, say $A$. In the graph
induced by $A$ and these two vertices, every vertex of $A$
has red degree at least $12-5=7$, while the two added
vertices have at least eight red neighbors in $A$.
Dirac's theorem supplies a red $C_{14}$.

If $s=4$, the component orders are $12,11$. If two vertices
of $S$ are assigned to the $12$-vertex component, the preceding
Dirac argument applies. Otherwise at least three vertices
are assigned to the $11$-vertex component $B$. In the graph
induced by $B$ and these three vertices, each added vertex
has at least seven red neighbors in $B$. Each vertex of $B$
has red degree at least eight in this induced graph, since
at most one vertex of $S$ was omitted. Again Dirac's theorem gives
a red $C_{14}$.

If $s=5$ and the component orders are $11,11$, three vertices
of $S$ are assigned to one component. These three have at
least seven red neighbors in it, and its vertices retain red
degree at least $9-2=7$ after the other two vertices of $S$
are omitted. Dirac's theorem gives a red $C_{14}$. The only other
possibility is that the component orders are $12,10$. Two
vertices assigned to the $12$-vertex component give the same
contradiction by Dirac's theorem, so at least four vertices of $S$ are
assigned to the $10$-vertex component $B$. In the graph
induced by $B$ and these four vertices, the four added
vertices have red degree at least six, while every vertex
of $B$ has red degree at least eight, because at most one
vertex of $S$ was omitted. If its ordered degree sequence is
$d_1\leq\cdots\leq d_{14}$, then $d_i>i$ for every
$i<7$: the first four degrees are at least six and the next
two are at least eight. Chv\'atal's degree-sequence
theorem~\cite{Chvatal}
therefore makes this graph Hamiltonian, again producing a
red $C_{14}$. All cases are impossible.
\end{proof}

\begin{proposition}
\label{prop:six-cut14}
Under the counterexample assumptions for $n=14$, if a set $S$
of six vertices disconnects $G$, then $G-S$ has exactly two
components, of orders $(17,4)$ or $(12,9)$.
\end{proposition}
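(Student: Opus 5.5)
We follow the scheme of Proposition~\ref{prop:global-six-connected}. Let $S$ be a cut with $|S|=6$. Since $\delta(G)=9$, every component of $G-S$ has order at least $10-6=4$, and $|V(G)\setminus S|=21$.

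\textbf{Step 1: exactly two components.} Suppose $G-S$ had three components. If two of them have order at least five, a hub in the third and alternating five vertices from each of these two gives a blue $W_{11}$. Otherwise at least two components have order exactly four, so the remaining components contain at least $13$ vertices. I would then put the two four-vertex components together as a set $B$ of eight vertices. All edges from $B$ to the rest are blue, and $B$ itself contains blue $K_{4,4}$ cross-edges. A hub in a larger component, together with a blue $C_{10}$ built from five vertices of the rest and five of $B$, alternating, gives a blue wheel. The rim is available because every edge between $B$ and the other components is blue. So in all cases $G-S$ has exactly two components $A,B$ with $|A|\ge|B|\ge4$ and $|A|+|B|=21$.

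\textbf{Step 2: restrict the orders.} If $|B|\ge5$, Lemma~\ref{lem:blue-sides} gives $\delta(G[A])\ge|A|-5$, and the same for $B$. A component of order at least $14$ then yields a red $C_{14}$ by Bondy's theorem, exactly as in Proposition~\ref{prop:global-six-connected}. A component of order $13$ has minimum degree at least $8\ge 7$, so it is Hamilton-connected by Lemma~\ref{lem:hc}, contradicting Lemma~\ref{lem:core}. Hence both orders are at most $12$, which leaves $(12,9)$ and $(11,10)$. When $|B|=4$, we get $(17,4)$, which is one of the allowed outcomes. So the substantive work is to exclude $(11,10)$.

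\textbf{Step 3: exclude $(11,10)$, the main obstacle.} As before, assign each $z\in S$ to a component in which it has at most four blue neighbours. If at least three vertices of $S$ are assigned to $A$ (order $11$), consider $A$ together with these three vertices. Each added vertex has at least $7$ red neighbours in $A$. Each vertex of $A$ has red degree at least $9-3=6$ from $A\cup S$ after omitting the other three vertices of $S$; within $A$ alone it has red degree at least $6$ by Lemma~\ref{lem:blue-sides}. This is weaker than the $7$ that Dirac requires on $14$ vertices, so I would use Chv\'atal's theorem, or choose the three assigned vertices to maximise coverage.

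Otherwise at least four vertices of $S$ are assigned to $B$ (order $10$). The graph on $B$ plus these four vertices has added degrees at least $6$ and $B$-degrees at least $9-2=7$. This gives degree sequence entries $d_1,\dots,d_4\ge6$ and $d_5,d_6\ge7$, and we need $d_6>6$, which holds. Chv\'atal's condition $d_i>i$ for $i<7$ then holds, producing a red $C_{14}$.

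The delicate subcase is exactly three vertices assigned to each side. There I would take the side-$A$ graph on $14$ vertices and verify Chv\'atal's condition: each vertex of $A$ has at least $6$ red neighbours in $A$, and at least one further red neighbour among the three added vertices unless it is blue to all of them. I would handle that exception by noting that the three added vertices each miss at most four vertices of $A$, so few vertices of $A$ are blue to all three.
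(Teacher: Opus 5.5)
The gap is in Step 1. Your Steps 2 and 3 follow the paper's proof. In Step 2, Bondy's theorem and Lemma~\ref{lem:core} reduce to $(12,9)$, $(11,10)$ and $(17,4)$. In Step 3, you assign $S$ and apply Chv\'atal's criterion. In the $A$-case, your observation that at most four vertices of $A$ are blue to all three added vertices is exactly what $d_i>i$ for $i\le 6$ needs.

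With at least three components and at most one of order at least five, the possible orders are $(13,4,4)$, $(9,4,4,4)$ and $(5,4,4,4,4)$. Your wheel puts the hub $h$ in a larger component and takes five rim vertices from ``the rest''. A rim vertex in $h$'s own component need not be blue-adjacent to $h$.

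In the configuration $(13,4,4)$ this cannot be avoided. If $h$ lies in the $13$-vertex component $A$, its only guaranteed blue neighbours are the eight vertices of the two small components, too few for a $C_{10}$. If $h$ lies in a four-vertex component, the guaranteed blue edges in its blue neighbourhood form only the $K_{4,13}$ between the other small component and $A$, whose longest cycle has eight vertices. So the claimed blue $W_{11}$ is not forced, and this configuration needs a separate argument.

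The paper fixes $v$ in one four-vertex component. The other one is blue-complete to $A$, so two distinct blue edges in $A$ would yield a blue $C_{10}$ in the blue neighbourhood of $v$. Hence $G[A]$ is $K_{13}$ minus at most one edge, which is Hamilton-connected and contradicts Lemma~\ref{lem:core}.

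Within your framework there is an even quicker repair. Apply Lemma~\ref{lem:blue-sides} to $A$, taking the eight-vertex union of the two small components as the other set. This gives $\delta(G[A])\ge 8$, so $G[A]$ is a Hamilton-connected subgraph of order $13=n-1$, contradicting Lemma~\ref{lem:core} exactly as in your Step 2.

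The configurations $(9,4,4,4)$ and $(5,4,4,4,4)$ are fine once the hub is placed in a four-vertex component. The remaining seventeen vertices then split into two blocks of at least five, giving a blue $K_{5,5}$ in the hub's blue neighbourhood.
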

\begin{proof}
Every component of $G-S$ has order at least $9-6+1=4$.
If there are at least three components, either all of them
have at least five vertices, giving a blue wheel directly, or
some component $D$ has order four. Fix $v\in D$. The other
components, whose orders sum to $17$ and are each at least
four, can be partitioned into two unions of at least five
vertices unless they are exactly two components of orders
$(13,4)$. Such a partition gives a blue $K_{5,5}$ in the
blue neighborhood of $v$, hence a blue wheel. In the
exceptional $(13,4)$ case, the four-vertex component is
blue-complete to the $13$-vertex component $A$. If $A$
contained two distinct blue edges, those edges, together
with four vertices of the smaller component, would give a
blue $C_{10}$ in the blue neighborhood of $v$: arrange six
vertices of $A$ into four blue paths and join them cyclically
through the four outside vertices. Hence $G[A]$ is a red
$K_{13}$ with at most one edge deleted. It is
Hamilton-connected, contradicting Lemma~\ref{lem:core}.
Thus $G-S$ has two components.

Their orders sum to $27-6=21$. If the smaller has order at least
five, the blue-wheel observation gives internal red minimum
degree at least $a-5$ in a component of order $a$. A component
of order at least $14$ then contains a red $C_{14}$ by Bondy's
theorem; its balanced complete-bipartite exception cannot attain
minimum degree $a-5>a/2$. A component of order $13$ is
Hamilton-connected and contradicts Lemma~\ref{lem:core}.
Therefore the only possible component orders with the smaller
at least five are $(12,9)$ and $(11,10)$.

To exclude $(11,10)$, assign each vertex of $S$ to a component
where it has at most four blue neighbors. If at least four are
assigned to the $10$-vertex component, select four. These and
that component induce a graph of order $14$ in which the four
selected vertices have red degree at least six and the ten
component vertices retain red degree at least $9-2=7$.
Hence $d_i>i$ for $1\le i\le6$, and Chv\'atal's theorem gives
a red $C_{14}$. Otherwise at least three vertices of $S$ are
assigned to the $11$-vertex component. Select three. Each has
at least seven red neighbors there. Component vertices have
internal red degree at least six, and at least seven of them
have an additional neighbor among the selected vertices of $S$. So at most
four vertices have degree six and all others have degree at
least seven. Again $d_i>i$ for $1\le i\le6$, giving a red
$C_{14}$. If the smaller component has order four, the other
has order $17$, completing the classification.
\end{proof}

\begin{proposition}
\label{prop:six-cut14-remaining}
Under the counterexample assumptions for $n=14$, a six-vertex
cut cannot leave components of orders $(12,9)$. Consequently,
the only possible six-cut component orders are $(17,4)$.
\end{proposition}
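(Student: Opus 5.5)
The plan is to avoid degree-sequence counting on $B\cup S$ altogether. Instead we route a single red cycle through both components, using the panconnectedness of the $12$-vertex side to absorb whatever length the $9$-vertex side produces. Let $S$ be the six-vertex cut and $A,B$ the components, with $|A|=12$ and $|B|=9$. Both have at least five vertices, so Lemma~\ref{lem:blue-sides} gives $\delta(G[A])\ge 7$ and $\delta(G[B])\ge 4$.

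\emph{The $12$-vertex side.} Since $7\ge(12+2)/2$, Williamson's theorem~\cite{Williamson} makes $G[A]$ panconnected in the following sense. For distinct $a_1,a_2\in A$ there is a red $a_1$--$a_2$ path in $A$ of every order from $\operatorname{dist}(a_1,a_2)+1$ to $12$. Two vertices with minimum degree $7$ in a $12$-vertex graph share a neighbour, so this distance is at most $2$. Hence every order from $3$ to $12$ is available.

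\emph{Attachments through $S$.} Proposition~\ref{prop:global-six-connected} gives $\kappa(G)\ge6$, so $S$ is a minimum cut. Therefore every vertex of $S$ has a red neighbour in $A$ and a red neighbour in $B$. I then claim there are distinct $z_1,z_2\in S$ with distinct red neighbours $a_1,a_2\in A$. Suppose not. Then all red edges from $S$ to $A$ would go to a single vertex $a$. In that case $\{a\}$ would separate $A-a$, which has $11$ vertices, from the rest of $G$, contradicting $2$-connectivity.

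\emph{The $9$-vertex side.} Choose red neighbours $b_1$ of $z_1$ and $b_2$ of $z_2$ in $B$; we may take $b_1=b_2$ if $z_1,z_2$ have a common red neighbour there. Since $\delta(G[B])\ge4$ on nine vertices, $G[B]$ is connected. So there is a red $b_1$--$b_2$ path $P$ in $B$ with some number $k$ of vertices, where $1\le k\le 9$.

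\emph{Closing the cycle.} The vertices $z_1$, $P$, $z_2$ form a red $z_1$--$z_2$ path with $k+2$ vertices. Take a red $a_1$--$a_2$ path in $A$ of order $12-k$. This order lies in $[3,11]$, which is within the range obtained above. Joining the two paths through the edges $z_1a_1$ and $z_2a_2$ gives a red cycle of order $(k+2)+(12-k)=14$, a contradiction.

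The second sentence of the proposition then follows directly from Proposition~\ref{prop:six-cut14}. The only delicate step I anticipate is the existence of distinct attachments $a_1\ne a_2$ from distinct vertices of $S$. If the bare cut-vertex argument above proves too weak, I would fall back on the fact that each vertex of $A$ has red degree $9$. A vertex of $A$ with internal degree $7$ or $8$ must then have red neighbours in $S$, and Hall's theorem applied to the bipartite red graph $G[S,A]$ would supply two disjoint attachment edges.
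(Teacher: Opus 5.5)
Your proof is correct, but it closes the cycle by a different mechanism from the paper's.

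\textbf{The paper's route.} The paper fixes the lengths in advance. It uses $\kappa(G)\ge6$ and K\"onig's theorem to obtain red matchings saturating $S$ into both $A$ and $B$. It then uses $\alpha(G[B])\le5$, since every vertex of $B$ has at most four blue neighbours in $B$, to find two matched vertices $b_s,b_t$ joined by a red edge. A red path on exactly ten vertices in $A$ then closes the cycle, giving $10+2+2=14$.

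\textbf{Your route.} You let the $B$-side path have whatever order $k$ connectivity happens to give. You absorb that order with the full panconnectedness of $G[A]$, where orders $3$ to $11$ suffice. As a result you need only two independent red $S$--$A$ edges and arbitrary red $S$--$B$ attachments. Both come from minimality of the cut together with your cut-vertex argument, and that argument is already sound, so the Hall fallback you mention is unnecessary. You need no matching into $B$ and no independence bound there, so your version is somewhat lighter.

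\textbf{What each costs.} Your route uses Williamson's theorem at full strength, down to path order three. That is available here only because $\delta(G[A])\ge7=(|A|+2)/2$. The paper's fixed-length template needs just one path order on the large side. This is why the same template carries over to later cut exclusions, such as the $(11,11)$ split at $n=15$. There the large side has minimum degree only six, Williamson's hypothesis fails, and the paper instead uses Hamilton-connectedness of an $11$-vertex set.
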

\begin{proof}
Suppose a six-vertex cut $S$ leaves components $A,B$ with
$|A|=12$ and $|B|=9$. The blue-wheel observation gives
$\delta(G[A])\ge12-5=7$ and $\Delta(\bG[B])\le4$.
Williamson's panconnectedness theorem~\cite{Williamson}
applies to $G[A]$, because $7=(|A|+2)/2$. In particular,
any two distinct vertices of $A$ can be joined by a red
path on ten vertices.

Since $G$ is $6$-connected by
Proposition~\ref{prop:global-six-connected}, both $G[A,S]$ and
$G[B,S]$ have matchings of size six. Indeed, if either bipartite
graph had matching number at most five, K\"onig's theorem would
give a vertex cover of size at most five. Deleting this cover
would separate nonempty subsets of $A$ and $B$, a contradiction.
Choose one such matching in each graph; both saturate $S$. Denote their
matched endpoints for $s\in S$ by $a_s\in A$ and $b_s\in B$.

The six distinct vertices $b_s$ cannot be independent in $G$,
because every vertex of $B$ has at most four blue neighbors
there. Choose $s\ne t$ with $b_sb_t$ red. A red path in $A$
on ten vertices from $a_s$ to $a_t$, followed by the red
edges $a_ss$, $sb_s$, $b_sb_t$, $b_tt$, and $ta_t$, gives a
red $C_{14}$, contrary to assumption.
\end{proof}

\begin{proposition}
\label{prop:six-cut17-cycle}
Assume the $n=14$ counterexample conditions and suppose a
six-vertex cut $S$ leaves red components $A,B$ of orders
$17,4$. Fix a red matching of size six between $A$ and $S$
and any red $C_{12}$ in $A$. Exactly two matching endpoints
lie on the cycle; they are adjacent or antipodal on it.
\end{proposition}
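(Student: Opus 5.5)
The plan is to first pin down the structure of $B$ and $S$, and then show that any two matching endpoints can be joined by a short red path through $S\cup B$ whose number of internal vertices can be tuned. Any badly placed pair of endpoints will then close into a red $C_{14}$.

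\textbf{Step 1 (structure of $B$ and $A$).} Each vertex of $B$ has red degree at least $9$ by Corollary~\ref{cor:small14}. It has at most three red neighbors inside $B$, and all its other red neighbors lie in $S$, which has order six. Hence every vertex of $B$ is red-complete to $S$ and red-complete to the rest of $B$, so $G[B]=K_4$. Since $|B|=4<5$, Lemma~\ref{lem:blue-sides} cannot be applied with $B$ as the outside set. Instead I will use the $C_{12}$ and the matching exactly as given. I only need that $A$ has red minimum degree at least $17-5=12$ internally; this holds because the blue-wheel observation applies with $B\cup\{\text{any vertex}\}$ replaced by the five-vertex set $B\cup\{s\}$ for a suitable $s$. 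If that observation turns out to be unavailable, I will instead use only $\delta(G)=9$ together with $|S|=6$, which gives at least three red neighbors in $A$; I expect the count below to survive with this weaker bound only after extra care.

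\textbf{Step 2 (tunable connectors).} Let $a_s,a_t$ be distinct matching endpoints, with $s\ne t$ in $S$. Consider the red paths $a_s\,s\,P\,t\,a_t$, where $P$ is a red path in $B\cong K_4$ on $q\in\{1,2,3,4\}$ vertices. These give red $a_s$--$a_t$ paths whose internal vertices all lie outside $A$, with exactly $j\in\{3,4,5,6\}$ internal vertices. Combining such a connector with a red $a_s$--$a_t$ path in $A$ on $m$ vertices yields a red cycle of order $m+j$. Hence it suffices to find such a path in $A$ with $m\in\{8,\dots,11\}$.

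\textbf{Step 3 (positions on the cycle).} Let $a_s,a_t$ both lie on the fixed $C_{12}$, at cycle distance $d\in\{1,\dots,6\}$. The longer arc between them has $13-d$ vertices, and this lies in $\{8,\dots,11\}$ exactly when $2\le d\le5$. So two endpoints on the cycle must be adjacent or antipodal. Three endpoints cannot be pairwise at distance $1$ or $6$. Indeed, if $x$ is one of them, the other two lie in $\{x\pm1,x+6\}$, and any two elements of this set are at distance $2$ or $5$. Thus at most two endpoints lie on the cycle.

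\textbf{Step 4 (at least two on the cycle; main obstacle).} The set $A\setminus V(C)$ has only five vertices, so at least one endpoint $a$ lies on $C$. Suppose it is the only one. Then the other five endpoints are exactly the five vertices of $A\setminus V(C)$. Take one of them, $u$. It has at least $12-4=8$ red neighbors on $C$. At most five vertices of $C$ lie within distance $2$ of $a$, so $u$ has a red neighbor $y$ on $C$ at distance $d\in\{3,\dots,6\}$ from $a$. The path consisting of $u$, $y$ and the longer $y$--$a$ arc has $14-d\in\{8,\dots,11\}$ vertices. Step 2 then closes it into a red $C_{14}$, a contradiction. This step depends on the degree count $\delta(G[A])\ge12$ from Step 1, and justifying that bound is the point I expect to require the most care. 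If it fails, I would try instead to route through two off-cycle endpoints and use their neighbors on $C$ to adjust the length.
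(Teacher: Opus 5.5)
There is a genuine gap in your Step~4, which is the direction showing that at least two endpoints lie on the cycle. That step rests on $\delta(G[A])\ge12$, and this bound is not available. Lemma~\ref{lem:blue-sides} needs at least five vertices outside $A$ that are blue-complete to $A$, and only the four vertices of $B$ are. Your substitute $B\cup\{s\}$ does not work. The given size-six matching saturates $S$, so every $s\in S$ has a red neighbor in $A$. Moreover, nothing available prevents a vertex of $A$ from being red-complete to $S$, and then no $s$ is even blue-adjacent to the would-be hub.

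From $\delta(G)=9$ and $|S|=6$ alone, each vertex of $T=A\setminus V(C)$ is guaranteed only three red neighbors in $A$. These may all lie in $T$ or within cyclic distance two of $a$. So no degree count of this kind forces a red edge to the far part of the cycle. Your fallback of routing through two off-cycle endpoints is too vague to address this.

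The missing idea is to turn Step~4 around and finish with a blue wheel instead of a red edge. Write $a=c_0$. All five vertices of $T$ are matched to vertices of $S$ different from the partner of $c_0$. Let $F=\{c_3,\ldots,c_9\}$ be the seven cycle vertices at cyclic distance $3,\ldots,6$ from $c_0$. By your own Step~2 computation, every $T$--$F$ edge is blue: a red one gives a red path on $14-d\in\{8,\dots,11\}$ vertices, which closes to a red $C_{14}$. The blue $K_{5,7}$ between $T$ and $F$ contains a blue $C_{10}$. Any vertex of $B$ is blue-adjacent to all of $A$, since $A$ and $B$ are distinct components of $G-S$, so it serves as a hub and gives a blue $W_{11}$. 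This is how the paper concludes.

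Your Steps~1 through~3 are correct and agree with the paper: $G[B]$ is a red $K_4$ complete to $S$, the connectors through $B$ have $3$ to $6$ internal vertices, and at most two endpoints lie on the cycle, necessarily adjacent or antipodal. Only the degree bound in Step~1 and its use in Step~4 need to be replaced.
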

\begin{proof}
Every vertex of $B$ has red degree nine and at most
$3+6=9$ possible red neighbors, so $B$ is a red $K_4$
complete to $S$. The $6$-connectivity of $G$ and
K\"onig's theorem imply that $G[A,S]$ has a matching of size six:
if $\nu(G[A,S])\le5$, a vertex cover of size at most five would
separate a nonempty subset of $A$ from $B$.

For any two red $A$--$S$ edges with distinct endpoints,
a red path on $k\in\{8,9,10,11\}$ vertices between their
$A$-endpoints would form a red $C_{14}$ when joined to a red
path between the corresponding vertices of $S$ with
$q=12-k\in\{4,3,2,1\}$ internal vertices in $B$. Every vertex of $B$ has blue neighborhood
exactly $A$, so the blue graph on $A$ has no $C_{10}$.
The equality $R(C_{12},C_{10})=16$~\cite{FSR} supplies
a red $C_{12}$ in $A$. Five vertices of $A$ lie outside it.

Among the six matching endpoints at least one lies on the
cycle. Two cycle endpoints at cyclic distance $2,3,4$, or
$5$ would be joined by a cycle arc on $11,10,9$, or $8$
vertices, respectively, which is forbidden. Thus two cycle
endpoints can only be adjacent or antipodal. An antipodal
pair permits no third endpoint at an allowed distance from
both, and three vertices cannot be pairwise adjacent on a
$C_{12}$. Hence there cannot be three cycle endpoints.

Suppose just one matching endpoint, say $c_0$, lies on the
cycle $C=c_0c_1\cdots c_{11}c_0$. Then all five vertices
$T=A\setminus V(C)$ are matched to distinct vertices of $S$.
No $t\in T$ can have a red edge to any of
$F=\{c_3,\ldots,c_9\}$: such an edge, followed by one of
the two arcs to $c_0$, would yield a forbidden path on
$8,9,10$, or $11$ vertices. Hence every $T$--$F$ edge is
blue. The blue $K_{5,7}$ between $T$ and $F$ contains a
blue $C_{10}$, with any vertex of $B$ as blue hub. This
contradiction shows there are two cycle endpoints.

\end{proof}

\begin{proposition}
\label{prop:global-seven-connected14}
Under the counterexample assumptions for $n=14$, the red graph
$G$ is at least $7$-connected.
\end{proposition}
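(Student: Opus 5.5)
The plan is to argue by contradiction, starting from the structure given by Proposition~\ref{prop:six-cut14-remaining} and Proposition~\ref{prop:six-cut17-cycle}. Proposition~\ref{prop:global-six-connected} already gives $6$-connectivity, so suppose a six-vertex cut $S$ exists. Then $G-S$ has components $A,B$ with $|A|=17$ and $|B|=4$. Moreover, $B$ is a red $K_4$ that is red-complete to $S$. Hence every vertex of $B$ is blue-complete to $A$, and the blue graph on $A$ contains no $C_{10}$.

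Fix a red matching of size six between $A$ and $S$, and a red cycle $C=c_0c_1\cdots c_{11}c_0$ in $A$. By Proposition~\ref{prop:six-cut17-cycle}, exactly two matching endpoints lie on $C$, and they are adjacent or antipodal. The other four matching endpoints form a set $T'\subseteq T=A\setminus V(C)$ with $|T'|=4$. The key tool from the proof of Proposition~\ref{prop:six-cut17-cycle} will be reused throughout. Two $A$-endpoints of matching edges can never be joined by a red path in $A$ on $8,9,10$ or $11$ vertices, because such a path closes through $S$ and $B$ into a red $C_{14}$.

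\emph{Adjacent case: endpoints $c_0,c_1$.} Take $t\in T'$ and a red edge $tc_j$. Following either arc of $C$ from $c_j$ to $c_0$ gives red paths on $j+2$ and $14-j$ vertices. These orders lie in $\{8,\dots,11\}$ precisely when $j\in\{3,\dots,9\}$. Shifting the indices by one for $c_1$ forbids $j\in\{4,\dots,10\}$. So every $t\in T'$ is blue to $F=\{c_3,\dots,c_{10}\}$, and $\bG[T',F]$ contains $K_{4,8}$.

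If the blue graph on $F$ had two distinct edges, we would proceed as in Lemma~\ref{lem:independent-five-separated}. The two edges give either a blue $P_3$ or two disjoint blue edges. Padding with singletons yields four disjoint blue paths covering six vertices of $F$. The four vertices of $T'$ join these paths cyclically into a blue $C_{10}$ inside $A$. Any vertex of $B$ serves as hub, giving a contradiction. Hence $G[F]$ is a red $K_8$ minus at most one edge.

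We then build a red $c_1$--$c_0$ path of forbidden order. Go from $c_1$ to $c_2$ and $c_3$. Then traverse a red path inside $F$ from $c_3$ to $c_{10}$ with a chosen number of vertices. Finally use $c_{11}$ and $c_0$. Since $F$ is almost complete, the number of vertices used in $F$ can be any value from $2$ to $5$ while avoiding the possible missing edge. This gives total orders $8$ to $11$, a contradiction.

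\emph{Antipodal case: endpoints $c_0,c_6$.} The same distance computation forbids red edges from $t\in T'$ to $c_3,\dots,c_9$ (measured from $c_0$). It also forbids red edges to $c_9,\dots,c_{11},c_0,\dots,c_3$ (measured from $c_6$). Together these cover all of $V(C)$, so $T'$ is blue-complete to $V(C)$. The two-blue-edge argument above then makes $G[V(C)]$ a red $K_{12}$ minus at most one edge. Such a graph contains $c_0$--$c_6$ paths of every order from $8$ to $11$, again a contradiction.

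The main obstacle is the bookkeeping in the first step. We must check that each edge $tc_j$ really yields a path of forbidden order in both cases, including boundary indices and the case $t=c$-adjacent. We must also check that the chosen near-clique paths avoid the single possible missing edge; this step is where the care is needed. If some boundary index escapes, I would fall back on the fifth vertex of $T$ or use the red edge $c_0c_1$ to shift paths.
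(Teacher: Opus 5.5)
Your argument is correct, but it reaches the contradiction by a different route from the paper's.

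\textbf{The paper's route.} The paper chooses a six-vertex set $F$ on $C$ at cyclic distance $2$--$5$ from both matched cycle vertices, namely $\{c_3,c_4,c_5,c_8,c_9,c_{10}\}$ or $\{c_2,c_3,c_4,c_8,c_9,c_{10}\}$. This makes both the $T$--$F$ and the $S$--$F$ edges blue. A blue edge $ts$ with $t\in T$, $s\in S$ would then give a blue $K_{5,5}$ between $B\cup\{s\}$ and $F$ inside the blue neighborhood of $t$. Hence $T$ is red-complete to $S$, and a red $K_{6,8}$ between $S$ and $T\cup B$, together with a red $P_3$ in $B$, closes a red $C_{14}$.

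\textbf{Your route.} You never examine $S$--$F$ or $T$--$S$ edges. You use only the $T'$--$C$ edges to obtain a larger set that is blue-complete to $T'$: $\{c_3,\dots,c_{10}\}$, or all of $V(C)$ in the antipodal case. The four-connector blue $C_{10}$ with hub in $B$ (the device of Lemma~\ref{lem:independent-five-separated}) turns this into near-completeness of the red graph on that set. You then apply the forbidden-path observation once more, between the two matched cycle vertices.

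\textbf{Checks.} Your index computations are right: orders $j+2$ and $14-j$ for $c_0$, orders $j+1$ and $15-j$ for $c_1$, and in the antipodal case the union covers all twelve indices. So the fallbacks in your last paragraph are unnecessary. There is one harmless counting slip. Since $c_1,c_2,c_{11},c_0$ lie outside $F$, the total order is $4$ plus the order of the $c_3$--$c_{10}$ path in $F$. Your ``$2$ to $5$'' must therefore count internal vertices, so that path has $4$ to $7$ vertices. Even under the literal reading, the values $4,5$ already give the forbidden orders $8,9$.

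\textbf{Comparison.} Your version keeps the construction inside $A$ and is somewhat shorter. The paper's version extracts extra structure ($T$ red-complete to $S$) and follows the template reused in Propositions~\ref{prop:seven-cut18} and~\ref{prop:eight-cut17}.
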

\begin{proof}
Proposition~\ref{prop:global-six-connected} gives
$\kappa(G)\ge6$. Suppose a six-vertex cut $S$ exists.
Propositions~\ref{prop:six-cut14} and
\ref{prop:six-cut14-remaining} leave only components $A,B$
of orders $17,4$. By Proposition~\ref{prop:six-cut17-cycle},
there is a red $C_{12}=c_0c_1\cdots c_{11}c_0$ in $A$
and a size-six red $A$--$S$ matching whose two endpoints
on the cycle are adjacent or antipodal. In the adjacent
case relabel them $c_0,c_1$ and put
$F=\{c_3,c_4,c_5,c_8,c_9,c_{10}\}$. In the antipodal
case relabel them $c_0,c_6$ and put
$F=\{c_2,c_3,c_4,c_8,c_9,c_{10}\}$.
The other four
matching endpoints form a set $T\subseteq A\setminus V(C)$.

As in the proof of Proposition~\ref{prop:six-cut17-cycle},
no two red $A$--$S$ edges with distinct endpoints can have
their $A$-endpoints joined by a red path on $8,9,10$, or
$11$ vertices. Hence every edge from $T$ to $F$ is blue.
Indeed, each $f\in F$ is at cyclic distance $3,4$, or $5$
from at least one of the two matched cycle endpoints;
attaching a matched outside
vertex to $f$ would yield one of the forbidden paths.

Every $S$--$F$ edge is blue as well. Each $f\in F$ is at
cyclic distance $2,3,4$, or $5$ from both matched cycle
endpoints. If $sf$ were red, choose one of these endpoints whose matching
partner is different from $s$. The longer arc between $f$
and that cycle vertex is a forbidden path on $8,9,10$,
or $11$ vertices.

If some $t\in T$ had a blue neighbor $s\in S$, then the
four vertices of $B$ together with $s$ and any five
vertices of $F$ would form a blue $K_{5,5}$ inside the
blue neighborhood of $t$: all $B$--$F$, $S$--$F$,
$T$--$F$, and $T$--$B$ edges involved are blue.
This gives a blue $W_{11}$. Thus every $T$--$S$ edge is
red. Every $B$--$S$ edge is also red, as established in
Proposition~\ref{prop:six-cut17-cycle}. Therefore the red
graph on $S\cup T\cup B$ contains a $K_{6,8}$ with sides
$S$ and $T\cup B$. Three vertices of the red clique $B$
form a red $P_3$ inside the eight-vertex side. Use its
two edges and alternate the six vertices of $S$ with the
remaining five vertices of $T\cup B$ to form a red
$C_{14}$. This contradiction excludes every six-vertex
cut.
\end{proof}

\begin{proposition}
\label{prop:next-cut-frontier14}
A seven-vertex cut either leaves a component of order at most four,
or leaves exactly two components of orders $(11,9)$ or $(10,10)$.
\end{proposition}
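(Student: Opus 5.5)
We work with a counterexample at $n=14$, so $|G|=27$ and $\delta(G)=9$, and let $S$ be a seven-vertex cut. There are $20$ vertices outside $S$. Every component of $G-S$ has order at least $9-7+1=3$. We may assume every component has order at least five, since otherwise the first alternative holds. Under this assumption we show there are exactly two components, and then restrict their orders.

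\emph{At most two components.} With all components of order at least five, three components give a blue $W_{11}$ directly. Take a hub in one component and alternate five vertices from each of two others around the rim, exactly as in Proposition~\ref{prop:global-six-connected}. Hence $G-S$ has exactly two components $A,B$, with $|A|+|B|=20$ and $|B|\ge5$.

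\emph{Bounding the larger component.} Since the smaller side has at least five vertices, Lemma~\ref{lem:blue-sides} gives $\delta(G[A])\ge|A|-5$ and $\delta(G[B])\ge|B|-5$.
\begin{itemize}
\item If $|A|\ge14$, then $\delta(G[A])\ge |A|-5>|A|/2$. Bondy's theorem yields a red $C_{14}$; the balanced complete bipartite exception is excluded because its minimum degree is only $|A|/2$.
\item If $|A|=13$, then $\delta(G[A])\ge8\ge(13+1)/2$. So $G[A]$ is Hamilton-connected by Lemma~\ref{lem:hc}, contradicting Lemma~\ref{lem:core} with $(n,k)=(14,5)$.
\end{itemize}
Thus $|A|\le12$, and since $|A|+|B|=20$, the possible orders are $(12,8)$, $(11,9)$ and $(10,10)$.

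\emph{Excluding $(12,8)$.} This is the main obstacle, and I would mimic Proposition~\ref{prop:six-cut14-remaining}.
\begin{itemize}
\item In $A$ we have $\delta(G[A])\ge7=(12+2)/2$. Williamson's theorem then gives, between any two distinct vertices of $A$, red paths of every order up to $12$; in particular paths on $10$, $11$ and $12$ vertices.
\item By Proposition~\ref{prop:global-seven-connected14}, $G$ is $7$-connected. K\"onig's theorem then gives matchings of size seven in $G[A,S]$ and in $G[B,S]$, as in that proof. Write $a_s\in A$ and $b_s\in B$ for the matched partners of $s\in S$.
\item In $B$ every vertex has at most four blue neighbors, and $B$ has eight vertices. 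So the seven vertices $b_s$ cannot be independent in $G$: a vertex among them would have at least six blue neighbors, namely the other six. Hence some $b_sb_t$ is red.
\item Take a red $a_s$--$a_t$ path in $A$ on $10$ vertices. Closing it through $s,b_s,b_t,t$ gives a cycle with $10+4=14$ vertices, a red $C_{14}$.
\end{itemize}
This excludes $(12,8)$.

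The only place this could fail is if the Williamson step needs a path order that is not available, but order $10$ lies in the admissible range because the distance between $a_s$ and $a_t$ is small. The remaining orders are therefore $(11,9)$ and $(10,10)$, which proves the statement.
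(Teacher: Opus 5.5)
Your proof is correct and matches the paper's argument step for step. The steps are the three-component wheel, the Bondy/core-lemma bound $|A|\le 12$, and the exclusion of $(12,8)$ via $7$-connectivity, K\"onig matchings, a red edge among the seven matched vertices of the eight-vertex side, and a ten-vertex Williamson path in the twelve-vertex side. Your remark on the path order can be made precise: two vertices of a $12$-vertex graph with minimum degree $7$ are adjacent or have a common neighbor, so they are at distance at most $2$, and every path order from $3$ to $12$ is available.
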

\begin{proof}
Let $|S|=7$. If every component of
$G-S$ has at least five vertices, three components give a
blue wheel. With two components of orders $a\ge b\ge5$,
we have $a+b=20$ and internal red minimum degree at least
$a-5$ in the larger component. If $a\ge14$, Bondy's theorem
gives a red $C_{14}$; if $a=13$, the component is
Hamilton-connected on $n-1$ vertices, contradicting
Lemma~\ref{lem:core}. Thus the candidate pairs are
$(12,8)$, $(11,9)$, and $(10,10)$.

For $(12,8)$, $7$-connectivity and K\"onig's theorem
give red matchings of size seven from $S$ into each
component, both saturating $S$. The $12$-vertex component
has red minimum degree at least seven, so Williamson's
theorem~\cite{Williamson} makes it panconnected. The
seven matched vertices of the $8$-vertex component cannot
be independent in $G$ because every component vertex has at
most four blue neighbors inside it. Choose two matched
vertices joined by a red edge. A red path on ten vertices
between their corresponding matched vertices in the larger
component, followed by two vertices of $S$ and the red edge,
forms a red $C_{14}$. This excludes $(12,8)$.

\end{proof}

\begin{proposition}
\label{prop:next-cut-sizes14}
Every seven-vertex cut leaves exactly two components, with orders
$(17,3)$, $(16,4)$, $(11,9)$, or $(10,10)$.
\end{proposition}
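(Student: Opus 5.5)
Given Proposition~\ref{prop:next-cut-frontier14}, what remains is the case where some component of $G-S$ has at most four vertices; we must show that then $G-S$ has exactly two components, of orders $(17,3)$ or $(16,4)$. The setting is $n=14$, with $|S|=7$, $\delta(G)=9$, $|G-S|=20$, and $\kappa(G)\ge7$ by Proposition~\ref{prop:global-seven-connected14}.

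\textbf{Sizes of small components.} A component $D$ of $G-S$ has all red neighbours of its vertices in $D\cup S$. Hence $|D|\ge 9-7+1=3$. So a small component has order $3$ or $4$, and the remaining components together have $16$ or $17$ vertices.

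\textbf{At most two components.} Suppose there are at least three, and let $D$ be a component of order $3$ or $4$ with $v\in D$. The blue neighbourhood of $v$ contains all the other components. Their union has $16$ or $17$ vertices and each has order at least three. We split the other components into two unions of at least five vertices each, whenever possible. This gives a blue $K_{5,5}$, hence a blue $C_{10}$ with hub $v$.

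Such a split fails only when there are exactly two other components and one of them, say $D'$, has order $3$ or $4$. Then the third component $A$ has order $13$ or $12$, or at least $12$ in every case.
\begin{itemize}
\item If $|D'|=4$, the argument of Proposition~\ref{prop:six-cut14} applies: $A$ has at most one blue edge, since two blue edges plus the four vertices of $D'$ would close a blue $C_{10}$ in the blue neighbourhood of $v$. So $G[A]$ is nearly complete. If $|A|=13$ it is a Hamilton-connected core, contradicting Lemma~\ref{lem:core}. If $|A|\ge14$ it contains a red $C_{14}$.
\item If $|D'|=3$, mirror the $|B|=3$ case of Proposition~\ref{prop:local-disconnected}. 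The blue graph $F$ on $A$ has no two disjoint $P_3$'s, otherwise these together with one more vertex of $A$ and $D'$ give a blue $C_{10}$ at hub $v$. So at most one vertex of $A$ has blue degree at least five. Deleting it leaves a $13$-vertex set with red minimum degree at least $8$, which is a Hamilton-connected core; this again contradicts Lemma~\ref{lem:core}.
\end{itemize}
If instead $|A|=12$ (the orders are $4,4,12$), $A$ is still nearly red-complete. Using the two small red cliques and the cut, we route a red $C_{14}$ as follows: take a $10$-vertex path in $A$ between two vertices matched into $S$, and close it through $S$ and a small component.

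\textbf{Two components.} With exactly two components, the orders are $(17,3)$ or $(16,4)$, since the small one has order $3$ or $4$.

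The main obstacle is the three-component case with two small components. One must check that each subcase yields either a blue $C_{10}$ in some blue neighbourhood or a Hamilton-connected $13$-vertex core. The $(4,4,12)$ split needs an explicit red cycle built through $S$, using $7$-connectivity and König's theorem to obtain matchings saturating $S$.
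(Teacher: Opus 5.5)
Your argument is correct. It organizes the case of three or more components differently from the paper.

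\textbf{The paper's route.} The paper splits on whether some component has order three. If one does, a vertex $v$ in it has red neighbourhood exactly $(D-v)\cup S$. So $v$ has minimum degree, and its blue neighbourhood is red-disconnected into pieces of order at least three. This contradicts Proposition~\ref{prop:local-disconnected} directly. If every component has order at least four, two of them have order four. Lemma~\ref{lem:dense-eight} then builds a red $C_{14}$ inside $S\cup U$ without examining the remaining components at all.

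\textbf{Your route.} You do the blue-$K_{5,5}$ split yourself and reduce to the triples $(3,3,14)$, $(3,4,13)$ and $(4,4,12)$. For the first two you reprove, inline, the $|B|=3$ and $|B|=4$ cases of Proposition~\ref{prop:local-disconnected}. For $(4,4,12)$ you use that $A$ is a red $K_{12}$ minus at most one edge, and close a ten-vertex path in $A$ through $S$ and a small component. This works: take a red edge $d_1d_2$ in $D$, then distinct $s,t$ among their at least six red neighbours in $S$, then distinct partners $a_s,a_t$ from the size-seven K\"onig matching. The resulting red cycle is $a_s\cdots a_t\,t\,d_2\,d_1\,s$, of order $10+4=14$.

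\textbf{Comparison.} Your version is self-contained and its final routing is explicit. The paper's is three lines because both tools are proved once and reused; Lemma~\ref{lem:dense-eight} reappears for $n=15$. Lemma~\ref{lem:dense-eight} also covers configurations such as $(4,4,4,8)$ without a separate split.

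\textbf{Two slips to fix.}
\begin{itemize}
\item You have $|A|=20-|D|-|D'|\in\{12,13,14\}$, not ``$13$ or $12$''. Your second bullet deletes one vertex to obtain a $13$-vertex core, so it needs $|A|=14$. For the triple $\{3,4,13\}$ with $v$ placed in the order-four component, that bullet leaves only twelve vertices and gives nothing. Choose $D$ of minimum order. Then $|D'|=3$ forces $(3,3,14)$, and $\{3,4,13\}$ always falls under your first bullet, where $G[A]$ is $K_{13}$ minus at most one edge.
\item An order-four component need not be a red clique; a red $C_4$ that is red-complete to $S$ is allowed. Your routing only needs a red edge inside it. Such an edge exists because each of its vertices has at least two red neighbours there.
\end{itemize}
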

\begin{proof}
Let $S$ be a cut of order seven. Since $\delta(G)=9$, every
component of $G-S$ has at least three vertices. Suppose there
are at least three components. If a component $D$ has order
three, every $v\in D$ has exactly nine red neighbors,
namely $(D-v)\cup S$. Its blue neighborhood is the union
of the other components, each of order at least three. This
contradicts Proposition~\ref{prop:local-disconnected}.

Thus every component has at least four vertices. There cannot
be two components of order at least five, since a hub in a
third and five vertices from each of those two would form a
blue wheel. Hence two components have order four. Let $U$
be their union. Each vertex of $U$ has at least six red
neighbors in $S$ and internal red degree at least two in
its own component. In particular, $G[U]$ contains a red $P_3$.
Lemma~\ref{lem:dense-eight}, with $s=7$, gives a red $C_{14}$.
This contradiction proves that $G-S$ has two components.
Their orders sum to twenty. The claimed list follows from
Proposition~\ref{prop:next-cut-frontier14} and the lower
bound of three on each component order.
\end{proof}

\begin{proposition}
\label{prop:seven-cut14-balanced}
Under the $n=14$ counterexample assumptions, every
seven-vertex cut leaves exactly two components of order ten.
\end{proposition}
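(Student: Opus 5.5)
By Proposition~\ref{prop:next-cut-sizes14}, a seven-vertex cut $S$ leaves exactly two components, of orders $(17,3)$, $(16,4)$, $(11,9)$ or $(10,10)$. So it suffices to exclude the first three pairs. Throughout, $G$ is $7$-connected by Proposition~\ref{prop:global-seven-connected14}. By K\"onig's theorem, as in the proof of Proposition~\ref{prop:six-cut14-remaining}, there are red matchings of size seven from $S$ into each component, each saturating $S$.

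\emph{The pair $(11,9)$.} Let $|A|=11$ and $|B|=9$. The blue-wheel observation gives $\delta(G[A])\ge6$ and $\Delta(\bG[B])\le4$. The seven matched vertices of $B$ cannot be independent, so two of them, $b_s,b_t$, are joined by a red edge. It then suffices to find a red path in $A$ on ten vertices joining $a_s$ and $a_t$; closing it through $s,b_s,b_t,t$ gives a red $C_{14}$. Williamson's theorem needs minimum degree $13/2$ on eleven vertices, which we do not have. Instead I would use flexibility in the choice of pair. Among seven matched vertices of $B$ with blue degree at most four inside $B$, there are many red pairs. I would also use panconnectedness-type arguments for $\delta\ge6$ on $11$ vertices via Lemma~\ref{lem:hc}: delete one suitable vertex of $A$ to obtain ten vertices with minimum degree at least $5$. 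This yields Hamilton-connectedness when the removed vertex does not lower degrees too much, or else an Ore-type exception, handled as in Lemma~\ref{lem:two-dirac-components}. Alternatively, one can shorten the $B$-side: use red paths through $B$ on three or four vertices, which $\delta(G[B])\ge4$ and Dirac-type arguments supply, so that paths on nine or eight vertices in $A$ suffice. The sum of the two path lengths must be twelve.

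\emph{The pairs $(17,3)$ and $(16,4)$.} For the small component $B$, each vertex has red degree at least nine with at most $|B|-1+7$ available neighbours. So for $|B|=3$, $B$ is a red triangle complete to $S$. For $|B|=4$, each vertex of $B$ misses at most one vertex of $S$. This gives Lemma~\ref{lem:four-side-routing}-type routing: any two vertices of $S$ can be joined through $B$ by red paths with $1$–$3$ internal vertices (for $|B|=3$) or $2$–$4$ (for $|B|=4$, with $s=7$ and degree at least $9=s+2$). Then I mimic Propositions~\ref{prop:six-cut17-cycle} and~\ref{prop:global-seven-connected14}. The blue graph on $A$ has no $C_{10}$, since any vertex of $B$ is a blue hub over $A$. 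For $|A|=17$ and $|A|=16$, $R(C_{12},C_{10})=16$ gives a red $C_{12}$ in $A$. Two matched $A$-endpoints joined in $A$ by a red path on $k$ vertices with $12-k$ in the routable range are forbidden. This forces the matched endpoints on the cycle to be few and to lie in restricted relative positions. It then forces large sets $T\subseteq A\setminus V(C)$ and $F\subseteq V(C)$ with all $T$–$F$ and $S$–$F$ edges blue. Using $B$ as hubs, we get a blue $K_{5,5}$ in a blue neighbourhood, or else all $T$–$S$ edges are red. In the latter case a red $K_{7,\cdot}$ plus a red $P_3$ or an edge in $B$ gives $C_{14}$, as in Lemma~\ref{lem:dense-eight} with $s=7$.

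The main obstacle is the $(11,9)$ case. Panconnectedness is not available at minimum degree six on eleven vertices, so path lengths must be balanced between $A$ and $B$, and the exceptional near-bipartite structures must be excluded by hand. For the $(16,4)$ case, the cycle-position analysis gets weaker when only $2$–$4$ internal vertices are routable. I expect to need both arcs of the $C_{12}$ and the five outside vertices together with the seventh matching edge.
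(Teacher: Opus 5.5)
\textbf{Gap 1: the $(11,9)$ case is not proved.} You list three possible routes but carry none through. The first is a ten-vertex $a_s$--$a_t$ path after deleting a vertex of $A$; that leaves a ten-vertex graph of minimum degree five, where Lemma~\ref{lem:hc} no longer applies, and you leave the Ore-type exceptions unhandled. The other two balance paths on nine or eight vertices in $A$ against three- or four-vertex paths in $B$, and nothing you cite gives prescribed-endpoint paths of those orders. The missing observation is that no shortening in $A$ is needed. Since $\delta(G[A])\ge6=(|A|+1)/2$, Lemma~\ref{lem:hc} makes $G[A]$ itself Hamilton-connected, so the right splits are $11+1$ and $3+9$, not $10+2$.
\begin{itemize}
\item If some $b\in B$ has two red neighbours $s,t\in S$, a spanning red $a_s$--$a_t$ path in $A$ closes through $s,b,t$ to a red $C_{14}$.
\item Otherwise each vertex of $B$ has at most one red neighbour in $S$. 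Red degree at least nine then forces exactly one such neighbour and all eight inside $B$, so $G[B]=K_9$. Take a size-seven $B$--$S$ matching and a common red neighbour of $a_s,a_t$ in $A$ (it exists because $|A|=11$ and $\delta(G[A])\ge6$). With a spanning $b_s$--$b_t$ path in $K_9$, this gives a red cycle on $3+9+2=14$ vertices.
\end{itemize}

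\textbf{Gap 2: the $(17,3)$ case does not follow from your template.} Only the subcase with exactly two matching endpoints on the $C_{12}$ works as you describe (a blue $K_{5,6}$ in $A$ with a hub in $B$). With $|B|=3$, the set $B\cup\{s\}$ has only four vertices. So a single blue $T$--$S$ edge no longer yields a blue $K_{5,5}$ in the blue neighbourhood of $t$, and red-completeness of $T$ to $S$ does not follow. Also, once at least three endpoints lie on the cycle, the permitted distances $1,5,6$ leave $\{c_0,c_1,c_6\}$, its reflection, or $\{c_0,c_1,c_6,c_7\}$. Then only four, respectively three, outside vertices are matched, so $T\cup B$ has seven or six vertices, and Lemma~\ref{lem:dense-eight} needs eight. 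The paper closes this subcase by a degree count at $c_4$:
\begin{itemize}
\item $c_4$ is blue to all of $S$, $T$ and $B$;
\item the chords $c_4c_1,c_4c_2,c_4c_6,c_4c_7,c_4c_8$ are blue, since each would shortcut the spanning $c_1$--$c_0$ arc to a forbidden order;
\item hence $d_G(c_4)\le 6+2<9$.
\end{itemize}

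\textbf{The $(16,4)$ case is a valid alternative.} Your sketch goes through: the endpoints are exactly $c_0,c_1,c_2$, and all four outside vertices are matched. The set $F=\{c_4,\ldots,c_{10}\}$ is blue to $T\cup S$, and $B\cup\{s\}$ gives the blue $K_{5,5}$. Lemma~\ref{lem:dense-eight} then applies to the eight vertices of $T\cup B$. The paper instead uses the same $c_4$ count, giving $d_G(c_4)\le7$. A few fixes are needed:
\begin{itemize}
\item Lemma~\ref{lem:four-side-routing} requires every vertex of $S$ to have a red neighbour in $B$; derive this from $7$-connectivity.
\item Size-seven matchings into $B$ exist only when $|B|\ge7$, not for components of order three or four as your opening paragraph claims.
\item When $|A|=16$, there are four outside vertices, not five.
\end{itemize}
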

\begin{proof}
By Proposition~\ref{prop:next-cut-sizes14}, the candidate
component orders are $(17,3)$, $(16,4)$, $(11,9)$, and
$(10,10)$. Write the components $A,B$, with $A$ the larger.
The red $A$--$S$ bipartite graph always has a matching of
size seven, by $7$-connectivity and K\"onig's theorem.

Suppose first that $(|A|,|B|)=(11,9)$. The blue-wheel
observation gives $\delta(G[A])\ge6$, so $G[A]$ is
Hamilton-connected. If some $b\in B$ had two distinct
red neighbors $s,t\in S$, use the distinct $A$-endpoints
of their matching edges and a spanning red path in $A$.
Together with $s,b,t$, this gives a red $C_{14}$.
Thus each vertex of $B$ has at most one red neighbor in
$S$. Its red degree is at least nine, so it must have
exactly one such neighbor and all eight possible red
neighbors in $B$. Hence $G[B]=K_9$.
There is also a size-seven red $B$--$S$ matching. Choose
two distinct vertices $s,t\in S$ and their matched endpoints
$a_s,a_t\in A$ and $b_s,b_t\in B$. The vertices $a_s,a_t$
have a common red neighbor in $A$, since $|A|=11$ and
$\delta(G[A])\ge6$. A red path on three vertices between
$a_s,a_t$, a spanning red path in $B$ between $b_s,b_t$,
and the four matching edges give a red cycle on
$3+9+2=14$ vertices, a contradiction.

Next suppose $(|A|,|B|)=(16,4)$. Every vertex of $S$ has
a red neighbor in $B$, since deleting the other six
would otherwise disconnect $G$. Apply
Lemma~\ref{lem:four-side-routing}, with $|S|=7$ and
$\delta(G)=9$. It follows that no two red $A$--$S$
edges with distinct endpoints can have their $A$-endpoints
joined by a red path on $8,9$, or $10$ vertices.
The blue graph on $A$ has no $C_{10}$, so
$R(C_{12},C_{10})=16$ gives a red
$C=c_0c_1\cdots c_{11}c_0$ in $A$.
At least three matching endpoints lie on $C$. Their
pairwise cyclic distances cannot be $3,4$, or $5$,
because the longer arcs have $10,9$, or $8$ vertices.
The permitted distances are $1,2$, and $6$. An antipodal
pair permits no third endpoint; otherwise there are at
most three endpoints, and three are consecutive. Hence
exactly three lie on $C$, say $c_0,c_1,c_2$, and all four
vertices of $T=A\setminus V(C)$ are matched.

The vertex $c_4$ is blue-adjacent to all of $S$ and $T$.
A red edge to $S$ would combine with the longer arc to
$c_0$ or $c_1$, choosing a different matching partner,
to violate the forbidden path orders. A red edge from
$t\in T$ to $c_4$ would extend the longer $c_4$--$c_0$
arc to a forbidden path on ten vertices. Edges from
$c_4$ to $B$ are blue as well. The four chords
$c_4c_1,c_4c_7,c_4c_8,c_4c_9$ must also be blue:
otherwise they shortcut the spanning $c_1$--$c_0$ arc
to paths on $10,10,9,8$ vertices, respectively. Thus
$d_G(c_4)\le11-4=7<9$, a contradiction.

Finally suppose $(|A|,|B|)=(17,3)$. Here $B$ is a red
$K_3$ complete to $S$, since a vertex of $B$ has exactly
$2+7=9$ possible red neighbors. Using paths with one,
two, or three internal vertices in $B$ forbids red paths on
$11,10$, or $9$ vertices between distinct red attachment
endpoints in $A$. Again $A$ has a red $C_{12}$, and at
least two matching endpoints lie on it. If exactly two
lie on the cycle, all five outside vertices are matched.
Choose one cycle endpoint as $c_0$. Each matched outside
vertex is blue-adjacent to
$\{c_3,c_4,c_5,c_7,c_8,c_9\}$, since a red edge there
would give a forbidden path to $c_0$. This blue
$K_{5,6}$ supplies a wheel with a hub in $B$.

There are therefore at least three cycle endpoints.
Permitted cyclic distances are $1,5$, and $6$.
Three such endpoints must include an adjacent pair: without
an adjacent pair, after fixing one endpoint as $c_0$, the
other two would lie in $\{c_5,c_6,c_7\}$, which is impossible
because their distance would be one or two. Relabel an
adjacent pair as $c_0,c_1$. An additional endpoint must then
be $c_6$ or $c_7$, so there are three or four endpoints.
Let $T$ be the matched vertices outside the cycle;
at most two other outside vertices are unmatched.
The same path restriction makes every $c_4$--$S$ and
$c_4$--$T$ edge blue, and $c_4$ has no red neighbor
in $B$. Moreover the five chords
$c_4c_2,c_4c_6,c_4c_1,c_4c_7,c_4c_8$ are blue,
since they shortcut the spanning $c_1$--$c_0$ arc to
paths on $11,11,10,10,9$ vertices, respectively.
Consequently $d_G(c_4)\le11-5+2=8<9$, the final
contradiction. Only the pair $(10,10)$ remains.
\end{proof}

\begin{proposition}
\label{prop:global-eight-connected14}
Under the counterexample assumptions for $n=14$, the red
graph $G$ is at least $8$-connected.
\end{proposition}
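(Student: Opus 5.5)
The plan is to show that no seven-vertex cut exists. Proposition~\ref{prop:global-seven-connected14} already gives $\kappa(G)\ge7$, so this yields $\kappa(G)\ge8$. Suppose $S$ is a cut with $|S|=7$. By Proposition~\ref{prop:seven-cut14-balanced}, $G-S$ has exactly two components $A,B$, and $|A|=|B|=10$. I will produce a red $C_{14}$ directly using Lemma~\ref{lem:two-dirac-components} with $h=5$; this case is exactly what that lemma is set up for.

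\emph{Step 1: degree bounds inside the components.} All $A$--$B$ edges are blue, and each side has $10\ge5$ vertices. Lemma~\ref{lem:blue-sides} therefore gives $\delta(G[A])\ge10-5=5$ and $\delta(G[B])\ge5$. These are precisely the hypotheses $|A|=|B|=2h$ and $\delta\ge h$ of Lemma~\ref{lem:two-dirac-components}.

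\emph{Step 2: matchings through the cut.} Suppose $\nu(G[A,S])\le6$. By K\"onig's theorem, $G[A,S]$ has a vertex cover $K$ with $|K|\le6$. Then $K$ separates $A\setminus K$ from $B$, and $A\setminus K$ is nonempty because $|A|=10>6$. This is a cut of order at most six, contradicting $\kappa(G)\ge7$. Hence $G[A,S]$ has a red matching of size seven, which saturates $S$. In the same way $G[B,S]$ has a red matching of size seven saturating $S$. For each $s\in S$, let $a_s\in A$ and $b_s\in B$ be its partners. The $a_s$ are pairwise distinct, and so are the $b_s$.

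\emph{Step 3: apply the lemma.} Choose six vertices $s_1,\dots,s_6\in S$, which is $h+1=6$. Their partners $a_{s_i}\in A$ and $b_{s_i}\in B$ are distinct, and every $a_{s_i}s_i$ and $s_ib_{s_i}$ is a red edge. All hypotheses of Lemma~\ref{lem:two-dirac-components} hold with $h=5\ge3$, so $G$ contains a red $C_{2h+4}=C_{14}$. This contradicts the counterexample assumption, so no seven-vertex cut exists and $\kappa(G)\ge8$.

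I do not expect a serious obstacle here; the substantive work was done in Lemma~\ref{lem:two-dirac-components}. The one point to check is Step~2: the K\"onig cover must leave a nonempty part of $A$ on one side, which holds since $|A|=10$ exceeds any cover of order at most six. If the lemma's hypotheses had fallen short, my fallback would be to use the Shih et al.\ exceptional structures $H_2\vee 2K_4$ and $H_5\vee\overline K_5$ directly. With the seventh matched pair as a spare, those structures are also excluded.
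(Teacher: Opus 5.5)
Your proposal is correct and follows the paper's own proof step for step. You use Proposition~\ref{prop:seven-cut14-balanced} to reduce to two components of order ten, the blue-wheel bound to get $\delta\ge5$ on each side, and $7$-connectivity with K\"onig's theorem to get red matchings saturating $S$ on both sides; six matched triples then feed Lemma~\ref{lem:two-dirac-components} with $h=5$ to produce a red $C_{14}$.
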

\begin{proof}
We already have $\kappa(G)\ge7$. If a seven-vertex cut
$S$ existed, Proposition~\ref{prop:seven-cut14-balanced}
would give exactly two components $A,B$ of order ten.
All cross-edges are blue, so the blue-wheel observation
gives $\delta(G[A]),\delta(G[B])\ge5$. By
$7$-connectivity and K\"onig's theorem, red matchings
from $S$ to $A$ and from $S$ to $B$ both saturate $S$.
Choose any six vertices of $S$ and their matched endpoints.
Lemma~\ref{lem:two-dirac-components}, with $h=5$, gives
a red $C_{14}$, a contradiction. Thus no seven-vertex
cut exists.
\end{proof}

\subsection{Nine-connectivity at \texorpdfstring{$n=15$}{n=15}}
Throughout this subsection, $G$ is a counterexample at $n=15$.
Thus $|G|=29$ and $\delta(G)=10$ by Corollary~\ref{cor:small}.

\subsubsection{Excluding cuts of order at most seven}
\begin{proposition}
\label{prop:global-six-connected15}
Under the counterexample assumptions for $n=15$, the red graph
$G$ is at least $7$-connected.
\end{proposition}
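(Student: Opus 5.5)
We mirror the proof of Proposition~\ref{prop:global-six-connected}, now with $|G|=29$, $\delta(G)=10$ and target cycle $C_{15}$. Suppose a set $S$ of $s\le6$ vertices disconnects $G$. Each component of $G-S$ has order at least $11-s\ge5$.

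\textbf{Two components.} Three components would give a blue $W_{11}$: take a hub in one component and alternate five vertices from each of two others. So there are exactly two components $A,B$. Since the other side always has at least five vertices, Lemma~\ref{lem:blue-sides} gives $\delta(G[A])\ge|A|-5$, and similarly for $B$.

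\textbf{Component orders at most thirteen.} A component of order $a\ge15$ has minimum degree $a-5>a/2$. Bondy's theorem then gives a red $C_{15}$, and the balanced bipartite exception is impossible at this degree. A component of order $14$ has minimum degree at least $9\ge15/2$, so it is Hamilton-connected by Lemma~\ref{lem:hc}, contradicting Lemma~\ref{lem:core} with $(n,k)=(15,5)$. Hence both components have order at most $13$. Since $|A|+|B|=29-s$, this forces $s\ge3$, and the possible orders are exactly those listed in Table~\ref{tab:w11-cut-stages}: $(13,13)$, $(13,12)$, $(13,11)$, $(12,12)$, $(13,10)$ and $(12,11)$.

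\textbf{Assigning cut vertices.} No $z\in S$ has five blue neighbors in each of $A$ and $B$, since those ten vertices would form a blue rim with hub $z$. So we assign each $z$ to a component of order $a$ in which it has at least $a-4$ red neighbors. For each order pair we pick a component $D$ and a set $Z$ of assigned vertices with $|D|+|Z|=15$, then show $G[D\cup Z]$ is Hamiltonian.

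\textbf{The Hamiltonicity cases.} Vertices of $D$ keep red degree at least $10-(s-|Z|)$ in $G[D\cup Z]$, because only the $s-|Z|$ omitted cut vertices are lost. Added vertices have degree at least $|D|-4$.
\begin{itemize}
\item If two vertices are assigned to a $13$-component, the added vertices have degree at least $9$ and the component vertices at least $8$. Dirac's theorem applies at order $15$, with threshold $7.5$.
\item For $(12,12)$ and $(12,11)$, use three vertices assigned to a $12$-component, or four assigned to an $11$-component.
\item For $(13,13)$, two vertices must share a component by pigeonhole.
\item For $(13,12)$, either two vertices go to the $13$-component or at least three go to the $12$-component. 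In the latter case the component vertices keep degree at least $9$ and the added vertices have degree at least $8$, so Dirac applies.
\item For $(13,11)$, either two go to the $13$-component or four go to the $11$-component. In the latter case the added vertices have degree at least $7$ and the component vertices at least $9$, so Dirac applies or Chv\'atal's condition is immediate.
\item For $(13,10)$, if at most one vertex is assigned to the $13$-component, at least five go to the $10$-component. The added vertices then have degree at least $6$, and Chv\'atal's condition $d_i>i$ for $i<7.5$ needs checking: five degrees are at least $6$, and the others at least $10-1=9$. The condition fails at $i=6$ only if six degrees are at most $6$, but just five vertices can have degree this low, so it holds.
\end{itemize}

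\textbf{Main obstacle.} The $(13,10)$ case is the delicate one. If Chv\'atal's condition fails there, I would instead use the fact that each added vertex has at most one red neighbor missing relative to the cut structure. Alternatively, I would note that the $13$-component with one added vertex has $14$ vertices and minimum degree at least $8$, so it is Hamilton-connected, and then close a $C_{15}$ through a cut vertex having two red neighbors in it.
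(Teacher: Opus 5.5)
\textbf{Gap in the $(12,11)$ case.} Your overall route is the paper's: exactly two components via the three-component wheel, orders at most $13$ via Bondy and Lemma~\ref{lem:core}, then assigning cut vertices and applying Dirac or Chv\'atal to a $15$-vertex red subgraph. The gap is the $(12,11)$ case with $s=6$. You dispose of it in one line, and there the degree bounds you set up do not suffice.

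Suppose three cut vertices are assigned to the $12$-vertex component $D$. In $G[D\cup Z]$ the added vertices have degree at least $8$. Your bound for the component vertices is only $10-(6-3)=7$, and the internal bound $|D|-5$ also gives only $7$. Up to twelve vertices may then have degree $7$. Dirac fails, since it needs $8$ at order $15$, and so does Chv\'atal, since $d_7>7$ is not guaranteed.

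The missing step is to combine the two degree sources. Each selected cut vertex has at least $12-4=8$ red neighbours in $D$. So at least eight vertices of $D$ have internal degree $\ge7$ plus a neighbour in $Z$, hence degree $\ge8$. At most four vertices then have degree $7$, and $d_i>i$ for $1\le i\le 7$ follows. This is exactly the count the paper uses.

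The other subcase, four vertices assigned to the $11$-vertex component, also has minimum degree only $7$, so Dirac does not apply there either. Chv\'atal does: the four added vertices have degree $\ge7$, and the eleven component vertices have degree $\ge10-2=8$.

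\textbf{Misplaced obstacle.} Your ``main obstacle'' paragraph puts the difficulty in the wrong place. The $(13,10)$ case is already settled by the Chv\'atal count you wrote down: five degrees $\ge6$ and the remaining ten $\ge9$ give $d_i>i$ for $i\le7$. The hedged alternatives there are unnecessary. The case that genuinely needs an extra idea is the $(12,11)$ configuration above, which is the one you glossed over.
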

\begin{proof}
Suppose that $S$ is a vertex cut of order $s\leq5$.
Every component of $G-S$ has order at least $11-s\geq6$,
because $\delta(G)=10$. Three components would give a blue
$W_{11}$ by using a hub in one and five rim vertices in each
of two others. Thus there are exactly two components $A,B$.
By Lemma~\ref{lem:blue-sides}, every component of order $a$
has minimum red degree at least $a-5$.
If $a\geq15$, Bondy's theorem gives a red $C_{15}$ unless
the component is $K_{a/2,a/2}$. In the latter case, $a\geq16$
and a vertex has at least seven blue neighbors in its own
bipartition class; five of these and five vertices of the
other component make a blue $W_{11}$. If $a=14$, then
$\delta(G[A])\geq9$, so $G[A]$ is a Hamilton-connected
subgraph of order $n-1$, contrary to Lemma~\ref{lem:core}.
Consequently both components have order at most $13$,
and $s\geq3$.

Each $z\in S$ must have at most four blue neighbors in at
least one component; assign it to one such component. If a
component has order $a$, an assigned vertex has at least
$a-4$ red neighbors there.

For $s=3$, the component orders are $13,13$. At least two
vertices of $S$ are assigned to one component. Its internal
minimum red degree is at least eight, while each of the two
vertices of $S$ has at least nine red neighbors there. Dirac's
theorem gives a red $C_{15}$ on these $15$ vertices.

For $s=4$, the orders are $13,12$. Two vertices assigned to
the $13$-vertex component give the same contradiction.
Otherwise three are assigned to the $12$-vertex component.
They each have at least eight red neighbors in it, while
each of its vertices retains red degree at least $10-1=9$
after the fourth vertex of $S$ is omitted. Dirac's theorem gives $C_{15}$.

For $s=5$, the orders are either $12,12$ or $13,11$.
In the first case, three vertices of $S$ are assigned to one
component. They have at least eight red neighbors there,
and each component vertex retains degree at least $10-2=8$;
Dirac's theorem again gives $C_{15}$. In the second case, two vertices of $S$ assigned to the $13$-vertex component give a
contradiction by Dirac's theorem. Otherwise four are assigned to the
$11$-vertex component $B$. On the $15$ vertices formed by
$B$ and these four vertices, the latter have red degree at
least seven, while every vertex of $B$ has red degree at
least $10-1=9$. In the ordered degree sequence
$d_1\leq\cdots\leq d_{15}$, we have $d_i>i$ for each
$i<15/2$: the first four entries are at least seven, and
the next three are at least nine. Chv\'atal's
 degree-sequence theorem~\cite{Chvatal} therefore gives a
 Hamiltonian red cycle, necessarily a $C_{15}$.

It remains to exclude a cut $S$ of order six. Every component of
$G-S$ has at least $10-6+1=5$ vertices. Thus three components
would immediately give a blue wheel, and there are exactly two.
The observation above gives internal minimum red degree at least
$a-5$ in a component of order $a$. A component of order at least
$15$ has a red $C_{15}$ by Bondy's theorem: its balanced
complete-bipartite exception is impossible because $a-5>a/2$
for $a\ge15$. A component of order $14$ is Hamilton-connected,
since its minimum degree is at least nine, and contradicts
Lemma~\ref{lem:core}. The component orders sum to $29-6=23$,
so the remaining pairs are $(13,10)$ and $(12,11)$.

Assign each vertex of $S$ to a component in which it has at most
four blue neighbors. In the $(13,10)$ case, two vertices assigned
to the $13$-vertex component give a $15$-vertex red graph of
minimum degree at least eight, hence a $C_{15}$ by Dirac's theorem.
Otherwise at least five vertices are assigned to the $10$-vertex
component. Together they induce a graph of order $15$ in which
the five assigned vertices have red degree at least six, while
all ten component vertices have red degree at least $10-1=9$.
Consequently $d_i>i$ for $1\le i\le7$, so Chv\'atal's criterion
gives a $C_{15}$.

In the $(12,11)$ case, suppose first that at least three vertices of $S$ are assigned to the $12$-vertex component. Choose three
of them. They and that component induce a $15$-vertex graph in
which each selected vertex of $S$ has at least eight red neighbors.
Every component vertex has internal red degree at least seven;
at least eight component vertices have an additional red neighbor
among the three selected vertices of $S$. Therefore at most four
vertices have degree as low as seven, and all the others have
degree at least eight. Again $d_i>i$ for $1\le i\le7$.
If fewer than three vertices of $S$ are assigned to that component,
at least four are assigned to the $11$-vertex component. Four of
these and that component induce a $15$-vertex graph. The four
selected vertices each have at least seven red neighbors in the
component, and each component vertex retains at least
$10-2=8$ red neighbors after the other two vertices of $S$ are
omitted. Here too $d_i>i$ for $1\le i\le7$. Chv\'atal's theorem
contradicts the absence of a red $C_{15}$ in both subcases.
\end{proof}

\begin{proposition}
\label{prop:seven-cut15}
Under the counterexample assumptions for $n=15$, if a set $S$
of seven vertices disconnects $G$, then $G-S$ has exactly two
components, of orders $(18,4)$, $(13,9)$, or $(11,11)$.
\end{proposition}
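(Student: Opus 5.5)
We follow the pattern of Propositions~\ref{prop:six-cut14} and \ref{prop:next-cut-frontier14}, now with $|G|=29$, $\delta(G)=10$, and $\kappa(G)\ge7$ from Proposition~\ref{prop:global-six-connected15}. Let $S$ be a seven-vertex cut. Every component of $G-S$ has at least $10-7+1=4$ vertices, and the component orders sum to $22$.

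\textbf{Step 1: exactly two components.} Suppose there are at least three components. If three of them have order at least five, a hub in one and five vertices from each of two others give a blue wheel. So some component $D$ has order four. Fix $v\in D$. Then $v$ has exactly $3+7=10$ possible red neighbors, so its red neighborhood is $(D-v)\cup S$. Hence $v$ has minimum red degree, and its blue neighborhood is the union of the other components, which total $18$ vertices. Since there are at least two other components, each of order at least four, this blue neighborhood is red-disconnected with no component containing all but one vertex. This contradicts Proposition~\ref{prop:local-disconnected}. (Alternatively, in the subcase $(14,4)$ with $D$ of order four, Lemma~\ref{lem:independent-five-separated}-type reasoning applies: the $14$-vertex component would be red $K_{14}$ minus at most one edge, which is Hamilton-connected and excluded by Lemma~\ref{lem:core}. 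Other splits of the $18$ vertices into two unions of at least five give a blue $K_{5,5}$ around $v$.) Thus $G-S$ has exactly two components $A,B$ with $|A|\ge|B|\ge4$ and $|A|+|B|=22$.

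\textbf{Step 2: the size filter.} If $|B|=4$, then $|A|=18$, which is on the list. Assume $|B|\ge5$, so Lemma~\ref{lem:blue-sides} gives $\delta(G[A])\ge|A|-5$ and $\delta(G[B])\ge|B|-5$.
\begin{itemize}
\item If $|A|\ge15$, Bondy's theorem gives a red $C_{15}$, since $|A|-5>|A|/2$ rules out the $K_{a/2,a/2}$ exception.
\item If $|A|=14$, then $\delta\ge9$, so $G[A]$ is Hamilton-connected by Lemma~\ref{lem:hc}, contradicting Lemma~\ref{lem:core}.
\end{itemize}
The remaining pairs are $(13,9)$, $(12,10)$ and $(11,11)$, so we must exclude $(12,10)$.

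\textbf{Step 3: excluding $(12,10)$, the main step.} Here $\delta(G[A])\ge7$ and $\delta(G[B])\ge5$. By $7$-connectivity and K\"onig's theorem (as in Proposition~\ref{prop:six-cut14-remaining}), there are red matchings saturating $S$ into both $A$ and $B$, with endpoints $a_s$ and $b_s$.

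The first attempt is to apply Lemma~\ref{lem:two-dirac-components} directly, but the orders $12\ne10$ do not fit it. Instead, a red $C_{15}$ can be built from:
\begin{itemize}
\item a path in $A$ on $k$ vertices from $a_s$ to $a_t$,
\item the two vertices $s,t$, and
\item a path in $B$ on $13-k$ vertices from $b_s$ to $b_t$.
\end{itemize}
Since $7=(12+2)/2$, Williamson's theorem makes $G[A]$ panconnected, so $A$ supplies paths of every order from $3$ to $12$ between any two vertices. It therefore suffices to find $s\ne t$ and a red $b_s$--$b_t$ path in $B$ on between $1$ and $10$ vertices; any path with at least two vertices works.

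The graph $G[B]$ has ten vertices and minimum degree at least five, so it is connected: two nonadjacent vertices have at least $5+5>8$ neighbors among the remaining eight vertices, hence a common neighbor. Thus $b_s$ and $b_t$ are joined by a red path on $p\le10$ vertices. Take $k=13-p\ge3$ in $A$. The total order is $k+2+p=15$, a red $C_{15}$, a contradiction.

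The hardest point is ensuring that the $A$-path order $13-p$ lies in $[3,12]$; this holds because $2\le p\le10$, so $3\le13-p\le11$. Hence $(12,10)$ is excluded and the list is complete.
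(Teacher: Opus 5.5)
Your proof is correct, but it takes a different route from the paper's in two places.

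\textbf{Step 1.} The paper argues directly here. With a four-vertex component $D$ and $v\in D$, it splits the remaining $18$ vertices into two unions of at least five vertices, giving a blue $K_{5,5}$. The one exception is when the other components have orders exactly $(14,4)$. In that case it shows the $14$-vertex component is a red $K_{14}$ minus at most one edge, which contradicts Lemma~\ref{lem:core}. You instead observe that $v$ has minimum red degree and that its blue neighborhood splits into red components of order at most $14<17$. This contradicts Proposition~\ref{prop:local-disconnected} immediately. It is the same shortcut the paper itself uses for three-vertex components in Propositions~\ref{prop:next-cut-sizes14} and~\ref{prop:next-cut-sizes}, and that proposition's proof already contains the $K_{5,5}$ and near-complete-$K_{14}$ arguments.

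\textbf{Step 3.} The paper excludes $(12,10)$ without any connectivity input. It assigns each vertex of $S$ to a side where it has at most four blue neighbors. It then applies Chv\'atal's degree-sequence criterion to a $15$-vertex set: either three vertices of $S$ with the $12$-vertex side, or five vertices of $S$ with the $10$-vertex side. You instead use $\kappa(G)\ge7$ from Proposition~\ref{prop:global-six-connected15}, which is proved independently, so there is no circularity. With K\"onig's theorem this gives $S$-saturating matchings into both sides. You then apply Williamson's panconnectedness on the $12$-vertex side and use a short $b_s$--$b_t$ path on the $10$-vertex side. This is the template the paper uses for $(12,9)$ and $(12,8)$ at $n=14$, and for $(13,9)$ and $(11,11)$ in Proposition~\ref{prop:global-eight-connected15}. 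Your version is shorter and more uniform with the rest of the section. The paper's version is self-contained relative to the connectivity bound.

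A minor simplification: you do not need the common-neighbor argument to see that $G[B]$ is connected. $B$ is a component of $G-S$, so it is connected by definition. The degree bound only serves to keep the $B$-path short, on $2$ or $3$ vertices, so that $k\in\{10,11\}$. As you note, any $p\le10$ already works.
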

\begin{proof}
Every component of $G-S$ has order at least $10-7+1=4$.
Suppose there are at least three components.
Three components of order at least five would give a blue
wheel. If instead there is a component $D$ of order four,
fix $v\in D$. The other components have total order $18$
and individual orders at least four. Unless they are exactly
two components of orders $(14,4)$, they can be partitioned
into two unions of at least five vertices. This gives a blue
$K_{5,5}$ in the blue neighborhood of $v$, hence a blue
wheel. In the exceptional $(14,4)$ case, two distinct blue
edges in the $14$-vertex component $A$ could be joined
through the four-vertex component to make a blue $C_{10}$
in the blue neighborhood of $v$. Thus $G[A]$ is a red
$K_{14}$ with at most one edge deleted. It is
Hamilton-connected on $n-1=14$ vertices, contradicting
Lemma~\ref{lem:core}. Thus there are exactly two components.

The orders sum to $29-7=22$. If the smaller has order at least
five, the blue-wheel observation gives internal red minimum
degree at least $a-5$ in a component of order $a$. A component
of order at least $15$ contains a red $C_{15}$ by Bondy's
theorem, while one of order $14$ is Hamilton-connected and
contradicts Lemma~\ref{lem:core}. The remaining size pairs
are $(13,9)$, $(12,10)$, and $(11,11)$.

Suppose the pair is $(12,10)$. Assign each of the seven vertices of $S$ to a component in which it has at most four blue
neighbors. If at least three are assigned to the $12$-vertex
component, select three. In their induced graph of order $15$,
the selected vertices have red degree at least eight. The
component vertices have internal red degree at least seven,
and at least eight of them gain a neighbor among the selected vertices of $S$.
Thus at most four vertices have degree seven; all others have
degree at least eight. If fewer than three are assigned there,
at least five are assigned to the $10$-vertex component. Choose
five. Each has at least six red neighbors in that component,
whose ten vertices retain red degree at least $10-2=8$.
In either case the ordered degree sequence satisfies $d_i>i$
for $1\le i\le7$. Chv\'atal's theorem gives a red $C_{15}$,
excluding $(12,10)$. A smaller component of order four forces
the pair $(18,4)$.
\end{proof}

\begin{proposition}
\label{prop:seven-cut18}
Under the $n=15$ counterexample assumptions, no seven-vertex
cut can leave red components of orders $(18,4)$.
\end{proposition}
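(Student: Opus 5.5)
Let $S$ be a seven-vertex cut with components $A,B$, $|A|=18$, $|B|=4$. The plan mirrors the $(16,4)$ case of Proposition~\ref{prop:seven-cut14-balanced}, with the one-larger parameters at $n=15$.

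\textbf{Setup.} First, every vertex of $S$ has a red neighbor in $B$, since otherwise the remaining six vertices of $S$ would form a six-vertex cut, contrary to Proposition~\ref{prop:global-six-connected15}. Each $b\in B$ has red degree at least $10=s+3$, so Lemma~\ref{lem:four-side-routing} applies with $s=7$. It gives, for any distinct $x,y\in S$, red $x$--$y$ paths with $2,3,4$ internal vertices in $B$. Second, $7$-connectivity and K\"onig's theorem give a red $A$--$S$ matching $M$ of size seven. If $as,a't$ are two edges of $M$ (or any red $A$--$S$ edges with distinct ends in both parts), then no red $a$--$a'$ path in $A$ has $15-2-q\in\{9,10,11\}$ vertices, where $q\in\{2,3,4\}$. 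Third, every $b\in B$ is blue-complete to $A$, so $\bG[A]$ has no $C_{10}$. Since $|A|=18$ and $R(C_{14},C_{10})=18$, $A$ contains a red $C_{14}$, say $C=c_0c_1\cdots c_{13}c_0$. Only four vertices of $A$ lie off $C$, so at least three endpoints of $M$ lie on $C$.

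\textbf{Distance analysis.} Two endpoints on $C$ at cyclic distance $d$ are joined by an arc on $15-d$ vertices. Arcs on $9,10,11$ vertices are forbidden, so $d\in\{4,5,6\}$ is excluded. The permitted distances are therefore $1,2,3,7$. With these restrictions, I would enumerate the possible configurations of at least three endpoints on $C$. An antipodal pair $c_0,c_7$ allows a third point only at distance $\le3$ from both, which is impossible. So the endpoints lie within a window of length at most $3$, giving at most four consecutive-ish endpoints, for example $\{c_0,\dots,c_3\}$ or subsets of it. Then the off-cycle matched vertices $T$ satisfy $|T|\ge3$.

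\textbf{Contradiction via a low-degree vertex.} Pick a vertex $c_j$ roughly opposite the window, such as $c_7$ or $c_8$. I then show that $c_j$ is blue to:
\begin{itemize}
\item all of $B$, since $B$ is blue-complete to $A$;
\item all of $S$, since a red edge $sc_j$ combined with the longer arc to an endpoint matched to a different vertex of $S$ gives a forbidden path;
\item all of $T$, for the same reason;
\item most chords of $C$ at $c_j$, since these shortcut a spanning arc between two adjacent endpoints into paths on $9$--$11$ vertices.
\end{itemize}
The red degree of $c_j$ is then at most (cycle neighbors plus permitted chords plus unmatched off-cycle vertices), which should fall below $10$.

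If instead the counting leaves too much room, an alternative contradiction is available. When exactly three endpoints lie on $C$, all four outside vertices are matched, and they are blue to a large arc set $F$ of $C$. This produces a blue $K_{4,\ge6}$. Adding a blue-adjacent vertex of $S$, as in Proposition~\ref{prop:global-seven-connected14}, gives a blue $K_{5,5}$ with a hub in $B$.

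\textbf{Main obstacle.} The hardest step will be the case bookkeeping. The window configurations (three or four endpoints, with spacings $1,2,3$) must each be checked so that some $c_j$ loses enough red neighbors to drop below degree $10$. The chord checks must also respect that the shortcut path's ends are matched to distinct vertices of $S$. I expect the four-endpoint window $c_0..c_3$ to be the tightest case, and there I would choose $c_j=c_8$ or $c_9$ and count carefully.
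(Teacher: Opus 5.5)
Your argument is correct, but it differs from the paper's at two points.

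\textbf{Where the routes differ.} First, you invoke Lemma~\ref{lem:four-side-routing}, so you forbid only $A$-paths on $9,10,11$ vertices. The paper instead observes that each $b\in B$ has red degree ten with only $3+7$ candidates, so $B$ is a red $K_4$ red-complete to $S$. This adds connectors with a single internal vertex and forbids $12$-vertex paths as well. Only distances $1,2,7$ remain, hence exactly three consecutive endpoints lie on $C$ and all four vertices off $C$ are matched. Second, the paper does not finish with a degree count. It shows that $T$--$F$ and $S$--$F$ are blue for $F=\{c_4,\dots,c_{12}\}$. It then deduces that $T$ is red-complete to $S$; otherwise $B\cup\{s\}$ and five vertices of $F$ give a blue $K_{5,5}$ in the blue neighborhood of $t$. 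Finally it closes a red $C_{15}$ in the red $K_{7,8}$ between $S$ and $T\cup B$.

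\textbf{Your degree count does close.} You left it unverified, but it works with room to spare and handles your longer configuration list uniformly. Normalize so that $c_0,c_1\in E\subseteq\{c_0,\dots,c_3\}$ and $E$ contains $c_2$ or $c_3$, and take $c_8$.
\begin{itemize}
\item It is blue to $B$ trivially.
\item It is blue to $T$, because $tc_8c_7\cdots c_0$ has ten vertices.
\item It is blue to $S$. The arcs $c_8c_7\cdots c_0$ and $c_8c_9\cdots c_0c_1c_2$ (or $\cdots c_3$) have $9$ and $9$ (or $10$) vertices, and one of their ends has a matching partner different from the given $s$.
\item The chords $c_8c_m$ are blue for $m=2,3,4$ and for $m=12,13,0$. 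They shortcut $c_1c_2\cdots c_{13}c_0$ to $c_1$--$c_0$ paths on $9,10,11$ and $11,10,9$ vertices, respectively.
\end{itemize}
Thus $d_G(c_8)\le(13-6)+1=8<10$. The $+1$ covers the unmatched vertex off $C$ in the four-endpoint case, so that case is the easiest rather than the tightest.

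\textbf{What each route buys.} Your route avoids the final $K_{7,8}$ construction and does not need $B$ to be red-complete to $S$. The paper's route spends one line on that completeness and then works with a single rigid configuration.

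\textbf{Your fallback paragraph should be discarded.} A vertex of $B$ cannot be the hub of a rim through a vertex of $S$, because $B$ is red-complete to $S$. The correct version uses $t\in T$ as hub, and it proves only that $T$ is red-complete to $S$. A red $C_{15}$ must still be constructed after that.
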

\begin{proof}
Suppose to the contrary that a seven-vertex cut $S$ leaves
components $A,B$ of orders $18,4$, respectively.
Every vertex of $B$ has at most $3+7=10$ possible red
neighbors and has red degree ten, so $B$ is a red $K_4$
complete to $S$. If the red $A$--$S$ bipartite graph had
matching number at most six, K\"onig's theorem would give
a vertex cover of size at most six. Deleting that cover would
separate nonempty subsets of $A$ and $B$, contradicting
Proposition~\ref{prop:global-six-connected15}. Thus a
size-seven matching exists. By Lemma~\ref{prop:local15},
applied to any vertex of $B$, choose a red cycle
$C=c_0c_1\cdots c_{13}c_0$ in $A$. Put $T=A\setminus V(C)$;
thus $|T|=4$.

Let $as$ and $a's'$ be any red $A$--$S$ edges with
$a\ne a'$ and $s\ne s'$. A red path between $a,a'$ on $k$
vertices, with $9\le k\le12$, would make a red $C_{15}$:
join its endpoints to $s,s'$ and connect $s$ to $s'$ through
$q=13-k\in\{1,2,3,4\}$ distinct vertices of the red clique
$B$. Hence no such path exists.

Of the seven matching endpoints in $A$, at least three lie
on this cycle because only four
vertices of $A$ lie outside it. Two cycle endpoints at cyclic
distance $3,4,5$, or $6$ would have a cycle arc on
$12,11,10$, or $9$ vertices, respectively, contradicting the
previous paragraph. Thus distinct cycle endpoints can have
cyclic distance only $1,2$, or $7$. An antipodal pair at
distance seven permits no third endpoint with this property;
otherwise at most three endpoints can occur, and three must
be consecutive. Thus exactly three matching endpoints lie on
the cycle, and all four outside vertices are matched.

Relabel the three consecutive endpoints $c_0,c_1,c_2$,
and put $F=\{c_4,\ldots,c_{12}\}$.
For $t\in T$, its matching partner in $S$ differs from those
of $c_0,c_1,c_2$. If $tc_j$ were red for some
$j\in\{4,\ldots,12\}$, an arc from $c_j$ to one of
$c_0,c_1,c_2$, preceded by $t$, would be a red path on
$9,10,11$, or $12$ vertices. This is impossible by the
path restriction. Hence all $T$--$F$ edges are blue.

In fact, every $S$--$F$ edge is blue. If $sf$ were red for
$s\in S$ and $f\in F$, at least two of the three matched
cycle vertices $c_0,c_1,c_2$ lie at cyclic distance
$3,4,5$, or $6$ from $f$. One of their matching partners
is different from $s$. The appropriate arc of the red
$C_{14}$ would then give a forbidden red path on
$9,10,11$, or $12$ vertices between $f$ and that cycle
vertex. This contradiction proves the claim.

Now each $t\in T$ is blue-adjacent to all four vertices of
$B$ and all nine of $F$. If $t$ had a blue neighbor $s\in S$,
then $B\cup\{s\}$ and any five vertices of $F$ would form
a blue $K_{5,5}$ inside the blue neighborhood of $t$:
all $B$--$F$ edges are blue, as are all $S$--$F$ edges.
This would give a blue $W_{11}$. Hence every $T$--$S$
edge is red. We already know that every $B$--$S$ edge is
red, so the red graph on $S\cup T\cup B$ contains a
$K_{7,8}$ with parts $S$ and $T\cup B$, plus an edge
inside $B$. Alternating the seven vertices of $S$ with
the eight vertices of $T\cup B$, using that edge once,
produces a red $C_{15}$. This final contradiction rules
out $(18,4)$.
\end{proof}

\begin{proposition}
\label{prop:global-eight-connected15}
Under the counterexample assumptions for $n=15$, the red graph
$G$ is at least $8$-connected.
\end{proposition}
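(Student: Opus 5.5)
We already have $\kappa(G)\ge7$ from Proposition~\ref{prop:global-six-connected15}. So suppose a seven-vertex cut $S$ exists. By Propositions~\ref{prop:seven-cut15} and~\ref{prop:seven-cut18}, $G-S$ has exactly two components $A,B$, with $(|A|,|B|)\in\{(13,9),(11,11)\}$. In both cases the smaller side has at least five vertices, so Lemma~\ref{lem:blue-sides} gives $\delta(G[A])\ge|A|-5$ and $\delta(G[B])\ge|B|-5$. Moreover, $7$-connectivity and K\"onig's theorem give red matchings of size seven from $S$ into $A$ and into $B$, both saturating $S$. For $s\in S$, write $a_s\in A$ and $b_s\in B$ for the matched endpoints. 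The aim in each case is to build a red $C_{15}$ of the form
\[
 a_s \leadsto a_t \to t \to b_t \leadsto b_s \to s \to a_s,
\]
with prescribed path orders inside $A$ and $B$.

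\emph{The case $(13,9)$.} Here $\delta(G[A])\ge8=(13+2)/2$, so Williamson's theorem makes $G[A]$ panconnected. Hence any two distinct vertices of $A$ are joined by red paths of every order from $3$ up to $13$ (distance at most two by the degree bound), including order $11$. In $B$ we have $\Delta(\bG[B])\le4$, so the seven vertices $b_s$ are not independent in $G$. Choose $b_sb_t$ red. A red $a_s$--$a_t$ path on $11$ vertices, together with $s,t$ and the edge $b_sb_t$, gives $11+2+2=15$ vertices, a red $C_{15}$. This is the direct analogue of Proposition~\ref{prop:six-cut14-remaining}.

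\emph{The case $(11,11)$.} This is the main obstacle, because the required path orders split as $k+q=13$ with $k,q\le11$, for example $(11,2)$, $(10,3)$, $(9,4)$ or $(8,5)$, and neither side is panconnected ($\delta\ge6<13/2$). I would first try Lemma~\ref{lem:two-dirac-components}, but the sides have odd order 11, so I would instead adapt its argument. Since $\delta(G[A])\ge6=(11+1)/2$, Lemma~\ref{lem:hc} makes $G[A]$ Hamilton-connected, and likewise $G[B]$. Hence any two $a_s,a_t$ are joined by a spanning red path on $11$ vertices, and the cycle closes as soon as $b_sb_t$ is red. As before, $\Delta(\bG[B])\le4$ makes the seven $b_s$ non-independent, which again gives a red $C_{15}$ of order $11+2+2$.

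So both remaining pairs contradict the assumption, and $\kappa(G)\ge8$. I expect the only delicate points to be confirming that the distance bound in Williamson's theorem allows order $11$ in the $(13,9)$ case, which is immediate, and making sure the matchings are chosen with distinct endpoints so that the cycle is genuine.
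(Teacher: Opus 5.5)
Your proof is correct and follows the paper's argument: matchings from $S$ saturating it on both sides, a red edge $b_sb_t$ forced by $\Delta(\bG[B])\le4$, and an $11$-vertex red $a_s$--$a_t$ path in $A$ closing to $11+2+2=15$ vertices. The one difference is the $(13,9)$ case. You use Williamson's theorem there, while the paper handles both cases uniformly: it takes any $11$-vertex subset $A'\ni a_s,a_t$, notes $\delta(G[A'])\ge(|A|-5)-(|A|-11)=6$, and applies Lemma~\ref{lem:hc}. As you then saw yourself, the $(11,11)$ case was no real obstacle.
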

\begin{proof}
Proposition~\ref{prop:global-six-connected15} gives
$\kappa(G)\ge7$. Suppose that a seven-vertex cut $S$ exists.
Propositions~\ref{prop:seven-cut15} and \ref{prop:seven-cut18}
leave components $A,B$ of orders
$(13,9)$ or $(11,11)$, where $A$ is the larger. Because both
components have at least five vertices, the blue-wheel
observation gives $\delta(G[A])\ge|A|-5$ and
$\Delta(\bG[B])\le4$.

Both $G[A,S]$ and $G[B,S]$ have matchings of size seven.
Indeed, if either bipartite graph had matching number at most six,
K\"onig's theorem would give a vertex cover of size at most six.
Deleting that cover would separate nonempty subsets of $A$ and
$B$, contrary to $7$-connectivity. Choose one such matching in
each graph; both saturate $S$. For each
$s\in S$, denote its matched endpoints by $a_s\in A$ and
$b_s\in B$.

The seven vertices $b_s$ cannot be independent in $G$: each
vertex of $B$ has at most four blue neighbors in $B$, so
$\alpha(G[B])\le5$. Choose distinct $s,t\in S$ with
$b_sb_t$ red. Select any $11$-vertex subset $A'$ of $A$
containing $a_s,a_t$. Because at most $|A|-11$ vertices
have been deleted,
\[
 \delta(G[A'])\ge (|A|-5)-(|A|-11)=6
              =\frac{|A'|+1}{2}.
\]
The Hamilton-connectedness degree criterion gives a red
Hamilton path through all $11$ vertices of $A'$ from
$a_s$ to $a_t$. Together with the red edges
$a_ss$, $sb_s$, $b_sb_t$, $b_tt$, and $ta_t$, it forms a
red cycle on $11+4=15$ vertices, a contradiction.
\end{proof}

\subsubsection{Excluding eight-vertex cuts}

\begin{proposition}\label{prop:next-cut-frontier}
Every eight-vertex red cut leaves a component of order at most four.
\end{proposition}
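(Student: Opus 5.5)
The plan is to argue by contradiction: assume an eight-vertex cut $S$ such that every component of $G-S$ has at least five vertices. I would then reduce to two components of bounded orders and close a red $C_{15}$ through $S$, as in the proof of Proposition~\ref{prop:global-eight-connected15}.

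First, there are exactly two components. If $G-S$ had three components, each of order at least five, a hub in one and five vertices from each of two others would form a blue $W_{11}$. So there are exactly two components $A,B$ with $|A|\ge|B|\ge5$ and $|A|+|B|=29-8=21$. All $A$--$B$ edges are blue and both sides have at least five vertices. Lemma~\ref{lem:blue-sides} therefore gives $\delta(G[A])\ge|A|-5$ and $\Delta(\bG[B])\le4$.

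Second, the orders are bounded. If $|A|\ge15$, then $|A|-5>|A|/2$, so Bondy's theorem gives a red $C_{15}$; the $K_{a/2,a/2}$ exception cannot have minimum degree $a-5$. If $|A|=14$, then $\delta(G[A])\ge9=\lceil 15/2\rceil$. Lemma~\ref{lem:hc} makes $G[A]$ a Hamilton-connected core of order $n-1$, contradicting Lemma~\ref{lem:core}. The remaining pairs are $(13,8)$, $(12,9)$ and $(11,10)$.

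Third, I would exclude these three pairs uniformly. By Proposition~\ref{prop:global-eight-connected15}, $\kappa(G)\ge8$. If $\nu(G[A,S])\le7$ or $\nu(G[B,S])\le7$, K\"onig's theorem gives a vertex cover of size at most seven whose deletion separates nonempty parts of $A$ and $B$, which is impossible. So both bipartite graphs have matchings of size eight saturating $S$; write $a_s\in A$ and $b_s\in B$ for the partners of $s\in S$. The eight vertices $b_s$ are not independent in $G$, since $\alpha(G[B])\le5$ (a vertex of $B$ has at most four blue neighbours in $B$). Choose $s\ne t$ with $b_sb_t$ red. Take any $11$-vertex set $A'\subseteq A$ containing $a_s,a_t$. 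Then
\[
\delta(G[A'])\ge(|A|-5)-(|A|-11)=6=\tfrac{|A'|+1}{2},
\]
so Lemma~\ref{lem:hc} gives a red spanning $a_s$--$a_t$ path in $A'$. Adding the red edges $a_ss$, $sb_s$, $b_sb_t$, $b_tt$, $ta_t$ produces a red cycle on $11+4=15$ vertices, a contradiction. Hence some component has at most four vertices.

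The only delicate step is the order bound $|A|\le13$, in particular the Bondy exception for $|A|\ge15$ and the core case $|A|=14$. After that the construction is a direct repetition of the seven-cut argument, with eight matched pairs in place of seven.
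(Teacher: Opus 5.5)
Your proof is correct and follows the paper's argument essentially step for step. You reduce to two components, bound the larger one by Bondy's theorem and the core lemma, and then use König-saturating matchings, a red edge among the matched vertices of the smaller side, and a Hamilton path on an eleven-vertex subset of the larger side. (One harmless slip: $\lceil 15/2\rceil=8$, not $9$; the bound $\delta(G[A])\ge 9$ still exceeds the required $15/2$.)
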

\begin{proof}
Let $S$ be an eight-vertex cut and suppose all components have
order at least five. Three components would give a blue wheel,
so there are exactly two, of orders $a\ge b\ge5$, with $a+b=21$.
By Lemma~\ref{lem:blue-sides}, the larger component has minimum
red degree at least $a-5$. If $a\ge15$, Bondy's theorem gives
$C_{15}$, while $a=14$ contradicts the core lemma.
The remaining pairs are $(13,8),(12,9),(11,10)$.
By eight-connectivity and K\"onig's theorem, both red bipartite
graphs from $S$ into the two components have matchings saturating
$S$: a vertex cover of at most seven vertices would leave nonempty
parts of both components disconnected. Two matched vertices of
the smaller component are red-adjacent, since its internal blue
maximum degree is at most four. Choose eleven vertices of the
larger component containing the two corresponding matched endpoints.
Their induced red minimum degree is at least
$(a-5)-(a-11)=6$, so the induced graph is Hamilton-connected.
A spanning red path, the two vertices of $S$, and the red edge in the
smaller component give $C_{15}$, a contradiction.
\end{proof}

\begin{proposition}\label{prop:next-cut-sizes}
Every eight-vertex red cut leaves exactly two components, of
orders $(18,3)$ or $(17,4)$.
\end{proposition}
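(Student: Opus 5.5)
Let $S$ be an eight-vertex cut in a counterexample at $n=15$. Since $\delta(G)=10$, every component of $G-S$ has order at least $10-8+1=3$. By Proposition~\ref{prop:next-cut-frontier}, some component has order at most four. The plan is to show that $G-S$ has exactly two components; the orders then sum to $29-8=21$, and a small component of order three or four forces the pairs $(18,3)$ and $(17,4)$.

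\emph{A component of order three.} Suppose there are at least three components and one of them, $D$, has order three. Each $v\in D$ has at most $2+8=10$ possible red neighbors, so its red neighborhood is exactly $(D-v)\cup S$. Its blue neighborhood is then the union of the other components, each of order at least three, and $v$ has minimum red degree. This configuration is impossible by Proposition~\ref{prop:local-disconnected}, as in the proof of Proposition~\ref{prop:next-cut-sizes14}.

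\emph{All components of order at least four.} There cannot be two components of order at least five: a hub in a third component, with five rim vertices taken alternately from each of those two, gives a blue $W_{11}$. So at least two components $D_1,D_2$ have order exactly four. Let $U=D_1\cup D_2$, so $|U|=8$. Each vertex of $D_i$ has red degree at least ten, at most three red neighbors inside $D_i$, and none elsewhere outside $S$. Hence it has at least seven $=s-1$ red neighbors in $S$ and at least two red neighbors inside $D_i$. In particular $G[U]$ contains a red edge. Lemma~\ref{lem:dense-eight} with $s=8$ then gives a red $C_{15}$, a contradiction.

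Therefore $G-S$ has exactly two components, and one of them has order at most four. The step that needs the most care is the three-component case with a component of order three. The reduction to Proposition~\ref{prop:local-disconnected} requires checking that such a $v$ really has minimum red degree (it does, with degree ten) and that the remaining components of $G-S$ are genuinely separate red components of its blue neighborhood. If the reduction failed, the fallback would be to use the red $C_{14}$ from Lemma~\ref{prop:local15} inside the blue neighborhood: that cycle would have to lie within a single component, of order at least fourteen, leaving at most one vertex for the other components. This contradicts there being at least two further components of order at least three.
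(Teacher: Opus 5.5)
Your proof is correct and follows the paper's argument step for step: Proposition~\ref{prop:local-disconnected} excludes a three-vertex component when there are at least three components, two components of order at least five give a blue wheel, two four-vertex components are excluded by Lemma~\ref{lem:dense-eight} with $s=8$, and Proposition~\ref{prop:next-cut-frontier} then leaves only $(18,3)$ and $(17,4)$. The fallback you sketch is not needed, and it would not work as stated: the component containing the red $C_{14}$ leaves up to $18-14=4$ vertices, not one, so this count alone does not exclude a further component of order three or four — that is exactly the case Proposition~\ref{prop:local-disconnected} treats in detail.
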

\begin{proof}
Let $|S|=8$. Since $\delta(G)=10$, every component of $G-S$
has at least three vertices. Suppose there are at least three
components. If one component $D$ has three vertices, every
$v\in D$ has exactly ten red neighbors, namely $(D-v)\cup S$.
Its blue neighborhood consists of the other components, each of
order at least three. This contradicts
Proposition~\ref{prop:local-disconnected}.
Thus all components have order at least four. Two components of
order at least five, with a hub in a third component, give a blue
wheel. Therefore there are two components of order four.
Let $U$ be their union. Each vertex of $U$ has at least seven
red neighbors in $S$ and at least two red neighbors in its own
component. Lemma~\ref{lem:dense-eight} gives $C_{15}$.
This contradiction proves that there are exactly two components.
Their orders sum to 21, and Proposition~\ref{prop:next-cut-frontier}
requires the smaller order to be at most four. The claimed two
possibilities follow.
\end{proof}

\begin{proposition}
\label{prop:eight-cut17}
Under the $n=15$ counterexample assumptions, an eight-vertex
cut cannot leave components of orders $(17,4)$.
Consequently every eight-vertex cut, if one exists, leaves
components of orders $(18,3)$.
\end{proposition}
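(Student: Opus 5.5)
We model the argument on Proposition~\ref{prop:six-cut14-remaining}'s companion, the $(16,4)$ case of Proposition~\ref{prop:seven-cut14-balanced}. Suppose an eight-vertex cut $S$ leaves components $A,B$ with $|A|=17$ and $|B|=4$.

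\textbf{Setup and path restriction.} Every vertex of $B$ has red degree at least ten, and it has at most $3+8=11$ possible red neighbours, all in $B\cup S$. By $8$-connectivity (Proposition~\ref{prop:global-eight-connected15}), every vertex of $S$ has a red neighbour in $B$. Otherwise the other seven vertices of $S$ would separate $B$ from $A$. Lemma~\ref{lem:four-side-routing}, applied with $s=8$ and degree $10=s+2$, then gives the following: between any two distinct $x,y\in S$ there are red paths with $q=2,3,4$ internal vertices, all in $B$. For $q=1$ we instead need a common red neighbour in $B$. Since each vertex of $B$ misses at most one vertex of $S$, two vertices of $S$ have a common red neighbour in $B$ unless they are the unique blue neighbours of the corresponding vertices, which we will not rely on. By K\"onig's theorem and $8$-connectivity, $G[A,S]$ has a matching of size eight. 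We conclude that no two red $A$--$S$ edges with distinct endpoints can have their $A$-endpoints joined by a red path on $k=9,10,11$ vertices, since $q=13-k\in\{4,3,2\}$. The blue graph on $A$ lies in the blue neighbourhood of a vertex of $B$, so it has no $C_{10}$. As $|A|=17\ge R(C_{14},C_{10})=18$ fails, we take a red $C_{12}$ from $R(C_{12},C_{10})=16$. Better, we first try to obtain a $C_{14}$: any vertex of $B$ has blue degree at least $29-1-11=17$, and if it has degree exactly ten, Lemma~\ref{prop:local15} does not apply directly. So we work with a red $C_{12}$ $C=c_0\cdots c_{11}c_0$ in $A$, which leaves five outside vertices.

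\textbf{Endpoints on the cycle.} At least three of the eight matching endpoints lie on $C$. Pairwise cyclic distances $3,4,5$ produce arcs on $10,9,8$ vertices. Only $10$ and $9$ are forbidden, so the distances that are excluded are $3$ and $4$. Distance $5$ gives arcs on $8$ and $6$ vertices, which are harmless, so distances $1,2,5,6$ are permitted. We enumerate the configurations of at least three points on $\mathbb{Z}_{12}$ avoiding pairwise distances $3,4$. For each one we locate a cycle vertex $c_j$, analogous to $c_4$ in the $(16,4)$ argument, whose red edges to $S$, to $T=A\setminus V(C)$, and to several chords all create forbidden $9$-, $10$- or $11$-vertex paths to a matched endpoint. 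The aim in each case is $d_G(c_j)<10$. Alternatively, as in Proposition~\ref{prop:seven-cut18}, we aim to show that the $T$--$F$ and $S$--$F$ edges are blue for a large arc $F$. Then the blue $K_{5,5}$ inside the blue neighbourhood of some $t\in T$ forces all $T$--$S$ edges to be red, and a red $C_{15}$ can be built alternating $S$ with $T\cup B$.

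\textbf{Main obstacle.} The cycle analysis is weaker here than at seven-cuts, because the forbidden path orders form only $\{9,10,11\}$, plus $12$ if the $q=1$ routing is available. The hardest step is therefore the case enumeration of endpoint patterns. We would first try to recover the forbidden order $12$. If $s,s'$ have a common red neighbour in $B$, then $k=12$ is also forbidden, and the pattern becomes exactly that of the $(17,3)$ case of Proposition~\ref{prop:seven-cut14-balanced}, namely permitted distances $1,5,6$, which gives the degree contradiction at $c_4$. When $q=1$ fails for a specific pair, that pair consists of the unique blue neighbours of two vertices of $B$. In that situation we would exploit the rigidity: $B$ is then nearly a red $K_4$ almost complete to $S$. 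We would either choose a different matching pair, since only few pairs are exceptional, or use the $K_{8,\cdot}$ structure to build the $C_{15}$ directly.
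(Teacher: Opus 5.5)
Your setup matches the paper's. Eight-connectivity gives every vertex of $S$ a red neighbour in $B$. Lemma~\ref{lem:four-side-routing} then gives routings through $q\in\{2,3,4\}$ vertices of $B$. There is a red $A$--$S$ matching of size eight, $A$ contains a red $C_{12}$ with five vertices off it, and red paths on $9,10,11$ vertices between attachment endpoints are forbidden.

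\textbf{The gap.} It lies in the step that must produce the contradiction. On $C_{12}$, two endpoints at cyclic distance $d$ are joined by a longer arc on $13-d$ vertices. So distance $2$ gives an $11$-vertex arc, which your own $q=2$ routing already forbids. The permitted distances are $1,5,6$, not $1,2,5,6$. The correspondence $3,4,5\mapsto 10,9,8$ you wrote belongs to the $(16,4)$ case at $n=14$, where the forbidden orders were $8,9,10$.

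Because of this slip, you treat the endpoint enumeration as the main obstacle. You try to recover order $12$ from a $q=1$ routing and leave the exceptional pairs to an unspecified ``rigidity'' argument. That detour is unnecessary: order $12$ would exclude distance $1$, not $2$, and the $(17,3)$ case you cite uses only the orders $9,10,11$. More importantly, the proposal never reaches a contradiction, since neither of your two suggested endings is carried out for any configuration. You also do not derive the second sentence of the statement, which follows at once from Proposition~\ref{prop:next-cut-sizes}.

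\textbf{How to close it.} With the corrected distance set, any three cycle endpoints include an adjacent pair. If $c_0$ is an endpoint and no two are adjacent, the others lie in $\{c_5,c_6,c_7\}$, where any two vertices are at distance $1$ or $2$. Label an adjacent pair $c_0,c_1$. Every further endpoint is then $c_6$ or $c_7$, so three or four endpoints lie on $C$, and at least four of the five off-cycle vertices are matched.

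The paper splits into two cases here.
\begin{itemize}
\item For endpoints $c_0,c_1,c_6$, the five matched outside vertices are blue to $\{c_2,c_3,c_4,c_8,c_9,c_{10}\}$. This blue $K_{5,6}$ contains a blue $C_{10}$ with any vertex of $B$ as hub.
\item For endpoints $c_0,c_1,c_6,c_7$, all $T$--$F$ and $S$--$F$ edges are blue. The blue-$K_{5,5}$ argument then makes $T$ red-complete to $S$, and Lemma~\ref{lem:dense-eight} on $B\cup T$ gives a red $C_{15}$.
\end{itemize}

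Your first suggested ending also works, without the split, using $c_4$:
\begin{itemize}
\item $c_4$ is blue to $S$: its longer arcs to $c_0$ and $c_1$ have $9$ and $10$ vertices, and one of these two endpoints has a partner different from the given vertex of $S$.
\item $c_4$ has no red neighbours in $B$, since $A$ and $B$ are different components of $G-S$.
\item $c_4$ is blue to every matched outside vertex, since a red edge would give a $10$-vertex path to $c_0$.
\item The chords $c_4c_2,c_4c_6,c_4c_1,c_4c_7,c_4c_8$ are blue, because they shortcut $c_1c_2\cdots c_{11}c_0$ to paths on $11,11,10,10,9$ vertices.
\end{itemize}
Hence $d_G(c_4)\le 6+1<10$. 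Either route needs the corrected distance set and an explicit case analysis, which the proposal does not supply.
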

\begin{proof}
Suppose that $S$ is such a cut, with components $A,B$ of
orders $17,4$. Every vertex of $B$ has at least two red
neighbors in $B$ and at least seven in $S$. Thus $G[B]$
is $C_4$, $K_4$ minus one edge, or $K_4$. A vertex of
red degree two in $B$ is red-complete to $S$. Every
$s\in S$ has a red neighbor in $B$, since otherwise
$S\setminus\{s\}$ would be a cut, contrary to
Proposition~\ref{prop:global-eight-connected15}.

Lemma~\ref{lem:four-side-routing} now gives, for distinct
$s,s'\in S$ and each $q\in\{2,3,4\}$, a red
$s$--$s'$ path whose internal vertices are exactly $q$
distinct vertices of $B$.

It follows that two red $A$--$S$ edges with distinct
endpoints cannot have their $A$-endpoints joined by a
red path on $k\in\{9,10,11\}$ vertices: a red path between
the corresponding vertices of $S$ with $q=13-k\in\{4,3,2\}$
internal vertices in $B$ would complete a red $C_{15}$. There is a red matching of size eight
between $A$ and $S$, by $8$-connectivity and K\"onig's
theorem. Also $A$ contains a red $C_{12}$, since its
blue graph has no $C_{10}$ and $R(C_{12},C_{10})=16$.
Write this red cycle as $c_0c_1\cdots c_{11}c_0$.

At least three of the eight matched endpoints lie on
the cycle. Two such endpoints cannot have cyclic
distance $2,3$, or $4$, since the longer cycle arc then
has $11,10$, or $9$ vertices. Thus their permitted
distances are $1,5$, and $6$. Any set of at least three
cycle endpoints contains an adjacent pair: otherwise,
after one endpoint is labeled $c_0$, the others lie
among $c_5,c_6,c_7$, where two permitted endpoints must
be adjacent. Label an adjacent pair $c_0,c_1$. Every
other endpoint must then be $c_6$ or $c_7$. Hence there
are exactly three or four cycle endpoints.

If there are three, reflection and relabeling make them
$c_0,c_1,c_6$. All five vertices of $A\setminus V(C)$
are matched. Each is blue-adjacent to all of
$F=\{c_2,c_3,c_4,c_8,c_9,c_{10}\}$: a red edge to
$F$, followed by an arc to a matched cycle endpoint
at distance $3,4$, or $5$, would give a forbidden path
on $11,10$, or $9$ vertices. The resulting blue
$K_{5,6}$ contains a blue $C_{10}$ inside $A$, with a
vertex of $B$ as hub, a contradiction.

If there are four, they are $c_0,c_1,c_6,c_7$. Let
$T$ be the four matched vertices outside the cycle,
and put $F=V(C)\setminus\{c_0,c_1,c_6,c_7\}$.
Each vertex of $F$ is at distance $3,4$, or $5$ from
at least one matched cycle endpoint, and at distance
$2,3$, or $4$ from at least two of them. The forbidden
path orders therefore imply that all $T$--$F$ and
$S$--$F$ edges are blue; in the second assertion, use
a cycle endpoint whose matching partner differs from
the vertex of $S$ under consideration.

If $t\in T$ had a blue neighbor $s\in S$, then
$B\cup\{s\}$ and any five vertices of $F$ would give
a blue $K_{5,5}$ in the blue neighborhood of $t$.
Thus $T$ is red-complete to $S$. Put $U=B\cup T$.
Every vertex of $U$ has at least seven red neighbors
in $S$, and $G[U]$ contains a red edge in $B$.
Lemma~\ref{lem:dense-eight} supplies a red $C_{15}$,
the final contradiction. The remaining cut-size claim
follows from Proposition~\ref{prop:next-cut-sizes}.
\end{proof}

\begin{proposition}
\label{prop:global-nine-connected15}
Under the counterexample assumptions for $n=15$, the red
graph $G$ is at least $9$-connected.
\end{proposition}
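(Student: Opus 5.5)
The plan is to exclude the single remaining configuration, an eight-vertex cut $S$ whose deletion leaves components $A,B$ with $|A|=18$ and $|B|=3$. By Proposition~\ref{prop:eight-cut17} this is the only possibility, and I would follow the proof of the $(17,3)$ case in Proposition~\ref{prop:seven-cut14-balanced} together with that of Proposition~\ref{prop:seven-cut18}.

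\textbf{Forbidden path lengths.} Every vertex of $B$ has red degree ten, but it has only $2+8=10$ possible red neighbors. Hence $B$ is a red $K_3$ that is red-complete to $S$. Consequently, any two distinct $s,s'\in S$ are joined by red paths whose internal vertices are exactly $q$ vertices of $B$, for each $q\in\{1,2,3\}$. Now take two red $A$--$S$ edges $as$ and $a's'$ with $a\ne a'$ and $s\ne s'$. A red $a$--$a'$ path on $k=13-q\in\{10,11,12\}$ vertices inside $A$ would close a red $C_{15}$, so no such path exists. Proposition~\ref{prop:global-eight-connected15} and K\"onig's theorem give a red $A$--$S$ matching of size eight, exactly as in the earlier cut arguments. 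Lemma~\ref{prop:local15}, applied to a vertex of $B$ (whose blue neighborhood is exactly $A$), gives a red cycle $C=c_0\cdots c_{13}c_0$ in $A$. Put $T=A\setminus V(C)$, so $|T|=4$.

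\textbf{Positions of the endpoints on $C$.} Since only four vertices of $A$ lie off $C$, at least four matching endpoints lie on $C$. Two endpoints at cyclic distance $d$ are joined by an arc on $15-d$ vertices. Therefore the distances $3,4,5$ are forbidden, and only $1,2,6,7$ are permitted. Next I would classify, up to rotation and reflection, the sets of at least four cycle vertices with pairwise distances in $\{1,2,6,7\}$. These sets lie in two clusters of at most three consecutive vertices, roughly antipodal. Examples are $\{c_0,c_1,c_7,c_8\}$, $\{c_0,c_1,c_2,c_8\}$, and $\{c_0,c_1,c_7\}$ plus one more. I would check by hand how many endpoints can occur and which vertices of $T$ are then matched. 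If only four endpoints lie on $C$, all of $T$ is matched.

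\textbf{Forcing blue edges and reaching a contradiction.} In each configuration, let $F$ be the set of cycle vertices at distance $3,4,5$ from some endpoint. For $f\in F$, I also need an endpoint at such a distance whose matching partner avoids a prescribed $s$. The path restriction then makes all $T$--$F$ and $S$--$F$ edges blue, as in Propositions~\ref{prop:seven-cut18} and~\ref{prop:eight-cut17}. Then one of two arguments finishes the configuration.
\begin{itemize}
\item If some $t\in T$ has a blue neighbor $s\in S$, then $B\cup\{s\}$ and five vertices of $F$ form a blue $K_{5,5}$ in the blue neighborhood of $t$, giving a blue $W_{11}$. Otherwise $T$ is red-complete to $S$. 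Then the red graph on $S\cup T\cup B$ contains $K_{8,7}$ together with red edges inside $B$, and alternating around it, using one $B$-edge, gives a red $C_{15}$.
\item Alternatively, take a cycle vertex in the middle of the gap between the clusters, such as $c_4$ or $c_{11}$. Its edges to $S$, $T$ and $B$ are blue, and several of its chords are blue because they would shortcut an endpoint-to-endpoint arc into a forbidden path. Counting these gives red degree below ten, which contradicts $\delta(G)=10$.
\end{itemize}

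\textbf{Main obstacle.} I expect the hardest step to be the case analysis of endpoint placements on $C_{14}$. This is especially delicate when five or six endpoints lie on $C$, because then some vertices of $T$ may be unmatched. In those cases I would try the red-degree count at a gap vertex first. I would keep the blue-$K_{5,5}$ argument, which needs at least one matched $t\in T$ and $|F|\ge5$, for the configurations with exactly four endpoints on the cycle.
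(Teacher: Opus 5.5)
Your set-up matches the paper's: the red $K_3$ on $B$ complete to $S$, the forbidden path orders $10,11,12$, the size-eight matching, the red $C_{14}$ in $A$, and the permitted distances $1,2,6,7$. The endpoint classification you defer is short and settles the case split. The endpoints must contain an adjacent pair. Otherwise, placing one at $c_0$, the rest lie in $\{c_2,c_6,c_7,c_8,c_{12}\}$, where no three are pairwise compatible. Relabel that pair $c_0,c_1$. Every further endpoint then lies in $\{c_2,c_7,c_8,c_{13}\}$, whose only compatible pairs are $\{c_2,c_8\},\{c_7,c_8\},\{c_7,c_{13}\}$. Hence exactly four endpoints lie on $C$ and every vertex of $T$ is matched. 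The five- and six-endpoint cases you worry about do not occur.

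The genuine gap is in the step you reserve for exactly these configurations. The blue-$K_{5,5}$ argument making $T$ red-complete to $S$ can be made to work in both configurations. The claimed red $C_{15}$ on $S\cup T\cup B$ that follows, however, does not exist, because $|S|=8$ while $|T\cup B|=4+3=7$. No red edge inside $S$ is known, so on any cycle built from known red edges the vertices of $S$ are pairwise nonconsecutive. At least half the cycle then lies in $T\cup B$, so the cycle has at most $14$ vertices; using a $B$-edge only enlarges the $T\cup B$ side. The parity is the reverse of Proposition~\ref{prop:seven-cut18}, where $|S|=7$ and $|T\cup B|=8$.

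You can repair this by inserting the matched cycle edge $c_0c_1$ as a two-vertex block: $s_0c_0c_1s_1x_1s_2x_2\cdots s_6x_6s_0$. Here $s_0\ne s_1$ are the partners of $c_0,c_1$, the $s_i$ with $i\ge2$ are five further vertices of $S$, and the $x_i$ are six vertices of $T\cup B$.

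Alternatively, use your second alternative, which is the paper's proof, but with an explicit count, since the count is tight:
\begin{itemize}
\item a red $tc_4$ gives the $12$-vertex path $tc_4c_5\cdots c_{13}c_0$;
\item a red $sc_4$ gives an $11$- or $12$-vertex arc to whichever of $c_0,c_1$ has partner different from $s$;
\item edges from $c_4$ to $B$ are blue;
\item the chords $c_4c_1,c_4c_7,c_4c_8,c_4c_9$ would shortcut the spanning arc $c_1c_2\cdots c_{13}c_0$ to $12,12,11,10$ vertices.
\end{itemize}
Thus $d_G(c_4)\le13-4=9$, contradicting $\delta(G)=10$; saying that ``several'' chords are blue is not enough.

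A minor slip: for $T$--$F$ edges the relevant distances are $4,5,6$, not $3,4,5$, because prepending $t$ to the longer arc adds a vertex.
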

\begin{proof}
By Proposition~\ref{prop:global-eight-connected15},
$\kappa(G)\ge8$. If an eight-vertex cut $S$ exists,
Proposition~\ref{prop:eight-cut17} leaves only components
$A,B$ of orders $18,3$. Every vertex of $B$ has at most
$2+8=10$ possible red neighbors, so $B$ is a red $K_3$
complete to $S$. The red $A$--$S$ bipartite graph has a
matching of size eight, by $8$-connectivity and
K\"onig's theorem. Also $A$ contains a red $C_{14}$,
because its blue graph has no $C_{10}$ and
$R(C_{14},C_{10})=18$.

For two red $A$--$S$ edges with distinct endpoints, a red
path on $k\in\{10,11,12\}$ vertices between their
$A$-endpoints would yield a red $C_{15}$ by connecting
the corresponding vertices of $S$ through $q=13-k\in\{3,2,1\}$
vertices of $B$. Thus these three path orders are forbidden.
Write the red cycle as $C=c_0c_1\cdots c_{13}c_0$.
At least four matching endpoints lie on $C$, and no two
have cyclic distance $3,4$, or $5$: the longer arc would
have $12,11$, or $10$ vertices. Permitted cyclic distances
are therefore $1,2,6$, and $7$.

These cycle endpoints include an adjacent pair. Otherwise,
after choosing one as $c_0$, the others lie in
$\{c_2,c_6,c_7,c_8,c_{12}\}$. Subject to nonadjacency,
the only permitted pairs among these five vertices are
$\{c_2,c_8\}$, $\{c_6,c_8\}$, and $\{c_6,c_{12}\}$.
No three can be chosen, contradicting the existence of
at least four endpoints in total. Relabel an adjacent
pair $c_0,c_1$. Every other endpoint lies in
$\{c_2,c_7,c_8,c_{13}\}$, whose permitted pairs are
$\{c_2,c_8\}$, $\{c_7,c_8\}$, and $\{c_7,c_{13}\}$.
Again no three can be chosen. Hence exactly four matching
endpoints lie on $C$, and all four vertices of
$T=A\setminus V(C)$ are matched.

Every edge from $c_4$ to $T$ is blue. A red edge $tc_4$
would extend the longer $c_4$--$c_0$ arc to a red path
on twelve vertices from $t$ to $c_0$, contrary to the
path restriction. Every $c_4$--$S$ edge is also blue:
if $sc_4$ were red, choose $c_0$ or $c_1$ whose matching
partner differs from $s$; the longer arc to it has
eleven or twelve vertices. Edges from $c_4$ to $B$ are
blue because $A,B$ are different red components of $G-S$.
Thus every red neighbor of $c_4$ lies on $C$.

Finally, all four chords $c_4c_1$, $c_4c_7$, $c_4c_8$,
and $c_4c_9$ are blue. If one were red, it would shortcut
the spanning $c_1$--$c_0$ arc
$c_1c_2\cdots c_{13}c_0$ to a path on twelve, twelve,
eleven, or ten vertices, respectively. Such a path is
forbidden because $c_0,c_1$ have distinct matching partners
in $S$. Consequently $c_4$ has at most $13-4=9$ red
neighbors, contradicting $\delta(G)=10$. No eight-vertex
cut exists, so $\kappa(G)\ge9$.
\end{proof}

\section{The fourteen-cycle versus the eleven-vertex wheel}
\label{sec:w11-fourteen}
We now prove the first equality in Theorem~\ref{thm:eleven}.
The counterexample assumptions in this section refer to $n=14$.
Let $v$ be a vertex of minimum red degree and put $X=N_G(v)$.
The key step is to prove $\nu(G[V(C),X])\le4$ for a red
twelve-cycle $C$ in the blue neighborhood of $v$.

By K\"onig's theorem, $G[V(C),X]$ then has a vertex cover of
size at most four. Enlarging this cover to four vertices and
adding the five vertices of the blue neighborhood outside $C$
produces a vertex cut $S$ of order nine. The two remaining vertex
sets have only four possible pairs of orders, up to interchange.
We construct a red path in each set and join the paths through
two vertices of $S$. The required path orders, both here and
in the argument for $n=15$, are listed in
Table~\ref{tab:w11-final-path-orders}.

To prove the inequality, we suppose that $G[V(C),X]$ contains
five pairwise vertex-disjoint edges and show that some vertex
of $C$ has red degree at most eight, contradicting $\delta(G)=9$.

\begin{lemma}
\label{lem:n14-local-cycle12-four-matching}
Under the counterexample assumptions for $n=14$, let $v$ have
red degree nine, put $X=N_G(v)$, and let
$H=G[N_{\overline G}(v)]$. If $C$ is a red $C_{12}$ in $H$, then
\[
\nu(G[V(C),X])\le4.
\]
\end{lemma}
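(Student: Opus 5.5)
The plan is to argue by contradiction. Suppose $G[V(C),X]$ has a matching $M$ of five edges $c_{i}x_{i}$, with distinct $c_i\in V(C)$ and distinct $x_i\in X$. The aim is to exhibit a vertex of $C$ with red degree at most eight, contradicting $\delta(G)=9$ from Corollary~\ref{cor:small14}.

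\textbf{Path restriction.} Every $x\in X$ is red-adjacent to $v$, and $v$ is blue-complete to $V(H)\supseteq V(C)$. So for matched endpoints $c_i\ne c_j$ with partners $x_i\ne x_j$, the path $x_ivx_j$ has one internal vertex. Hence no red $c_i$--$c_j$ path may have $14-3=11$ vertices. More generally, a red $x_i$--$x_j$ path with $q$ internal vertices inside $X\cup\{v\}$ forbids $c_i$--$c_j$ paths on $12-q$ vertices. I will mainly use $q=1$ (via $v$), and $q=0$ or $2$ when $x_ix_j$ or some $x_iy$ ($y\in X$) is red. The same restriction applies to a red edge $cx$ with $x\in X$ and $c\in V(C)$ unmatched: pairing it with a matching edge whose $X$-partner differs from $x$ forbids $c$--$c_i$ paths on $11$ vertices.

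\textbf{Positions of the matched endpoints.} On $C_{12}$, two vertices at cyclic distance $2$ are joined by an arc on $11$ vertices. So no two matched endpoints are at distance $2$. The distance-$2$ graph on $\mathbb Z_{12}$ is two disjoint $6$-cycles, one on each parity class. Each has independence number $3$, so the five endpoints split $3+2$ between the classes. Those in a class of size three are pairwise at distance $4$. Up to rotation and reflection this leaves only a few configurations, for example $\{c_0,c_4,c_8\}$ together with two odd vertices not at distance $2$ from each other. I would first enumerate these configurations.

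\textbf{Degree count.} In each configuration I choose a vertex $c\in V(C)$, typically at distance $2$ from at least two matched endpoints with distinct partners, and bound its red neighbors in three places.
\begin{itemize}
\item On $X$: a red edge $cx$ together with a matching edge at a distance-$2$ endpoint whose partner is not $x$ gives an $11$-vertex arc. So $c$ has essentially no red neighbors in $X$, at most one if all such partners coincide.
\item On $C$: chords from $c$ that shortcut a spanning arc between two matched endpoints into an $11$-vertex path must be blue, as in the $c_4$ arguments of Propositions~\ref{prop:seven-cut14-balanced} and~\ref{prop:global-nine-connected15}.
\item On $H\setminus V(C)$: for each of the five outside vertices $t$, Lemma~\ref{lem:successors}-type insertion or rerouting arguments limit red edges. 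A red edge $tc$ together with $t$ adjacent to a neighbor of a matched endpoint yields an $11$-vertex path, and $t$ sees $v$ only in blue.
\end{itemize}
Summing, $c$ should have at most $11-(\text{blue chords})$ red neighbors on $C$, plus few elsewhere, for a total of at most $8$.

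\textbf{Main obstacle.} The hard step is the last one: controlling red edges from the chosen $c$ to the five vertices of $H\setminus V(C)$, which need not be matched. If that count is too weak, I would use $\kappa(G)\ge8$ (Proposition~\ref{prop:global-eight-connected14}) together with K\"onig's theorem to route an outside vertex to $X$ or $C$, and extra red edges inside $X$ (since $\bar G[X]$ contains no structure creating a blue $W_{11}$ with hub $v$), to obtain $q=0$ or $q=2$ connections. These add forbidden path orders $12$ and $10$, making more chords and edges blue, until the red degree of $c$ drops to eight.
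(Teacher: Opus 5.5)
\textbf{The framework is right, but the proof is not completed.} Your framework coincides with the paper's. You assume a five-edge matching and forbid $11$-vertex red paths between matched endpoints, since such a path closes through the two partners and $v$. You then use the two distance-two six-cycles on the parity classes to force an alternating triple such as $\{c_0,c_4,c_8\}$, and look for a vertex of $C$ of red degree at most eight. The gap is that you never carry out the step that produces the contradiction, and you place the difficulty in the wrong spot.

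\textbf{The outside vertices are not the obstacle.} No control of the five vertices of $V(H)\setminus V(C)$ is needed; the trivial bound of five suffices. What must be shown is that the chosen vertex has at most three red neighbors on $C$, that is, eight of its nine chords are blue. Here is how that goes.
\begin{itemize}
\item Rotation by four and reflection fix $\{c_0,c_4,c_8\}$ and act transitively on the six odd indices. So you may assume $\{c_0,c_1,c_4,c_8\}\subseteq E$; one odd endpoint suffices.
\item Take $w=c_2$. It is at distance two from $c_0$ and $c_4$, whose partners are distinct because $M$ is a matching, so your caveat about coinciding partners never arises. Hence every $w$--$X$ edge is blue, and $wv$ is blue since $w\in V(H)$.
\item Each chord $c_2c_q$ with $q\in\{0,4,5,6,7,9,10,11\}$, if red, would lie on an $11$-vertex red path whose ends are two of $c_0,c_1,c_4,c_8$. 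For example, use $c_1c_2c_0c_{11}c_{10}\cdots c_4$ for $q=0$ and $c_0c_{11}\cdots c_5c_4c_2c_1$ for $q=4$; the other six values are similar. So all eight of these chords are blue.
\item Therefore the only possible red neighbors of $w$ are $c_1,c_3,c_8$ and the five vertices of $V(H)\setminus V(C)$. This gives $d_G(w)\le 8<9=\delta(G)$.
\end{itemize}

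\textbf{The fallback does not work as stated.} Its justification for red edges inside $X$ is wrong. $X=N_G(v)$ is the red neighborhood of $v$, so the absence of a blue $W_{11}$ with hub $v$ says nothing about $\overline G[X]$. The $q=0$ or $q=2$ connections you want are therefore not guaranteed. Routing outside vertices via $\kappa(G)\ge8$ and K\"onig's theorem is also left unspecified.

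As written, the proposal does not establish $\nu(G[V(C),X])\le4$. The missing ingredient is the explicit chord verification at $c_2$.
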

\begin{proof}
Write $C=c_0c_1\cdots c_{11}c_0$. Suppose that $G[V(C),X]$
contains a matching of size five, and let $E$ be the set of its
endpoints on $C$.
No two matched endpoints can be joined by a red path on eleven
vertices inside $C$ and its chords: that path would close
through their two distinct partners in $X$ and $v$ to a red
$C_{14}$. In particular, no two indices in $E$ have cyclic
distance two.

The distance-two graph on the twelve indices consists of
two six-cycles, one on each parity class. Each contributes
at most three vertices to $E$. Since $|E|=5$, one parity
class contributes exactly three, and those three form an
alternating triple in its six-cycle. Up to rotation, this
triple is $\{c_0,c_4,c_8\}$. The other parity class contributes
at least one vertex. Rotations by four and reflection preserve
the triple and act transitively on the six odd indices, so
we may assume that $\{c_0,c_1,c_4,c_8\}\subseteq E$.

Put $w=c_2$. Its distance-two vertices $c_0,c_4$ are matched
to different vertices of $X$. Consequently every $w$--$X$
edge is blue: a proposed red edge combines with an
eleven-vertex arc to one of those two endpoints whose matching
partner differs from the proposed neighbor.

The following eight chords are also blue. For each proposed
red chord $c_2c_q$, the table supplies an eleven-vertex red
path whose endpoints belong to $\{c_0,c_1,c_4,c_8\}$,
contrary to the forbidden-path observation. Index sequences
denote the corresponding sequences of cycle vertices.
\[
\begin{array}{c|l}
q&\text{path using }c_2c_q\\\hline
0&1,2,0,11,10,9,8,7,6,5,4\\
4&0,11,10,9,8,7,6,5,4,2,1\\
5&0,11,10,9,8,7,6,5,2,3,4\\
6&1,0,11,10,9,8,7,6,2,3,4\\
7&4,5,6,7,2,1,0,11,10,9,8\\
9&0,11,10,9,2,3,4,5,6,7,8\\
10&1,0,11,10,2,3,4,5,6,7,8\\
11&0,1,2,11,10,9,8,7,6,5,4
\end{array}
\]
Thus $w$ has at most $11-8=3$ red neighbors on $C$, none in
$X\cup\{v\}$, and at most five in $V(H)\setminus V(C)$.
This gives $d_G(w)\le8$, contrary to $\delta(G)=9$.
\end{proof}

\begin{lemma}
\label{lem:nine-six-marked}
Let $A$ be a graph of order nine and minimum degree at least
four, and let six of its vertices be marked. Then $A$ has a
Hamilton path whose two endpoints are distinct marked vertices.
\end{lemma}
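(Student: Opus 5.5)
Since $\delta(A)\ge4=(9-1)/2$, I would start from Ore-type results. The sum of degrees of any two nonadjacent vertices is at least $8=|A|-1$, so Ore's theorem gives a Hamilton path. The condition does not, however, guarantee a Hamilton cycle or Hamilton-connectedness. My plan is therefore a case split on whether $A$ is Hamiltonian.

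\emph{Case 1: $A$ has a Hamilton cycle $x_0x_1\cdots x_8x_0$.} Any two cyclically consecutive vertices $x_i,x_{i+1}$ are the endpoints of the Hamilton path obtained by deleting the edge $x_ix_{i+1}$. The six marked vertices lie among nine positions on the cycle, so by pigeonhole two of them are consecutive. Indeed, three unmarked vertices can separate at most three gaps, whereas avoiding adjacency would require six gaps. This finishes the case immediately.

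\emph{Case 2: $A$ is non-Hamiltonian.} I would use the stability form of Dirac/Ore at order $2h+1$ with minimum degree $h=4$. A non-Hamiltonian graph of order $9$ with $\delta\ge4$ is either a subgraph of $K_4\vee \overline{K}_5$ containing $K_{4,5}$, or the union of two $K_5$'s sharing exactly one vertex (with possibly some extra structure, but essentially that shape). Deriving this cleanly is the main obstacle. I would argue it directly. Take a longest cycle $C$; it has length $8$ by Dirac's circumference theorem, which needs $2$-connectivity or else cut-vertex handling. Let $u$ be the vertex outside $C$. By Lemma~\ref{lem:successors}, the at least four neighbors of $u$ on the $C_8$ are independent, so they form one parity class. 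Their successors form a clique, and so do their predecessors, which are the same set, namely the other parity class. Together with $u$, that clique is a set of five vertices. A short argument using $\delta\ge4$ then pins down the structure. If a cut vertex exists instead, each component minus it has at least four vertices, which forces the two-$K_5$ configuration.

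In the $K_{4,5}$-type structure with independent side $I$ of size $5$ and other side $Y$ of size $4$, every Hamilton path alternates and has both ends in $I$. Conversely, any two distinct vertices of $I$ are the ends of such a path. Six marked vertices cannot all lie in $Y$, since $|Y|=4$; hence at least two lie in $I$, and those two serve as the endpoints. In the two-$K_5$ configuration sharing vertex $c$, any $a$ in one clique and $b$ in the other, both different from $c$, are joined by a Hamilton path through $c$. The six marked vertices include at most one copy of $c$, and each side minus $c$ has only four vertices, so at least one marked vertex lies on each side off $c$. This gives the required pair.
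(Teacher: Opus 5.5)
Your proof is correct, but it takes a different route from the paper. One wording needs fixing. Lemma~\ref{lem:successors} says the successors form a \emph{blue} clique. For the single graph $A$ this means they are pairwise nonadjacent, not a clique. Read that way, $u$ together with the other parity class is an independent set of five vertices. Each of these has all of its at least four neighbors among the remaining four cycle vertices, so it is adjacent to all of them. This is the ``short argument'', and it yields exactly your $K_{4,5}$-type structure.

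The cut-vertex case is likewise rigid. Each component of $A-c$ has exactly four vertices and must be a $K_4$ joined to $c$, and $A$ cannot be disconnected. So the hedge about ``extra structure'' is unnecessary.

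The paper avoids the case split altogether. It adjoins a vertex $z$ adjacent exactly to the six marked vertices. In the resulting ten-vertex graph only the at most three unmarked vertices can have degree four, so Chv\'atal's condition $d_i>i$ for $1\le i\le4$ holds. Deleting $z$ from the Hamilton cycle leaves the required path. This is the same ``attach a vertex to the allowed endpoints'' device as in Lemma~\ref{lem:hc}, and it takes three lines.

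Your route in effect re-derives the two non-Hamiltonian extremal graphs for $\delta\ge(|A|-1)/2$, namely $H_4\vee\overline K_5$ and $K_1\vee 2K_4$, and checks each by hand. It is longer and needs Dirac's circumference theorem and the successor lemma. In exchange, it shows that six marks cannot be reduced to five. Marking the four vertices of the small side of $H_4\vee\overline K_5$ plus one further vertex leaves no admissible Hamilton path. The same holds if you mark the cut vertex of $K_1\vee 2K_4$ together with one of its $K_4$'s.
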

\begin{proof}
Add a new vertex $z$ adjacent to precisely the six marked
vertices. In the resulting ten-vertex graph, at most three
vertices have degree four, and all other vertices have degree
at least five. Its nondecreasing degree sequence satisfies
$d_i>i$ for $1\le i\le4$. Chv\'atal's sufficient
degree-sequence condition~\cite{Chvatal} therefore gives a
Hamilton cycle. Deleting $z$ leaves the required path.
\end{proof}

\begin{theorem}\label{theorem:R14=27}
With $W_{11}=K_1+C_{10}$, we have
\[
R(C_{14},W_{11})=27.
\]
\end{theorem}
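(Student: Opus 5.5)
The lower bound $R(C_{14},W_{11})\ge27$ comes from the red graph $2K_{13}$. It has no $C_{14}$, and its blue complement is $K_{13,13}$. That graph is bipartite, so it contains no odd wheel, and in particular no $W_{11}$.

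For the upper bound, suppose a counterexample at $n=14$ exists. By Corollary~\ref{cor:small14} and Proposition~\ref{prop:global-eight-connected14}, we have $|G|=27$, $\delta(G)=9$ and $\kappa(G)\ge8$. Choose $v$ of red degree nine and put $X=N_G(v)$. The blue neighborhood $H$ has seventeen vertices, and its blue graph has no $C_{10}$. Since $R(C_{12},C_{10})=16$, there is a red $C_{12}$ in $H$; call it $C$. By Lemma~\ref{lem:n14-local-cycle12-four-matching}, $\nu(G[V(C),X])\le4$. König's theorem then gives a vertex cover $K$ of $G[V(C),X]$ of size at most four, which I enlarge arbitrarily to exactly four vertices of $V(C)\cup X$. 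Put $S=K\cup(V(H)\setminus V(C))$, so $|S|=4+5=9$.

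After deleting $S$, the remaining vertices split into $P=(V(C)\setminus K)$ and $Q=(X\setminus K)\cup\{v\}$, and there are no red $P$--$Q$ edges. Writing $j=|K\cap X|$, the orders are $|P|=8+j$ and $|Q|=10-j$ with $0\le j\le4$. This gives the pairs $(8,10),(9,9),(10,8),(11,7),(12,6)$. All $P$--$Q$ edges are blue. Whenever both sides have at least five vertices, Lemma~\ref{lem:blue-sides} gives $\delta(G[P])\ge|P|-5$ and $\delta(G[Q])\ge|Q|-5$. The vertex $v$ is red-complete to $X\setminus K$, and $P$ inherits long red paths from the cycle $C$.

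The target is a red $C_{14}$ of the following shape: a red path in $P$ on $p$ vertices, a red path in $Q$ on $q$ vertices with $p+q=12$, and two vertices $s,t\in S$ joining the endpoints. Since $\kappa(G)\ge8$ and $|S|=9$, König's theorem shows that $G[P,S]$ and $G[Q,S]$ each have matchings of size at least eight (otherwise a cover of seven vertices would separate). So I can choose $s\ne t$ with distinct endpoints $a_s,a_t\in P$ and $b_s,b_t\in Q$. The required paths in each range would come from the following tools.

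For the $(9,9)$ case I would use Lemma~\ref{lem:nine-six-marked}: a nine-vertex side has minimum degree at least four, and among eight matched endpoints I can mark six.

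For sides of order at most eleven with $\delta\ge|\cdot|-5$, I would use Lemma~\ref{lem:hc}, or Williamson's theorem when the degree is high enough.

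For the small side with $j=4$, namely $Q=\{v\}\cup$ two vertices of $X$, I would argue directly: $v$ is adjacent to all of $Q$, and paths through $v$ of orders $1,2,3$ are available.

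I would then distribute the lengths, for example $(p,q)=(9,3)$, $(10,2)$ or $(11,1)$, so that the $P$-path can be taken along arcs of $C$ or from Hamilton-connectedness of a subset.

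I expect the main obstacle to be the unbalanced cases $(11,7)$ and $(12,6)$ together with $(8,10)$. There the degree bounds on one side are too weak for Hamilton-connectedness, and endpoint choices must be coordinated across the two matchings. My first approach would be the pigeonhole argument from Proposition~\ref{prop:six-cut14-remaining}. The matched endpoints on the smaller side cannot be independent, since $\alpha\le5$ when blue degrees are at most four, which gives a red edge $b_sb_t$ in $Q$. I would then need a red $a_s$--$a_t$ path of the complementary order inside $P$, using the cycle arcs of $C$ and the forbidden-distance analysis of Lemma~\ref{lem:n14-local-cycle12-four-matching}. If that fails, I would count the red degrees of the cycle vertices that are forced to be blue-isolated, as in the connectivity propositions, to reach a vertex of red degree below nine.
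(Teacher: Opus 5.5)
Your reduction to the nine-vertex cut $S$ with sides $P,Q$ matches the paper's. The theorem is actually proved in the case analysis that follows, and there your proposal leaves three of the four distributions open and misjudges which ones are hard. The splits $(12,6)$ and $(11,7)$ are immediate once you count red neighbours in $S$. A vertex $b$ of the small side $D$ has no red neighbour in the large side $L$ and at most $|D|-1$ in $D$, hence at least $9-(|D|-1)\ge3$ in $S$. The size-eight matching of $G[L,S]$ misses only one vertex of $S$, so two of these neighbours $s,t$ have distinct partners $a_s,a_t\in L$. Any eleven-vertex $L'\subseteq L$ containing them has $\delta(G[L'])\ge(|L|-5)-(|L|-11)=6$, so Lemma~\ref{lem:hc} gives a spanning $a_s$--$a_t$ path that closes through $s,b,t$ to a red $C_{14}$. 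The route you propose instead fails as stated. First, $G[Q,S]$ cannot have a matching of size eight when $|Q|\le7$; your K\"onig argument needs $|Q|\ge8$. Second, a red edge $b_sb_t$ forces path orders $(10,2)$, that is, a ten-vertex $a_s$--$a_t$ path in an eleven-vertex side with only $\delta\ge6$. Neither Lemma~\ref{lem:hc} nor Williamson's theorem (which needs $\delta\ge7$ there) provides such a path. Also, for $j=4$ the side $Q$ is $v$ together with five vertices of $X$, not two.

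The real obstacle is the $(10,8)$ split in either orientation, and it needs an ingredient absent from your plan. The ten-vertex side $L$ has only $\delta(G[L])\ge5<11/2$, and it can genuinely fail to be Hamilton-connected: a red $K_{5,5}$ or $K_2\vee 2K_4$ is compatible with the blue-degree bound. The paper takes seven vertices of $S$ saturated by both size-eight matchings. Since $\delta(G[D])\ge3$ and only one vertex of $D$ is unmarked, every marked $b_i$ has a marked red neighbour $b_j$, which finishes the case if $G[L]$ is Hamilton-connected. Otherwise $G[L]$ satisfies Ore's condition, and the theorem of Shih et al.\ leaves only two forms. In $H_2\vee(K_4\cup K_4)$, any vertex outside $H_2$ starts a Hamilton path ending at any prescribed vertex. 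In $H_5\vee\overline{K}_5$, the blue $K_5$ and Lemma~\ref{lem:independent-five-separated} force $\overline{G}[D]$ to have at most one edge, so $D$ is Hamilton-connected. One then switches to path orders $(4,8)$, using a four-vertex path across the red $K_{5,5}$ between marked vertices in opposite parts. Your fallback via forbidden distances on $C$ cannot reach these structures: in the $(8,10)$ orientation the ten-vertex side is $\{v\}\cup X$, which contains no vertex of $C$. Your $(9,9)$ sketch is fine once the six marked pairs come from vertices of $S$ saturated by both matchings, with $Q$-endpoint different from $v$, closing through $b_ivb_j$.
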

\begin{proof}
The red graph $K_{13}\cup K_{13}$ on 26 vertices has no
$C_{14}$, and its bipartite blue complement has no $W_{11}$.
This proves the lower bound 27.

For the upper bound, suppose a coloring on 27 vertices has
neither a red $C_{14}$ nor a blue $W_{11}$. By
Corollary~\ref{cor:small14} and
Proposition~\ref{prop:global-eight-connected14}, its red graph
satisfies $\delta(G)=9$ and $\kappa(G)\ge8$. Choose a vertex
$v$ of red degree nine, put $X=N_G(v)$, and let
$H=G[N_{\overline G}(v)]$. The graph $H$ has seventeen
vertices and no blue $C_{10}$. Since $R(C_{12},C_{10})=16$,
it contains a red $C_{12}$, denoted $C$.

Lemma~\ref{lem:n14-local-cycle12-four-matching} and K\"onig's
theorem give a vertex cover of $G[V(C),X]$ of size at most four. Extend this cover, if necessary, to a set
$Z\subseteq V(C)\cup X$ of exactly four vertices. Put
\[
T=V(H)\setminus V(C),\quad S=T\cup Z,\quad
A=V(C)\setminus Z,\quad B=\{v\}\cup(X\setminus Z).
\]
Then $|S|=9$. If $a=|Z\cap V(C)|$, the two sides have orders
$|A|=12-a$ and $|B|=6+a$, where $0\le a\le4$.
All $A$--$B$ edges are blue, and $v$ is red-adjacent to
every other vertex of $B$. We refer to $A,B$ as sides;
we do not need either induced graph to be connected.

Every vertex of each side has at most four blue neighbors
within that side. Otherwise five such neighbors, together
with five vertices of the opposite side, give a blue
$C_{10}$ in its blue neighborhood. Consequently
\[
\delta(G[A])\ge |A|-5,\qquad
\delta(G[B])\ge |B|-5.
\]
We shall repeatedly use the fact that $\nu(G[D,S])\ge8$ whenever
$D\in\{A,B\}$ and $|D|\ge8$. Otherwise, K\"onig's theorem would
give a vertex cover of $G[D,S]$ of size at most seven. Deleting
this cover would leave a vertex of $D$ and remove every edge
joining the remaining vertices of $D$ to $S$. The other side
would be nonempty and untouched by the cover, contradicting
$\kappa(G)\ge8$.

Up to interchanging $A$ and $B$, the possible pairs $(|A|,|B|)$ are
$(12,6)$, $(11,7)$, $(10,8)$, and $(9,9)$.
In the first two cases, let $L$ be the larger side and $D$
the smaller. A red $L$--$S$ matching of size eight misses
only one vertex of $S$. Any $b\in D$ has at least
$9-(|D|-1)$, respectively four or three, red neighbors
in $S$. Hence two of these, say $s,t$, occur in the matching.
Let their distinct matched neighbors in $L$ be $a_s,a_t$.
Choose an eleven-vertex set $L'\subseteq L$ containing
$a_s,a_t$. Its minimum red degree is at least
\[
(|L|-5)-(|L|-11)=6,
\]
so $G[L']$ is Hamilton-connected. A spanning red $a_s$--$a_t$
path in $G[L']$ closes through $sbt$ to a red $C_{14}$.

Now suppose the two sides have orders ten and eight, and denote
them by $L,D$, respectively. Choose a matching of size eight in each of
$G[L,S]$ and $G[D,S]$. At least seven vertices of $S$ are
saturated by both matchings. Choose seven of them and index
the corresponding matching edges as
\[
a_i s_i,\ s_i b_i\in E(G),\qquad 1\le i\le7,
\]
with distinct $a_i\in L$ and distinct $b_i\in D$.
The graph on $D$ has minimum degree at least three. Every
marked $b_i$ has at least two marked red neighbors, because
only one vertex of $D$ is unmarked. Thus, if $G[L]$ is
Hamilton-connected, a spanning red path between suitable
$a_i,a_j$ closes through the red edge $b_i b_j$ and
$s_i,s_j$ to a red cycle of order $10+2+2=14$.

The graph on $L$ has minimum degree at least five, so it
satisfies Ore's degree-sum condition on ten vertices.
If $G[L]$ is not Hamilton-connected, Theorem~3 of
Shih et al.~\cite{SSK} implies that it has one of the forms
\[
H_2\vee(K_4\cup K_4)
\quad\hbox{or}\quad H_5\vee\overline K_5.
\]
For the first form, any vertex outside the two joining
vertices is the endpoint of a Hamilton path ending at any
other prescribed vertex, as explicitly constructed in
Lemma~\ref{lem:two-dirac-components}. Choose a marked $a_i$
outside those two vertices and a marked red neighbor $b_j$
of $b_i$. The same cycle construction gives $C_{14}$.

For the second form, the independent set of five vertices
and Lemma~\ref{lem:independent-five-separated} imply that
$\cl G[D]$ has at most one edge. Hence $G[D]$ is
Hamilton-connected. The seven marked vertices of $L$ meet
both parts of the spanning red $K_{5,5}$. Choose $a_i,a_j$
in opposite parts and join them by a red path on four
vertices in this $K_{5,5}$. A spanning red $b_i$--$b_j$
path in $D$, together with this path and $s_i,s_j$, forms
a red cycle on $4+8+2=14$ vertices. This excludes the
ten--eight distribution in both orientations.

Finally suppose $|A|=|B|=9$. Here we keep the original
names so that $v\in B$ is red-adjacent to all of $B-v$.
Choose matchings of size eight in $G[A,S]$ and $G[B,S]$.
Again, at least seven vertices of $S$ are saturated by both.
Discard the at most one pair whose $B$-endpoint is $v$,
and retain six pairs, written
\[
a_i s_i,\ s_i b_i\in E(G),\qquad 1\le i\le6,
\]
where $b_i\ne v$. The red graph on $A$ has minimum
degree at least four. By Lemma~\ref{lem:nine-six-marked},
it has a Hamilton path with endpoints $a_i,a_j$ for some
$i\ne j$. The red path $b_i v b_j$ in $B$ and the vertices
$s_i,s_j\in S$ close this path into a red cycle
on $9+3+2=14$ vertices. All possible distributions have
been excluded, proving the upper bound.
\end{proof}

\section{The fifteen-cycle versus the eleven-vertex wheel}
\label{sec:w11-fifteen}
We prove the second equality in Theorem~\ref{thm:eleven}.
The counterexample assumptions in this section refer to $n=15$.
Let $v$ be a vertex of minimum red degree, put $X=N_G(v)$, and
let $C$ be a red fourteen-cycle in the blue neighborhood of $v$.
We prove $\nu(G[V(C),X])=5$. By K\"onig's theorem, $G[V(C),X]$
has a minimum vertex cover of size five. Together with the four
vertices of the blue neighborhood outside $C$, this cover gives
a vertex cut $S$ of order nine.

Once the orders of the components of $G-S$ are determined, the
final step again reduces to finding two red paths whose orders
sum to $n-2$. Joining these paths through two vertices of $S$
completes the required cycle. Table~\ref{tab:w11-final-path-orders}
summarizes the required path orders. The proofs of
Theorems~\ref{theorem:R14=27} and~\ref{theorem:R15=29} establish
the existence and the required endpoints of these paths.

\begin{table}[tbp]
\centering
\small
\caption{Path orders in the final separations. Each construction
uses one path from each side and two distinct vertices of $S$.
The required paths and endpoints are constructed in the proofs.}
\label{tab:w11-final-path-orders}
\begin{tabular}{@{}c c c c@{}}
\toprule
$n$ & Side orders & Red paths in the two sides & Cycle order \\
\midrule
14 & $(12,6)$ & $P_{11},P_1$ & $11+1+2=14$ \\
14 & $(11,7)$ & $P_{11},P_1$ & $11+1+2=14$ \\
14 & $(10,8)$ & $P_{10},P_2$ or $P_4,P_8$
& $10+2+2=4+8+2=14$ \\
14 & $(9,9)$ & $P_9,P_3$ & $9+3+2=14$ \\
\midrule
15 & $(13,7)$ & $P_{12},P_1$ & $12+1+2=15$ \\
15 & $(12,8)$ & $P_{12},P_1$ & $12+1+2=15$ \\
15 & $(11,9)$ & $P_{11},P_2$ & $11+2+2=15$ \\
15 & $(10,10)$ & $P_{10},P_3$ & $10+3+2=15$ \\
\bottomrule
\end{tabular}

\smallskip
\parbox{0.94\linewidth}{\footnotesize
$P_1$ denotes a single vertex. The paths are listed in the order
of the two side sizes; equal-sized sides may be interchanged.
For the $(9,9)$ case at $n=14$, the side supplying $P_3$ contains
the original minimum-degree vertex, which is adjacent to the
other vertices of that side.}
\end{table}

\subsection{Edges between the local cycle and the red neighborhood}
The inequality $\nu(G[V(C),X])\ge5$ follows from
$\kappa(G)\ge9$. To prove the reverse inequality, we suppose that
$G[V(C),X]$ contains a matching of size six. We distinguish two
cases according to the indices of its endpoints on $C$. If these
indices have the same parity, we obtain the blue-clique
configuration in the next lemma. Otherwise, the possible endpoint
configurations force a vertex of red degree at most nine. Both
cases contradict the counterexample assumptions.

\begin{lemma}
\label{lem:seven-clique-two-exceptions}
Let the edges of a complete graph with vertex partition $F\cup D$
be colored red and blue, where $|F|=7$ and $|D|=11$.
Suppose that there is no blue $C_{10}$ and that
$F$ is a blue clique, every vertex of $F$ has at most two blue
neighbors in $D$, and at most two vertices of $F$ have more than
one blue neighbor in $D$. Then the graph contains a red $C_{15}$.
\end{lemma}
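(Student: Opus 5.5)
The plan is to build a red $C_{15}$ that alternates between $F$ and $D$ and closes with a single red path inside $D$. Since $F$ is a blue clique of order seven, it contributes at most seven cycle vertices. These will be separated by red paths in $D$ whose total order is $15-7=8$.

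\textbf{Step 1: structure inside $D$.} First consider the blue graph on $D$. Any five blue-adjacent vertices of $F$ can be joined cyclically through the remaining vertices, so the argument behind Lemma~\ref{lem:independent-five-separated} applies. The difference is that not all $F$--$D$ edges are red: only edges that are blue in both directions matter. We use that $F$ is a blue $K_7$ with almost complete red adjacency to $D$.

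Suppose $\cl G[D]$ contained a blue path system covering many vertices. Blue paths in $D$, blue edges from $F$ to $D$, and the blue clique $F$ together would then form a blue $C_{10}$. The blue $F$--$D$ edges are few, at most $2\cdot 2+5\cdot 1=9$, so the useful direction is the opposite one. We aim to show that $G[D]$ has large minimum degree. A vertex $d\in D$ with many blue neighbors in $D$ would give a blue star. Combined with $F$, which is blue and mostly red to $D$, this cannot by itself produce a blue $C_{10}$. So blue structure in $D$ has to be controlled by the absence of a blue $C_{10}$ within $D$ alone. Since $|D|=11$ and $R(C_8,C_{10})$ is small, $D$ contains long red cycles.

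\textbf{Step 2: alternating construction.} Order $F=\{f_1,\dots,f_7\}$. Each $f_i$ is red to all but at most two vertices of $D$, and at most two of the $f_i$ miss two. We want six red singleton connectors $d_1,\dots,d_6$ in $D$ and one red path $P\subseteq D$ on two vertices. These form the cycle $f_1d_1f_2d_2\cdots f_6d_6f_7Pf_1$, which has $7+6+2=15$ vertices.

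The red path $P$ comes from Step 1. Any red edge $xy$ in $D$ will do, provided $x$ is red to $f_7$ and $y$ is red to $f_1$. Label $F$ so that the two exceptional vertices, those with two blue neighbors, are $f_1$ and $f_7$ or are placed conveniently. Then the choice of $xy$ only has to avoid at most four blue adjacencies.

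After removing $x,y$, nine vertices of $D$ remain. Each gap between $f_i$ and $f_{i+1}$ needs a common red neighbor, and there are at least $9-4=5$ candidates per gap, with at least $9-3=6$ candidates for most gaps. Hall's condition for six gaps then follows as in Lemma~\ref{lem:dense-eight}. The only possible failure is when all gaps share the same three missing vertices. This is ruled out by the hypothesis bounding the vertices of $F$ that have two blue neighbors in $D$, together with a suitable reordering of $F$.

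\textbf{Main obstacle.} The delicate point is Step 1: guaranteeing a red edge $xy$ in $D$ with the right endpoint conditions, and handling Hall's condition in the extremal configuration. I would argue that if $D$ had no suitable red edge, then $D$ minus at most four vertices would be a blue clique on at least seven vertices. That already contains a blue $C_{10}$ once combined with two vertices of $F$ through blue edges, or a blue $C_{10}$ inside $D\cup F$ directly, because two blue cliques of order seven share blue cross edges. This contradicts the hypothesis. The remaining bookkeeping is to choose the cyclic order of $F$ so that the blue neighbors of the two exceptional vertices do not collapse all gaps onto one common candidate set.
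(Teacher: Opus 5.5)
Your outline matches the paper's: you alternate through all seven vertices of $F$ using six singleton connectors in $D$, and close the cycle with one red edge $xy$ inside $D$. The connector step is fine. Once the at most two exceptional vertices are nonconsecutive in the order of $F$, each gap has at least $9-3=6$ common red neighbors in $D\setminus\{x,y\}$, so the six connectors can be chosen greedily.

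The genuine gap is the closing edge $xy$. You fix $f_1,f_7$ first, and even propose making them the exceptional vertices. You then argue that if no red $xy$ exists with $xf_7$ and $yf_1$ red, the resulting blue clique of order at least seven in $D$ yields a blue $C_{10}$ together with $F$. That inference is false. There are at most $5+2\cdot2=9$ blue $F$--$D$ edges, so such a clique is joined to $F$ almost entirely by red edges. A blue clique of order seven, eight, or nine contains no $C_{10}$ by itself.

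Here is a concrete failure. Write $D=U\cup\{u,w\}$ with $|U|=9$. Make $F$ and $U$ blue cliques, make $f_1u,f_1w,f_7u,f_7w$ blue, and color every remaining edge red. All hypotheses hold. The blue graph has two components of order nine, so there is no blue $C_{10}$. Yet with the exceptional vertices $f_1,f_7$ as ends, every admissible $x,y$ lies in the blue clique $U$, so your construction cannot start. Your Step 1 appeal to long red cycles in $D$ is also unsupported, and not needed; indeed $R(C_8,C_{10})=13>11$.

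The missing idea is to choose the red edge before the ends. At most nine $F$--$D$ edges are blue, so at most one vertex of $D$ is blue to all of $F$. The at least ten vertices of $D$ having a red neighbor in $F$ cannot form a blue clique, since it would contain $C_{10}$, so they span a red edge $ab$. Next take distinct red neighbors $f_a,f_b\in F$ of $a,b$ as the two ends of the ordering of $F$. This fails only if $a$ and $b$ have the same unique red neighbor in $F$, which would require at least twelve blue cross edges. Finally, place the exceptional vertices nonconsecutively between the ends. This is exactly the paper's argument.

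Alternatively, your fixed-ends plan can be rescued by choosing $f_1,f_7$ non-exceptional. Each then has at least ten red neighbors in $D$. The absence of a suitable $xy$ then forces a blue clique of order ten in $D$, which contains a blue $C_{10}$.
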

\begin{proof}
There are at most $5+2\cdot2=9$ blue edges between $F$ and $D$.
Consequently at most one vertex of $D$ has no red neighbor in $F$.
The set $D_+$ of vertices having a red neighbor in $F$ has order
at least ten, so it contains a red edge $ab$; otherwise it is a
blue clique containing $C_{10}$. We can choose distinct red
neighbors $f_a,f_b\in F$ of $a,b$. Indeed, failure would require
both red neighborhoods in $F$ to be the same singleton, giving
at least twelve blue $F$--$D$ edges.

Order the seven vertices of $F$ starting at $f_a$ and ending at
$f_b$, with the at most two exceptional vertices nonconsecutive.
Such an order always exists: if neither exception is prescribed
at an end, place them in positions two and five; if exactly one
is prescribed at an end, place the other in position four; and
if both are prescribed at the ends, they are already separated.
The cases with fewer than two exceptions are immediate.
Every consecutive pair in this order has at least
$9-(1+2)=6$ common red neighbors in $D\setminus\{a,b\}$.
Choose distinct common neighbors for the six consecutive pairs,
one at a time. They form, together with $F$ and $a,b$, a red
alternating $a$--$b$ path on fifteen vertices. Closing it with
$ab$ gives the desired red $C_{15}$.
\end{proof}

\begin{proposition}\label{prop:local-cycle14-five-matching}
Under the counterexample assumptions for $n=15$, let $v$ have
minimum red degree, put $X=N_G(v)$, and let
$H=G[N_{\overline G}(v)]$. For every red cycle $C$ of order
fourteen in $H$,
\[
\nu(G[V(C),X])=5.
\]
\end{proposition}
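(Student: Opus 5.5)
We prove the two inequalities separately. For $\nu(G[V(C),X])\ge5$ we use $\kappa(G)\ge9$ (Proposition~\ref{prop:global-nine-connected15}) and K\"onig's theorem. Suppose $G[V(C),X]$ had a vertex cover $Z$ with $|Z|\le4$. Put $T=V(H)\setminus V(C)$; then $|T|=4$ by Lemma~\ref{prop:local15}, and $S=Z\cup T$ has at most eight vertices. After deleting $S$, no red edge joins $V(C)\setminus Z$ to $\{v\}\cup(X\setminus Z)$. Edges from $v$ to $C$ are blue, and $X$--$C$ edges are covered by $Z$. Both sets are nonempty, since $|V(C)|=14$ and $v$ remains. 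So $S$ would be a cut of order at most eight, a contradiction.

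For the upper bound, suppose there is a matching of size six with endpoint set $E\subseteq V(C)$ and distinct partners in $X$. Two matched endpoints joined by a red path on twelve vertices inside $V(C)$ close through their partners and $v$ to a red $C_{15}$, since $12+2+1=15$. In particular, no two indices of $E$ lie at cyclic distance two on $C_{14}$: the longer arc has thirteen vertices, so we need a path on twelve. We therefore refine the forbidden arcs first. Between indices $i,j$ the two arcs have $d+1$ and $15-d$ vertices, so an arc on exactly twelve vertices occurs for $d=3$ only. We would also use paths that skip a vertex via a chord, and paths through a vertex of $T$, to forbid further distances. The first concrete step is to determine the admissible configurations of six indices on $C_{14}$ with no two at distance three. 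Following the outline preceding Lemma~\ref{lem:seven-clique-two-exceptions}, we split according to parity.

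\emph{Same-parity case.} The six endpoints lie in one parity class of seven vertices, say the even indices. Let $F$ be the seven vertices of the opposite parity class. Lemma~\ref{lem:successors}, applied to partners $x\in X$ with the cycle $C$ extended through $v$, or a direct chord argument as in Lemma~\ref{lem:n14-local-cycle12-four-matching}, shows that successors and predecessors of matched endpoints form blue cliques. Since six of the seven even indices are matched, their successors and predecessors cover all odd indices, and we aim to show that $F$ is a blue clique. Next, each $f\in F$ has red neighbors in $X\cup T\cup V(C)$ heavily restricted by the forbidden twelve-vertex paths, so $f$ has at most two blue neighbors in $D=V(H)\setminus F$, which has order eleven, and this happens for at most two vertices $f$. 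Lemma~\ref{lem:seven-clique-two-exceptions}, applied to $F\cup D=V(H)$, then gives a red $C_{15}$. The absence of a blue $C_{10}$ there holds because $v$ would be a hub. Verifying the degree hypotheses is the delicate part: we must count forced red edges from $F$ into $D$ using $\delta(G)=10$ and the fact that $F$ is blue inside.

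\emph{Mixed-parity case.} We enumerate the configurations of six indices with both parities present and no forbidden distances, up to rotation and reflection. For each, in the style of the table in Lemma~\ref{lem:n14-local-cycle12-four-matching}, we choose a vertex $w\in V(C)$ for which two matched endpoints with distinct partners make every $w$--$X$ edge blue. We then list explicit chords $wc_q$, each of which would create a forbidden twelve-vertex path between matched endpoints. Counting gives at most $13-(\text{blue chords})$ red neighbors on $C$, at most four in $T$, and none in $X\cup\{v\}$. This yields $d_G(w)\le9$, contradicting $\delta(G)=10$. The main obstacle is this case enumeration, together with confirming that each chord table covers enough chords (at least eight). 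I expect to lean on symmetry to reduce to a few configurations and to verify each chord path by hand.
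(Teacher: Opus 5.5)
Your lower bound $\nu\ge5$ is the paper's argument, and your upper-bound plan is also the paper's plan: forbidden twelve-vertex paths, a parity split, Lemma~\ref{lem:seven-clique-two-exceptions} in the same-parity case, and a degree count at a well-chosen $w$ otherwise. However, the upper bound is only sketched, and the one concrete mechanism you propose fails. Lemma~\ref{lem:successors} does not apply here. Its successor-chord trick produces a spanning path on all fourteen vertices of $C$, which closes through $x_ivx_j$ to a $C_{17}$, whereas the forbidden objects are $c_i$--$c_j$ paths on exactly twelve vertices. Even if successor and predecessor sets were blue cliques, their union misses the pair $c_{m-1}c_{m+1}$ flanking the unmatched even vertex $c_m$. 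The paper instead reduces a proposed red chord inside $F$ to $c_1c_j$ with $j\in\{3,5,7\}$. For each $j$ it exhibits two twelve-vertex paths through that chord whose even endpoint pairs are disjoint; since only one even vertex lies outside $E$, one of the pairs lies in $E$.

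Your degree verification is also aimed in the wrong direction. Restricting red neighbors in $T$ or on the even cycle vertices cannot help, since $D$ consists of exactly these. What you need is the following chain. An odd vertex at cyclic distance three from two members of $E$ has no red neighbor in $X$: use the member whose partner differs from the proposed neighbor. Only $c_{m\pm3}$ fail this condition, and each of them has at most one red neighbor in $X$. Since $F$ is blue and $v$ is blue to $V(H)$, $\delta(G)=10$ then forces at least ten, respectively nine, red neighbors in the eleven-vertex set $D$. This is precisely the hypothesis of Lemma~\ref{lem:seven-clique-two-exceptions}.

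The enumeration you call the main obstacle has a short solution that you do not mention. Because $\gcd(3,14)=1$, the distance-three graph on the indices is a single $14$-cycle, in the order $0,3,6,9,12,1,4,\ldots$, along which parity alternates. The set $E$ is independent in this cycle, so the eight unselected indices form six positive gaps. The gap pattern is either $3,1,1,1,1,1$, which puts all of $E$ in one parity class, or a permutation of $2,2,1,1,1,1$. Up to symmetry the latter gives only $\{0,1,2,6,8,10\}$, $\{0,1,2,6,7,8\}$ and $\{0,1,2,7,8,9\}$.

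The chord tables must then actually be produced. For the first two configurations, take $w=c_3$, whose distance-three vertices $c_0,c_6$ are matched; the blue chords go to $c_q$ with $q\in\{0,1,5,6,7,8,9,10\}$. For the third, take $w=c_4$, via the matched vertices $c_1,c_7$, with $q\in\{0,1,2,6,7,8,9,10\}$. Without these, the mixed case is not proved.

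Finally, delete your sentence asserting that no two indices of $E$ lie at cyclic distance two. It is false, since the two arcs then have $3$ and $13$ vertices. All three mixed configurations contain the pair $c_0,c_2$, and such pairs are handled only through the degree count at $w$.
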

\begin{proof}
Write $C=c_0c_1\cdots c_{13}c_0$ and $T=V(H)\setminus V(C)$,
so $|T|=4$ and $|X|=10$. If $\nu(G[V(C),X])\le4$,
K\"onig's theorem would give a vertex cover of $G[V(C),X]$
$Z\subseteq V(C)\cup X$ of size at most four. Removing $Z\cup T$
would separate $V(C)\setminus Z$ from $v$: all $C$--$X$ red
edges meet $Z$, and all $C$--$v$ edges are blue. Both sides are
nonempty, because $|C|=14>|Z|$ and $v\notin Z\cup T$.
This contradicts $\kappa(G)\ge9$. Hence $\nu(G[V(C),X])\ge5$.

For red attachment edges $c_i x,c_j y$ with $i\ne j$ and $x\ne y$,
a red path on
twelve vertices from $c_i$ to $c_j$ inside $C$ and its chords
would close through $xvy$ to a red $C_{15}$. We use this
forbidden-path observation throughout the proof.

Suppose $G[V(C),X]$ contains a matching of size six, and let $E$ be its
six endpoints on $C$. No two indices in $E$ have cyclic distance
three, because their longer cycle arc has twelve vertices.
The graph on the fourteen indices whose edges join indices
differing by three modulo fourteen is itself a fourteen-cycle.
In its cyclic order, the six gaps between successive members
of $E$ contain positive numbers of unselected indices summing
to eight. Thus the gap sizes are either $3,1,1,1,1,1$ or a
permutation of $2,2,1,1,1,1$. In the first case all six
members of $E$ have the same parity in the original cycle.
In the second case the two gaps of size two split the selected
vertices into two strings of lengths $k$ and $6-k$, where, up
to reflection, $k=1,2$, or $3$. Up to rotation and reflection
of $C$, the three mixed-parity possibilities are therefore
\[
\{0,1,2,6,8,10\},\qquad
\{0,1,2,6,7,8\},\qquad
\{0,1,2,7,8,9\}.
\]
These representatives can also be verified directly by listing
their gap sizes in the index order $0,3,6,9,12,1,4,7,10,13,2,5,8,11$.

First suppose the six endpoints have the same parity; relabel
so that six of the seven even-indexed vertices are in $E$.
Put $F=\{c_1,c_3,\ldots,c_{13}\}$.
Every odd-indexed vertex having two members of $E$ at distance
three has no red neighbor in $X$, by the forbidden-path
observation and the distinct matching partners of those two
members. The other two odd-indexed vertices, each having just
one member of $E$ at distance three, have at most one red
neighbor in $X$, namely that member's matching partner.

We claim that $F$ is independent in $G$. Up to rotation and
reflection, a proposed red chord is $c_1c_j$ for $j=3,5$, or $7$.
For each value of $j$, the following two twelve-vertex paths
have disjoint even-indexed endpoint pairs. Since only one
even-indexed vertex is absent from $E$, one pair lies in $E$,
contradicting the forbidden-path observation:
\[
\begin{array}{c|l}
3&0,1,3,4,5,6,7,8,9,10,11,12\\
 &2,3,1,0,13,12,11,10,9,8,7,6\\\hline
5&0,13,12,11,10,9,8,7,6,5,1,2\\
 &4,5,1,0,13,12,11,10,9,8,7,6\\\hline
7&0,13,12,11,10,9,8,7,1,2,3,4\\
 &2,3,4,5,6,7,1,0,13,12,11,10
\end{array}
\]
Here and below an index sequence denotes the corresponding
sequence of cycle vertices.

Let $D=V(H)\setminus F$, of order eleven. The five vertices
of $F$ having no red neighbor in $X$ have at least ten red
neighbors in $D$, by $\delta(G)=10$. The other two have at
least nine red neighbors in $D$. All edges from $v$ to $F$
are blue. Lemma~\ref{lem:seven-clique-two-exceptions} therefore
applies to $H$, giving a red $C_{15}$, a contradiction.

It remains to treat the three mixed-parity configurations.
The first two contain $\{c_0,c_1,c_2,c_6,c_8\}$. Put $w=c_3$.
Its distance-three neighbors $c_0,c_6$ are both matched, so all
$w$--$X$ edges are blue. The following table shows that the
eight chords from $w$ to $c_q$, for
$q\in\{0,1,5,6,7,8,9,10\}$, are blue as well: if a chord
were red, its listed twelve-vertex path would have both
endpoints in $\{c_0,c_1,c_2,c_6,c_8\}\subseteq E$.
\[
\begin{array}{c|l}
q&\text{path using }c_3c_q\\\hline
0&1,2,3,0,13,12,11,10,9,8,7,6\\
1&2,3,1,0,13,12,11,10,9,8,7,6\\
5&0,13,12,11,10,9,8,7,6,5,3,2\\
6&0,13,12,11,10,9,8,7,6,3,2,1\\
7&0,13,12,11,10,9,8,7,3,4,5,6\\
8&1,0,13,12,11,10,9,8,3,4,5,6\\
9&0,13,12,11,10,9,3,4,5,6,7,8\\
10&1,0,13,12,11,10,3,4,5,6,7,8
\end{array}
\]
Thus $w$ has at most $13-8=5$ red neighbors on $C$, at most
four in $T$, and none in $X\cup\{v\}$. Its total red degree
is at most nine, contrary to $\delta(G)=10$.

For the last configuration $E=\{c_0,c_1,c_2,c_7,c_8,c_9\}$,
put $w=c_4$. Its distance-three neighbors $c_1,c_7$ are matched,
so again all $w$--$X$ edges are blue. Eight blue chords are
forced by the following table, giving the same degree contradiction:
\[
\begin{array}{c|l}
q&\text{path using }c_4c_q\\\hline
0&1,2,3,4,0,13,12,11,10,9,8,7\\
1&0,13,12,11,10,9,8,7,6,5,4,1\\
2&0,13,12,11,10,9,8,7,6,5,4,2\\
6&0,13,12,11,10,9,8,7,6,4,3,2\\
7&0,13,12,11,10,9,8,7,4,3,2,1\\
8&1,0,13,12,11,10,9,8,4,5,6,7\\
9&1,0,13,12,11,10,9,4,5,6,7,8\\
10&1,0,13,12,11,10,4,5,6,7,8,9
\end{array}
\]
Thus $G[V(C),X]$ has no matching of size six, so
$\nu(G[V(C),X])\le5$. Together with the reverse inequality
proved above, this gives $\nu(G[V(C),X])=5$.
\end{proof}

\begin{corollary}
\label{cor:local-cycle14-nine-cut}
Under the preceding assumptions, $\kappa(G)=9$. For any such
cycle $C$, a minimum vertex cover $Z$ of $G[V(C),X]$
has size five. Put $T=V(H)\setminus V(C)$,
$a=|Z\cap V(C)|$, and $S=T\cup Z$.
Then $1\le a\le5$, and $G-S$ has exactly two red components
\[
A=V(C)\setminus Z,\qquad B=\{v\}\cup(X\setminus Z),
\]
of orders $14-a$ and $6+a$. Every vertex of either component
has at most four blue neighbors within that component, and
$v$ is red-adjacent to every other vertex of $B$.
\end{corollary}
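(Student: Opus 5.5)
We combine Proposition~\ref{prop:local-cycle14-five-matching} with K\"onig's theorem and then check the separation properties directly. The proposition gives $\nu(G[V(C),X])=5$. By K\"onig's theorem, a minimum vertex cover $Z$ of $G[V(C),X]$ therefore has exactly five vertices.

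The next step is to show that $S=T\cup Z$ separates $A$ from $B$. We have $|S|=4+5=9$ and $|G|-|S|=20=|A|+|B|$, with $|A|=14-a$ and $|B|=1+(10-(5-a))=6+a$. There are three kinds of potential $A$--$B$ edges.
\begin{itemize}
\item An edge from $A\subseteq N_{\bG}(v)$ to $v$ is blue by the definition of $H$.
\item An edge from $A$ to $X\setminus Z$ is blue, because every red $V(C)$--$X$ edge meets $Z$ and such an edge avoids $Z$ at both ends.
\item The vertices of $T$ and $Z$ lie in $S$, so they contribute no cross edges.
\end{itemize}
Hence no red edge joins $A$ and $B$.

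For the connectivity claims, $B$ is red-connected because $v$ is red-adjacent to every vertex of $X$, in particular to all of $X\setminus Z$. This also gives the final assertion about $v$.

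The bounds on $a$ come from nonemptiness and connectivity. If $a=0$, then $A=V(C)$ is nonempty. Any red component structure inside $A$ then yields a cut of order nine, which is consistent with $\kappa(G)\ge9$. So $a\ge1$ must come from a different argument.

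For $a=0$, $Z\subseteq X$ consists of five vertices covering every red $C$--$X$ edge. Then $A=V(C)$ has order $14$ and contains a red $C_{14}$. Each vertex of $A$ has blue neighbors $v$ together with $X\setminus Z$, five vertices. Since $A$ has no red neighbors in $B$ and $|B|\ge5$, Lemma~\ref{lem:blue-sides} gives $\delta(G[A])\ge 9=(14+4)/2$, which exceeds $(14+1)/2$. So $G[A]$ is a Hamilton-connected subgraph of order $n-1=14$, contradicting Lemma~\ref{lem:core}. Thus $a\ge1$.

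The bound $a\le5$ is trivial since $|Z|=5$. Both sides are nonempty since $|A|\ge9$ and $v\in B$.

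The remaining point is that $A$ is a single red component of $G-S$. If $G[A]$ were disconnected, its pieces, together with $B$, would give at least three components of $G-S$. A cut of order nine with three components, each of order at least five, would give a blue wheel as in earlier arguments.

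To handle small pieces, use Lemma~\ref{lem:blue-sides}: since $|B|\ge7\ge5$, we get $\delta(G[A])\ge|A|-5$. With $|A|\ge9$, any split of $A$ into two parts makes some vertex have at least $\lceil |A|/2\rceil\ge5$ blue neighbors within $A$. This contradicts the bound of at most four. Concretely, a piece of order $p\le |A|/2$ forces blue degree at least $|A|-p\ge5$ for its vertices. Hence $G[A]$ is connected. The same degree argument gives the "at most four blue neighbors within each component" statement for both $A$ and $B$, since the other side has at least five vertices.

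Finally, $S$ is a vertex cut of order nine, which gives $\kappa(G)\le9$. Combined with Proposition~\ref{prop:global-nine-connected15}, this yields $\kappa(G)=9$.

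The only delicate step is excluding $a=0$, which needs the core lemma. The rest is bookkeeping.
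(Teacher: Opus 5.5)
Your proposal is correct and follows the paper's proof essentially step for step. König's theorem with Proposition~\ref{prop:local-cycle14-five-matching} gives $|Z|=5$, and $S=T\cup Z$ separates $A$ from $B$ because every $A$--$B$ edge is blue. The blue-sides bound gives both the internal blue-degree claim and the red connectivity of $A$; your smallest-piece argument is equivalent to the paper's observation that each component would need at least $|A|-4$ vertices. Finally, $a=0$ is excluded because $G[V(C)]$ would be a Hamilton-connected $14$-vertex core, contradicting Lemma~\ref{lem:core}. You can delete the exploratory paragraph suggesting that $a\ge1$ might follow from connectivity considerations, since it plays no role in the argument.
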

\begin{proof}
K\"onig's theorem and Proposition~\ref{prop:local-cycle14-five-matching}
give $|Z|=5$. The set $S$ has order nine and separates the
nonempty sets $A$ and $B$, proving $\kappa(G)=9$. We have
$0\le a\le5$, $|A|=14-a\ge9$, and $|B|=6+a\ge6$.
All $A$--$B$ edges are blue. If a vertex in either side had
five blue neighbors within its own side, those five vertices
and five vertices of the other side would form a blue $C_{10}$
in its blue neighborhood. This proves the internal blue-degree
bound. The graph $G[B]$ is connected because $v$ is red-adjacent
to all of $X\setminus Z$. The red graph on $A$ is connected
as well: if it had two components, each would have at least
$|A|-4$ vertices, impossible when $|A|\ge9$.

Finally, if $a=0$, then $A=V(C)$ has fourteen vertices and
$\delta(G[A])\ge14-5=9$. Thus $G[A]$ is Hamilton-connected by the
minimum-degree criterion, contradicting Lemma~\ref{lem:core}.
Thus $a\ge1$.
\end{proof}

\subsection{The final nine-vertex cut}
The cycle constructions below have a common form. Suppose $P_A$
joins $a_s$ to $a_t$ in one side, $P_B$ joins $b_s$ to $b_t$ in
the other, and $s,t$ are distinct vertices of $S$ with red edges
$a_ss,sb_s,a_tt,tb_t$. The path $P_B$ may be a single vertex,
in which case $b_s=b_t$. The two paths and the attachment edges
form a red cycle, with
\[
 |V(C)|=|V(P_A)|+|V(P_B)|+2.
\]
The remaining issue in each case is to choose paths of the required
orders with these attachment endpoints.

\begin{theorem}\label{theorem:R15=29}
With the convention $W_{11}=K_1+C_{10}$, we have
\[
R(C_{15},W_{11})=29.
\]
\end{theorem}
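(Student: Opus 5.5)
The lower bound comes from the red graph $K_{14}\cup K_{14}$ on $28$ vertices. It has no $C_{15}$, and its blue complement is $K_{14,14}$, which is bipartite and so contains no wheel of odd order. For the upper bound, assume a coloring of $K_{29}$ with red graph $G$ has no red $C_{15}$ and no blue $W_{11}$. By Corollary~\ref{cor:small}, $\delta(G)=10$, and by Proposition~\ref{prop:global-nine-connected15}, $\kappa(G)\ge9$. Choose $v$ of red degree ten. Lemma~\ref{prop:local15} gives a red $C_{14}$ in its blue neighborhood. Corollary~\ref{cor:local-cycle14-nine-cut} then gives a nine-vertex cut $S$ with sides $A,B$ of orders $(14-a,6+a)$, $1\le a\le5$. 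Up to interchange, the side orders are $(13,7),(12,8),(11,9),(10,10)$. Each side has internal blue degree at most four, and $v\in B$ is red-adjacent to all of $B-v$.

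First I record a matching fact, as in the proof of Theorem~\ref{theorem:R14=27}. For any side $D$ with $|D|\ge9$, $G[D,S]$ has a matching saturating $S$. Otherwise K\"onig's theorem gives a vertex cover of size at most eight, and deleting it separates a vertex of $D$ from the other side, contrary to $\kappa(G)\ge9$. For smaller sides I can only guarantee a matching of size $|D|$ or of the size permitted by the cover argument. In each case I build the cycle by the common form stated before the theorem: a path $P_A$ between $a_s,a_t$, a path $P_B$ between $b_s,b_t$, two vertices $s,t\in S$, and total order $|P_A|+|P_B|+2=15$. The target orders are those in Table~\ref{tab:w11-final-path-orders}.

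For $(13,7)$ and $(12,8)$, take $L$ the larger side and $D$ the smaller. A vertex $b\in D$ has at least $10-(|D|-1)$ red neighbors in $S$, that is, four or three. Since an $S$-saturating matching of $L$ exists, two of these neighbors $s,t$ have distinct partners $a_s,a_t\in L$. Pick a $12$-subset $L'\ni a_s,a_t$. Then $\delta(G[L'])\ge(|L|-5)-(|L|-12)=7\ge13/2$, so $G[L']$ is Hamilton-connected by Lemma~\ref{lem:hc}, and its spanning path closes through $sbt$ into a red $C_{15}$.

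For $(11,9)$, both sides have saturating matchings, so all nine $s\in S$ get $a_s,b_s$. The nine-vertex side $D$ has $\delta\ge4$ and independence number at most five, hence contains a red edge $b_sb_t$ between marked vertices. I then need a Hamilton $a_s$--$a_t$ path in the $11$-side $L$, where $\delta(G[L])\ge6=(11+1)/2$, so $G[L]$ is Hamilton-connected by Lemma~\ref{lem:hc}. This gives $11+2+2=15$.

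For $(10,10)$, the target is $P_{10},P_3$. Let $B$ contain $v$. Both matchings saturate $S$, and at most one $b_s$ equals $v$. Discarding that pair leaves eight indices with $b_s\neq v$, and $b_svb_t$ is a red $P_3$ for any two of them. It remains to find a Hamilton path in $G[A]$ (order $10$, $\delta\ge5$) whose endpoints are two of the eight marked $a_s$. I would do this as in Lemma~\ref{lem:nine-six-marked}: add a vertex $z$ joined to the eight marked vertices and check Chv\'atal's condition on eleven vertices. At most two vertices keep degree five and the rest have degree at least six, so $d_i>i$ for $i\le5$, and deleting $z$ from the Hamilton cycle gives the path. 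For this side the key point is that the $P_3$ goes through $v$, which the table already notes.

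The main obstacle I expect is that the matching claim for smaller sides is weaker than stated, and that parity issues may arise when the endpoints on a side cannot be chosen as required. The $(10,10)$ case relies on the Chv\'atal count, which I would double-check. If it fails, I would fall back on Theorem~3 of Shih et al.\ as in the $(10,8)$ case at $n=14$, treating the exceptional forms $H_2\vee(K_4\cup K_4)$ and $H_5\vee\overline K_5$ separately, the latter via Lemma~\ref{lem:independent-five-separated}.
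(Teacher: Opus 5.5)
Your proof is correct. Through the reduction to the nine-vertex cut and in the $(13,7)$ and $(12,8)$ cases it follows the paper's proof step for step. It differs in the last two cases.

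\textbf{The $(11,9)$ case.} You use the perfect matching between $S$ and the nine-vertex side, so every red edge there already joins two marked vertices. The paper instead takes a red edge $bb'$ and picks distinct red neighbours $s,t\in S$ of its endpoints. The two arguments cost about the same.

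\textbf{The $(10,10)$ case.} This is the real difference.
\begin{itemize}
\item The paper does not use $v$ here. It finds a red $P_3$ between marked vertices through an arbitrary vertex $z$, concludes that neither side is Hamilton-connected, and then needs three further ingredients: Theorem~3 of Shih et al., explicit Hamilton paths in $H_2\vee(K_4\cup K_4)$, and Lemma~\ref{lem:independent-five-separated} to dispose of $H_5\vee\overline K_5$.
\item You take the $P_3$ to be $b_svb_t$, using that $v\in B$ is red-adjacent to all of $B-v$. This is what the paper itself does in the $(9,9)$ case at $n=14$. You then need only an eleven-vertex analogue of Lemma~\ref{lem:nine-six-marked}.
\item Your Chv\'atal count is right. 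After adding $z$ adjacent to the eight marked vertices, only the two unmarked vertices can have degree five, and every other vertex has degree at least six. Hence $d_i>i$ for $1\le i\le5<11/2$.
\item The Hamilton cycle through $z$ therefore yields a spanning path of $G[A]$ between two marked vertices $a_i,a_j$. Together with $s_i,b_i,v,b_j,s_j$ it closes to a red $C_{15}$.
\end{itemize}

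\textbf{Comparison.} Your fallback to Shih et al.\ is not needed, and your route is shorter and avoids the Ore-type characterization. What the paper's route gains is independence from $v$: its argument is symmetric in $A,B$ and works for any $(10,10)$ separation with saturating matchings and the internal blue-degree bound, in the spirit of Lemma~\ref{lem:two-dirac-components}.

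\textbf{Remarks on your caveats.} The worry in your last paragraph about matchings into smaller sides does not arise, because you never use a matching into a side of order less than nine. Also, a bipartite graph contains no wheel at all, since every wheel contains a triangle, so the qualifier ``of odd order'' in your lower-bound sentence can be dropped.
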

\begin{proof}
The disjoint union $K_{14}\cup K_{14}$ has no red $C_{15}$,
and its blue complement is bipartite and therefore contains
no wheel $K_1+C_{10}$. This gives the lower bound 29.

Suppose that a coloring on 29 vertices has neither a red
$C_{15}$ nor a blue $W_{11}$. The preceding reductions apply:
Corollary~\ref{cor:small} gives $\delta(G)=10$;
Proposition~\ref{prop:global-nine-connected15} gives
$\kappa(G)\ge9$.
Choose a vertex $v$ of red degree ten. Its blue neighborhood
$H$ has eighteen vertices and no blue $C_{10}$. Thus
$R(C_{14},C_{10})=18$ supplies a red $C_{14}$ in $H$.
Proposition~\ref{prop:local-cycle14-five-matching} and
Corollary~\ref{cor:local-cycle14-nine-cut} now give a red
vertex cut $S$ of order nine, whose deletion leaves two
components. Up to exchanging these components, their orders
are
\[
(13,7),\quad(12,8),\quad(11,9),\quad(10,10).
\]
Denote the two components by $A$ and $B$, with $|A|\ge|B|$.
Every vertex has at most four blue neighbors within its own
component, so
\[
\delta(G[A])\ge |A|-5,\qquad
\delta(G[B])\ge |B|-5.
\]

We shall repeatedly use the fact that $G[D,S]$ has a matching
saturating $S$ whenever $D\in\{A,B\}$ and $|D|\ge9$. Otherwise,
K\"onig's theorem would give a vertex cover of $G[D,S]$ of size
at most eight. Deleting this cover would leave a vertex of $D$
and remove every edge joining the remaining vertices of $D$ to
$S$. The other component would be nonempty and untouched by the
cover, contradicting $\kappa(G)\ge9$.
Write $a_s$ for the distinct matched neighbor in $A$ of each
$s\in S$ whenever this matching is used.

Suppose first that $(|A|,|B|)$ is $(13,7)$ or $(12,8)$.
Any $b\in B$ has at least $10-(|B|-1)\ge3$ red neighbors
in $S$. Choose distinct such neighbors $s,t$, and choose
a twelve-vertex subset $A'\subseteq A$ containing both
$a_s$ and $a_t$. Its minimum red degree is at least
\[
(|A|-5)-(|A|-12)=7,
\]
so $G[A']$ is Hamilton-connected. A spanning red $a_s$--$a_t$
path in $G[A']$ closes through $sbt$ to a red cycle on
$12+3=15$ vertices.

If $(|A|,|B|)=(11,9)$, the graph on $A$ has minimum degree
at least six and is Hamilton-connected. The graph on $B$ has
minimum degree at least four, so choose a red edge $bb'$ in it.
Both $b$ and $b'$ have at least $10-(9-1)=2$ red neighbors in
$S$. Choose distinct $s,t\in S$ with $sb$ and $b't$ red.
A spanning red $a_s$--$a_t$ path in $A$ closes through
$sbb't$ to a red cycle on $11+4=15$ vertices.

It remains to exclude $(|A|,|B|)=(10,10)$. Both induced red
graphs have minimum degree at least five. Matchings saturating
$S$ on both sides can be indexed as
\[
a_i s_i,\ s_i b_i\in E(G),\qquad 1\le i\le9,
\]
where the $a_i$ are distinct and the $b_i$ are distinct.
For every $i$, there is some $j\ne i$ and a red path
$b_i z b_j$ on three vertices in $B$. To see this, choose
any red neighbor $z$ of $b_i$. Among its at least five red
neighbors, at most one is outside the nine marked vertices
$b_1,\ldots,b_9$ and at most one is $b_i$; hence at least
three choices of $b_j$ remain. Whenever $G[A]$ has a spanning
$a_i$--$a_j$ path, this path, the vertices $s_i,s_j\in S$,
and $b_i z b_j$ form a red $C_{15}$.
The same observation holds with $A$ and $B$ interchanged.

In particular, neither $G[A]$ nor $G[B]$ is Hamilton-connected.
Since both graphs satisfy Ore's degree-sum condition on ten vertices,
Theorem~3 of Shih et al.~\cite{SSK} leaves only the forms
\[
H_2\vee(K_4\cup K_4)
\qquad\hbox{or}\qquad
H_5\vee\overline K_5.
\]
In the first form, the two clique orders must both equal four
because their vertices have degree at least five.

Suppose, for example, that $A=H_2\vee(K_4\cup K_4)$.
Every vertex outside the two-vertex set $H_2$ is an endpoint
of a Hamilton path whose other endpoint is any prescribed
different vertex. Here is an explicit construction. Name the
two joining vertices $p,q$, and the cliques $L,R$, with the
first prescribed endpoint $a\in L$.
If the other endpoint $b$ lies in $L$, split $L$ into two
nonempty paths starting at $a$ and ending at $b$, respectively,
and concatenate them as
\[
L_1,\ p,\ R,\ q,\ L_2.
\]
If $b\in R$, choose $r\in R\setminus\{b\}$ and use
\[
L,\ p,\ r,\ q,\ R\setminus\{r\},
\]
ordering the clique paths to start at $a$ and end at $b$.
If $b=p$, use $L,q,R,p$; the case $b=q$ is symmetric.
These constructions never require the edge $pq$.
Choose a marked $a_i$ outside $H_2$, which is possible because
nine vertices are marked. The preceding three-vertex-path
observation in $B$ supplies $j\ne i$, and the stated Hamilton
path in $A$ then gives a red $C_{15}$. Thus the first form is
impossible on either side.

The remaining form on $A$ contains five mutually blue-adjacent
vertices. Lemma~\ref{lem:independent-five-separated} therefore
forces the blue graph on the other ten-vertex component $B$
to have at most one edge. Hence $\delta(G[B])\ge8$, so $B$
is Hamilton-connected, contrary to the case already excluded.
All four component distributions are impossible, completing
the upper bound.
\end{proof}

\section{The seven-vertex wheel}\label{sec:wheel-seven}
\subsection{The critical graphs for \texorpdfstring{$(C_8,C_6)$}{(C8,C6)}}
We show that every graph in $\Hc_8$ contains a Hamilton-connected
induced subgraph on seven vertices.
\begin{lemma}\label{lem:red7}
  Every graph in $\Hc_8$ contains a red $C_7$.
\end{lemma}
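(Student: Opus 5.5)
The plan is to start from a red $C_6$ and try to enlarge it by one vertex, using Lemma~\ref{lem:successors} to convert every failed enlargement into blue edges. Let $H\in\Hc_8$, so $|H|=9$, $H$ has no red $C_8$ and no blue $C_6$. Suppose for contradiction that $H$ has no red $C_7$.

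\emph{Step 1: a red $C_6$.} Since $R(C_6,C_6)=8$ (the cycle--cycle formula of~\cite{FSR}) and there is no blue $C_6$, every eight vertices of $H$ contain a red $C_6$. Fix a red $C=c_0c_1\cdots c_5c_0$ and let $T$ be the three vertices outside it.

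\emph{Step 2: local structure at each outside vertex.} With $r=6$ and no red $C_7$, Lemma~\ref{lem:successors} applies to each $t\in T$. Its red neighbours on $C$ form an independent set of $C$, so there are at most three of them, and the successors and predecessors of these neighbours are blue cliques. Hence each $t\in T$ falls into one of three types.
\begin{itemize}
\item \emph{Full:} $t$ is red-adjacent to exactly one parity class, say $\{c_0,c_2,c_4\}$. Then the opposite class $\{c_1,c_3,c_5\}$ is a blue triangle.
\item \emph{Intermediate:} $t$ has exactly two red neighbours on $C$, at distance two or three. Then it has at least four blue neighbours on $C$, and the two successors give one forced blue edge.
\item \emph{Sparse:} $t$ has at most one red neighbour on $C$, so it is blue-adjacent to at least five vertices of $C$.
\end{itemize}

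\emph{Step 3: red edges inside $T$.} These are controlled by a second use of the forbidden-cycle argument. If $t,t'\in T$ are red-adjacent, $t$ is red to $c_i$ and $t'$ is red to $c_j$, then inserting the path $t t'$ into $C$ gives a red cycle whose length is determined by $i-j$. A cycle of length $7$ or $8$ is forbidden, and this yields explicit restrictions on the red neighbourhoods of red-adjacent pairs in $T$. Similarly, a red $P_3$ in $T$ whose ends attach to consecutive vertices of $C$ would give a red $C_8$.

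\emph{Step 4: building a blue $C_6$.} Here the cases are organised by the types of the three vertices of $T$.
\begin{itemize}
\item If two vertices $t,t'\in T$ are sparse, each is blue-adjacent to at least five vertices of $C$, so they have at least four common blue neighbours. Together with the third vertex of $T$, or with blue edges inside $C$, this should give a blue $C_6$ of the form $t\,c\,t'\,c'\,x\,c''\,t$. Finding $x$ and $c''$ is where Step~3 enters.
\item If some $t$ is full on the even class, then $\{c_1,c_3,c_5\}$ is a blue triangle. Any other vertex $t'\in T$ with two blue neighbours among $c_1,c_3,c_5$, together with one more blue path through $T$, completes a blue $C_6$. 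Otherwise $t'$ is red to two odd vertices, which forces it to be full on the odd class. Then both parity classes are blue triangles, and the edges $t t'$ and the third vertex of $T$ must be examined. A red edge $t t'$ gives a red $C_8$, namely $t\,c_0\,c_1\,\cdots\,c_5\,t'\,t$ after rerouting. If $t t'$ is blue, two blue triangles joined by blue edges through $t,t'$ give a blue $C_6$.
\end{itemize}

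\emph{Main obstacle.} I expect the hardest part to be the mixed cases in which several vertices of $T$ are intermediate. There the blue edges forced by Lemma~\ref{lem:successors} are scattered, and a blue $C_6$ has to be assembled by hand. In these cases I would first try to re-choose $C$: replace a vertex of $C$ by a vertex of $T$ to obtain a different red $C_6$ in which some outside vertex becomes full or sparse, and then rerun the easier cases. Only if that fails would I fall back on explicit enumeration of the intermediate patterns, all of which are equivalent up to the dihedral symmetry of $C$.
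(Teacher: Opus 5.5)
Your proposal does not yet prove the lemma. After Step~2 it is a plan. The configurations you yourself call the main obstacle, with several outside vertices of intermediate type, are never treated. Nothing shows that re-choosing $C$ can always make some outside vertex full or sparse, and no enumeration is given.

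Several claims in the cases you do sketch are also wrong:
\begin{itemize}
\item A red path $t\,t'\,t''$ whose ends attach to consecutive vertices $c_0,c_1$ closes only to red cycles of orders $5$ and $9$. A red $C_8$ requires ends at cyclic distance two, and distance three gives a red $C_7$.
\item If $t'$ has at most one blue neighbour among $c_1,c_3,c_5$, it may be red to exactly two odd vertices. So it need not be full on the odd class.
\item Suppose $t$ is full on the even class, $t'$ is full on the odd class, and $tt'$ is blue. Then the blue cliques on $\{t,c_1,c_3,c_5\}$ and $\{t',c_0,c_2,c_4\}$ are joined only by $tt'$ unless some antipodal chord is blue. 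So no blue $C_6$ follows as you state. That subcase does close, but differently: if all antipodal chords are red, the red graph contains $K_{4,4}$ minus $tt'$, hence a red $C_8$.
\item A blue $K_4$ plus a vertex with two blue neighbours in it yields only a blue $C_5$. Your ``one more blue path through $T$'' is unspecified.
\end{itemize}

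The missing idea is to start from a longer monochromatic cycle rather than a red $C_6$. This removes the attachment analysis entirely.
\begin{itemize}
\item The paper obtains a monochromatic cycle of length at least seven from Lemma~3.1 of K\'arolyi and Rosta (with $n=7$, $k=6$), using that there is no blue $C_6$.
\item It discards length nine by their Lemma~2.1(1), which turns a monochromatic $C_9$ into a monochromatic $C_8$.
\item In a blue $C_m$ with $m\in\{7,8\}$, every chord joining vertices at cyclic distance $m-5$ must be red. A blue one would close a blue $C_6$ with the longer arc.
\item Since $\gcd(m-5,m)=1$, these red chords form a red $C_m$. For $m=8$ this contradicts the absence of a red $C_8$, and for $m=7$ it is the required red $C_7$.
\end{itemize}
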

\begin{proof}
  Lemma 3.1 of~\cite{kr}, with parameters $n=7,k=6$, gives a monochromatic
  cycle of length at least seven, since a blue $C_6$ is forbidden.
  A monochromatic $C_9$ gives a monochromatic $C_8$ by Lemma 2.1(1)
  of the same paper. A red $C_8$ is forbidden.
  If there is a blue $C_8$, all its chords joining vertices at cyclic
  distance three must be red, since any blue chord of this type closes a blue
  six-cycle along the longer arc. These chords form a red $C_8$, because
  $\gcd(3,8)=1$. Thus a monochromatic $C_7$ exists.
  If it is blue, all its chords of cyclic distance two are red; these form
  a red $C_7$ because $\gcd(2,7)=1$.
\end{proof}

\begin{lemma}\label{lem:attach7}
  Let $H\in\Hc_8$, let $C=01234560$ be a red cycle, and write
  $V(H)\setminus V(C)=\{p,q\}$. Each of $p,q$ has at most one red neighbor
  on $C$.
\end{lemma}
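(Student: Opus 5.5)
Fix $t\in\{p,q\}$ and suppose $t$ has two red neighbors on $C$. Since $H$ has order $9$ and no red $C_8$, Lemma~\ref{lem:successors} with $r=7$ applies: the red neighbors of $t$ on $C$ are pairwise nonconsecutive, and their successors form a blue clique, as do their predecessors. A $7$-cycle has independence number $3$, so $t$ has two or three red neighbors on $C$. Up to rotation and reflection, the possibilities are:
\begin{itemize}
\item two neighbors at distance $2$, namely $\{0,2\}$;
\item two neighbors at distance $3$, namely $\{0,3\}$;
\item three neighbors, namely $\{0,2,4\}$.
\end{itemize}
In each case I will derive either a red $C_8$ or a blue $C_6$.

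\textbf{Blue edges at $t$.} First I collect the forced blue edges. Every non-neighbor of $t$ on $C$ is joined to $t$ by a blue edge, so $t$ has at least four blue neighbors on $C$. Lemma~\ref{lem:successors} gives the blue successor and predecessor edges.

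\textbf{Blue chords of $C$.} Next I use chord-shortening in the style of that lemma. Suppose $ti$ and $tj$ are red and a chord of $C$ yields a red spanning $i$--$j$ path of $V(C)$. Closing that path through $t$ gives a red $C_8$. So all such chords are blue. For neighbors $\{0,2\}$ this should make many chords blue, for example $13$, $14$ and $16$, among others; I will tabulate them as the paper does.

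\textbf{Using the other outside vertex.} I also bring in the second outside vertex $u$. If $u$ is red-adjacent to $t$, then a red path from $t$ through $u$ creates more forbidden configurations. Otherwise $ut$ is blue, which adds a further blue edge.

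\textbf{Closing the argument.} The goal in each case is a blue $C_6$ that alternates between $t$'s blue neighbors on $C$ and the blue chords found above. Take neighbors $\{0,3\}$ as an example:
\begin{itemize}
\item $t$ is blue to $1,2,4,5,6$;
\item the successors $1,4$ are blue-adjacent;
\item the predecessors $6,2$ are blue-adjacent.
\end{itemize}
A candidate blue cycle is $t\,1\,4\,\cdots$; I would then look for a blue $4$-path from $4$ back to one of $t$'s blue neighbors through $\{2,5,6\}$, using the chords that the shortening argument forces blue.

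If a case does not close on $C$ alone, I will use the $9$-vertex count. Two vertices outside a red $C_7$ that are each red to at most a few cycle vertices should give a blue $K_{3,3}$, and $K_{3,3}$ contains a blue $C_6$.

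\textbf{Main obstacle.} The hard part is the case $\{0,2\}$. There the successor and predecessor cliques are small, namely $\{1,3\}$ and $\{6,1\}$, and only a few chords are forced blue, so constructing the blue $C_6$ there will likely also need $u$'s adjacencies and a short subcase split on the color of $ut$.
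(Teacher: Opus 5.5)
\textbf{There is a genuine gap.} Your main closing strategy cannot succeed. That strategy looks for a blue $C_6$ through $t$, built from $t$'s blue edges to $C$ and the chords forced blue by single-chord shortening. The trouble is that each of your three cases can be realized on the eight vertices $\{t\}\cup V(C)$ with no red $C_8$ and no blue $C_6$.
\begin{itemize}
\item For $N_C(t)=\{0,3\}$, color blue only $14,16,24,26,46$ and $t1,t2,t4,t5,t6$, and everything else red. In blue, $5$ is a leaf and $0,3$ are isolated, so every blue cycle has at most five vertices. In red, deleting $\{0,3,5\}$ leaves four components $\{t\},\{1,2\},\{4\},\{6\}$, so there is no red Hamilton cycle.
\item For $\{0,2\}$, let both $t$ and $1$ be red exactly to $0,2$, and color everything else red. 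The blue graph is $K_2\vee\overline K_4$ on $\{t,1,3,4,5,6\}$, whose longest cycle is a $C_4$.
\item For $\{0,2,4\}$, color blue only $t1,t3,t5,t6,13,15,35,16,36$. Then $t,1,3$ are red only to $\{0,2,4\}$, and $0,2,4$ are blue-isolated.
\end{itemize}
The shortening step is also weaker than you expect. For $\{0,2\}$, vertex $1$ must be interior to any spanning red $0$--$2$ path, and checking chords at $1$ shows that only $13$ and $16$ are forced; $14$ is not. Likewise, for $\{0,3\}$ only $14$ and $26$ are forced. Your fallback does not work either. Two outside vertices that are each blue to at least four of the seven cycle vertices are only guaranteed one common blue neighbor, which yields nothing like a blue $K_{3,3}$.

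\textbf{What is missing} is a concrete use of the second outside vertex $q$; your remarks about the color of $tq$ do not supply it. The paper applies Lemma~\ref{lem:successors} to $q$ as well and builds blue $6$-cycles through both $p$ and $q$, of the form $p,a,b,q,c,d,p$.

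\emph{Three red neighbors, treated first.} With $N_C(p)=\{0,2,4\}$, the blue graph on $\{p,1,3,5,6\}$ contains $K_5-56$ and is Hamilton-connected by Lemma~\ref{lem:hc}. Hence $q$ has at most one blue neighbor in $S=\{1,3,5,6\}$, so $N_C(q)$ is $\{1,3,5\}$ or $\{1,3,6\}$. This forces every edge between $\{1,3,5\}$ and $\{0,2,4\}$ to be red. Then a blue $pq$ gives the blue $C_6$ $p,1,3,6,2,q,p$, while a red $pq$ gives a red $K_{4,4}$.

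\emph{Case $\{0,2\}$.} Since both outside vertices now have at most two red neighbors, $q$ is blue to some $r\in\{4,5\}$. A blue edge $q3$ or $q6$ would then close $q,3,1,6,p,r,q$. So $N_C(q)=\{3,6\}$, which forces $04$ and $25$ blue and gives the blue $C_6$ $p,4,0,q,2,5,p$.

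\emph{Case $\{0,3\}$.} Here $q$ must be red to both ends of $14$ or of $26$, which forces $03$ and $25$ blue. A short block argument then makes every other edge on $V(C)$ red. The red graph on $V(C)$ therefore has minimum degree four, so a red spanning $0$--$3$ path exists and closes through $p$ to a red $C_8$.
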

\begin{proof}
  Write $N_C(t)$ for the red neighbors of $t$ on $C$, with indices
  taken modulo seven.
  By Lemma~\ref{lem:successors}, $N_C(t)$ is an independent set in the cycle $C$ and has
  size at most three. The successor and predecessor rules give
  \begin{align}
    i,i+2\in N_C(t)&\ \Longrightarrow\ (i+1)(i+3),\ (i+1)(i-1)
    \text{ are blue},\label{eq:rules7a}\\
    i,i+3\in N_C(t)&\ \Longrightarrow\ (i+1)(i+4),\ (i+2)(i-1)
    \text{ are blue}.\label{eq:rules7b}
  \end{align}
  The second edge in~\eqref{eq:rules7b} joins the two predecessors.

  Suppose first that $|N_C(p)|=3$. Up to rotation and reflection,
  $N_C(p)=\{0,2,4\}$. The forced blue edges are
  $13,16,35,15,36$. Thus on $S=\{1,3,5,6\}$ the blue graph contains
  $K_4-56$, and $p$ is blue-adjacent to all of $S$.
  The blue graph on $S\cup\{p\}$ is Hamilton-connected by
  Lemma~\ref{lem:hc}. Hence $q$ has at most one blue neighbor in $S$;
  otherwise, a spanning path on these five vertices closes through $q$
  to a blue $C_6$. Independence on the red cycle forces
  $N_C(q)=\{1,3,5\}$ or $\{1,3,6\}$. The reflection $i\mapsto4-i$
  interchanges these cases while preserving $N_C(p)$, so take the first.
  This forces $02,04,24,26,46$ to be blue.

  Set $A=\{1,3,5\}$ and $B=\{0,2,4\}$. All edges between $A$ and $B$ are
  red: the cycle supplies five of them, and if one of the remaining
  four were blue, the corresponding row below would be a blue six-cycle.
  \[
  \begin{array}{c|l}
    \text{blue edge}&\text{blue cycle}\\\hline
    14 & p,3,1,4,2,6,p\\
    03 & p,1,3,0,2,6,p\\
    05 & p,1,5,0,2,6,p\\
    25 & p,1,5,2,4,6,p
  \end{array}
  \]
  If $pq$ is blue, then $p,1,3,6,2,q,p$ is a blue $C_6$.
  If $pq$ is red, the red graph between $A\cup\{p\}$ and $B\cup\{q\}$
  is $K_{4,4}$, which contains $C_8$. Thus both outside vertices have at
  most two red neighbors on $C$.

  Suppose next that $N_C(p)=\{0,2\}$. The edges $13,16$ are blue.
  Choose $r\in\{4,5\}$ with $qr$ blue; such a choice exists since $q$
  cannot have consecutive red neighbors. Also $pr$ is blue.
  If $q3$ is blue, use $q,3,1,6,p,r,q$; if $q6$ is blue, use
  $q,6,1,3,p,r,q$. Therefore $N_C(q)=\{3,6\}$.
  This forces $04,25$ to be blue, giving $p,4,0,q,2,5,p$.

  Finally suppose $N_C(p)=\{0,3\}$. The edges $14,26$ are blue, and $p$
  is blue-adjacent to all four endpoints. If $q$ has a blue neighbor on
  each edge, orient them to obtain two internally disjoint blue three-edge
  $p$--$q$ paths, forming a blue $C_6$.
  Thus $N_C(q)=\{1,4\}$ or $\{2,6\}$. Applying the reflection
  $i\mapsto3-i$ if necessary, assume $N_C(q)=\{1,4\}$, which forces
  $25,03$ to be blue.
  Put $A=\{0,3\}$, $B=\{1,4\}$ and $D=\{2,5,6\}$.
  The edges $03,14,25,26$ are blue, $p$ is blue-adjacent to $B\cup D$,
  and $q$ is blue-adjacent to $A\cup D$.
  If $a\in A,b\in B$ and $ab$ is blue, use $p,b,a,q,5,2,p$.
  If $a\in A,d\in D$ and $ad$ is blue, let $a'$ be the other vertex of
  $A$ and take $d'\in D\setminus\{d\}$; use $p,d,a,a',q,d',p$.
  The case of a blue edge between $B$ and $D$ is symmetric.
  All edges between distinct blocks are therefore red.
  The only remaining edge within the blocks is $56$, which is red
  because it lies on $C$.
  The blue graph on $V(C)$ consequently has maximum degree two.
  The red graph has minimum degree four and is Hamilton-connected by
  Lemma~\ref{lem:hc}. A spanning red $0$--$3$ path closed through $p$
  gives $C_8$.
  These cases exhaust the possible cyclic distances between two red neighbors.
\end{proof}

Sparse red attachments also restrict the blue degrees on the core.
\begin{lemma}\label{lem:densecore}
  Let $X$ be a vertex set of size at least seven, and let $p,q$
  be distinct vertices outside $X$.
  Suppose the blue graph contains no $C_6$ and each of $p,q$ has at most
  one red neighbor in $X$. Then the blue graph on $X$ has maximum degree
  at most two.
\end{lemma}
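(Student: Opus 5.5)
We argue by contradiction: suppose some $x\in X$ has three distinct blue neighbors $y_1,y_2,y_3\in X$, and build a blue $C_6$ through $p$ and $q$. First note that $p$ and $q$ are blue-adjacent to all of $X$ except at most one vertex each. Let $r_p$ and $r_q$ denote the possible red neighbors in $X$, so at most two vertices of $X$ fail to be blue-adjacent to both $p$ and $q$.

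The intended cycle has the form $p,\,y_i,\,x,\,y_j,\,q,\,z,\,p$, with $z\in X\setminus\{x,y_i,y_j\}$ a common blue neighbor of $p$ and $q$. It uses the blue edges $xy_i$ and $xy_j$, the blue edges $py_i$ and $qy_j$, and the blue edges $qz$ and $pz$. First choose the ordered pair $(y_i,y_j)$ with $y_i\ne r_p$ and $y_j\ne r_q$. Among three vertices $y_1,y_2,y_3$, at most one equals $r_p$ and at most one equals $r_q$. Hence we can pick $y_i\ne r_p$ and then $y_j\ne r_q$ with $y_j\ne y_i$: the forbidden values for $y_j$ are $y_i$ and possibly $r_q$, leaving at least one of the three. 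Next choose $z$ in $X\setminus\{x,y_i,y_j,r_p,r_q\}$, which has at least $7-5=2$ elements. Then $z$ is blue to both $p$ and $q$. All six vertices are distinct, since $p,q\notin X$.

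This gives a blue $C_6$, contrary to the hypothesis. Hence every vertex of $X$ has blue degree at most two within $X$. The edge $pq$ is not needed, and the only delicate point is the counting that ensures the distinct choices exist. With $|X|\ge7$ there is slack, so no obstacle is expected beyond checking that the choice of $y_i,y_j$ avoids $r_p$ and $r_q$ appropriately, which the counting above handles.
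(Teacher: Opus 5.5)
Your proof is correct and is essentially the paper's argument. You use the same blue six-cycle through $p$ and $q$, with two blue neighbors of $x$ chosen to avoid the red attachments of $p$ and $q$, and a common blue neighbor of $p,q$ taken from the remaining at least four vertices of $X$.
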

\begin{proof}
  If $x\in X$ has three blue neighbors in $X$, choose two distinct
  neighbors $u,w$ with $pu,qw$ blue. Each of $p,q$ excludes at most one of
  the three choices, so this is possible.
  Among the at least four vertices in $X\setminus\{x,u,w\}$, at most
  two fail to be common blue neighbors of $p,q$.
  Choose a common blue neighbor $r$. Then $x,u,p,r,q,w,x$ is a blue $C_6$.
\end{proof}

\begin{proposition}\label{prop:core8}
  Every graph in $\Hc_8$ contains a Hamilton-connected induced subgraph
  on seven vertices.
\end{proposition}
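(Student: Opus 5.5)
The plan is to take the core to be the vertex set of a red seven-cycle. Let $H\in\Hc_8$, so $|H|=9$. By Lemma~\ref{lem:red7}, $H$ contains a red cycle $C$ of order seven; put $X=V(C)$ and $V(H)\setminus X=\{p,q\}$. By Lemma~\ref{lem:attach7}, each of $p,q$ has at most one red neighbor in $X$. The blue graph of $H$ contains no $C_6$, so Lemma~\ref{lem:densecore} applies with $|X|=7$. It gives $\Delta(\cl H[X])\le2$, hence $\delta(H[X])\ge6-2=4=(7+1)/2$. Lemma~\ref{lem:hc} then shows that the red graph $H[X]$ is Hamilton-connected, and it is induced by construction.

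The key steps, in order, are as follows. First, obtain the red $C_7$ from Lemma~\ref{lem:red7}. Second, bound the red attachments of the two outside vertices by Lemma~\ref{lem:attach7}. Third, bound the blue degrees inside $X$ via Lemma~\ref{lem:densecore}. Fourth, conclude with the degree criterion of Lemma~\ref{lem:hc}.

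The only point needing care is the hypothesis of Lemma~\ref{lem:densecore}. It requires $|X|\ge7$, at most one red neighbor of each of $p,q$ in $X$, and no blue $C_6$. All three hold directly here, since $\cl H$ has no $C_6$ by the definition of $\Hc_8$. The substantive work, namely the case analysis on the attachments, was already carried out in Lemma~\ref{lem:attach7}. So I expect no real obstacle beyond checking that the numerical threshold $(r+1)/2=4$ for $r=7$ is met exactly.
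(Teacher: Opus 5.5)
Your proposal is correct and matches the paper's proof step for step. You take the red $C_7$ from Lemma~\ref{lem:red7}, bound the outside attachments by Lemma~\ref{lem:attach7}, and get blue maximum degree at most two on the core from Lemma~\ref{lem:densecore}. Then Lemma~\ref{lem:hc} applies at exactly the threshold $\delta\ge4=(7+1)/2$.
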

\begin{proof}
  Choose a red seven-cycle by Lemma~\ref{lem:red7}.
  Lemmas~\ref{lem:attach7} and~\ref{lem:densecore} give red minimum degree
  at least four on its vertex set. Apply Lemma~\ref{lem:hc}.
\end{proof}

\subsection{The critical graphs for \texorpdfstring{$(C_9,C_6)$}{(C9,C6)}}
For $\Hc_9$, the same type of core exists apart from two bipartite
exceptions.
\begin{lemma}\label{lem:blue5}
  If $H\in\Hc_9$ contains a blue $K_5$, then
  $H\cong K_{5,5}$ or $H\cong K_{5,5}-e$.
\end{lemma}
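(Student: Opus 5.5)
We argue in the blue graph. Let $K$ be the vertex set of a blue $K_5$ in $H$ and put $Y=V(H)\setminus K$, so $|Y|=5$. The aim is to show that $Y$ also spans a blue $K_5$ and that at most one $K$--$Y$ edge is blue. Then the red graph is $K_{5,5}$ with parts $K,Y$, possibly minus one edge.

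\emph{Step 1: each $y\in Y$ has at most one blue neighbor in $K$.} Suppose $y$ has two blue neighbors $u,w\in K$. The blue $K_5$ has a blue spanning $u$--$w$ path on five vertices, and closing it through $y$ gives a blue $C_6$. So every $y\in Y$ has at least four red neighbors in $K$.

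\emph{Step 2: $Y$ contains no red edge.} This is the step I expect to be the main obstacle, since it requires an explicit red $C_9$. Suppose $y_1y_2$ is red. We look for a red cycle
\[
y_1y_2k_1y_3k_2y_4k_3y_5k_4y_1,
\]
where $y_3,y_4,y_5$ are the other vertices of $Y$ and $k_1,\dots,k_4\in K$ are distinct. It uses all five vertices of $Y$ and four of $K$, so it has nine vertices. Each gap needs a vertex of $K$ red-adjacent to both of its flanking $Y$-vertices. Each $y$ misses at most one vertex of $K$, so each gap has at least three candidates among the five vertices of $K$. Hall's condition can fail only if all four gaps have essentially the same three candidates. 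Each $y_i$ misses at most one vertex of $K$, so a common omitted pair would have to be covered by the missed vertices of the $y_i$ in all four gaps. We can break this by reordering $y_3,y_4,y_5$, or by choosing a different red edge, as in the argument of Lemma~\ref{lem:dense-eight}. I would handle this by a short case split on the multiset of missed vertices of the five $y_i$:
\begin{itemize}
\item if some $y_i$ misses nothing, place it next to the vertex whose gap is tight;
\item if two $y_i$ miss the same vertex of $K$, make them consecutive;
\item otherwise the missed vertices are distinct, and a union of gaps has at least four candidates once two gaps are involved.
\end{itemize}
Choosing distinct representatives then yields a red $C_9$, which is forbidden. Hence $Y$ is a blue $K_5$.

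\emph{Step 3: at most one blue $K$--$Y$ edge.} Now both $K$ and $Y$ are blue cliques, so by symmetry Step 1 also gives that each $k\in K$ has at most one blue neighbor in $Y$. The blue cross edges therefore form a matching. Suppose two of them, $k_1y_1$ and $k_2y_2$, are disjoint. Pick $y'\in Y\setminus\{y_1,y_2\}$ and $k'\in K\setminus\{k_1,k_2\}$. Then
\[
k_1y_1y'y_2k_2k'k_1
\]
is a blue $C_6$, which is forbidden. So there is at most one blue cross edge.

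Consequently the red graph of $H$ is $K_{5,5}$ with parts $K,Y$, possibly with one edge removed. This gives $H\cong K_{5,5}$ or $H\cong K_{5,5}-e$.
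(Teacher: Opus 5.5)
Your proposal is correct and follows essentially the same route as the paper. The shared steps are: each vertex of $Y$ has at most one blue neighbor in $K$, by a spanning blue path in the $K_5$; a Hall argument over the four gaps of a red nine-cycle through a red edge of $Y$; and the observation that two disjoint blue cross edges close a blue $C_6$.

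The paper's version of Step 2 is cleaner than your bullet list. It notes that Hall can fail only when the missed vertices alternate $a,b,a,b,a$ along $y_2,y_3,y_4,y_5,y_1$, and swapping two interior vertices repairs this. That observation also settles the case your bullets leave loose. When the only pair sharing a missed vertex is $\{y_1,y_2\}$, they cannot be made consecutive through a gap, but then Hall already holds.
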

\begin{proof}
  Let $Q$ induce a blue $K_5$ and put $P=V(H)\setminus Q$, so $|P|=5$.
  Each $p\in P$ has at most one blue neighbor in $Q$, since two such
  neighbors can be joined by a spanning blue path on $Q$ and closed through
  $p$ to form $C_6$.

  Suppose $uv$ is a red edge in $P$. Order $P$ as
  $p_1=u,p_2,p_3,p_4,p_5=v$, and for $1\le i\le4$ put
  $S_i=N_H(p_i)\cap N_H(p_{i+1})\cap Q$.
  Each $S_i$ has size at least three. Hall's condition for these four sets
  can fail only when their union has size at most three.
  In that case all four equal the same three-element set $Q\setminus\{a,b\}$,
  and the unique blue neighbors in $Q$ of $p_1,\ldots,p_5$ must alternate
  $a,b,a,b,a$, after interchanging $a,b$ if necessary.
  Interchanging $p_2,p_3$ makes one of the new sets have size four, while
  all four still have size at least three. Hall's condition now holds.
  Thus, with a suitable ordering, choose distinct representatives
  $s_i\in S_i$. The red path
  $p_1s_1p_2s_2p_3s_3p_4s_4p_5$, closed by $uv$, is a red $C_9$.
  Hence $P$ is also a blue clique.

  There is at most one blue edge between $P$ and $Q$.
  Indeed, two such edges sharing an endpoint form a blue $C_6$ with a spanning path
  in the opposite five-clique. Two disjoint such edges form a blue $C_6$
  with a three-vertex path in each clique.
  The assertion follows.
\end{proof}

\begin{lemma}\label{lem:red8}
  Every nonbipartite graph in $\Hc_9$ contains a red $C_8$.
\end{lemma}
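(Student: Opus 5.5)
Following the proof of Lemma~\ref{lem:red7}, I first obtain a long monochromatic cycle and then convert it into a red $C_8$. Let $H\in\Hc_9$ be nonbipartite, on ten vertices, with no red $C_9$ and no blue $C_6$. Lemma~3.1 of~\cite{kr} (with $n=8$, $k=6$) gives a monochromatic cycle of length at least eight. A monochromatic $C_{10}$ or $C_9$ shortens to a monochromatic cycle of the next smaller length, by Lemma~2.1 of~\cite{kr}, and a red $C_9$ is forbidden. So we may assume there is a monochromatic $C_8$. If it is red we are done.

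Suppose it is a blue $C_8=x_0\cdots x_7x_0$. Every blue chord at cyclic distance three closes a blue $C_6$ along the longer arc, so all four distance-three chord classes are red. Since $\gcd(3,8)=1$, these chords form a red $C_8$, and we are again done.

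The remaining difficulty is when no monochromatic $C_8$ arises this way. This happens when the longest monochromatic cycle guaranteed is blue of length $9$ or $10$ and the shortening lemma fails. That lemma can fail only in a bipartite-like situation, which is exactly where nonbipartiteness must enter. I would handle it as follows.

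A blue $C_9$ shortens to a blue $C_8$ unless its chords are structured. Concretely, for a blue $C_9$ every distance-three chord must be red, or it closes a blue $C_6$ along an arc of seven vertices. Those red chords form three red triangles, and the distance-two and distance-four chords must then be analysed.

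Alternatively, and I expect more robustly, use nonbipartiteness directly. Since $R(C_9,C_6)=11$ and $H$ is critical, I would argue that the red graph is dense. If $H$ contains a blue $K_5$, Lemma~\ref{lem:blue5} makes $H$ bipartite, which is excluded. Hence the blue graph is $K_5$-free and has no $C_6$, so blue degrees are small and the red minimum degree is large, presumably at least five on ten vertices.

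Then the theorem of Brandt et al.~\cite{bfg} (which needs $\delta\ge 4$ on ten vertices, together with nonbipartiteness) gives weak pancyclicity with girth three or four. Dirac's circumference bound for a $2$-connected graph gives a red cycle of length at least $\min\{2\delta,10\}\ge 9$, so a red $C_8$ exists.

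The main obstacle is the red minimum-degree bound. A vertex of blue degree at least six has a blue neighborhood on six vertices with no blue $P_5$ closing to a $C_6$ through it, which forces red structure there. I would try to show that blue degree at least six yields either a red $C_9$ or a blue $K_5$, using the small Ramsey facts behind Lemma~\ref{lem:densecore}. I would also need $2$-connectivity of the red graph, which should follow similarly: a red cut vertex would leave blue-complete parts containing a blue $C_6$.
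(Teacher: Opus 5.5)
Your reduction to a monochromatic $C_8$ has a genuine gap, and neither of your repair routes closes it.

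\textbf{The shortening step fails.} The claim that a monochromatic $C_{10}$ shortens to a monochromatic $C_9$ is false here. The graph $K_{5,5}\in\Hc_9$ has a red $C_{10}$ but no monochromatic $C_9$: its red graph is bipartite and its blue graph is $2K_5$. In fact no member of $\Hc_9$ has a monochromatic $C_9$. A red one is forbidden. In a blue $C_9$, every chord at cyclic distance \emph{four} must be red, because its longer arc has six vertices. These chords form a red $C_9$, since $\gcd(4,9)=1$. So the chain $C_{10}\to C_9\to C_8$ is never available.

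\textbf{The blue-$C_9$ analysis uses the wrong chords.} A distance-three chord of a $C_9$ closes a $C_7$ and a $C_4$, not a $C_6$.

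\textbf{The difficulty is misplaced.} A blue $C_{10}$ already gives a monochromatic $C_8$ by Lemma~2.1(2) of~\cite{kr}. The delicate case is a red $C_{10}$, which is necessarily spanning. The paper handles it with Lemma~2.1(3) of~\cite{kr}: the two parity classes are blue cliques, so $H$ is bipartite. This is where nonbipartiteness is used. A direct chord argument also works. Write the red cycle as $x_0x_1\cdots x_9x_0$. Distance-two chords are blue to avoid a red $C_9$, and distance-three chords are blue to avoid a red $C_8$. Then $x_0x_2x_4x_7x_5x_8x_0$ is a blue $C_6$.

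\textbf{The fallback rests on a false premise.} Excluding a blue $K_5$ does not make blue degrees small or red degrees large. By the classification in Appendix~\ref{sec:classification}, every nonbipartite member of $\Hc_9$ has two vertices of red degree at most two. For example, take the red graph $K_8\cup 2K_1$ with every other edge blue. It lies in $\Hc_9$ and is nonbipartite. Its blue graph $K_2\vee\overline K_8$ has clique number three and no $C_6$. Yet two vertices have red degree zero and blue degree nine. So the red graph of $H$ has neither minimum degree five nor $2$-connectivity; here it is disconnected. Hence the theorems of Brandt et al.\ and Dirac cannot be applied to $H$.

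What is missing is a direct forced-chord analysis of monochromatic $C_9$ and $C_{10}$, as sketched above, rather than a degree argument.
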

\begin{proof}
  Lemma 3.1 of~\cite{kr}, with $n=8,k=6$, gives a monochromatic cycle
  of length at least eight.
  There is no monochromatic $C_9$: a red one is forbidden, and in a blue
  one all chords of cyclic distance four must be red to avoid a blue $C_6$;
  these chords form a red $C_9$.
  If there is a red $C_{10}$, Lemma 2.1(3) of~\cite{kr}, with the colors
  interchanged, shows that the two parity classes of this cycle are blue
  cliques. Since $|H|=10$, the cycle is spanning, so these two classes
  partition $V(H)$ into two independent sets in the red graph.
  Thus $H$ is bipartite, a contradiction.
  If there is a blue $C_{10}$, Lemma 2.1(2) of~\cite{kr} gives a
  monochromatic $C_8$.
  Finally a blue $C_8$ has all its distance-three chords red and hence
  gives a red $C_8$, as in Lemma~\ref{lem:red7}.
\end{proof}

\begin{lemma}\label{lem:attach8}
  Let $H\in\Hc_9$ be nonbipartite, let $C=012345670$ be a red cycle,
  and write $V(H)\setminus V(C)=\{p,q\}$.
  Each outside vertex has at most one red neighbor on $C$.
\end{lemma}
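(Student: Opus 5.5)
I will follow the template of Lemma~\ref{lem:attach7}, now on the red $8$-cycle $C=012345670$ with indices mod $8$. For $t\in\{p,q\}$ write $N_C(t)$ for its red neighbors on $C$.

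\emph{Step 1: the tools.} Lemma~\ref{lem:successors} applies, since a red $C_9$ is forbidden. So $N_C(t)$ is independent in $C$, and the successors of $N_C(t)$ form a blue clique, as do the predecessors. This gives the analogues of \eqref{eq:rules7a} and \eqref{eq:rules7b}:
\begin{itemize}
\item $i,i+2\in N_C(t)$ forces $(i+1)(i+3)$ and $(i+1)(i-1)$ blue;
\item $i,i+3\in N_C(t)$ forces $(i+1)(i+4)$ and $(i+2)(i-1)$ blue;
\item $i,i+4\in N_C(t)$ forces $(i+1)(i+5)$ and $(i-1)(i+3)$ blue.
\end{itemize}
Independence gives $|N_C(t)|\le4$. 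Lemma~\ref{lem:blue5} also helps: since $H$ is nonbipartite, $H$ contains no blue $K_5$, because the two exceptions $K_{5,5}$ and $K_{5,5}-e$ are bipartite.

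\emph{Step 2: large neighborhoods.} Suppose $|N_C(p)|=4$, so $N_C(p)=\{0,2,4,6\}$. Then the odd vertices form a blue $K_4$ and $p$ is blue to all of them, giving a blue $K_5$, a contradiction. For $|N_C(p)|=3$ there are, up to symmetry, two shapes: $\{0,2,4\}$ and $\{0,2,5\}$. In each case I list the forced blue edges among successors and predecessors. I expect a large blue structure together with $p$, either a blue $K_5$ or a Hamilton-connected blue graph on five vertices containing $p$, as in Lemma~\ref{lem:hc}. That structure restricts $q$ to at most one blue neighbor in it, which pins down $N_C(q)$. Then I derive a contradiction from one of three sources:
\begin{itemize}
\item a short blue $C_6$, checked against an explicit table;
\item a red $K_{a,b}$ configuration through $p$ and $q$ that contains $C_9$;
\item a red Hamilton-connected graph on $V(C)$ whose spanning path between two red neighbors of $p$ closes through $p$ into a red $C_9$.
\end{itemize}

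\emph{Step 3: two neighbors.} Write $N_C(p)=\{0,d\}$ with $d\in\{2,3,4\}$, and treat each cyclic distance separately, as in Lemma~\ref{lem:attach7}. In each case the two forced blue edges, plus the fact that $p$ is blue to the other six cycle vertices, constrain $q$. If $q$ had blue neighbors suitably placed, I would build a blue $C_6$ through $p$, $q$ and the forced blue edges, using two internally disjoint blue three-edge $p$--$q$ paths. This should force $N_C(q)$ into one or two specific pairs, each of which forces further blue edges. I would then finish with one of two arguments:
\begin{itemize}
\item show that all cross edges between the natural blocks are red, so that the blue graph on $V(C)$ has maximum degree at most two; the red graph on the eight vertices then has minimum degree at least five, is Hamilton-connected by Lemma~\ref{lem:hc}, and a spanning red path between the two red neighbors of $p$ closes through $p$ to a red $C_9$;
\item exhibit an explicit blue $C_6$.
\end{itemize}

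\emph{Main obstacle.} The case analysis for $|N_C(p)|=2$ at distances $3$ and $4$ is where I expect the most work. The extra cycle vertex, compared with the $C_7$ case, gives $q$ more room, so more blue-hexagon tables are needed. I would first try the maximum-blue-degree-two route, mirroring the final case of Lemma~\ref{lem:attach7}. I would fall back on direct blue $C_6$ tables where a block edge cannot be forced red.
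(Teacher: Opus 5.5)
Your outline follows the same architecture as the paper: Lemma~\ref{lem:successors} gives independence and the successor/predecessor cliques, Lemma~\ref{lem:blue5} excludes $|N_C(t)|=4$, and then come the two three-point shapes and the distances $2,3,4$. But beyond the case $|N_C(p)|=4$ nothing is actually proved. Every remaining case stops at ``I expect'' or ``should force'', and those cases are the whole content of the lemma.

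One of your three closing devices cannot work. A red complete bipartite configuration never contains the odd cycle $C_9$, so the $K_{4,4}$ step of Lemma~\ref{lem:attach7} does not transplant; you would need an extra red edge inside a part. Your expectation for the shape $\{0,2,5\}$ is also off. No Hamilton-connected blue five-set and no restriction on $q$ is needed there: the successor triangle $\{1,3,6\}$ and the predecessor triangle $\{7,1,4\}$, together with $p$, already give the blue $C_6$ $p,3,6,1,4,7,p$.

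In the paper, each remaining case closes with an explicit blue hexagon.

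\emph{Shape $\{0,2,4\}$.} Here your Hamilton-connected blue graph on $\{p,1,3,5,7\}$ does the work. It forces $pq$ red and $N_C(q)$ to be three vertices of $\{1,3,5,7\}$, missing some $b$. This uses the order you chose: four red neighbors must first be excluded for both $p$ and $q$, so that $|N_C(q)|\le3$. The successor and predecessor triangles of $N_C(q)$ then cover all but at most one edge of $\{0,2,4,6\}$. Take $r$ a blue neighbor of $6$ in $\{0,2,4\}$ and $s$ a blue neighbor of $b$ in $\{1,3,5,7\}$. Then $p,6,r,q,b,s,p$ is blue.

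\emph{Distance $2$.} The forced edges $13,17$ share the vertex $1$, so your two-disjoint-three-edge-paths device does not apply. Instead pick $r\in\{4,5,6\}$ blue to $q$, hence also blue to $p$. The cycles $q,3,1,7,p,r,q$ and $q,7,1,3,p,r,q$ force $N_C(q)=\{3,7\}$. Its forced edges $04,26$ then give the blue cycle $p,4,0,q,2,6,p$.

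\emph{Distances $3$ and $4$.} You flagged these as the main obstacle, but they are the easiest. Your device does force $N_C(q)=\{1,4\}$, respectively $\{1,5\}$, up to reflection. Then $p,5,2,7,q,6,p$, respectively $p,2,6,q,3,7,p$, is blue. The maximum-blue-degree-two route is never needed.
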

\begin{proof}
  Write $N_C(t)$ for the red neighbors of $t$ on $C$.
  Indices in this proof are modulo eight. Lemma~\ref{lem:successors}
  shows that each $N_C(t)$ is an independent set in the cycle $C$ and has size at most four.
  Four red neighbors would be one parity class of $C$; their successors
  would form a blue $K_4$, all blue-adjacent to $t$.
  This gives a blue $K_5$, so Lemma~\ref{lem:blue5} implies that
  $H$ is bipartite, a contradiction. Thus $|N_C(t)|\le3$.

  Up to the symmetries of $C$, an independent three-set is either
  $\{0,2,5\}$ or $\{0,2,4\}$.
  If $N_C(p)=\{0,2,5\}$, its successors $\{1,3,6\}$ and predecessors
  $\{7,1,4\}$ are blue triangles. Hence $p,3,6,1,4,7,p$ is a blue $C_6$.
  If $N_C(p)=\{0,2,4\}$, the blue graph on $S=\{1,3,5,7\}$ contains
  $K_4-57$, and $p$ is blue-adjacent to $S$.
  The blue graph on $S\cup\{p\}$ is Hamilton-connected by
  Lemma~\ref{lem:hc}.
  Vertex $q$ can have at most one blue neighbor in these five vertices,
  since otherwise it closes a spanning blue path to $C_6$.
  Since $q$ has at most three red neighbors on $C$, it follows that $pq$
  is red and $N_C(q)$ consists of precisely three vertices of $S$.
  Let $b$ be the remaining vertex of $S$, so $qb$ is blue.
  Any three-element subset of $S$ is a rotation of $\{1,3,5\}$.
  Its predecessor and successor triples are therefore two blue triangles
  on $\{0,2,4,6\}$ sharing an edge. Hence the blue graph on this set
  contains $K_4$ with at most one edge deleted.
  Choose a blue neighbor $r$ of $6$ in that set and a blue neighbor $s$ of
  $b$ in $S$. The vertices in the sequence $p,6,r,q,b,s,p$ are distinct,
  and all its edges are blue, a contradiction.
  Thus each outside vertex has at most two red neighbors on $C$.

  It remains to exclude two red neighbors, whose cyclic distance is
  two, three or four.
  If $N_C(p)=\{0,2\}$, the edges $13,17$ are blue.
  Choose $r\in\{4,5,6\}$ with $qr$ blue; also $pr$ is blue.
  If $q3$ or $q7$ is blue, use respectively
  $q,3,1,7,p,r,q$ or $q,7,1,3,p,r,q$.
  Thus $N_C(q)=\{3,7\}$, forcing $04,26$ blue.
  Now $p,4,0,q,2,6,p$ is a blue $C_6$.

  If $N_C(p)=\{0,3\}$, the edges $14,27$ are blue and $p$ is
  blue-adjacent to their four endpoints.
  To prevent two internally disjoint blue three-edge $p$--$q$ paths,
  $q$ must be red-adjacent to both endpoints of one of these edges.
  Applying the reflection $i\mapsto3-i$ if necessary, assume
  $N_C(q)=\{1,4\}$.
  This forces $25,03$ blue, and $p,5,2,7,q,6,p$ is a blue $C_6$.

  If $N_C(p)=\{0,4\}$, the same argument applied to the blue edges
  $15,37$ allows us, by reflection, to take $N_C(q)=\{1,5\}$.
  The edge $26$ is then blue, giving the blue cycle $p,2,6,q,3,7,p$.
  All cases lead to a contradiction.
\end{proof}

\begin{proposition}\label{prop:core9}
  Every nonbipartite graph in $\Hc_9$ contains a Hamilton-connected
  induced subgraph on eight vertices. The bipartite graphs in $\Hc_9$
  are precisely $K_{5,5}$ and $K_{5,5}-e$.
\end{proposition}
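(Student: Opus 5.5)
For the first assertion I will mirror Proposition~\ref{prop:core8}. Let $H\in\Hc_9$ be nonbipartite. Lemma~\ref{lem:red8} gives a red $C_8$, say $C$, and exactly two vertices $p,q$ lie outside it. By Lemma~\ref{lem:attach8}, each of $p,q$ has at most one red neighbor on $C$. With $X=V(C)$, which has eight vertices, the hypotheses of Lemma~\ref{lem:densecore} hold: $|X|\ge7$, there is no blue $C_6$, and each outside vertex has at most one red neighbor in $X$. Hence the blue graph on $X$ has maximum degree at most two, so the red graph $H[X]$ has minimum degree at least $7-2=5\ge(8+1)/2$. Lemma~\ref{lem:hc} then makes $H[X]$ Hamilton-connected, and $H[X]$ is an induced subgraph on eight vertices.

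For the second assertion I will prove two inclusions.

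First, $K_{5,5}$ and $K_{5,5}-e$ belong to $\Hc_9$. Each has ten vertices. A bipartite red graph has no odd cycle, so it has no red $C_9$. The blue graph of $K_{5,5}$ is $2K_5$, which has no $C_6$ because its components have five vertices. For $K_{5,5}-e$, the blue graph is $2K_5$ plus one edge $xy$ joining the cliques. That edge is a bridge, so it lies on no cycle, and every blue cycle stays inside a five-vertex clique. Thus there is no blue $C_6$, and both graphs lie in $\Hc_9$.

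Conversely, let $H\in\Hc_9$ be bipartite with red parts $U,W$. Each part is a blue clique, and $|U|+|W|=10$. A part with at least six vertices would contain a blue $C_6$, so $|U|=|W|=5$. Then $U$ spans a blue $K_5$, and Lemma~\ref{lem:blue5} gives $H\cong K_{5,5}$ or $H\cong K_{5,5}-e$.

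I expect no real obstacle here, since the earlier lemmas already carry the substantive work. The only points needing care are to confirm that Lemma~\ref{lem:densecore} applies with $|X|=8$, and that the single cross edge in $K_{5,5}-e$ is a bridge in the blue graph and so creates no blue $C_6$.
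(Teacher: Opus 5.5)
Your proposal is correct and follows the paper's proof essentially step for step. In the nonbipartite case you chain Lemma~\ref{lem:red8}, Lemma~\ref{lem:attach8}, Lemma~\ref{lem:densecore} and Lemma~\ref{lem:hc}, and in the bipartite case you use the class-size argument followed by Lemma~\ref{lem:blue5}. The one difference is that you spell out the converse membership check, namely that the blue cross edge in $K_{5,5}-e$ is a bridge; the paper states that converse in a single sentence.
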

\begin{proof}
  In the nonbipartite case, apply Lemmas~\ref{lem:red8}, \ref{lem:attach8}
  and~\ref{lem:densecore}. The eight cycle vertices induce a red graph of
  minimum degree at least five, so Lemma~\ref{lem:hc} applies.
  If $H$ is bipartite, a bipartition class of size six would give a blue
  $K_6$. Both classes must therefore have size five. Lemma~\ref{lem:blue5}
  gives the stated alternatives. Conversely, both graphs
  avoid a red $C_9$ and a blue $C_6$.
\end{proof}

\subsection{The two Ramsey equalities}
The following degree bound reduces the two Ramsey problems to the
critical graphs described above.
\begin{lemma}\label{lem:globaldegree}
  Let $n\ge8$. If a graph $G$ of order $2n-1$ contains neither a red $C_n$
  nor a blue $W_7$, then $\delta(G)\le n-3$.
\end{lemma}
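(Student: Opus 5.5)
Suppose $\delta(G)\ge n-2$, and let $v$ be any vertex. Its blue neighbourhood $N=N_{\bG}(v)$ then has at most $(2n-2)-(n-2)=n$ vertices. The blue graph on $N$ contains no $C_6$, since $v$ would be its hub and give a blue $W_7$.

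The plan is to locate a Hamilton-connected red core of order $n-1$ and invoke Lemma~\ref{lem:core} with $k=3$. Its hypothesis $n\ge\max\{6,7\}$ holds because $n\ge8$. That lemma yields a red $C_n$ or a blue $K_1+C_6=W_7$, which is the contradiction.

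\textbf{Degree step.} First I pin down the degrees. Each blue neighbourhood avoids a blue $C_6$ and a red $C_n$. The cycle Ramsey value $R(C_n,C_6)=n+2$ for $n\ge8$ bounds every blue degree by $n+1$. So the standing assumption $\delta(G)\ge n-2$ is the only remaining case. I split it into $\delta(G)\ge n-1$ and $\delta(G)=n-2$.

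\textbf{Case $\delta(G)\ge n-1$.} Here $\delta\ge(|G|+2)/3$ holds since $n\ge8$.
\begin{itemize}
\item If $G$ is bipartite, one class has at least $n\ge 7$ vertices. That class is a blue clique, which contains $W_7$.
\item If $G$ is nonbipartite and $2$-connected, the theorem of Brandt et al.\ together with Dirac's circumference bound $c(G)\ge\min\{2n-2,2n-1\}\ge n$ gives a red $C_n$.
\item If $G$ is not $2$-connected, some component of $G-u$ (for a cut vertex $u$, or of $G$ itself if disconnected) has order $a\le n-1$. Every other part is entirely blue to it. If $a\ge 3$, a vertex in one part, together with three vertices from each side, produces a blue $K_{3,3}$ inside its blue neighbourhood, hence a blue $C_6$ and a blue $W_7$. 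The degree bounds force such sizes, because each part has order at least $\delta-1\ge n-2$.
\end{itemize}

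\textbf{Case $\delta(G)=n-2$.} Take $v$ of red degree $n-2$. Then $|N|=n$ exactly, and the blue graph on $N$ has no $C_6$. I apply $R(C_{n-1},C_6)=n$, valid since $n-1\ge7$, to obtain a red $C_{n-1}$ in $N$; call it $C$, and let $t$ be the single remaining vertex of $N$.

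The goal is to show that $V(C)$ induces a red graph of minimum degree at least $n/2$. Lemma~\ref{lem:hc} then makes it a Hamilton-connected core, and Lemma~\ref{lem:core} finishes the proof. I follow the model of Lemmas~\ref{lem:attach7} and~\ref{lem:densecore}:
\begin{itemize}
\item Vertices outside $N\cup\{v\}$, namely the $n-2$ red neighbours of $v$, can each have at most one red neighbour on $C$. Two such neighbours, joined by suitable arcs, together with $v$ and $t$ would close a red $C_n$.
\item Lemma~\ref{lem:densecore}, applied with two outside vertices having at most one red neighbour in $X=V(C)$, bounds the blue degree inside $V(C)$ by $2$. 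That gives red minimum degree at least $n-4\ge(n)/2$ on the $n-1$ vertices, which is enough.
\end{itemize}

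\textbf{Main obstacle.} The hard step is the attachment claim in the last case. The forbidden structure here is a red $C_n$, whereas $C$ has only $n-1$ vertices, so a single outside vertex with two red neighbours at awkward cyclic distance does not immediately close a $C_n$. I would first use Lemma~\ref{lem:successors}, which applies because a red $C_{(n-1)+1}=C_n$ is forbidden. It says the red neighbours on $C$ form an independent set, and that their successors form a blue clique. A blue clique of size $3$ inside $N$, together with $v$ and the outside vertex, quickly produces a blue $C_6$ with hub $v$. This bounds the attachments to at most two, and the distance-by-distance case analysis of Lemma~\ref{lem:attach7} would then be generalised.
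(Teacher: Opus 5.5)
Your treatment of the case $\delta(G)=n-2$ has a genuine gap.

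\emph{The Ramsey step is false.} By the cycle Ramsey formula, $R(C_{n-1},C_6)=n+1$ for $n\ge9$, and $R(C_7,C_6)=11$ for $n=8$; it is not $n$. For instance, on $n$ vertices the coloring with red graph $K_{n-2}\cup K_2$ and blue graph $K_{n-2,2}$ has no red $C_{n-1}$ and no blue $C_6$. So nothing supplies the red $C_{n-1}$ in $N$.

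\emph{The attachment claim is unproved.} Even granting $C$, take $x\in N_G(v)$ with two red neighbours on $C$. Through the arcs of $C$ these close a red $C_n$ only when the two neighbours are consecutive on $C$. In general you would need a spanning path of $V(C)$ between them, which is exactly the Hamilton-connectedness you are trying to establish. Nor can $v$ be used, since it is blue to all of $N$.

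\emph{The borrowed blue-$C_6$ contradictions do not apply.} The arguments from Lemmas~\ref{lem:successors} and~\ref{lem:densecore} need all six cycle vertices in $N_{\bG}(v)$, so that $v$ can serve as hub. But vertices of $N_G(v)$ are red to $v$, and $N\setminus V(C)=\{t\}$ is a single vertex. So Lemma~\ref{lem:densecore} has no admissible pair $p,q$. In the paper these lemmas are used inside an $(n+1)$-vertex blue neighbourhood, which leaves two vertices outside a red $C_{n-1}$.

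\emph{The missing observation is that none of this is needed.} Your $\delta\ge n-1$ argument already works for $\delta\ge n-2$. Indeed $n-2\ge(|G|+2)/3=(2n+1)/3$ exactly when $n\ge7$, so Brandt et al.\ applies in the nonbipartite $2$-connected case. Dirac's bound then gives $c(G)\ge 2n-4\ge n$, hence a red $C_n$. This is the paper's proof. It uses neither a core nor $R(C_n,C_6)$; your ``degree step'' only yields the lower bound $\delta\ge n-3$, which plays no role here.

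\emph{Your non-$2$-connected step also needs repair.} A blue $K_{3,3}$ with a hub requires the hub to be blue to three vertices on each side, which needs a third component. But every component of $G-x$ has at least $n-2$ vertices, so there are exactly two, of orders $(n-2,n)$ or $(n-1,n-1)$.
In the first case, the $n$-vertex component has red minimum degree at least $n-3\ge n/2$, and Dirac's theorem gives a red $C_n$.
In the second case, both components are Hamilton-connected by Lemma~\ref{lem:hc}. Since $x$ has at least $n-2$ red neighbours, it has two in one component, and a spanning path between them closes through $x$ to a red $C_n$.
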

\begin{proof}
  Suppose $\delta(G)\ge n-2$.
  The graph $G$ is nonbipartite, since a bipartite graph of this order has
  an independent set of size at least $n\ge8$, giving a blue clique
  containing $W_7$.
  If $G$ is not 2-connected, choose $x$ such that $G-x$ is disconnected.
  Each component has at least $n-2$ vertices, so there are exactly two,
  and their orders are either $n-2,n$ or $n-1,n-1$.
  An $n$-vertex component has minimum degree at least $n-3\ge n/2$,
  giving a red $C_n$.
  In the other case, each component has minimum degree at least $n-3$
  and is Hamilton-connected by Lemma~\ref{lem:hc}.
  Vertex $x$ has two red neighbors in one component, whose spanning path
  closes through $x$ to a red $C_n$.
  Therefore $G$ is 2-connected.

  Since $n-2\ge(2n+1)/3$, the theorem of Brandt et al.
  implies that $G$ is weakly pancyclic with girth three or four.
  Dirac's circumference bound gives $c(G)\ge2n-4\ge n$.
  Hence $G$ contains $C_n$, a contradiction.
\end{proof}

\begin{proof}[Proof of the first two equalities in Theorem~\ref{thm:main}]
  For either $n=8$ or $n=9$, the red graph $2K_{n-1}$ has no $C_n$,
  and its bipartite complement has no $W_7$. Thus $R(C_n,W_7)\ge2n-1$.

  For the upper bound, suppose that $G$ is a graph on $2n-1$ vertices
  with no red $C_n$ and no blue $W_7$.
  The cycle Ramsey formula~\cite{kr} gives $R(C_n,C_6)=n+2$ for these
  two values of $n$. A blue neighborhood cannot contain a blue $C_6$,
  so every blue degree is at most $n+1$. Consequently
  $\delta(G)\ge n-3$. Lemma~\ref{lem:globaldegree} yields equality.
  Choose $v$ of red degree $n-3$ and put $B=N_{\cl G}(v)$.
  Then $|B|=n+1$ and $G[B]\in\Hc_n$.

  For $n=8$, Proposition~\ref{prop:core8} supplies a Hamilton-connected
  seven-vertex core, contradicting Lemma~\ref{lem:core} with $k=3$.
  For $n=9$, the same argument using Proposition~\ref{prop:core9}
  handles a nonbipartite $G[B]$.

  It remains to consider $n=9$ with $G[B]\cong K_{5,5}$ or $K_{5,5}-e$.
  Let its two parts be $U,V$, each of order five, and choose
  $a\in N_G(v)$; this set has order six.
  If $a$ has three blue neighbors in $U$, take one, $u$, as a hub.
  The set $(U\setminus\{u\})\cup\{v\}$ is a blue five-clique,
  and $a$ has at least two blue neighbors in it.
  A spanning blue path between these neighbors closes through $a$ to a
  blue six-cycle, all blue-adjacent to $u$. This is a blue $W_7$.
  Therefore $a$ has at least three red neighbors in $U$, and likewise
  at least three in $V$.

  Choose $u_0\in U$ to be an endpoint of the possible missing red edge,
  or an arbitrary vertex if there is no missing edge.
  Let $U'=U\setminus\{u_0\}$ and choose a four-element set $V'\subset V$
  containing a red neighbor of $a$.
  The red graph between $U'$ and $V'$ is complete, and $a$ has a red
  neighbor in each part. An alternating spanning path in this $K_{4,4}$
  between such neighbors closes through $a$ to a red $C_9$.
  This final contradiction proves both equalities.
\end{proof}

\section{The nine-vertex wheel}\label{sec:wheel-nine}
We now prove $R(C_8,W_9)=15$. The coloring with red graph $2K_7$
gives the lower bound. For the upper bound, suppose that a red--blue
coloring of $K_{15}$ has no red $C_8$ and no blue $W_9$, and let $G$
be its red graph. We first restrict the structure of $G$ and then prove
that its minimum degree is five.

\begin{corollary}\label{w9:cor:no-core}
  A red--blue coloring of $K_{15}$ with no red $C_8$ and no blue $W_9$
  contains no red Hamilton-connected subgraph on seven vertices.
  In particular, it contains no seven-vertex red subgraph of minimum
  degree at least four.
\end{corollary}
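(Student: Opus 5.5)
This is a direct specialization of Lemma~\ref{lem:core}. I would take $n=8$ and $k=4$, so that $K_1+C_{2k}=K_1+C_8=W_9$ in the paper's notation. The parameter condition $n\ge\max\{2k,k+4\}=\max\{8,8\}=8$ holds with equality, and $2n-1=15$ and $n-1=7$ match the statement. Now suppose the red graph $G$ on $15$ vertices contained a Hamilton-connected subgraph of order seven. Lemma~\ref{lem:core} would then give either a red $C_8$ or a blue $K_1+C_8=W_9$, and both are excluded by hypothesis. This proves the first assertion.

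For the second assertion, I would apply Lemma~\ref{lem:hc} with $r=7$. A seven-vertex red subgraph $Q$ with $\delta(Q)\ge4=(7+1)/2$ is Hamilton-connected, so the first assertion rules it out.

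There is no real obstacle here. The only point to check is that both numerical conditions in Lemma~\ref{lem:core} hold at the boundary $n=2k=k+4=8$. In the lemma's proof, $|S_i|\ge n-4=4\ge k$ is still enough for Hall's condition, and $\delta(G[T])\ge n-k=4=n/2$ still permits Dirac's theorem, so the lemma applies as stated.
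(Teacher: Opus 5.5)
Your proposal is correct and follows the paper's proof: the paper likewise applies Lemma~\ref{lem:core} with $n=8$, $k=4$, and the ``in particular'' clause follows from Lemma~\ref{lem:hc} exactly as you describe, since $4=(7+1)/2$. Your check that both numerical conditions still hold at the boundary $n=2k=k+4=8$ is accurate.
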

\begin{proof}
  Apply Lemma~\ref{lem:core} with $n=8,k=4$.
\end{proof}

\subsection{Basic structural reductions}
\begin{lemma}\label{w9:lem:degree-lower}
  $\delta(G)\ge4$.
\end{lemma}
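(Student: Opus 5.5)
The plan is to convert the statement into an upper bound on blue degrees. Each vertex of $K_{15}$ has fourteen neighbours, so $\delta(G)\ge4$ is equivalent to every vertex having blue degree at most ten. I would prove this bound by a Ramsey argument inside a single blue neighbourhood, in the same way the paper handles the other wheels. For the eleven-vertex wheel it bounds blue neighbourhoods using $R(C_{14},C_{10})=18$; for $W_7$ it uses $R(C_n,C_6)=n+2$.

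Fix a vertex $v$ and suppose, for contradiction, that $|N_{\cl G}(v)|\ge11$. Inside this blue neighbourhood:
\begin{itemize}
\item there is no blue $C_8$, since such a cycle together with the blue hub $v$ would form a blue $W_9=K_1+C_8$;
\item there is no red $C_8$, by the standing assumption on the colouring.
\end{itemize}
So the colouring restricted to the blue neighbourhood has no monochromatic $C_8$. The cycle--cycle Ramsey formula~\cite{FSR} (see also~\cite{kr}) gives $R(C_n,C_n)=3n/2-1$ for even $n\ge6$. In particular $R(C_8,C_8)=11$, so any eleven vertices of the blue neighbourhood already contain a monochromatic $C_8$, which is a contradiction.

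Hence every blue degree is at most ten, and every red degree is at least $14-10=4$, which is the claim. There is no real obstacle here. The only point to check is that the cited value $R(C_8,C_8)=11$ lies in the range where the even-cycle diagonal formula applies; it does, since $n=8\ge6$ is even.
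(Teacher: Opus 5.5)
Your proposal is correct and is essentially the paper's proof: the paper also bounds every blue degree by ten via $R(C_8,C_8)=11$, where a blue $C_8$ in a blue neighbourhood gives a blue $W_9$ with that vertex as hub and a red $C_8$ is forbidden, so every red degree is at least $14-10=4$. Your check that $R(C_8,C_8)=11$ falls under the even-cycle diagonal formula $3n/2-1$ is also correct.
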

\begin{proof}
  A vertex with at least eleven blue neighbors would have a red $C_8$
  or a blue $C_8$ in its blue neighborhood, since $R(C_8,C_8)=11$.
  The latter cycle would form a blue $W_9$ with that vertex as hub.
  Thus every blue degree is at most ten, so every red degree is at least
  $14-10=4$.
\end{proof}

\begin{lemma}\label{w9:lem:no-k8}
  Every eight-vertex set spans at least three red edges.
  In particular, there is no blue $K_8$, and $G$ is nonbipartite.
\end{lemma}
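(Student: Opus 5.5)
We argue by contradiction: suppose some eight-vertex set $Y$ spans at most two red edges. The plan is to use the blue density of $Y$ to build a blue $W_9$ with hub in $Y$, or else to push the seven vertices outside $Y$ into a red Hamilton-connected core contradicting Corollary~\ref{w9:cor:no-core}. The two consequences are immediate once the main claim holds. A blue $K_8$ is an eight-set with no red edge. A bipartite $G$ on fifteen vertices has an independent class of order at least eight, which again spans no red edge.

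First, $Y$ itself is handled. With at most two red edges, the blue graph on $Y$ is $K_8$ minus at most two edges. Pick a hub $y\in Y$ incident with no red edge of $Y$; such a vertex exists, since at most four vertices are touched. Then $y$ is blue-adjacent to the other seven vertices of $Y$. These seven induce a blue $K_7$ minus at most two edges, which is Hamiltonian but gives only a blue $C_7$, i.e.\ a $W_8$, one rim vertex short. So we need one outside vertex $z\in Z=V(G)\setminus Y$, with $|Z|=7$, that is blue to $y$ and has two blue neighbours $u,w\in Y\setminus\{y\}$. We also need a blue spanning $u$--$w$ path in $Y\setminus\{y\}$. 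That graph, $K_7$ minus at most two edges, has minimum degree at least $4=(7+1)/2$, so it is Hamilton-connected by Lemma~\ref{lem:hc}. Closing the path through $z$ gives a blue $C_8$ in the blue neighbourhood of $y$, i.e.\ a blue $W_9$.

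Hence every $z\in Z$ that is blue to $y$ has at most one blue neighbour in $Y\setminus\{y\}$. Vary the hub: several vertices of $Y$ (at least four) avoid the red edges of $Y$. For each such $y$ and each $z\in Z$, either $zy$ is red, or $z$ has at least six red neighbours in $Y\setminus\{y\}$. In either case $z$ has many red neighbours in $Y$. Summing over the clean hubs, we expect each $z\in Z$ to have at least five red neighbours in $Y$. There are then at least $35$ red $Z$--$Y$ edges, while $Y$ itself is nearly a red independent set.

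Now we build a red $C_8$ directly. We alternate four vertices of $Z$ with four vertices of $Y$ in a red $K_{4,4}$-like structure, choosing distinct common red neighbours by Hall's theorem, as in Lemma~\ref{lem:dense-eight}. Two $Z$-vertices with at least five red neighbours in the eight-set $Y$ share at least two common red neighbours. A cyclic order $z_1z_2z_3z_4$ then needs four distinct connectors, one from each common neighbourhood. If Hall fails, all four common neighbourhoods equal the same two-set; we then swap the order as in Lemma~\ref{lem:blue5}, or raise the degree count to six. That yields a red $C_8$, a contradiction.

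The main obstacle is the degree count in the previous step. A vertex $z$ red to $y$ but with many blue neighbours in $Y$ escapes the bound. I would first handle this by choosing the hub $y$ among the clean vertices to be a blue neighbour of $z$. If $z$ is red to all clean hubs, it already has at least four red neighbours in $Y$, and Hall with slack four still works after the reordering trick. If that bound proves too weak, the fallback is Lemma~\ref{w9:lem:degree-lower} together with the $R(C_8,C_8)=11$ bound to add further red edges.
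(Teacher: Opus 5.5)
Your hub step is correct and matches the paper's. If $y\in Y$ lies on no red edge of $Y$, then an outside vertex $z$ that is blue to $y$ and has two blue neighbours in $Y\setminus\{y\}$ closes a blue $C_8$ around $y$. Hence every $z\in Z$ either has at least six red neighbours in $Y$, or is red to every such clean vertex. Your derivation of the two consequences is also fine.

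The gap is in the finish. The bound \emph{each $z$ has at least five red neighbours in $Y$} does not follow from this dichotomy. Suppose the two red edges of $Y$ are disjoint, so there are exactly four clean vertices. Then a $z$ that is red to exactly those four and blue to the four endpoints of the red edges is not excluded by anything you prove. Your Hall step on an arbitrary cyclic order of four outside vertices is therefore unsupported. A vertex of the first kind and one of the second kind may have only two common red neighbours, and for a bad choice or order of the four vertices Hall really fails. Your description of the failure mode is also incomplete: three coinciding two-sets already violate Hall, not only four. The remarks about a reordering trick, \emph{slack four}, or a fallback via $R(C_8,C_8)=11$ do not supply the missing argument.

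The missing observation is that all vertices of the second kind share one common red neighbourhood, namely the set $U$ of clean vertices, with $|U|\ge4$. Since $|Z|=7$, four outside vertices are of the same kind.
\begin{itemize}
\item If four have at least six red neighbours in $Y$, any two of them have at least four common red neighbours. Hall's condition for the four cyclically consecutive pairs then holds trivially, giving a red $C_8$ that alternates between $Z$ and $Y$.
\item Otherwise four outside vertices are red-complete to $U$. Together with four vertices of $U$ they span a red $K_{4,4}$, which contains $C_8$.
\end{itemize}
This pigeonhole split on the dichotomy is exactly how the paper concludes. No global count of red $Z$--$Y$ edges is needed.
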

\begin{proof}
  Suppose $|Q|=8$ and $G[Q]$ has at most two edges.
  Let $D$ consist of the endpoints of those red edges, and put $U=Q\setminus D$,
  so $|U|\ge4$. Every vertex of $U$ is blue-adjacent to all other vertices of $Q$.
  If an outside vertex $p$ has at least three blue neighbors in $Q$ and one
  of them is $u\in U$, then the blue graph on $Q\setminus\{u\}$ has minimum
  degree at least four and is Hamilton-connected.
  Take a spanning path between two other blue neighbors of $p$, and close
  it through $p$. This is a blue eight-cycle with hub $u$.
  Consequently every outside vertex either has at least six red neighbors
  in $Q$, or is red-complete to $U$.

  There are seven outside vertices. If four have at least six red neighbors
  in $Q$, order them as $p_1,\ldots,p_4$.
  Each pair $p_i,p_{i+1}$, with indices taken modulo four, has at least
  four common red neighbors in $Q$;
  Hall's theorem gives distinct representatives $q_i$ and the red cycle
  $p_1q_1p_2q_2p_3q_3p_4q_4p_1$.
  Otherwise at least four outside vertices are red-complete to $U$,
  giving a red $K_{4,4}$ and hence a red $C_8$.

  A bipartite graph on 15 vertices has an independent set of size at
  least eight, which would be such a blue clique.
\end{proof}

\begin{lemma}\label{w9:lem:small-degree}
  Let $8\le h\le10$. If $H$ has $h$ vertices and $\delta(H)\ge4$,
  then $H$ or $\cl H$ contains $C_8$.
\end{lemma}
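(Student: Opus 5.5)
We argue by contradiction: assume $H$ has $h\in\{8,9,10\}$ vertices, $\delta(H)\ge4$, and neither $H$ nor $\cl H$ contains $C_8$. Since the statement is purely about graphs, we cannot use the wheel hypothesis. The plan is to exploit the density of $H$ to produce a long red cycle, shorten it to length eight, and use the blue complement when shortening fails. Throughout, red means $H$ and blue means $\cl H$.

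\textbf{Step 1: reduce to $h=8,9$.} For $h=10$, the blue degree of every vertex is at most five. We apply the K\'arolyi--Rosta lemmas (Lemma 3.1 and Lemma 2.1 of~\cite{kr}) as in Lemma~\ref{lem:red8}: there is a monochromatic cycle of length at least eight, and a monochromatic $C_9$ or $C_{10}$ yields a monochromatic $C_8$ unless a parity-class clique structure appears. If that structure is red, the two classes are red $K_5$'s joined by blue edges; the minimum degree $4$ then forces a red edge between the classes, and this edge together with the two red $K_5$'s gives a red $C_8$. If the structure is blue, it is a blue $K_5$-type configuration, which contradicts the red minimum degree in the corresponding way. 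Alternatively, for $h=10$ one could pass to the $9$-vertex case by deleting a suitable vertex, at the cost of the minimum degree dropping to $3$ at its neighbors. I would rather handle $h=10$ directly, along the lines just described.

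\textbf{Step 2: $h=8$.} Here $\delta(H)\ge4=h/2$, so Dirac's theorem gives a red Hamilton cycle, that is, a red $C_8$. This case is immediate.

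\textbf{Step 3: $h=9$.} First, if $H$ is $2$-connected, Dirac's circumference bound gives $c(H)\ge8$. If $c(H)=8$ we are done. Otherwise $H$ has a Hamiltonian $C_9$ (when $\delta\ge4$ and $h=9$, Dirac's theorem does not quite apply since $4<9/2$, so this needs care). We then shorten it using chords. Every vertex has at least two chords on the $9$-cycle, and a chord $x_ix_{i+2}$ directly gives a $C_8$ by skipping $x_{i+1}$. So we may assume all chords have cyclic length $3$ or $4$. With the parallel-chord trick (chords $x_ix_j$ and $x_{i+1}x_{j+1}$ give a spanning path that closes up, as in Lemma~\ref{lem:successors}) and chord pairs of lengths $3$ and $4$, one rearranges to skip exactly one vertex. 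Each vertex has at least two chords chosen from four possible lengths-$3$/$4$ chords; a counting argument forces two crossing chords of lengths $3$ and $4$ that produce an $8$-cycle.

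If $H$ is not $2$-connected, then since the minimum degree is $4$, each block side has at least five vertices, and $9$ vertices cannot accommodate two such sides plus a cut vertex unless the sizes are $5$ and $4$ plus the cut vertex. In that situation the blue graph contains a $K_{4,4}$ between the sides, which contains a blue $C_8$. If $H$ is disconnected, the blue complete bipartite graph between the components similarly gives a blue $C_8$ whenever both sides have at least four vertices, and $\delta\ge4$ guarantees each component has at least five vertices.

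\textbf{Main obstacle.} The main obstacle is the chord analysis on a red $C_9$ (and the analogous analysis on $C_{10}$ for $h=10$), where every chord has length $3$ or $4$. There I would first test whether three pairwise length-$3$ chords (a triangle structure) force a blue $C_8$ on the complementary pattern; otherwise I would invoke Bondy's pancyclicity reasoning on the dense chord graph.
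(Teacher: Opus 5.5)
There is a genuine gap: the $2$-connected case, which carries all the content, is never actually proved.

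\emph{The case $h=9$.} You assert that a counting argument on a Hamiltonian $C_9$ forces two crossing chords of lengths $3$ and $4$. That mechanism fails. Take the nine-cycle $x_0\cdots x_8$ together with the three triangles $x_rx_{r+3}x_{r+6}$. This graph has minimum degree four and no chords of length two or four at all. It does contain $C_8$, for example $x_0x_6x_5x_4x_7x_1x_2x_3$. But finding it needs a real case analysis of the kind carried out in Lemma~\ref{w9:lem:nine-degree-five}, which you do not supply. Your fallback, Bondy's theorem, needs $\delta\ge h/2$, and that fails at $h=9$.

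\emph{The case $h=10$.} Here the reasoning is incorrect as written.
\begin{itemize}
\item You give no justification for a monochromatic cycle of length at least eight; the paper applies Lemma~3.1 of \cite{kr} only when a blue $C_6$ is forbidden.
\item Minimum degree four does not force a red edge between two red $K_5$'s: $H=2K_5$ has none. Even a single such edge is a bridge and lies on no cycle. In that configuration the $C_8$ comes from the blue $K_{5,5}$, not from red.
\item When the parity classes are blue cliques, the red graph is bipartite with parts of order five. This does not contradict $\delta\ge4$ (consider $K_{5,5}$); you have to build the red $C_8$.
\item You omit the cut-vertex case at $h=10$. Also, at $h=9$ the two sides of a cut vertex have orders $4,4$, not $5,4$.
\end{itemize}

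\emph{The missing idea.} The paper uses one tool that covers all three orders at once. First handle the disconnected and cut-vertex cases, which you correctly reduce to a blue $K_{4,4}$. Then split the $2$-connected case by bipartiteness.
\begin{itemize}
\item If $H$ is nonbipartite, then $\delta(H)\ge4\ge(h+2)/3$; this is exactly where $h\le10$ is needed. The theorem of Brandt et al.~\cite{bfg} makes $H$ weakly pancyclic with girth three or four. Dirac's circumference bound gives $c(H)\ge\min\{8,h\}=8$. Hence $C_8\subseteq H$.
\item If $H$ is bipartite, each part has at least four vertices. A part of order four is complete to the other part, giving $K_{4,4}$. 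The only remaining case is two parts of order five. Any four vertices from each side then span $K_{4,4}$ minus a matching, which is Hamiltonian.
\end{itemize}
This replaces your entire chord analysis and contains your Step~2 as a special case.
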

\begin{proof}
  Regard $H$ as red and its complement as blue.
  If $H$ is disconnected, each component has at least five vertices,
  so two components give a blue $K_{4,4}$.
  If $H$ is connected but not 2-connected, choose a cut vertex $x$.
  Each component of $H-x$ has at least four vertices, again giving
  a blue $K_{4,4}$.

  Suppose $H$ is 2-connected.
  If it is nonbipartite, then $4\ge(h+2)/3$ and its circumference is at
  least eight. Weak pancyclicity gives a red $C_8$.
  If it is bipartite, both parts have size at least four.
  A part of size four is red-complete to the other part.
  The only other case has parts of size five.
  Choose four vertices from each part; every chosen vertex has at least
  three neighbors in the opposite chosen part.
  The missing cross-edges form a matching. Hence the chosen vertices
  contain a spanning red copy of $K_{4,4}-M$, where $M$ is a perfect
  matching. This graph is Hamiltonian.
\end{proof}

\begin{lemma}\label{w9:lem:connectivity}
  $G$ is 2-connected.
\end{lemma}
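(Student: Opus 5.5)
We argue by contradiction, in the style of the connectivity reductions of Section~\ref{sec:w11-connectivity}. Suppose $G$ has a cut of order at most one. Let $S$ be a cut vertex, or $S=\emptyset$ if $G$ is disconnected. By Lemma~\ref{w9:lem:degree-lower}, every component $D$ of $G-S$ has order at least $\delta(G)+1-|S|\ge 4$. All edges between distinct components are blue.

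\emph{Step 1: exactly two components.} Suppose there are three components $D_1,D_2,D_3$. A hub in $D_1$ together with four vertices from each of $D_2,D_3$ gives a blue $K_{4,4}$, hence a blue $C_8$, in the hub's blue neighborhood. This is a blue $W_9$. So $G-S$ has exactly two components $A,B$ with $|A|\ge|B|\ge4$ and $|A|+|B|=15-|S|\in\{14,15\}$.

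\emph{Step 2: internal degrees.} The analogue of Lemma~\ref{lem:blue-sides} for $W_9$ says that a vertex of $A$ with four blue neighbours in $A$ would form a blue $W_9$ when alternated with four vertices of $B$. Hence $\delta(G[A])\ge|A|-4$, and likewise $\delta(G[B])\ge|B|-4$.

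\emph{Step 3: case analysis on $|A|$.}
\begin{itemize}
\item If $|A|\ge 8$, then $G[A]$ has minimum degree at least $|A|-4\ge|A|/2$, so by Bondy's theorem it is pancyclic, apart from the $K_{|A|/2,|A|/2}$ exception. That exception occurs only at $|A|=8$, and then $K_{4,4}$ itself contains a red $C_8$. Either way $G[A]$ contains a red $C_8$. If $|A|\ge 11$, Lemma~\ref{w9:lem:small-degree} is not needed, since the degree condition already suffices.
\item If $|A|=7$, then $\delta(G[A])\ge 3$. This does not immediately give a core.
\end{itemize}
The sizes force $|A|\ge 7$, and when $|A|=7$ we have $|B|\in\{7,8\}$. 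Since $|B|\ge 8$ is excluded by the first bullet, the remaining case is $|A|=|B|=7$ with $S=\{x\}$ a cut vertex.

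\emph{Step 4: the case $|A|=|B|=7$ (main obstacle).} Here the vertex $x$ cannot have four blue neighbours in both $A$ and $B$, since these would give a blue $C_8$ with hub $x$. So $x$ has at least four red neighbours in one side, say $A$. I would then use the minimum degree bound $\delta(G)\ge 4$ (Lemma~\ref{w9:lem:degree-lower}), not just the internal bound. A vertex of $A$ not red-adjacent to $x$ has all its red neighbours in $A$, so it has internal degree at least $4$. A vertex adjacent to $x$ has internal degree at least $3$.

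The aim is either to find a seven-vertex red subgraph of minimum degree at least $4$, contradicting Corollary~\ref{w9:cor:no-core}, or to build a red $C_8$ directly. For the direct construction, take a red Hamilton path of $G[A]$ between two red neighbours of $x$ and close it through $x$. A Hamilton path between prescribed endpoints exists by Chv\'atal-type degree arguments after adding an auxiliary vertex adjacent to the neighbours of $x$, as in Lemma~\ref{lem:nine-six-marked}.

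Concretely: add $z$ adjacent to the (at least four) red neighbours of $x$ in $A$. The resulting graph has eight vertices, the vertices of $A$ have degree at least $4$, and $z$ has degree at least $4$. Since $4=8/2$, Dirac's theorem gives a Hamilton cycle. Deleting $z$ and attaching $x$ yields a red $C_8$.

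This Dirac step is the one to double-check. The only delicate point is that every vertex of $A$ has degree at least $4$ in $G[A]+z$: a neighbour of $x$ gains the edge to $z$, and a non-neighbour already has internal degree at least $4$. I expect this to go through, completing the contradiction.
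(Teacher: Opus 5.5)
Your proof is correct, but it differs from the paper's in both substantive steps. For a side with at least eight vertices, you apply Bondy's theorem directly, using $\delta(G[A])\ge |A|-4\ge |A|/2$; the $K_{4,4}$ exception at $|A|=8$ itself contains $C_8$. The paper instead invokes Lemma~\ref{w9:lem:small-degree}, which needs only $\delta\ge 4$ but has its own case analysis, and may return a blue $C_8$ that must then be completed to a wheel with a hub on the other side. Your stronger degree bound makes that lemma unnecessary here. In the balanced case $|A|=|B|=7$, you exploit the cut vertex $x$. It has at most three blue neighbours on one side, so it has at least four red neighbours there. Every vertex of $A$ has all its red neighbours in $A\cup\{x\}$ and $\delta(G)\ge 4$, so $G[A\cup\{x\}]$ has order eight and minimum degree at least four. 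Dirac's theorem then gives a red $C_8$; your auxiliary vertex $z$ is just a copy of $x$ and can be dropped. The paper ignores $x$ altogether. It uses Corollary~\ref{w9:cor:no-core} to force a vertex of $A$ with three blue neighbours in $A$ and two blue edges in $B$, and then builds a blue eight-cycle rim from a blue linear forest. Your route is shorter and avoids both the core lemma and the rim construction, at the cost of relying on the global bound $\delta(G)\ge 4$ and the separating vertex. The paper's argument holds for any two blue-separated seven-sets, in the spirit of the Chng et al.\ lemma it cites. One small slip: with your convention $|A|\ge|B|$, the case $|A|=7$ forces $|B|=7$ directly, so the mention of $|B|=8$ there is unnecessary.
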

\begin{proof}
  Suppose otherwise. Choose $x$ so that $G-x$ is disconnected;
  if $G$ itself is disconnected, any vertex can be chosen, since its
  components have at least five vertices.
  By Lemma~\ref{w9:lem:degree-lower}, every component of $G-x$ has at least
  four vertices.
  Three components would give a blue wheel: choose its hub in one
  component and its rim as a blue $K_{4,4}$ between two others.
  Thus there are exactly two components, with vertex sets $A,B$,
  where $4\le |A|\le |B|$ and $|A|+|B|=14$.

  All edges between $A$ and $B$ are blue.
  Every vertex of $B$ has at most three blue neighbors within $B$,
  since four of them and four vertices of $A$ would form its blue rim.
  Therefore $\delta(G[B])\ge |B|-4$.
  If $|B|\ge8$, then $8\le |B|\le10$ and $\delta(G[B])\ge4$.
  Lemma~\ref{w9:lem:small-degree} gives a red $C_8$ in $B$ or a blue $C_8$
  there, the latter forming a blue wheel with any hub in $A$.

  It follows that $|A|=|B|=7$.
  If either part has blue maximum degree at most two, its red induced
  graph has minimum degree at least four and is Hamilton-connected,
  contrary to Corollary~\ref{w9:cor:no-core}.
  Choose $a\in A$ with three blue neighbors $a_1,a_2,a_3$ in $A$.
  There are at least two blue edges in $B$, since otherwise its red
  graph is $K_7$ or $K_7-e$, again Hamilton-connected.
  Two selected blue edges are either adjacent or disjoint.
  Add vertices to obtain a five-vertex blue linear forest with two edges
  and three path components $P_1,P_2,P_3$; isolated vertices count as paths.
  Traversing
  \[
  P_1,\ a_1,\ P_2,\ a_2,\ P_3,\ a_3,\ P_1
  \]
  gives a blue cycle of length eight, with all connecting edges between
  $A$ and $B$. Every rim vertex is blue-adjacent to $a$.
  This is a blue $W_9$, a contradiction.
\end{proof}

The final step of this proof is related to the disconnected-graph
lemma of Chng et al.~\cite[Lemma~4.4]{trees}.

\begin{corollary}\label{w9:cor:degrees}
  $\delta(G)\in\{4,5\}$.
\end{corollary}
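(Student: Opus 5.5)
The lower bound $\delta(G)\ge4$ is Lemma~\ref{w9:lem:degree-lower}, so the plan is to show that $\delta(G)\ge6$ is impossible. We argue by contradiction, in the same way as Lemma~\ref{lem:globaldegree} and the proofs of Corollaries~\ref{cor:small14} and~\ref{cor:small}.

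Assume $\delta(G)\ge6$. Lemma~\ref{w9:lem:no-k8} shows that $G$ is nonbipartite, and Lemma~\ref{w9:lem:connectivity} shows that $G$ is $2$-connected. Since $|G|=15$, we have $(|G|+2)/3=17/3<6\le\delta(G)$. The theorem of Brandt et al.~\cite{bfg} therefore applies: $G$ is weakly pancyclic with girth three or four.

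Dirac's circumference bound gives $c(G)\ge\min\{2\delta(G),15\}\ge12\ge8$. Weak pancyclicity then produces a red cycle of every length from the girth, which is at most four, up to at least twelve. In particular $G$ contains a red $C_8$, contradicting the assumption. Hence $\delta(G)\le5$, and together with the lower bound, $\delta(G)\in\{4,5\}$.

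I expect no real obstacle. The only points to check are that the numerical hypothesis $\delta\ge(|G|+2)/3$ holds at $\delta=6$, and that $2$-connectivity and nonbipartiteness are already available from the two preceding lemmas. The threshold is tight in the sense that $\delta=5$ would give $5<17/3$. This is why the corollary cannot rule out degree five by this argument, and the paper's subsequent work on excluding $\delta=4$ is needed separately.
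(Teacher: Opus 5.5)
Your proof is correct and matches the paper's argument: the lower bound comes from Lemma~\ref{w9:lem:degree-lower}, and $\delta(G)\ge6$ is excluded by combining nonbipartiteness (Lemma~\ref{w9:lem:no-k8}), $2$-connectivity (Lemma~\ref{w9:lem:connectivity}), the weak pancyclicity theorem of Brandt et al.\ (valid since $6\ge17/3$), and Dirac's bound $c(G)\ge12$ to obtain a red $C_8$. Your remark that the argument fails at $\delta=5$ because $5<17/3$ is also accurate.
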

\begin{proof}
  The lower bound follows from Lemma~\ref{w9:lem:degree-lower}.
  If $\delta(G)\ge6$, then $G$ is nonbipartite and 2-connected,
  $6\ge(15+2)/3$, and its circumference is at least twelve.
  Weak pancyclicity gives a red $C_8$.
\end{proof}

\subsection{Hamilton-connected subgraphs in blue neighborhoods}
\begin{lemma}\label{w9:lem:k6}
  A red--blue coloring of $K_{10}$ containing a red $K_6$ contains
  a red $C_8$, a blue $C_8$, or a red Hamilton-connected subgraph
  on seven vertices.
\end{lemma}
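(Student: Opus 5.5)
Let $K$ be the red $K_6$ and $R$ the remaining four vertices. Assume there is no red $C_8$ and no blue $C_8$; we aim to produce a red Hamilton-connected subgraph on seven vertices. The plan is to add one vertex of $R$ to $K$. For $r\in R$, write $d(r)$ for the number of red neighbours of $r$ in $K$. If some $r$ has $d(r)\ge3$, then $K\cup\{r\}$ induces a red graph on seven vertices with minimum degree at least $3$. This falls short of the bound $(7+1)/2=4$ required by Lemma~\ref{lem:hc}. We therefore check Hamilton-connectedness of $K_6$ plus a vertex of degree $\ge2$ directly. If $r$ has red neighbours $x\ne y$ in $K$, a spanning path between any two vertices of $K\cup\{r\}$ is obtained by routing through $r$ via $x$ and $y$ inside the clique. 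The cases are as follows. For endpoints $a,b\in K$, choose $x,y$ distinct from $\{a,b\}$ when possible; otherwise use $a\,r\,y\cdots b$. For endpoint $r$ and $b\in K$, start $r\,x$ with $x\ne b$ and traverse $K$ to $b$. All of these work because $K$ is complete with six vertices. Hence it suffices to find one $r\in R$ with at least two red neighbours in $K$, and the core is $K\cup\{r\}$.

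So we may suppose every $r\in R$ has at most one red neighbour in $K$, i.e. at least five blue neighbours there. We then build a blue $C_8$ by alternating vertices of $R$ with vertices of $K$, which is possible whenever the blue $R$--$K$ bipartite graph is nearly complete. Order $R$ as $r_1,r_2,r_3,r_4$ cyclically. The consecutive pair $r_i,r_{i+1}$ has at least $6-2=4$ common blue neighbours in $K$. Hall's condition for four sets, each of size at least $4$, holds trivially, so distinct representatives $k_i$ exist. The cycle $r_1k_1r_2k_2r_3k_3r_4k_4r_1$ is a blue $C_8$, contrary to assumption.

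The main obstacle I expect is the first step. One must verify carefully that $K_6$ plus a vertex of red degree two is Hamilton-connected, including the endpoint pairs that use $x$ or $y$ themselves. For instance, with endpoints $x$ and $y$, the path is $x$, then through $K\setminus\{x,y\}$, then $r$, then $y$. This is routine, since all four remaining clique vertices can be listed between $x$ and $r$ using the edge $ry$. The case with endpoints $r$ and $x$ also needs care: use $r\,y$, then the rest of $K$, ending at $x$. If the statement intends the Hamilton-connected subgraph to be induced, the same set works. I would also note that the hypotheses about red and blue $C_8$ are used only in the second step. The first step needs no hypothesis, so the lemma is in fact slightly stronger than stated.
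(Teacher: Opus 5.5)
\textbf{Gap in your first step.} $K_6$ plus a vertex $r$ with exactly two red neighbours $x,y$ in $K$ is \emph{not} Hamilton-connected. In a spanning $x$--$y$ path, $r$ is an internal vertex, so its two path-neighbours must be $x$ and $y$. But these are the endpoints, which forces the path to be just $x\,r\,y$. The path you propose ($x$, then $K\setminus\{x,y\}$, then $r$, then $y$) uses an edge from a vertex of $K\setminus\{x,y\}$ to $r$, and no such edge exists. More generally, a Hamilton-connected graph on at least four vertices has minimum degree at least three. The correct threshold is therefore three red neighbours. In that case your construction works: choose two neighbours of $r$ whose pair is not the pair of prescribed endpoints, and subdivide that edge of a clique path. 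Your remark that the lemma holds without the $C_8$ hypotheses rests on this error.

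\textbf{What is missing.} You still have to treat the case where every vertex of $R$ has at most two red neighbours in $K$ and some have exactly two. Your Hall argument does not cover it. Two outside vertices with disjoint red pairs have only $6-4=2$ common blue neighbours, and Hall's condition can fail. The missing idea splits this case in two.
\begin{itemize}
\item If two outside vertices have \emph{distinct} two-element red neighbourhoods, regard these as two distinct edges of $K_6$. They form a linear forest, so they lie on a Hamilton cycle of $K_6$. Subdividing both edges by the corresponding outside vertices gives a red $C_8$.
\item Otherwise all two-element red neighbourhoods equal a single pair $D$. Then every two outside vertices have at least $3$ common blue neighbours in $K$. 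Order $R$ cyclically so that one consecutive pair has at least $4$: put two vertices with neighbourhood $D$ next to each other if there are two such vertices, and otherwise put two vertices with at most one red neighbour next to each other. Now any three of the sets $S_i$ have union of size at least $3$, and all four have union of size at least $4$. Hall's condition holds, and the alternating blue $C_8$ follows.
\end{itemize}
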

\begin{proof}
  Let $X$ be the red six-clique and let $T$ be the set of four other
  vertices. Assume there is no red $C_8$ and no red Hamilton-connected
  subgraph on seven vertices.
  A clique together with one additional vertex having at least three
  neighbors in it is Hamilton-connected.
  Indeed, if the additional vertex is an endpoint, start there and
  traverse the clique to the prescribed other endpoint.
  If both endpoints lie in the clique, choose two of its three neighbors
  whose pair is not the pair of prescribed endpoints; a spanning path
  in the clique between those endpoints can contain the chosen edge.
  Subdivide that edge by the additional vertex.
  Thus every vertex of $T$ has at most two red neighbors in $X$.

  If two vertices of $T$ have distinct two-element red-neighbor sets,
  regard those sets as distinct edges of $K_6$.
  These two edges form a linear forest and lie in a Hamilton cycle of
  $K_6$. Subdividing them by the corresponding outside vertices gives
  a red $C_8$.
  Hence all two-element red-neighbor sets, if present, equal one pair $D$.

  Order $T$ cyclically so that two vertices with neighbor set $D$
  are consecutive if there are at least two such vertices.
  Otherwise choose two vertices with at most one red neighbor to be
  consecutive. Let $H$ be the red graph of the coloring and, for this
  order, put
  \[
  S_i=X\cap N_{\cl H}(t_i)\cap N_{\cl H}(t_{i+1}),\qquad i\pmod4,
  \]
  Every $S_i$ has size at least three, and the specially chosen pair
  gives one set of size at least four. Hall's condition holds:
  nonempty subfamilies of at most three sets have union of size at least three,
  and the full family has union of size at least four.
  Distinct representatives $x_i\in S_i$ give the blue cycle
  $t_1x_1t_2x_2t_3x_3t_4x_4t_1$.
\end{proof}

\begin{lemma}\label{w9:lem:three-attachments}
  Let $X,Y$ be disjoint vertex sets of sizes seven and three in a
  red--blue coloring of a complete graph. Suppose each vertex of $Y$ has at most one blue neighbor
  in $X$, and there is no red $C_8$.
  Then the red graph on $X$ has at most two edges.
\end{lemma}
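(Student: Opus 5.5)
The plan is to argue by contradiction: assume the red graph on $X$ has at least three edges and build a red $C_8$ that uses all three vertices of $Y$ together with five vertices of $X$. Every $y\in Y$ has at least six red neighbours in $X$; write $\beta(y)$ for its unique blue neighbour in $X$ if one exists. The cycle will have the form
\[
P_1\, y_1\, P_2\, y_2\, P_3\, y_3\, P_1,
\]
where $P_1,P_2,P_3$ are vertex-disjoint red paths in $X$ with $|V(P_1)|+|V(P_2)|+|V(P_3)|=5$. Such a triple is a red linear forest on five vertices of $X$ with exactly two edges. Any two red edges of $X$ qualify: if they share a vertex they form a $P_3$, completed by two singletons; if they are disjoint they form $2P_2$, completed by one singleton. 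Since $|X|=7$, there are enough unused vertices to choose the singletons.

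Next I would reduce the existence of the cycle to a matching problem. Fix the three paths and a cyclic order and orientation of them. This determines three \emph{gaps}, each consisting of an end of one path and the start of the next. A vertex $y$ can fill a gap exactly when $\beta(y)$ is not one of that gap's two endpoints. So the red $C_8$ exists as soon as the bipartite graph between $Y$ and the three gaps has a perfect matching.

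The placement of $\beta(y)$ controls how many gaps $y$ can fill. If $\beta(y)$ is not an endpoint of any path (it is unused, or it is the middle of a $P_3$), then $y$ fits every gap. If $\beta(y)$ is an endpoint of a path with at least two vertices, it lies in exactly one gap, so $y$ fits two gaps. Only when $\beta(y)$ is a singleton path does it lie in two gaps, leaving $y$ a single usable gap.

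The first step is therefore to choose the singletons outside $\{\beta(y):y\in Y\}$ whenever possible. In that case every $y$ fits at least two of the three gaps. Hall's condition can then fail only if all three vertices of $Y$ fit the same two gaps, which means all three $\beta(y)$ lie in the single remaining gap. That gap has only two endpoints, so two of the $y$'s share the same blue neighbour. The fix is to reverse the orientation of one path or reorder the paths so that this shared endpoint is separated from the other $\beta$ value. Hall's condition is then restored.

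The main obstacle is the configuration in which the singleton choice is forced onto a $\beta$ vertex. This happens when the chosen two-edge forest covers many of the seven vertices and the $\beta(y)$ fill up the remaining ones, or when the $\beta$ values sit at path ends in an adversarial way. This is exactly where the third red edge is needed; the statement is sharp at two edges, so the argument must use it. My first attempt would be a short case analysis on the shape of the three red edges: a star $K_{1,3}$, a path $P_4$, a triangle, $P_3\cup P_2$, or $3P_2$. In each case I would select, from the three available pairs of edges, a forest whose vertex set avoids or contains the $\beta$ values favourably. The goal is that each $\beta(y)$ is either an interior vertex of a $P_3$ or unused, so that the generic Hall argument above applies. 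If a case resists this, I would instead use all three red edges to form a longer path, such as $P_4$ or $P_3\cup P_2$ with singletons dropped. The cycle would then use fewer $Y$-vertices, with the vertex counts chosen to total eight.
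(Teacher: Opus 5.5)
Your reduction is the same as the paper's: you pad two red edges to a five-vertex linear forest with three components, which gives three gaps and a Hall matching of $Y$ into the gaps. But you misplace the obstruction, and this leaves the essential step unproved.

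Reorienting or reordering does \emph{not} restore Hall's condition when all three vertices of $Y$ have the same blue neighbour $s$ and $s$ is an end of a path with at least two vertices. In that case $s$ lies in exactly one gap under every orientation and cyclic order, and all of $Y$ misses that gap. So this pair of edges yields no red $C_8$, and there is no ``other $\beta$ value'' to separate from. This is the only bad configuration for a given pair.

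Your reorientation fix does work when exactly two $\beta$-values coincide. The forced-singleton case you single out as the main obstacle is harmless. Singletons are forced onto $\beta$-vertices only when the three $\beta(y)$ are distinct, and then Hall holds for every choice of forest and singletons:
\begin{itemize}
\item a gap excludes at most two vertices of $Y$;
\item two gaps share at most one vertex, so they cannot both exclude the same two vertices;
\item no vertex lies in all three gaps.
\end{itemize}

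Consequently the third red edge must be used to destroy the configuration `common blue neighbour at a leaf', and your proposal never does this. The planned case analysis aims to make every $\beta(y)$ interior or unused. That is neither always possible nor necessary: take three disjoint red edges with distinct $\beta$'s at one end of each, and every pair of them leaves a $\beta$-value at a leaf.

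The fallback of threading all three red edges into the cycle cannot work. A cycle alternating $k$ vertices of $Y$ with $k$ red paths in $X$ that contain $e$ red edges has length $e+2k$, which is odd when $e=3$.

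The missing idea is a short pigeonhole, as in the paper.
\begin{enumerate}
\item Failure for one pair of red edges forces a common blue neighbour $s$ of all of $Y$, of degree one in that pair.
\item Among three red edges, two are both incident with $s$ or both avoid $s$.
\item In that pair $s$ is either the centre of a $P_3$, or has degree zero and you choose the singletons away from $s$. Either way $s$ lies in no gap.
\item Then every vertex of $Y$ fits every gap, and any assignment closes a red $C_8$.
\end{enumerate}
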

\begin{proof}
  We first show that any pair $F$ of red edges in $X$ forces all three vertices
  of $Y$ to have the same unique blue neighbor $s$ in $X$, and that $s$
  has degree one in $F$.

  The two edges form $P_3$ or $2K_2$. Add isolated vertices of $X$ to obtain
  a five-vertex linear forest with three path components.
  Orient and order those components cyclically.
  Between the end of each component and the start of the next is a
  two-element joining pair $E_i$, for $i=1,2,3$.
  The three pairs have empty common intersection and any two intersect
  in at most one vertex. A leaf of $F$ occurs in one pair; an added
  isolated vertex occurs in two.
  Matching the pairs to distinct vertices of $Y$ red-adjacent to both
  members produces a red eight-cycle.
  Call the unique blue neighbor of a vertex of $Y$, when it exists,
  its exception.

  If the exceptions that exist are pairwise distinct, Hall's condition
  holds. Each pair has at least one eligible vertex of $Y$.
  Two pairs cannot have just one eligible vertex between them: the other
  two vertices of $Y$ would have the same exception in their intersection.
  All three pairs together have all of $Y$ eligible, since no exception
  belongs to every pair.

  Suppose exactly two vertices of $Y$ share an exception $s$, and the third
  has a different exception $t$ or none. Choose the added isolated vertices
  outside $\{s,t\}$ (omit $t$ when absent). There are enough choices both
  for $P_3$ and for $2K_2$. Orient the components so that no joining pair
  is $\{s,t\}$. For $P_3$ this is automatic because every joining pair
  contains an added isolated vertex; for $2K_2$ one can choose which
  endpoints of its two edges form the only pair without an isolated vertex.
  At most one pair contains $s$. Assign the third vertex of $Y$ to that
  pair, and the two remaining vertices to the other pairs.
  If no pair contains $s$, assign the third vertex to any pair avoiding
  its exception and complete the assignment.

  Finally suppose all three exceptions equal $s$.
  If $s$ is not a leaf of $F$, choose added isolated vertices different
  from $s$. Then none of the joining endpoints equals $s$, and every
  assignment works. This proves the asserted restriction on $F$.

  If the red graph on $X$ had at least three edges, the restriction for one pair
  would fix a common exception $s$ for all of $Y$.
  Among three edges, some two are both incident with $s$ or both avoid $s$.
  Then $s$ has degree two or zero in that pair, a contradiction.
\end{proof}

\begin{lemma}\label{w9:lem:no-blue-core}
  If $v$ has red degree four, its blue neighborhood contains no blue
  Hamilton-connected subgraph of order seven.
\end{lemma}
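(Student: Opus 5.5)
Let $v$ have red degree four, put $X=N_G(v)$, so $|X|=4$, and let $B=N_{\cl G}(v)$, of order ten. Suppose $B$ contains a set $Q$ of seven vertices on which the blue graph is Hamilton-connected. Put $Y=B\setminus Q$, so $|Y|=3$. The plan is to play the blue Hamilton-connectedness of $Q$ against the hub $v$, and then to use Lemma~\ref{w9:lem:three-attachments} and Lemma~\ref{w9:lem:no-k8} to reach a contradiction.

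The first step concerns each $y\in Y$. If $y$ had two blue neighbors in $Q$, a spanning blue path in $Q$ between them, closed through $y$, would be a blue $C_8$. All its vertices lie in $B$, so together with the hub $v$ it would form a blue $W_9$. Hence each vertex of $Y$ has at most one blue neighbor in $Q$.

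The second step applies Lemma~\ref{w9:lem:three-attachments} with the roles $X\to Q$ and $Y\to Y$, using the absence of a red $C_8$. It shows that the red graph on $Q$ has at most two edges. Now take $Q\cup\{v\}$. The vertex $v$ is blue to all of $Q$, since $Q\subseteq B$. So this eight-vertex set spans at most two red edges, which contradicts Lemma~\ref{w9:lem:no-k8}, since every eight-vertex set spans at least three red edges.

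The only point needing care is confirming that the hypotheses of Lemma~\ref{w9:lem:three-attachments} match exactly: sizes seven and three, at most one blue neighbor each, and no red $C_8$. Also, the blue $C_8$ in the first step must lie entirely inside $B$, so that $v$ is adjacent to every rim vertex. Both checks are routine, so I expect no real obstacle; the argument should be a direct chaining of the earlier lemmas.
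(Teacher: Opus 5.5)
Your proposal is correct and follows the paper's proof essentially step for step. Like the paper, you show that the hub $v$ forces each of the three remaining vertices of the blue neighborhood to have at most one blue neighbor in the core. You then apply Lemma~\ref{w9:lem:three-attachments} to get at most two red edges on the core, and add $v$ to contradict Lemma~\ref{w9:lem:no-k8}.
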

\begin{proof}
  Put $B=N_{\cl G}(v)$, so $|B|=10$.
  Suppose $X\subseteq B$ induces a blue Hamilton-connected graph of order
  seven, and set $Y=B\setminus X$.
  Every $y\in Y$ has at most one blue neighbor in $X$; otherwise a blue
  spanning path in $X$ closes through $y$ to an eight-cycle with hub $v$.
  Lemma~\ref{w9:lem:three-attachments} gives $e(G[X])\le2$.
  Since every edge from $v$ to $X$ is blue, the eight-vertex set $X\cup\{v\}$
  contradicts Lemma~\ref{w9:lem:no-k8}.
\end{proof}

\begin{corollary}\label{w9:cor:neighborhood}
  If $v$ has red degree four, then its ten-vertex blue neighborhood
  contains no Hamilton-connected seven-vertex subgraph in either color,
  and no six-clique in either color.
  Both color graphs on this neighborhood have no cycle longer than seven.
\end{corollary}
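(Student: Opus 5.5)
The plan is to fix $v$ of red degree four, put $B=N_{\cl G}(v)$ with $|B|=10$, and collect the facts about $B$ established so far.

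\emph{Blue cores.} Lemma~\ref{w9:lem:no-blue-core} already excludes a blue Hamilton-connected subgraph of order seven in $B$.

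\emph{Red cores.} A red Hamilton-connected seven-vertex subgraph anywhere in $G$ is excluded by Corollary~\ref{w9:cor:no-core}, so in particular none lies in $B$.

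\emph{Blue six-cliques.} A blue $K_6$ contains a blue Hamilton-connected seven-vertex subgraph only if we add a vertex, so I will argue via Lemma~\ref{w9:lem:k6} with colors interchanged. That lemma holds for any two-coloring of $K_{10}$, hence also with the roles swapped. A blue $K_6$ in $B$ therefore yields a blue $C_8$, a red $C_8$, or a blue Hamilton-connected seven-vertex subgraph. The first would form a blue $W_9$ with hub $v$, the second is forbidden, and the third is excluded by Lemma~\ref{w9:lem:no-blue-core}.

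\emph{Red six-cliques.} Lemma~\ref{w9:lem:k6} applied directly gives a red $C_8$, a blue $C_8$ in $B$ (hence a blue $W_9$ with hub $v$), or a red core. Each alternative is impossible.

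For the last sentence, no monochromatic $C_8$ lies in $B$: red is forbidden, and blue gives a wheel with hub $v$. It remains to exclude monochromatic $C_9$ and $C_{10}$ in $B$.

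\emph{A blue $C_9$ or $C_{10}$.} I will use the chord lemmas of K\'arolyi--Rosta as in Lemmas~\ref{lem:red7} and~\ref{lem:red8}. By Lemma~2.1 of~\cite{kr}, a monochromatic $C_9$ or $C_{10}$ in a coloring avoiding that color's $C_8$ forces structure. Concretely, for a blue $C_9$ (resp.\ $C_{10}$) with no blue $C_8$, the chords that would shortcut it to length eight must be red. These are the distance-two chords for $C_9$ and the distance-three chords for $C_{10}$.
\begin{itemize}
\item For $C_9$, the distance-two chords form a red $C_9$, since $\gcd(2,9)=1$.
\item For $C_{10}$, Lemma~2.1(2) of~\cite{kr} yields a monochromatic $C_8$ directly.
\end{itemize}
So a blue long cycle reduces to a red long cycle, or to a contradiction outright.

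\emph{A red $C_9$ or $C_{10}$.} The obstacle is a red $C_9$ or red $C_{10}$ in $B$. For a red $C_{10}$ with no red $C_8$, Lemma~2.1(3) of~\cite{kr} makes its two parity classes blue $K_5$'s. I then want to show that a blue $K_5$ plus further structure produces a blue $K_6$ or a blue core, both already excluded. The first attempt is to use that $G$ has minimum degree four with $v$ red-adjacent to none of $B$: the two blue $K_5$'s must be nearly red-complete to each other, otherwise a blue $C_8$ appears. That red near-complete bipartite $K_{5,5}$ then contains a red $C_8$.

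For a red $C_9$, the plan is to deduce a red $C_8$ by Lemma~2.1(1) of~\cite{kr}, exactly as in Lemma~\ref{lem:red7}. I expect the $C_{10}$ case, via the $K_{5,5}$ structure, to be the main point needing care.
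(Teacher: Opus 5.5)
Your treatment of the Hamilton-connected subgraphs and of the six-cliques matches the paper's argument exactly: you use Corollary~\ref{w9:cor:no-core}, Lemma~\ref{w9:lem:no-blue-core}, and Lemma~\ref{w9:lem:k6} in each color, with a blue $C_8$ in $B$ giving a blue $W_9$ with hub $v$. The same holds for your blue $C_9$, blue $C_{10}$ and red $C_9$ cases. One small correction: Lemma~2.1(1) of K\'arolyi--Rosta yields a \emph{monochromatic} $C_8$, not necessarily a red one. Both colors are excluded in $B$, so this is harmless.

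The gap is the red $C_{10}$ case. You assert, citing Lemma~2.1(3), that a red $C_{10}$ with no red $C_8$ has both parity classes blue $K_5$'s. That statement is false. Take as red graph the cycle $x_0x_1\cdots x_9x_0$ plus the single chord $x_0x_2$. Its only cycles have lengths $3$, $9$ and $10$, so there is no red $C_8$. Yet the even parity class contains the red edge $x_0x_2$. The paper invokes Lemma~2.1(3) only in Lemma~\ref{lem:red8}, under different hypotheses (no red $C_9$ and no blue $C_6$, with colors interchanged), which you do not have here. Your subsequent $K_{5,5}$ step would be fine if the parity classes really were blue cliques: the blue cross edges would then form a star, so the red cross edges contain $K_{4,5}\supseteq C_8$. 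But as written it rests on an unproved premise, and the appeal to $\delta(G)\ge4$ plays no role.

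The detour is unnecessary. Lemma~2.1(2) concerns a monochromatic $C_{10}$, so apply it to the red $C_{10}$ exactly as you did to the blue one; this is what the paper does. For a self-contained argument, take a red $C_{10}=x_0\cdots x_9$ inside $B$. A red chord at cyclic distance two gives a red $C_9$, which is already excluded. A red chord at distance three gives a red $C_8$. So all these chords are blue, and $x_0x_2x_4x_6x_9x_7x_5x_3x_0$ is then a blue $C_8$, forming a blue $W_9$ with hub $v$.
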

\begin{proof}
  Write $B=N_{\cl G}(v)$. A blue $C_8$ in $B$ would form a blue wheel with hub $v$,
  and a red $C_8$ is forbidden globally.
  The Hamilton-connected subgraphs are excluded by Corollary~\ref{w9:cor:no-core} and
  Lemma~\ref{w9:lem:no-blue-core}.
  Applying Lemma~\ref{w9:lem:k6} in each color excludes both six-cliques.

  A monochromatic nine-cycle gives a monochromatic eight-cycle by
  \cite[Lemma~2.1(1)]{kr}; a monochromatic ten-cycle does so by
  \cite[Lemma~2.1(2)]{kr}. Thus neither color has a cycle longer than seven.
\end{proof}

\subsection{The ten-vertex local dichotomy}
We prove a structural statement for arbitrary red--blue colorings of
$K_{10}$, using the published stability analysis of
F\"uredi et al.~\cite{fkv}. For reproducibility, the source theorem,
corollary, section, and case numbers below refer to the fixed version
arXiv:1507.05338v2. We state the structural alternatives used here
and give the edge counts and complement arguments needed for this
application.

\begin{lemma}\label{w9:lem:dense7}
  If $|F|=7$, $e(F)\ge17$, and $\delta(F)\ge3$, then $F$ is Hamilton-connected.
  Consequently, a seven-vertex graph of minimum degree at least two and at
  least seventeen edges is Hamilton-connected or contains $K_6$.
\end{lemma}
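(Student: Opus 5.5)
The plan is to work in the complement. Write $\cl F$ for the complement of $F$ on the same seven vertices. Then $e(\cl F)\le 21-17=4$, and $\delta(F)\ge3$ gives $\Delta(\cl F)\le3$. I will reuse the closure argument from the proof of Lemma~\ref{lem:hc}.

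Fix distinct endpoints $a,b$. Add a new vertex $w$ adjacent exactly to $a$ and $b$, giving an eight-vertex graph $F^+$. It suffices to show that $F^+$ is Hamiltonian. As in Lemma~\ref{lem:hc}, a nonadjacent pair $x,y$ of original vertices whose degrees in $F^+$ sum to at least $8$ may be joined without changing Hamiltonicity. Such additions only remove edges of $\cl F$ and only raise degrees, so the process can be iterated. Once all original pairs are joined, the graph is $K_7$ plus $w$, which is Hamiltonian.

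For the degree count, take a nonadjacent pair $x,y$. Then $d_F(x)+d_F(y)=12-d_{\cl F}(x)-d_{\cl F}(y)$. The edge $xy$ contributes $2$ to $d_{\cl F}(x)+d_{\cl F}(y)$, and each of the at most three other complement edges contributes at most $1$. Hence $d_{\cl F}(x)+d_{\cl F}(y)\le5$, and the pair has sum at least $8$ whenever this quantity is at most $4$. It also has sum at least $8$ when one of $x,y$ lies in $\{a,b\}$, because that vertex gains the edge to $w$.

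So the closure can be blocked only in one configuration. In the current graph, every remaining complement edge $xy$ would have to satisfy $d_{\cl F}(x)+d_{\cl F}(y)=5$ and avoid $\{a,b\}$. This forces the complement to have exactly four edges, all incident with $x$ or $y$, with $xy$ among them. That is, $\cl F$ is a double star with adjacent centers $u,v$ outside $\{a,b\}$. If it is not of this form, some pair can be closed, and after that the complement has at most three edges, so every later pair closes.

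The main obstacle is this double-star configuration, and I will handle it by an explicit construction rather than by closure. Let $W$ be the remaining five vertices. Then $W$ is a red $K_5$, since all complement edges touch $u$ or $v$. Each of $u,v$ has at most two nonneighbors in $W$, so each has at least three neighbors in $W$. The three complement edges other than $uv$ leave $u$ and $v$ together with at least seven adjacencies into $W$. Hence $u,v$ have distinct neighbors in $W$ and a common neighbor.

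By the observation in the proof of Lemma~\ref{w9:lem:k6}, the clique $W$ together with $u$ is Hamilton-connected, and so is $W\cup\{v\}$. To build the $a$--$b$ spanning path in $F$ (recall $a,b\in W$ here), I will take a spanning $a$--$b$ path of the Hamilton-connected graph $W\cup\{u\}$. I then insert $v$ by replacing some edge $zz'$ of this path, with $z,z'\in W$, by $z\,v\,z'$, where $z,z'$ are two neighbors of $v$. Such a choice exists because $v$ has at least three neighbors in $W$. Some care is needed to route the path in $W\cup\{u\}$ so that it contains a $W$-edge between two such neighbors of $v$, which the clique freedom allows.

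For the consequence, if $\delta(F)\ge3$ the first part applies. Otherwise some vertex $x$ has degree exactly $2$. Then $F-x$ has at least $17-2=15=\binom62$ edges on six vertices, so it is a $K_6$.
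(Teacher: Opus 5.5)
Your proof is correct, and its core is the same as the paper's: adjoin a new vertex adjacent only to the prescribed endpoints and run the closure at threshold $8$. The difference lies in how you check that the closure completes. The paper counts degrees of the nonuniversal vertices of the closure ($34\le 42-2r$, then excludes $r\le 4$). You instead work with the complement, which has at most four edges and maximum degree at most three.

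Your ``double-star obstacle'' never actually occurs. Blocking requires $d_{\overline F}(x)+d_{\overline F}(y)=5$ for \emph{every} remaining complement edge $xy$, not just one. That means every complement edge meets the other three. Four pairwise intersecting edges form a $K_{1,4}$, whose centre has degree $2$ in $F$, contrary to $\delta(F)\ge 3$. Concretely, in your double star the centre of complement degree three has a leaf of complement degree one, and that edge has complement degree sum $4$, so it closes.

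Hence the first closure step is always available and the whole closure completes. You can delete the explicit construction in $W\cup\{u,v\}$; its routing step was only sketched, though it is valid because one of $u,v$ has four neighbours in $W$. The bonus from the endpoints $a,b$ also becomes unnecessary. The resulting complement argument is at least as short as the paper's degree count.
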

\begin{proof}
  Take the Hamilton-connected $8$-closure: repeatedly add a missing edge
  whose endpoints have degree sum at least eight. This preserves
  Hamilton-connectedness in both directions. For completeness, this closure
  rule follows from the usual Hamiltonian closure rule after adjoining a
  new degree-two vertex adjacent to the two prescribed endpoints.

  Suppose the resulting closure is not complete, and let $r$ be its number
  of nonuniversal vertices. Every such vertex has degree at most four,
  since its nonneighbor has degree at least three. Hence
  \[
  34\le \sum d(x)\le6(7-r)+4r=42-2r,
  \]
  so $r\le4$. Each nonuniversal vertex is adjacent to the $7-r$ universal
  vertices. Thus $r\le2$ is impossible, while $r=3$ makes the endpoints
  of any missing edge have degree at least four, again impossible in a
  closed graph. For $r=4$, equality in the degree sum forces all four
  nonuniversal vertices to have degree four, giving the same contradiction.
  The closure is complete.

  For the final assertion, if a vertex has degree two, deleting it leaves
  at least fifteen edges on six vertices and hence a $K_6$. Otherwise the
  first assertion applies.
\end{proof}

\paragraph{Kopylov's theorem.}
Put
\[
h(n,k,a)=\binom{k-a}{2}+a(n-k+a).
\]
For integers $n\ge k$ and $1\le a<k/2$, the graph $H_{n,k,a}$
has a partition $(A,B,C)$ of sizes
$a,n-k+a,k-2a$, respectively, with all edges in $A\cup C$ and all
edges between $A$ and $B$, and no other edges.

We use the following formulation of Kopylov's theorem~\cite{kopylov1977};
see also~\cite[Theorem~1.3]{fkv}.
\begin{theorem}
  \label{w9:thm:kopylov}
  Let $n\ge k\ge5$. If $F$ is a 2-connected graph of order $n$ and
  circumference less than $k$, then
  \[
  e(F)\le\max\{h(n,k,2),h(n,k,\lfloor(k-1)/2\rfloor)\},
  \]
  with equality only if $F$ is isomorphic to $H_{n,k,2}$ or
  $H_{n,k,\lfloor(k-1)/2\rfloor}$.
\end{theorem}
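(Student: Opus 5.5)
This statement is Kopylov's classical theorem. It is not proved in this paper: the formulation is taken from~\cite{kopylov1977} and~\cite[Theorem~1.3]{fkv}, and the plan here is to quote it rather than reprove it. If a self-contained argument were wanted, I would follow the now-standard disintegration proof, sketched below with the main obstacle marked.

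\emph{Reduction to a saturated graph.} Replace $F$ by an edge-maximal $2$-connected supergraph on the same vertex set with circumference still less than $k$. Adding edges preserves $2$-connectivity, so this costs nothing, and equality can only occur for such a maximal graph. Maximality gives the key property: for every nonedge $xy$, adding $xy$ creates a cycle of length at least $k$. Hence $x$ and $y$ are joined by a path on at least $k$ vertices. Pósa-type rotations of a longest path then control the degrees at its endpoints. In particular, if a longest path has endpoints $x,y$ with $d(x)+d(y)$ large, a cycle of length at least $k$ appears, exactly as in the closure argument of Lemma~\ref{lem:hc}.

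\emph{Disintegration.} Put $t=\lfloor(k-1)/2\rfloor$ and repeatedly delete a vertex of degree at most $k-t-1$ in the current graph, until none remains; call the result the core $F'$. There are two cases.
\begin{itemize}
\item If $F'$ is empty, every deleted vertex removed at most $k-t-1$ edges. Summing, and treating the last $k-t$ vertices separately (they span at most $\binom{k-t}{2}$ edges), gives $e(F)\le h(n,k,t)$. Equality forces the extremal structure of $H_{n,k,t}$.
\item If $F'$ is nonempty, the saturation property and a longest-cycle argument should show that $F'$ is a clique $K_{k-a}$ for some $2\le a\le t$. Every vertex outside $F'$ then attaches to at most $a$ vertices of it. This bounds $e(F)\le h(n,k,a)$.
\end{itemize}
Finally, $h(n,k,a)$ is convex in $a$, so its maximum over $2\le a\le t$ is attained at an endpoint. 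This gives $\max\{h(n,k,2),h(n,k,t)\}$, and tracing equality yields $H_{n,k,2}$ or $H_{n,k,t}$.

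\emph{Main obstacle.} The hardest step is proving that a nonempty core is a clique, with the outside vertices attached through a common set of exactly $a$ vertices. This needs the full Kopylov path-rotation machinery, namely longest paths between prescribed endpoints in $2$-connected graphs. It is also where $2$-connectivity enters essentially. Because this is a long and well-documented argument, I would quote it, as the paper does, rather than reproduce it.
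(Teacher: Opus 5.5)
Your primary plan, to cite the result rather than prove it, is exactly what the paper does. Theorem~\ref{w9:thm:kopylov} is quoted from \cite{kopylov1977} and \cite[Theorem~1.3]{fkv} without proof, so as a proposal this is fine.

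Your optional sketch, however, is wrong for even $k$ as written, so do not rely on it. With deletion threshold $k-t-1$, an empty core gives only
\[
e(F)\le (k-t-1)(n-k+t)+\binom{k-t}{2}.
\]
This equals $h(n,k,t)$ exactly when $k-t-1=t$, that is, for odd $k$. For $k=2t+2$ it exceeds $h(n,k,t)$ by $n-k+t>0$. In fact $H_{n,k,t}$ itself has an empty core under your threshold: its clique $A\cup C$ is a $K_{t+2}$ with internal degree $t+1$. So your empty-core case would have to cover the extremal graph with a count that is not tight.

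The threshold that works for both parities is $t$. An empty core then gives $e(F)\le\binom{t+1}{2}+t(n-t-1)$, which is $h(n,k,t)$ for odd $k$ and $h(n,k,t)-1$ for even $k$. A nonempty core that is a clique $K_{k-a}$ must satisfy $k-a-1\ge t+1$, so $a\le k-t-2$, and this upper limit equals $t$ exactly when $k$ is even.

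There is a separate problem in the nonempty case. Knowing that each outside vertex has at most $a$ neighbors in the core does not bound the edges among the outside vertices. To get the count $\binom{k-a}{2}+a(n-k+a)$, you need the outside vertices to be deletable one at a time, each having degree at most $a$ at the moment it is deleted.
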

In particular, for $k=8$ the pairs of bounds at $n=8,9,10$ are
$(19,19),(21,22),(23,25)$.

\begin{lemma}\label{w9:lem:blocks}
  Suppose a coloring of $K_{10}$ contains no monochromatic $C_8$, no
  monochromatic $K_6$, and no monochromatic Hamilton-connected subgraph
  on seven vertices. Then each color graph
  having at least twenty-three edges is 2-connected.
\end{lemma}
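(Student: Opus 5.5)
We argue by contradiction. Let $F$ be one of the two color graphs, say red by symmetry, with $e(F)\ge23$, and suppose $F$ is not $2$-connected. The other color graph $\cl F$ then has at most $45-23=22$ edges. The aim is to show that the complementary color contains a forbidden object: a $C_8$, a $K_6$, or a Hamilton-connected seven-vertex subgraph. We split according to whether $F$ is disconnected or has a cut vertex.

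\emph{$F$ disconnected.} If some component has order $a$ with $1\le a\le 9$, then all $a(10-a)$ cross-edges are blue, so $\cl F\supseteq K_{a,10-a}$.

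If $4\le a\le 6$, the blue $K_{4,4}$ inside $K_{a,10-a}$ gives a blue $C_8$.

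If $a\le 3$, the other side has $b=10-a\ge7$ vertices. The red edges live inside the large side, plus at most $\binom{a}{2}\le 3$ inside the small side. Hence the red graph on the large side carries at least $20$ edges when $a=3$, more when $a$ is smaller.

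When $b=7$, the red graph on that side is a seven-vertex graph with at least $20$ edges. Its minimum degree is at least $20-\binom62=5$, so it is Hamilton-connected by Lemma~\ref{lem:hc}, which is forbidden.

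When $b\ge8$, pick a vertex of minimum red degree inside the large side and delete it, repeating until seven vertices remain. I would check that the edge count stays high enough for Lemma~\ref{w9:lem:dense7}, which yields a Hamilton-connected subgraph or a red $K_6$. For example, with $b=8$ and at least $22$ red edges, deleting a minimum-degree vertex (degree $\le 5$) leaves at least $17$ edges. If the deleted vertex instead had degree $\le1$, the remaining seven vertices carry at least $21$ edges, so they form $K_7$. The case $b=9$ is handled similarly.

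\emph{$F$ connected with a cut vertex $x$.} Let $A$ be a smallest component of $F-x$, with $|A|=a\le4$, and put $B=V\setminus(A\cup\{x\})$, so $|B|=9-a\ge5$. All $A$--$B$ edges are blue.

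If $a=4$, then $|B|=5$ and $\cl F\supseteq K_{4,5}$, which contains a blue $C_8$.

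If $a\le3$, count red edges: at most $\binom a2+a$ are incident with $A$, and at most $|B|$ join $x$ to $B$. So $G[B]$ carries at least $23-\binom a2-a-(9-a)$ red edges, i.e.\ at least $23-3-3-6=11$ when $a=3$. That is too weak, so a direct count will not suffice here.

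Instead, for $a=3$ use the structure: $|B|=6$ and the red graph on $B\cup\{x\}$ has at least $23-6=17$ edges on seven vertices. By Lemma~\ref{w9:lem:dense7}, provided the minimum degree is at least two, this seven-set is Hamilton-connected or contains a red $K_6$, both forbidden. If some vertex has degree at most one there, removing it leaves at least $16>15$ edges on six vertices, which is impossible. For $a\le2$, $B\cup\{x\}$ has at least eight vertices and many red edges, and we again delete low-degree vertices down to seven and apply Lemma~\ref{w9:lem:dense7}.

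The main obstacle is this edge bookkeeping in the cut-vertex case with small $|A|$. There one must verify that after trimming to seven vertices the hypotheses of Lemma~\ref{w9:lem:dense7} (at least $17$ edges and minimum degree at least two) really hold, so that the red graph on $B\cup\{x\}$ contains a forbidden Hamilton-connected subgraph or $K_6$. If it fails in a borderline distribution, I would fall back on using the blue $K_{a,|B|}$ together with blue edges inside $B$ to build a blue $C_8$, in the style of Lemma~\ref{lem:independent-five-separated}.
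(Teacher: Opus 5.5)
Your handling of several cases is correct: small sides of four to six vertices (a blue $K_{4,4}$), a seven-vertex large red side (Lemma~\ref{lem:hc} or Lemma~\ref{w9:lem:dense7}), and the disconnected case with an eight-vertex large side. The gap is in the cases where the large red side has eight or nine vertices: the cut-vertex case with $a\le2$ and the disconnected case with $a=1$.

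There, deleting minimum-degree vertices does not preserve seventeen edges. A $5$-regular graph on eight vertices has $20$ edges and drops to $15$ after one deletion. A nine-vertex graph with $23$ edges and degrees $5,\dots,5,6$ loses at least nine edges under any two deletions, leaving at most $14$. Such graphs are excluded only by the no-$C_8$ hypothesis, which your plan never uses.

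For eight vertices, Dirac's theorem patches this: minimum degree four gives a red $C_8$. For nine vertices you need more. You need the fact that both color graphs have circumference at most seven (Lemma~2.1 of K\'arolyi and Rosta rules out monochromatic $C_9$ and $C_{10}$). You also need an extremal bound for $2$-connected graphs of circumference at most seven, such as Theorem~\ref{w9:thm:kopylov}.

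Even with that input, red-side arguments cannot finish the cut-vertex case $a=1$. Take $F[B\cup\{x\}]\cong H_{9,8,3}$: a red $K_5$ on $A_0\cup C_0$ with $|A_0|=3$ and $|C_0|=2$, plus an independent set $B_0$ of four vertices red-complete to $A_0$. Join the pendant vertex $y$ only to $x$. This red graph has $23$ edges and circumference seven. Every seven-vertex set spans at most $16$ red edges, and there is no red $K_6$ and no red Hamilton-connected seven-vertex subgraph.

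There is no blue $C_8$ either, since each vertex of $A_0$ has blue degree at most one. So your proposed fallback of building a blue $C_8$ in the style of Lemma~\ref{lem:independent-five-separated} cannot close this case. The only forbidden object is a blue Hamilton-connected subgraph. The blue graph on $B_0\cup C_0$ is $K_6-e$, and $y$ has at least five blue neighbors there. So the seven vertices $B_0\cup C_0\cup\{y\}$ have blue minimum degree at least four and are Hamilton-connected.

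The paper reaches this configuration systematically by working with a largest red block of order $b$:
\begin{itemize}
\item for $b\le4$, an edge count rules it out;
\item for $b=5,6$, Hall's theorem gives a blue $C_8$;
\item for $b=7$, Lemma~\ref{w9:lem:dense7} applies;
\item for $b=8,9$, Theorem~\ref{w9:thm:kopylov} applies, and its equality case $H_{9,8,3}$ is eliminated by exactly this blue Hamilton-connected subgraph.
\end{itemize}
Your proof needs both the circumference input and this blue-side step to go through.
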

\begin{proof}
  Let $F$ be such a color graph, say red, and suppose it is not
  2-connected. By \cite[Lemma~2.1(1)--(2)]{kr}, a monochromatic cycle
  of length nine or ten would give a monochromatic $C_8$.
  Thus $F$ has circumference at most seven.
  Let $B$ be a largest block and set $b=|B|$.
  Bridges are allowed as two-vertex blocks. Every vertex outside $B$ has
  at most one red neighbor in $B$, since two such edges would enlarge the block.

  If $b\le4$, the block edge bound
  \[
  e(F)\le\sum_{D\text{ block}}2(|D|-1)\le18
  \]
  is a contradiction.

  If $b=6$, choose four outside vertices $t_1,\ldots,t_4$. Every
  consecutive pair has at least four common blue neighbors in $B$.
  Distinct representatives from these four sets give a blue $C_8$.
  If $b=5$, the corresponding sets each have size at least three.
  Hall can fail only if their full union has size three. In that event
  all four sets equal $B\setminus\{x,y\}$ for the same pair $x,y$;
  the unique red attachments of the outside vertices alternate
  $x,y,x,y$ in their cyclic order. Reorder them as $x,x,y,y$ instead.
  Now a consecutive equal-attachment pair has four common blue neighbors,
  and Hall holds. This again gives a blue $C_8$.

  For $b\ge7$, put $r=10-b$. Contracting $B$ to one vertex creates
  no parallel edges, so the number of edges not internal to $B$ is at
  most $\binom{r+1}{2}$. If $b=7$, this gives $e(F[B])\ge17$.
  Since $B$ is 2-connected it has minimum degree at least two, and
  Lemma~\ref{w9:lem:dense7} shows that $F[B]$ is Hamilton-connected or
  contains $K_6$, a contradiction.

  If $b=8$, then $e(F[B])\ge20$, contradicting the bound 19 in
  Theorem~\ref{w9:thm:kopylov}.
  Finally, if $b=9$, then $e(F[B])\ge22$, so equality holds in
  Kopylov's bound. Since $h(9,8,2)=21<22=h(9,8,3)$, uniqueness in
  the equality statement gives
  \[
  F[B]\cong H_{9,8,3}=K_3\vee(K_2+\overline K_4).
  \]
  Its blue graph contains a six-vertex $K_6-e$. The remaining vertex
  of $F$ has at most one red neighbor in $B$, so it has at least five
  blue neighbors in this six-vertex set. The resulting seven-vertex
  blue graph has minimum degree at least four and hence is Hamilton-connected,
  a contradiction.
\end{proof}

\paragraph{The structural input at equality.}
The next lemma records the precise consequence of the proof of
\cite[Theorem~5.1]{fkv} needed at the threshold $e(F)=23$.
Write $\mathcal G_i=\mathcal G_i(10,8)$ for the families defined
in Sections 4 and 5 of that paper.

\begin{lemma}\label{w9:lem:threshold-structure}
  Let $F$ be a $2$-connected graph with $|F|=10$, $c(F)=7$, and
  $e(F)\ge23$. Then at least one of the following holds:
  \begin{enumerate}
    \item[(i)] $F$ is a spanning subgraph of a member of
    $\mathcal G_i$ for some $i\in\{1,2,3,5,6,7,8\}$;
    \item[(ii)] $F$ has a seven-cycle $C$ such that the three vertices
    outside $C$ are independent and each has degree two in $F$.
    In this case $e(F)=23$ and $e(F[V(C)])=17$.
  \end{enumerate}
\end{lemma}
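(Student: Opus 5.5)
This lemma is a bookkeeping statement extracted from the stability proof of Füredi, Kostochka and Verstraëte, so the plan is to follow the proof of \cite[Theorem~5.1]{fkv} (arXiv:1507.05338v2) with parameters $n=10$, $k=8$ and record the point where it either places $F$ in a family or stops. Their theorem concerns $2$-connected $n$-vertex graphs with circumference less than $k$ and $e(F)>\max\{h(n,k,3),h(n,k,\lfloor(k-1)/2\rfloor)\}$ or the analogous threshold. Here $h(10,8,2)=23$ and $h(10,8,3)=25$, so our threshold $e(F)\ge23$ sits at $h(n,k,2)$, which is exactly the regime handled by the families $\mathcal G_i(10,8)$. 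The first step is to check that their hypotheses are met: $F$ is $2$-connected, $c(F)=7<8$, and the edge count $23$ is at least their lower threshold, which I expect to be $h(n,k,2)$ with $k=2t$, $t=4$.

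Next I would retrace their argument, which starts from a longest cycle $C$ of length $7$ together with the set $R=V(F)\setminus V(C)$ of three outside vertices. The proof then branches according to whether $R$ spans an edge, how the components of $F-V(C)$ attach to $C$, and on the degrees of the outside vertices. In every branch except one, their proof concludes that $F\subseteq$ some member of $\mathcal G_i$ for $i\in\{1,2,3,5,6,7,8\}$. Family $\mathcal G_4$ should not arise for $k$ even at these parameters, and I would verify that from its definition. The exceptional branch is the one where each of the three outside vertices is isolated in $F-V(C)$ and has exactly two neighbours on $C$. There their counting argument yields only an equality case rather than a family membership, and this is alternative (ii).

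For alternative (ii), the numbers come from a simple count. The outside vertices are independent with degree two each, so they contribute $6$ edges. The counting step in \cite{fkv} bounds $e(F[V(C)])\le\binom72-4=17$ in this configuration, using the fact that $c(F)=7$ prevents extra chords once two outside vertices attach. Together with $e(F)\ge23$, this forces $e(F)=23$ and $e(F[V(C)])=17$. I would derive the chord bound directly from Lemma~\ref{lem:successors}-type arguments: each outside vertex with neighbours $c_i,c_j$ forbids the chords joining the successors, and also those joining the predecessors, since either would produce a longer cycle.

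The main obstacle is faithfully matching their case analysis, including numbering and the exact parameter ranges, to $n=10$, $k=8$. The key points are to confirm that no branch requires $n$ larger than $10$ or strict inequality beyond $23$, and that the degree-two branch is the only place the strict inequality is used. If one of their cases assumes $e(F)>h(n,k,2)$ strictly, I would handle the equality $e(F)=23$ by hand in that case. To do so, I would use the $2$-connectivity and the block-style counting from Lemma~\ref{w9:lem:blocks} to show that it collapses either into a family or into configuration (ii).
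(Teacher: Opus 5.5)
Your overall route matches the paper's. You rerun Case~3 of the proof of \cite[Theorem~5.1]{fkv} on a longest cycle, note that every bridge configuration other than the all-singleton one lands in a listed family, and identify the all-singleton case (Case~3.4) with alternative~(ii). Two small points. The paper takes $C$ to maximize $\sum_{x\in V(C)}d_F(x)$ among longest cycles, because Claims~5.2--5.7 depend on that choice. Also, at $e(F)=23$ the theorem's statement gives nothing, since its alternative is exactly $e(F)\le h(10,8,2)=23$. So the real check is that those internal claims never use $e(F)>23$, not that $23$ is above some threshold.

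The genuine gap is the numerical part of~(ii). The successor/predecessor rule forbids only two chords per \emph{attachment pair}, and all three outside vertices may share one pair. This happens in $H_{10,8,2}$ itself, which has $23$ edges and is in configuration~(ii). Take $C=b_1a_1c_1c_2c_3c_4a_2b_1$, with outside vertices $b_2,b_3,b_4$ all attached to $\{a_1,a_2\}$. The rule excludes only $b_1c_1$ and $b_1c_4$. The other two missing pairs are excluded only through multi-chord Hamilton $a_1$--$a_2$ paths:
\[
a_1c_1c_3c_4c_2b_1a_2\ \text{(if $b_1c_2$ were an edge)},\qquad
a_1c_1c_2c_4c_3b_1a_2\ \text{(if $b_1c_3$ were an edge)}.
\]
Either path closes through $b_2$ to an $8$-cycle. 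Your count therefore gives only $e(F[V(C)])\le19$ and $e(F)\le25$, which forces nothing. If you intend to rely on a counting step inside \cite{fkv} instead, you would have to locate it, since the theorem statement yields only $e(F)\le23$.

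The missing idea is a global extremal bound on eight vertices. For any $z\notin V(C)$, the graph $F[V(C)\cup\{z\}]$ is $2$-connected of order $8$ with circumference at most $7$. Kopylov's theorem (Theorem~\ref{w9:thm:kopylov}) then gives
\[
e(F[V(C)])+2\le h(8,8,2)=h(8,8,3)=19 .
\]
Hence $e(F)=e(F[V(C)])+6\le23$, and $e(F)\ge23$ forces equality in both bounds. This is exactly how the paper obtains $e(F)=23$ and $e(F[V(C)])=17$.
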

\begin{proof}
  Choose a longest cycle $C$ maximizing
  $\sum_{x\in V(C)}d_F(x)$. Following~\cite{fkv}, a bridge is a
  component of $F-V(C)$, and its attachments are its neighbors on $C$.
  Claims 5.2--5.7 of that paper apply to this choice of $C$: their
  hypotheses are $2$-connectivity and the maximality conditions on
  $C$, with $c(F)=7$. They do not require a strict edge inequality.
  In particular, every nonsingleton bridge $S$ has exactly two
  attachments $a,b$ and is a $J_3$-bridge: $F[S]$ is connected,
  $F[S\cup\{a,b\}]+ab$ is $2$-connected, and a longest $a$--$b$
  path with internal vertices in $S$ has three edges.
  The graph $F[S]$ is a star. If $|S|\ge3$, each leaf has exactly
  one attachment neighbor, the same for all leaves, and the center
  has at most two attachment neighbors.

  Case 3 of the proof of~\cite[Theorem~5.1]{fkv} gives the following
  alternatives, considered in the listed order. A family entry means
  containment as a spanning subgraph.
  \begin{center}
  \small
  \renewcommand{\arraystretch}{1.15}
  \begin{tabular}{@{}c p{0.42\textwidth} p{0.41\textwidth}@{}}
  \toprule
  Source case & Bridge configuration & Containing family or alternative\\
  \midrule
  3.1 & A bridge has at least three attachments.
   & $\mathcal G_1,\mathcal G_2,\mathcal G_3$.\\
  3.2 & Two $J_3$-bridges have different attachment pairs.
   & $\mathcal G_5$.\\
  3.3 & There is a $J_3$-bridge, and all such bridges have the same
  attachment pair.
   & $\mathcal G_2,\mathcal G_3,\mathcal G_5,$\newline
     $\mathcal G_6,\mathcal G_7,\mathcal G_8$.\\
  3.4 & Every bridge is a singleton of degree two in $F$.
   & Alternative (ii).\\
  \bottomrule
  \end{tabular}
  \end{center}
  Since $|F-V(C)|=3$, there is at most one nonsingleton bridge;
  hence Case 3.2 cannot occur. If Case 3.1 does not occur, then
  a nonsingleton bridge places $F$ in Case 3.3. Otherwise all
  bridges are singletons, and $2$-connectivity makes each have
  exactly two attachments, giving Case 3.4. Thus the alternatives
  are exhaustive at order ten.

  In Case 3.4, take any $z\notin V(C)$. The graph
  $F[V(C)\cup\{z\}]$ is $2$-connected: it contains the cycle $C$
  and a vertex joined to two distinct vertices of $C$. It has
  order eight and circumference at most seven, so
  Theorem~\ref{w9:thm:kopylov} gives
  \[
    e(F[V(C)])+2\le h(8,8,2)=h(8,8,3)=19.
  \]
  Consequently $e(F)=e(F[V(C)])+6\le23$.
  The hypothesis $e(F)\ge23$ forces equality in both bounds,
  proving the last assertion of (ii).

  The edge-bound alternative in the statement of
  \cite[Theorem~5.1]{fkv} is $e(F)\le h(10,8,2)=23$.
  The bridge analysis above supplies the additional structure
  at equality; it uses no assumption that $e(F)>23$.
\end{proof}

We recall the relevant features of the containing families, as
defined in Sections 4 and 5 of~\cite{fkv}. The edge counts and
Hamilton-connected subgraphs needed here are established in the next
lemma.

\begin{itemize}
  \item $\mathcal G_1(n,8)=\{H_{n,8,3}\}$.
  \item In $\mathcal G_2$, $A$ is a three-clique, $B$ is independent,
  all $A$--$B$ edges occur, and each $z\in J$ has
  $N(z)=\{a_1,b_1\}$ for fixed $a_1\in A,b_1\in B$.
  \item In $\mathcal G_3$, $A$ is a clique of order three, and all
  $A$--$B$ edges are present. The set $J$ has at least two star components of order at least
  two, attached to the same pair $\{a_1,a_2\}\subset A$;
  there are no $B$--$J$ edges.
  \item In $\mathcal G_5,\mathcal G_6,\mathcal G_7,\mathcal G_8$,
  there is a clique $A$ of order $3,4,4,5$, respectively.
  Outside $A$ are $J_3$-bridges and isolated vertices; each isolated
  vertex has exactly two neighbors in $A$. The additional attachment
  restrictions distinguishing these families are not needed here.
\end{itemize}

\begin{lemma}\label{w9:lem:families}
  Let $F$ be a graph of order ten that is a spanning subgraph of a
  member of one of the seven families above. If $c(F)\le7$ and
  $e(F)\ge23$, then $\cl F$ contains a Hamilton-connected subgraph
  on seven vertices.
\end{lemma}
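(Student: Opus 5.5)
The plan is to exploit one common feature of all seven families: a small clique $A$ (order three, four or five) whose complement in $F$ is a large, nearly independent set with few neighbours in $A$. I will find seven vertices outside a suitable small set that induce, in $\cl F$, a graph of minimum degree at least four. Lemma~\ref{lem:hc} with $r=7$ then makes this blue graph Hamilton-connected. Since $F$ is only a spanning subgraph of a family member $M$, we have $\cl M\subseteq\cl F$. So it suffices to find such a seven-set in $\cl M$ for each member $M$ that can contain some $F$ with $c(F)\le7$ and $e(F)\ge23$. The edge condition is used only to restrict which members, and which parameters within a family, can occur at order ten.

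The first step is an edge count for each family at $n=10$, $k=8$. For $\mathcal G_1=\{H_{10,8,3}\}$ we have $|A|=3$, $|B|=5$ independent, and $C$ a pair joined to $A$. Take $B$ together with the two vertices of $C$: these seven vertices induce in $\cl M$ the graph $K_7$ minus the single edge inside $C$, so the minimum degree is five. For $\mathcal G_2$ and $\mathcal G_3$, the vertices of $B\cup J$ have neighbours only in $A$, apart from the star edges in $J$ and the fixed edges through $b_1$. Here I will count $|B\cup J|=7$, since $|A|=3$. In $\cl M$ each vertex of $B\cup J$ misses at most its star neighbours, which number one or two in $J$, or, in the case of $b_1$, the vertices of $J$. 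If $b_1$ has too many $J$-neighbours, I will swap $b_1$ for a vertex of $A$ that is blue to many of them. I expect $e\ge23$ to bound $|J|$ and the star sizes, and this is what should guarantee minimum degree at least four.

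For $\mathcal G_5$ through $\mathcal G_8$, the clique $A$ has order $a\in\{3,4,4,5\}$. Each isolated outside vertex has degree two, and each $J_3$-bridge is a star with two attachments. Here I will first use $e(F)\ge23$ together with the bound $e\le\binom a2+\sum(\text{bridge edges})$. This should force most outside vertices to be isolated degree-two vertices, or leaves with a single attachment. When $a=3$ the outside set has exactly seven vertices. Each of them is blue-adjacent to every other outside vertex except its star neighbours, so its blue degree within the set is at least $6-2=4$ unless it is a star centre with three or more leaves. When $a\ge4$, the outside set has fewer than seven vertices, and I will add $a-3$ vertices of $A$. For these I choose vertices of $A$ that are attachments of the fewest outside vertices; $A$ must contain vertices of small attachment degree, because every outside vertex has at most two attachments.

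The main obstacle is that the edge threshold $23$ alone may not rule out configurations such as a star centre of high degree, or a vertex of $A$ attached to nearly everything. In $\mathcal G_8$, for example, the outside graph has five vertices, each with two attachments among five clique vertices. To choose two clique vertices of blue degree at least four toward the seven-set, I would argue by counting attachments. There are at most ten attachment incidences spread over five clique vertices, so two clique vertices each have at most about four incidences, which leaves at least one further outside blue neighbour. If this count is too tight, I will fall back on the condition $c(F)\le7$: it forbids a clique of order five together with two disjoint bridges attached through distinct pairs, since these would form an eight-cycle. This restriction limits how the attachments can be distributed.
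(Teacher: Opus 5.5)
Your frame is sound. Since $\cl{M}\subseteq\cl{F}$, it suffices to exhibit seven vertices on which $\cl{M}$ has minimum degree at least four, and your treatment of $\mathcal G_1$ (blue $K_7-e$ on $B\cup C$) is exactly right. The gap is that in every other family the decisive step is an edge count. You leave that count as an expectation, and for $\mathcal G_5$--$\mathcal G_8$ you misjudge what it yields.

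\textbf{Families $\mathcal G_5$--$\mathcal G_8$.} The count does not merely push the outside vertices toward degree-two configurations. It shows that these four families contain no order-ten graph with $23$ edges at all. The missing ingredient is a per-bridge bound:
\begin{itemize}
\item A $J_3$-bridge on $s\ge3$ vertices is a star whose leaves all share one attachment vertex. Two leaves on different attachments, or one leaf on both, would give a four-edge path between the attachments. So such a bridge contributes at most $(s-1)+(s-1)+2=2s$ edges.
\item A two-vertex bridge contributes at most $5$ edges, and an isolated vertex contributes $2$.
\end{itemize}
Hence $e(F)\le\binom a2+2(10-a)+\lfloor(10-a)/2\rfloor$, which equals $20,21,22$ for $a=3,4,5$, contradicting $e(F)\ge23$.

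Without this, your constructive route does not close. In $\mathcal G_8$, a clique vertex placed in the seven-set is red to the other chosen clique vertex. It therefore needs at most one attachment among the five outside vertices to reach blue degree four. Averaging ten attachment incidences over five clique vertices only guarantees vertices with two, i.e.\ blue degree $3$. The same problem arises for $a=4$. Your fallback to $c(F)\le7$ is not made precise, and it is unnecessary once the count is done.

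\textbf{Families $\mathcal G_2$ and $\mathcal G_3$.} Here too the count is the whole argument.
\begin{itemize}
\item In $\mathcal G_2$, the defining graph has $3+3|B|+2|J|=24-|J|$ edges. So $|J|\le1$, and $B\cup J$ is blue $K_7$ or $K_7-e$. Your proposed swap of $b_1$ for a vertex of $A$ would not rescue larger $|J|$: at $|J|=3$, both $b_1$ and any vertex of $A\setminus\{a_1\}$ have blue degree $3$ in the seven-set.
\item In $\mathcal G_3$, put $b=|B|$ and $s=|J|=7-b\ge4$. For $b\le1$ you need the bound $e(F)\le3+3b+\binom b2+2s+(s-2)=22$, which rules these cases out. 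This leaves $(b,s)\in\{(2,5),(3,4)\}$. Only then do two nontrivial star components force star degree at most two, so the red graph on $B\cup J$ has maximum degree at most two.
\item The description of $\mathcal G_3$ does not make $B$ independent. You must therefore also bound red degrees inside $B$, rather than assert that vertices of $B$ have no red neighbours in $B\cup J$. This is harmless only because $|B|\le3$.
\end{itemize}
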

\begin{proof}
  For $\mathcal G_1$, the complement of $H_{10,8,3}$ induces the
  Hamilton-connected graph $K_7-e$ on $B\cup C$. The complement of
  every spanning subgraph of $H_{10,8,3}$ contains this graph.

  For $\mathcal G_2$, write $j=|J|$ and $b=|B|$, so $b+j=7$.
  The defining graph has
  \[
  3+3b+2j=24-j
  \]
  edges. At least twenty-three edges forces $j\le1$. The seven
  vertices $B\cup J$ then induce a blue $K_7$ or $K_7-e$.

  A $J_3$-bridge on $s$ vertices contributes, counting both its
  internal edges and its attachment edges, at most $2s$ edges when
  $s\ge3$, and at most five when $s=2$. An isolated vertex
  contributes two.
  To justify the first count, a bridge with at least three vertices is
  a star. If two leaves attached to different endpoints, or one leaf
  attached to both endpoints, a path between the attachment endpoints
  through two leaves and the center would have four edges.
  This contradicts the defining maximum length three.
  Thus all leaves have one common attachment neighbor, and the center
  has at most two. The bound is $(s-1)+(s-1)+2=2s$.
  For $\mathcal G_5,\mathcal G_6,
  \mathcal G_7,\mathcal G_8$, with $a=|A|$ and $10-a$ outside
  vertices, the number of edges is at most
  \[
  \binom a2+2(10-a)+\left\lfloor\frac{10-a}{2}\right\rfloor.
  \]
  For $a=3,4,5$ these are $20,21,22$, respectively, all too small.

  For $\mathcal G_3$, put $b=|B|$, $s=|J|=7-b$.
  There are at least two nontrivial star components, so $s\ge4$
  and $b\le3$. If $b\le1$, the star forest on $J$ has at most
  $s-2$ edges. Allowing every vertex of $J$ both attachment neighbors gives
  \[
  e(F)\le3+3b+\binom b2+2s+(s-2)=22+\binom b2=22.
  \]
  If $b=2$ or $3$, then $s=5$ or $4$, respectively. Since $J$ has
  at least two nontrivial star components, its maximum degree is at
  most two. The same holds on $B$, which has at most three vertices.
  There are no $B$--$J$ edges, so the red graph on $B\cup J$ has
  maximum degree at most two. Its blue complement has minimum degree
  at least four and is Hamilton-connected. The same degree bound holds
  in the complement of every subgraph of the family graph.
\end{proof}

\begin{theorem}\label{w9:thm:local}
  Every red--blue coloring of $K_{10}$ with no monochromatic $C_8$
  contains a monochromatic Hamilton-connected subgraph on seven vertices.
\end{theorem}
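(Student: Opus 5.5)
Suppose, for contradiction, that a coloring of $K_{10}$ has no monochromatic $C_8$ and no monochromatic Hamilton-connected subgraph on seven vertices. Lemma~\ref{w9:lem:k6}, applied in each color, rules out a monochromatic $K_6$: it would give a monochromatic $C_8$ or a Hamilton-connected seven-vertex subgraph. So the hypotheses of Lemma~\ref{w9:lem:blocks} hold.

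Since $\binom{10}{2}=45$, one color graph, say the red graph $F$, has at least $23$ edges. Lemma~\ref{w9:lem:blocks} makes $F$ 2-connected. As in the proof of that lemma, \cite[Lemma~2.1]{kr} rules out monochromatic $C_9$ and $C_{10}$, so $c(F)\le7$. If $c(F)\le6$, I would apply Theorem~\ref{w9:thm:kopylov} with $k=7$. Here $h(10,7,2)=10+14=24$ and $h(10,7,3)=1+30=31$, so this bound does not immediately contradict $e(F)\ge23$. I would therefore treat $c(F)\le6$ directly: equality-type structures such as $H_{10,7,3}=K_3\vee(\overline K_6\cup K_1)$ have a large independent set, whose blue graph is a $K_7$, or at least a dense Hamilton-connected seven-vertex graph. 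The plan is to use the stability version for $k=7$ in the same way, or to note that a subgraph of $H_{10,7,a}$ has complement containing $K_{7}$ minus few edges. This is the first place I expect difficulty. The simplest route is probably to observe that any 2-connected graph with circumference at most six and at least $23$ edges is contained in an $H_{10,7,a}$-type graph with $a=3$, because the $a=2$ bound $24$ requires near-equality. Its complement then contains the blue $K_7$ on $B\cup C$.

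So assume $c(F)=7$. Lemma~\ref{w9:lem:threshold-structure} then gives two alternatives. In alternative (i), $F$ is a spanning subgraph of a member of one of the seven families, and Lemma~\ref{w9:lem:families} gives a blue Hamilton-connected seven-vertex subgraph, a contradiction.

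In alternative (ii), there is a seven-cycle $C$ whose three outside vertices $z_1,z_2,z_3$ are independent, each of degree two, and $e(F[V(C)])=17$. Since $F[V(C)]$ is 2-connected, it has minimum degree at least two. Lemma~\ref{w9:lem:dense7} then shows $F[V(C)]$ is Hamilton-connected or contains $K_6$. Either outcome contradicts our assumptions: the first is a red Hamilton-connected seven-vertex subgraph, and the second is a red $K_6$.

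The main obstacle is the $c(F)\le6$ case, since Lemma~\ref{w9:lem:threshold-structure} assumes $c(F)=7$. A possible alternative there is to pick a longest cycle $C$ of order at most six and argue that each outside vertex has few red neighbors on it. Then the blue graph contains a large near-complete part with minimum degree at least four on seven vertices, and Lemma~\ref{lem:hc} makes it Hamilton-connected. I would try the Kopylov $k=7$ equality/stability route first. The fallback is this direct counting argument, using that $e(F)\ge23$ forces a dense core on at most six vertices with all other vertices attached to at most $a$ core vertices.
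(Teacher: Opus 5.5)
\textbf{Gap in the case $c(F)\le 6$.} Your setup and your treatment of $c(F)=7$ follow the paper's proof exactly. That covers excluding monochromatic $K_6$ by Lemma~\ref{w9:lem:k6}, getting $2$-connectivity from Lemma~\ref{w9:lem:blocks}, bounding $c(F)\le7$, and handling the two alternatives of Lemma~\ref{w9:lem:threshold-structure} by Lemmas~\ref{w9:lem:families} and~\ref{w9:lem:dense7}. The genuine gap is the case $c(F)\le6$, which you leave unresolved.

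Your Kopylov values are miscomputed. In fact $h(10,7,2)=\binom52+2\cdot5=20$ and $h(10,7,3)=\binom42+3\cdot6=24$, not $24$ and $31$. With the correct values, Kopylov's theorem settles part of this case. It gives $e(F)\le24$, and $e(F)=24$ forces $F\cong H_{10,7,3}=K_3\vee\overline K_7$, whose complement contains a blue $K_7$.

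The case $e(F)=23$ is not covered by the equality clause. Your sentence that near-equality forces containment in an $H_{10,7,3}$-type graph is exactly a stability theorem, not a consequence of Kopylov's bound.

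Your fallback also fails as stated. In $K_3\vee\overline K_7$ itself, each of the four vertices off a longest six-cycle has three red neighbors on that cycle. The blue $K_7$ there is the independent seven-set, which contains three cycle vertices. So "few red neighbors on $C$" is false, and the blue core cannot be built from the outside vertices alone.

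\textbf{The missing ingredient.} The paper closes this case with the stability result \cite[Corollary~5.8]{fkv}. It applies to a $2$-connected $10$-vertex graph of circumference at most six with $e(F)\ge\lfloor(5\cdot10-6)/2\rfloor=22$, and gives $F\subseteq H_{10,7,3}$. The six vertices of $B$ together with the single vertex of the part $C$ of $H_{10,7,3}$ form an independent set of order seven. So the blue graph contains $K_7$, a Hamilton-connected seven-vertex subgraph, which gives the contradiction. Without this stability input, or a genuine replacement argument for $e(F)=23$, your proof does not cover all colorings.
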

\begin{proof}
  Suppose not. Lemma~\ref{w9:lem:k6}, applied in both colors, also excludes
  monochromatic $K_6$. One color graph $F$ has at least
  $\lceil45/2\rceil=23$ edges, and it is 2-connected by
  Lemma~\ref{w9:lem:blocks}. By \cite[Lemma~2.1(1)--(2)]{kr}, a monochromatic
  cycle of length nine or ten would give a monochromatic $C_8$.
  Thus both color graphs have circumference at most seven.

  If $c(F)\le6$, \cite[Corollary~5.8]{fkv} applies because
  \[
  e(F)\ge23\ge\left\lfloor\frac{5\cdot10-6}{2}\right\rfloor=22.
  \]
  It gives $F\subseteq H_{10,7,3}$, whose complement contains
  a seven-clique, a contradiction.

  Thus $c(F)=7$. Apply Lemma~\ref{w9:lem:threshold-structure}.
  In alternative (i), Lemma~\ref{w9:lem:families} gives the contradiction.
  In alternative (ii), the seven-cycle $C$ satisfies
  $e(F[V(C)])=17$. Since $C$ spans $F[V(C)]$, this seven-vertex graph
  has minimum degree at least two, so
  Lemma~\ref{w9:lem:dense7} shows that it is Hamilton-connected or contains
  $K_6$. Both alternatives contradict our assumptions.
\end{proof}

\begin{corollary}\label{w9:cor:delta-five}
  Every 15-vertex graph with no red $C_8$ and no blue $W_9$ has
  minimum red degree exactly five.
\end{corollary}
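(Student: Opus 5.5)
By Corollary~\ref{w9:cor:degrees}, $\delta(G)\in\{4,5\}$, so it suffices to exclude a vertex of red degree four. Suppose $v$ has red degree four and put $B=N_{\cl G}(v)$, so $|B|=14-4=10$. We work entirely inside the coloring of $K_{10}$ induced on $B$ and derive a contradiction with Theorem~\ref{w9:thm:local}.

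First, $B$ contains no red $C_8$, since the whole coloring has none. It also contains no blue $C_8$: every vertex of $B$ is blue-adjacent to $v$, so a blue $C_8$ in $B$ would be the rim of a blue $W_9=K_1+C_8$ with hub $v$. Thus the coloring induced on $B$ is a red--blue coloring of $K_{10}$ with no monochromatic $C_8$. Theorem~\ref{w9:thm:local} then supplies a monochromatic Hamilton-connected subgraph of order seven inside $B$.

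We rule out both colors for this subgraph. A red one is impossible by Corollary~\ref{w9:cor:no-core}, which is Lemma~\ref{lem:core} with $(n,k)=(8,4)$. A blue one is impossible by Lemma~\ref{w9:lem:no-blue-core}: each of the three remaining vertices of $B$ has at most one blue neighbor in the core, and Lemma~\ref{w9:lem:three-attachments} then forces the core together with $v$ to be an eight-set spanning at most two red edges, contradicting Lemma~\ref{w9:lem:no-k8}. Both cases are already packaged in Corollary~\ref{w9:cor:neighborhood}, so this step only needs to be cited. The contradiction shows that no vertex has red degree four, and hence $\delta(G)=5$.

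The substantive work is in Theorem~\ref{w9:thm:local}, the stability input for $K_{10}$, and that is already available. What remains to check is that the hypotheses match exactly: there is no monochromatic $C_8$ in $B$, and the lower bound $\delta(G)\ge4$ comes from Lemma~\ref{w9:lem:degree-lower}. I expect no real obstacle beyond this bookkeeping.
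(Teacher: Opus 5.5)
Your proof is correct and matches the paper's own argument: Corollary~\ref{w9:cor:degrees} leaves only red degree four to exclude, and for such a vertex Corollary~\ref{w9:cor:neighborhood}, which packages Corollary~\ref{w9:cor:no-core} and Lemma~\ref{w9:lem:no-blue-core}, contradicts Theorem~\ref{w9:thm:local} on the ten-vertex blue neighborhood. You also check explicitly that this neighborhood has no monochromatic $C_8$ (no red one globally, no blue one because $v$ would be a hub), which is the hypothesis verification the paper leaves implicit when it cites Corollary~\ref{w9:cor:neighborhood}.
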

\begin{proof}
  Corollary~\ref{w9:cor:degrees} leaves degrees four and five.
  For a vertex of degree four, Corollary~\ref{w9:cor:neighborhood} contradicts
  Theorem~\ref{w9:thm:local} on its ten-vertex blue neighborhood.
\end{proof}

\subsection{Nine-cycles and ten-cycles}
\begin{lemma}\label{w9:lem:nine-degree-five}
  Every graph on fifteen vertices with minimum degree at least five
  that contains $C_9$ also contains $C_8$.
\end{lemma}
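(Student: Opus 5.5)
Fix a nine-cycle $C=c_0c_1\cdots c_8c_0$ in $G$ and suppose there is no $C_8$. The plan is to force some cycle vertex to have degree at most four, by showing it has few neighbors both on $C$ and off $C$.

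\emph{Chords.} A chord $c_ic_{i+2}$ shortcuts $C$ to an eight-cycle. So every chord joins vertices at cyclic distance three or four. Two chords can also combine to give $C_8$: for instance crossing pairs $c_ic_{i+3}$ and $c_{i+1}c_{i+5}$ give a cycle of order $9-2-3+2=\dots$, and the exact combinations are checked by arc-length arithmetic. I will list which pairs of distance-three and distance-four chords are compatible.

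\emph{Outside vertices.} For $t\notin V(C)$, Lemma~\ref{lem:successors} is not directly applicable, since we forbid $C_8$ rather than $C_{10}$. Instead I use a shortening rule. If $t$ is adjacent to $c_i$ and $c_j$ at cyclic distance $d$, the longer arc closes through $t$ to a cycle of order $9-d+2$. Hence $d=3$ is forbidden. Also $t$ adjacent to $c_i$ and $c_{i+1}$ with an off-cycle path, or two outside vertices forming paths of length three, give eight-cycles via arcs of six vertices. So each outside vertex has neighbors on $C$ pairwise at distances $1$, $2$ or $4$. A short case check shows such a set has size at most three, for example $\{0,1,2\}$ or $\{0,4\}$-type patterns. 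Moreover two outside vertices $t,t'$ that are adjacent to each other cannot attach to $c_i,c_j$ at distance $4$ (the arc of six vertices plus $t,t'$ gives $C_8$), nor at distance $0$ with further constraints, and so on.

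\emph{Counting.} There are six outside vertices and $\delta\ge5$. Cycle vertices have at most two cycle neighbors plus chords. I would count incidences: the nine cycle vertices need at least $9\cdot 3=27$ edge-endpoints beyond the cycle edges, coming from chords (each contributing two) and cycle--outside edges (at most $3\cdot 6=18$). This forces many chords. The compatibility analysis should then bound the chords, say by at most three. This yields a contradiction, or pins down a near-extremal configuration in which the outside vertices also need degree five. In that configuration I would look for an eight-cycle through two outside vertices and a six-vertex arc. Alternatively, if the six outside vertices have few edges to $C$, they induce a graph of minimum degree at least two or three on six vertices, and Lemma-style connectivity arguments produce the required cycle.

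The main obstacle is the chord-compatibility and attachment case analysis. If direct counting fails, my first fallback is to choose $C$ among nine-cycles maximizing the number of chords and exploit the extremality.
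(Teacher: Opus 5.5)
Your setup matches the paper's. You forbid chords at cyclic distance two, and you observe that an outside vertex cannot meet two cycle vertices at cyclic distance three. Hence each outside vertex meets each triple $X_r=\{c_r,c_{r+3},c_{r+6}\}$ at most once.

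\textbf{The gap is in the counting step.} Your global incidence count only forces at least five chords. The bound you expect from the compatibility analysis (``at most three'', or even four) is false. Take the nine-cycle with the five chords $c_0c_3,c_3c_6,c_0c_6,c_1c_4,c_0c_5$; it contains no $C_8$. The degree-two vertices $c_2,c_7,c_8$ force the omitted vertex to be $c_2$, $c_4$ or $c_5$, and each choice fails:
\begin{itemize}
\item omitting $c_4$ forces three cycle edges at $c_0$;
\item omitting $c_5$ forces the four-cycle $c_1c_2c_3c_4$;
\item omitting $c_2$ leaves $c_5$ with no usable neighbor.
\end{itemize}
With five chords the global count asks for only $17\le 18$ edges from $C$ to the six outside vertices, so no contradiction arises. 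Your fallbacks (a ``near-extremal configuration'', ``Lemma-style connectivity arguments'') do not say how to finish.

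\textbf{The missing idea is to count separately in each residue class.} Your own attachment rule says each outside vertex sends at most one edge into $X_r$. So each triple receives at most six outside edges, which gives $\sum_{x\in X_r}d_{G[C]}(x)\ge 9$ for every $r$. Let $a_r,b_r$ count the distance-three and distance-four chords whose starting index is $\equiv r \pmod 3$. The condition becomes $2a_r+b_r+b_{r-1}\ge 3$. In the example above, $\{c_1,c_4,c_7\}$ and $\{c_2,c_5,c_8\}$ have $G[C]$-degree sums $8$ and $7$, so they are excluded immediately.

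The forbidden chord pairs give $b_r\le 1$, and together with the class count this forces $a_r\ge1$ for every $r$. The paper then splits on the number of distance-four chords.

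Even so, the tight cases, where the class sums are exactly $9$, need a stronger attachment rule than your arc rule. Two cycle vertices joined by any six-edge path in $G[C]$, possibly through chords, cannot share an outside neighbor. For example, when $c_0c_4$, $c_0c_6$ and $c_1c_5$ are present, the path $c_3c_4c_0c_6c_5c_1c_2$ forbids a common outside neighbor of the adjacent vertices $c_3$ and $c_2$. Without the per-class count and this path-based rule, your plan does not reach a contradiction.
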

\begin{proof}
  Let $G$ be a graph on fifteen vertices with $\delta(G)\ge5$.
  Suppose that $G$ contains no $C_8$, and fix a cycle
  $C=x_0x_1\cdots x_8x_0$. Indices are taken modulo nine, and
  $T=V(G)\setminus V(C)$ has six vertices.
  Every chord of cyclic distance two is absent, since it would
  shorten $C$ to an eight-cycle. Write
  \[
  A_i=x_ix_{i+3},\qquad B_i=x_ix_{i+4}
  \]
  for the eighteen other possible chords.
  In the cycle and path lists below, an index denotes the corresponding
  vertex of $C$.

  Two vertices of $C$ joined by a six-edge path in $G[C]$ cannot have
  a common neighbor in $T$: that path and the two outside edges
  would form an eight-cycle.
  In particular, each $p\in T$ has at most one neighbor in each set
  \[
  X_r=\{x_r,x_{r+3},x_{r+6}\},\qquad r=0,1,2.
  \]
  The degree assumption therefore gives
  \begin{equation}\label{w9:eq:nine-class-sum}
    \sum_{x\in X_r}d_{G[C]}(x)\ge15-6=9.
  \end{equation}

  The following pairs of chords cannot both be present:
  \begin{equation}\label{w9:eq:nine-pairs}
    (A_i,B_{i+1}),\quad (A_i,B_{i+7}),\quad
    (B_i,B_{i+3}),\quad (B_i,B_{i+6}).
  \end{equation}
  Indeed, up to rotation and reflection the corresponding
  eight-cycles are
  \[
  (0,3,2,1,5,6,7,8),\qquad
  (0,3,4,5,6,7,2,1),\qquad
  (0,4,5,6,7,3,2,1).
  \]
  Let $a_r,b_r$ count the present chords $A_i,B_i$ with
  $i\equiv r\pmod3$, respectively. Subscripts on these counts are read
  modulo three. Relations~\eqref{w9:eq:nine-pairs} give $b_r\le1$,
  while \eqref{w9:eq:nine-class-sum} gives
  \begin{equation}\label{w9:eq:nine-chord-count}
    6+2a_r+b_r+b_{r-1}\ge9.
  \end{equation}
  Thus $a_r\ge1$ for every $r$. A present chord $B_j$ excludes
  $A_{j-1}$ and $A_{j+2}$, so the sole remaining chord of that
  residue class, $A_{j+5}$, must be present.

  There cannot be exactly one $B$-chord. If it is $B_j$, then
  $a_{j-1}=1$ and $b_{j-1}=b_{j-2}=0$, contradicting
  \eqref{w9:eq:nine-chord-count}.
  Next suppose that there is no $B$-chord.
  Here each $a_r\ge2$. In each $X_r$, select a two-edge path and
  denote its center by $c_r$.
  If two centers have cyclic distance two, rotate and reflect the labels so
  that they are $x_0,x_7$. Their paths and cycle edges yield
  \[
  (0,6,5,4,7,1,2,3).
  \]
  Otherwise rotate one center to $x_0$.
  The other two centers are chosen from $\{x_1,x_4\}$ and
  $\{x_5,x_8\}$, respectively, and the pair $x_1,x_8$ is excluded.
  The remaining triples are rotations of $\{x_0,x_1,x_5\}$.
  For this triple of centers the selected edges give the eight-cycle
  \[
  (0,3,4,1,7,6,5,8).
  \]
  Thus there are two or three $B$-chords.

  No two of their indices have cyclic distance two.
  Otherwise take them to be $B_0,B_2$.
  Then $A_5$ is forced, and the eight-cycle
  \[
  (0,1,2,6,7,8,5,4)
  \]
  gives a contradiction.

  Suppose there are exactly two $B$-chords.
  Their indices have distinct residues modulo three and cyclic
  distance one or four. Up to rotation and reflection, there are
  two cases.

  \emph{Case 1: $B_0,B_1$ are present.}
  Then $A_5,A_6$ are forced, and these are the only $A$-chords
  in their respective residue classes. The remaining possible
  $A$-chords, $A_1,A_4,A_7$, have both endpoints in $X_1$ and do
  not affect the following degrees:
  \[
  (d_{G[C]}(x_0),d_{G[C]}(x_3),d_{G[C]}(x_6))=(4,2,3),
  \quad
  (d_{G[C]}(x_2),d_{G[C]}(x_5),d_{G[C]}(x_8))=(2,4,3).
  \]
  Both triples have degree sum nine. The minimum-degree assumption and
  the fact that every vertex of $T$ has at most one neighbor in each triple
  force $x_3$ to have exactly three neighbors in $T$ and $x_8$
  to have exactly two. They also force every vertex of $T$ to
  have exactly one neighbor in $X_2=\{x_2,x_5,x_8\}$.
  The six-edge paths
  \[
  (3,4,0,6,5,1,2),\qquad (3,4,0,8,7,6,5)
  \]
  show that $x_3$ has no common neighbor in $T$ with $x_2$ or
  $x_5$. Thus all three neighbors of $x_3$ in $T$ must
  be adjacent to $x_8$, a contradiction.

  \emph{Case 2: $B_0,B_4$ are present.}
  Now $A_0,A_5$ are forced. The degrees within $C$ of
  $x_2,x_5,x_8$ are exactly $2,3,4$, independently of which
  of $A_1,A_4,A_7$ are present. Each vertex of $T$ has at most
  one neighbor in $X_2$, and the minimum-degree assumption
  requires at least six edges from $X_2$ to $T$.
  It follows that every vertex of $T$ has exactly one neighbor
  in $X_2$, and that $x_8$ has exactly one neighbor in $T$.
  The vertex $x_6$ has degree two in $G[C]$ and therefore at
  least three neighbors in $T$. The six-edge paths
  \[
  (6,5,8,4,0,3,2),\qquad (6,7,8,0,3,4,5)
  \]
  show that $x_6$ has no common neighbor in $T$ with $x_2$ or
  $x_5$. Hence at least three neighbors of $x_6$ in $T$ must
  be adjacent to $x_8$, a contradiction.

  Finally, suppose there are three $B$-chords.
  Their indices have distinct residues and no pair has cyclic
  distance two. The same three-position argument used for the
  centers above shows that their index set is a rotation of
  $\{0,1,5\}$. The forced $A$-chords are then exactly
  $A_5,A_6,A_1$.
  The vertex $x_3$ has degree two in $G[C]$ and hence at least
  three neighbors in $T$. None can be adjacent to any vertex
  of $X_2=\{x_2,x_5,x_8\}$, by the six-edge paths
  \[
  (3,4,0,6,5,1,2),\quad
  (3,4,0,8,7,6,5),\quad
  (3,2,1,4,0,5,8).
  \]
  At most three vertices of $T$ remain, each with at
  most one neighbor in $X_2$.
  However, the degrees in $G[C]$ of the vertices in $X_2$ are $2,5,3$,
  so the minimum-degree assumption requires at least five edges
  from $X_2$ to $T$. This final contradiction completes the proof.
\end{proof}

\begin{lemma}\label{w9:lem:short-long-cycle}
  $G$ contains a red $C_9$ or a red $C_{10}$.
\end{lemma}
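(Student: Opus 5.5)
We show that the red graph $G$ (15 vertices, no red $C_8$, no blue $W_9$) contains a red cycle of length $9$ or $10$. The argument is a circumference-plus-pancyclicity one. By Lemma~\ref{w9:lem:connectivity} the graph $G$ is $2$-connected, and by Corollary~\ref{w9:cor:delta-five} we have $\delta(G)=5$. Dirac's circumference bound therefore gives $c(G)\ge\min\{10,15\}=10$. By Lemma~\ref{w9:lem:no-k8}, $G$ is nonbipartite.

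Let $C$ be a longest red cycle, of length $c\ge10$. If $c=10$ we are done, so assume $c\ge11$. We want to shorten $C$ to length $9$ or $10$ while avoiding length $8$. The natural tool is weak pancyclicity, but the theorem of Brandt et al.\ needs $\delta\ge(15+2)/3$, i.e.\ $\delta\ge 6$, and we only have $5$. So we argue locally instead.

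Take a vertex $v$ of red degree five. Its blue neighborhood $B$ has nine vertices. Any blue $C_8$ in $B$ would be a blue wheel with hub $v$, and there is no red $C_8$ at all. Since $R(C_8,C_8)=11$, this alone does not yet force a long red cycle in $B$, so we combine it with the structure of $C$.

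The main approach is cycle shortening along $C$. Suppose $c\ge11$. Every vertex off $C$ has its red neighbors on $C$ restricted by the absence of $C_8$. Consider chords of $C$ and attachments of off-cycle vertices. A chord $x_ix_j$ splits $C$ into two cycles, of lengths $d+1$ and $c-d+1$, where $d$ is the cyclic distance of the chord. We aim to show that among the chords at each vertex of $C$, some chord produces a cycle of length exactly $9$ or $10$; producing length $8$ is forbidden anyway. Each vertex of $C$ has degree at least $5$, hence at least three chords or off-cycle neighbors. The off-cycle vertices number $15-c\le4$. Two neighbors of one outside vertex at cyclic distance $d$ yield cycles of lengths $d+2$ and $c-d+2$. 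So the forbidden length $8$ rules out distances $6$ and $c-6$ for such neighbor pairs, and distances $7$ and $c-7$ for chords. Length $9$ or $10$ arises from chord distances $8,9$ (or $c-8,c-9$) and from outside-neighbor distances $7,8$. For each $c\in\{11,\dots,15\}$ we then run a pigeonhole count over the residues, in the style of the proof of Lemma~\ref{w9:lem:nine-degree-five}. Each vertex's degree-5 requirement, together with the limited room off $C$, should force a chord or attachment pair at a distance giving $9$ or $10$.

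The main obstacle is the small cases, especially $c=11$ and $c=12$. There, most admissible chord distances give short cycles (lengths $3$ to $7$), and the degree condition can be met by many short chords without producing $9$ or $10$. For these cases we would switch strategy to two-chord reroutings: two crossing chords give cycles of length $c-d_1-d_2+4$ and similar expressions. We would tabulate the forbidden configurations as in Lemma~\ref{w9:lem:nine-degree-five} and derive a degree contradiction at a specific vertex. If that bookkeeping becomes unwieldy, the fallback is to use $v$ of degree five. By Theorem~\ref{w9:thm:local}, applied to $B\cup\{x\}$ for a red neighbor $x$ of $v$, there is a red or blue Hamilton-connected $7$-set. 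We would exclude the red case by Corollary~\ref{w9:cor:no-core} and the blue case by an analogue of Lemma~\ref{w9:lem:no-blue-core}, thereby forcing the long red cycle needed.
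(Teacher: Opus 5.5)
There is a genuine gap. The decisive step of your main line is only asserted (``should force''), and as designed it cannot succeed in the cases where it is needed. Take $C$ a longest red cycle and assume no $C_9$ and no $C_{10}$. For $c=13,14,15$ the admissible chord distances are $\{2,3\}$, $\{2,3,4\}$ and $\{2,\dots,5\}$ respectively. Each such chord closes only cycles of length at most six or at least eleven, and every vertex of $C$ can meet its degree requirement using such chords alone. So no single-chord or attachment pigeonhole can produce $C_9$ or $C_{10}$; you would need an analysis of combinations of several chords, and none is given.

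Your diagnosis of where the difficulty lies is also inverted. For a longest cycle, $c=11$ closes by plain counting: only distance-$5$ chords survive, and each of the four outside vertices has at most two red neighbors on $C$. Hence $\sum_{x\in V(C)}(d(x)-2)\le 22+8<33$. The case $c=12$ needs only a short two-chord check.

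Moreover, your main line never uses the absence of a blue $W_9$. It is therefore an unproved extremal claim about $15$-vertex graphs with $\delta=5$ and no $C_8$. The fallback does not repair this, for two reasons. First, Theorem~\ref{w9:thm:local} requires the ten-set $B\cup\{x\}$ to contain no blue $C_8$, but a blue $C_8$ through $x$ is not excluded, because $vx$ is red. Second, there is no analogue of Lemma~\ref{w9:lem:no-blue-core}: $x$ is not blue-adjacent to $v$, and $|B|=9$ leaves only two vertices outside a seven-set, so Lemma~\ref{w9:lem:three-attachments} does not apply.

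The missing idea is the choice of extremal cycle. Take a red cycle of \emph{minimum} length $\ell$ among red cycles of length at least eight, so $9\le\ell\le15$, and suppose $\ell\ge11$. Minimality forbids two kinds of red configuration whose longer arc would close a red cycle of length between $8$ and $\ell-1$: a red chord, and a pair of red neighbors of an outside vertex. The blue edges this forces are exactly where the no-blue-$W_9$ hypothesis enters.
\begin{itemize}
\item For $\ell\ge13$ there are no red chords. Any eight blue neighbors on $C$ of a cycle vertex then induce a blue graph of minimum degree at least five, and Dirac's theorem gives a blue $C_8$ with that vertex as hub.
\item For $\ell=12$ only antipodal chords can be red, and the same argument works with blue minimum degree four.
\item For $\ell=11$, an outside vertex $p$ has its red neighbors on $C$ in three consecutive positions, since distances $3,4,5$ would close red cycles of length $10,9,8$. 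All chords of cyclic distance $2,3,4$ are blue. On eight consecutive cycle vertices $0,\dots,7$ blue-adjacent to $p$, the cycle $0,2,4,6,3,1,5,7,0$ is a blue $C_8$ with hub $p$.
\end{itemize}
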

\begin{proof}
  Dirac's circumference theorem guarantees a red cycle of length at
  least eight. Among such cycles, choose one of minimum length $\ell$.
  Since red $C_8$ is forbidden, $9\le\ell\le15$.
  Suppose $\ell\ge11$.

  For $\ell\ge13$, any chord of this cycle would give a shorter red
  cycle of length at least eight, using the longer arc.
  There are consequently no red chords.
  Each cycle vertex has at least ten blue neighbors on the cycle.
  Any eight of these neighbors induce a blue graph of minimum degree at
  least five, which is Hamiltonian by Dirac's theorem.
  Together with the chosen vertex, this gives a blue $W_9$.

  If $\ell=12$, the only possible red chords join antipodal vertices.
  Thus the red induced graph on the cycle has maximum degree at most
  three. Each cycle vertex has at least eight blue neighbors on it,
  and any eight chosen neighbors induce a blue graph of minimum
  degree at least four. Dirac's theorem again gives a blue $W_9$.

  Let $\ell=11$ and choose a vertex $p$ outside the cycle.
  It cannot be red-adjacent to two cycle vertices at cyclic distance
  three, four or five: the longer arc together with $p$ would give
  a red cycle of length ten, nine or eight, respectively.
  Thus its red neighbors on the cycle lie in three consecutive
  positions. To see this when the set is nonempty, label one red neighbor
  zero. Every other red neighbor lies among $-2,-1,1,2$; avoiding pairs
  at cyclic distance three or four confines the set to three consecutive
  positions.
  There are therefore eight consecutive cycle vertices blue-adjacent
  to $p$. Label them $0,1,\ldots,7$ in their order on the eleven-cycle.
  All chords of cyclic distance two, three or four are blue, since
  a red one would create a shorter cycle of length at least eight.
  The cycle
  \[
  0,2,4,6,3,1,5,7,0
  \]
  uses only such blue chords and has blue hub $p$.
  Every case $\ell\ge11$ is impossible.
\end{proof}

\begin{lemma}\label{w9:lem:forced-nine-cycle}
  The graph $G$ contains a red $C_9$.
\end{lemma}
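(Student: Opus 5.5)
By Lemma~\ref{w9:lem:short-long-cycle}, $G$ contains a red $C_9$ or a red $C_{10}$. So it suffices to assume that $G$ contains a red ten-cycle $C=x_0x_1\cdots x_9x_0$ but no red $C_9$, and to derive a contradiction. Throughout we use $\delta(G)=5$ (Corollary~\ref{w9:cor:delta-five}), the absence of red $C_8$, and the absence of blue $W_9$. Put $T=V(G)\setminus V(C)$, so $|T|=5$.

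\emph{Step 1: restrict the chords.} A red chord of cyclic distance two, three or four gives a red cycle of length nine, eight or seven on the longer arc together with the chord. The first two are forbidden, so the only possible red chords join vertices at cyclic distance four or five. I will then rule out short configurations directly. Two crossing chords of distance four or five, combined with arcs of $C$, produce cycles of lengths eight or nine in many arrangements. I will list the surviving chord patterns up to rotation and reflection, as in the proof of Lemma~\ref{w9:lem:nine-degree-five}. I expect each vertex to retain at most two red chords, so its red degree inside $C$ is at most four.

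\emph{Step 2: restrict attachments from $T$.} Let $p\in T$. If $p$ had red neighbours $x_i,x_j$ at cyclic distance $d$, the longer arc through $p$ would give a red cycle of length $12-d$. Hence distances three and four are forbidden, because they give $C_9$ and $C_8$. Distance five gives $C_7$ and is allowed. The shorter arc through $p$ gives a red cycle of length $d+2$, which is harmless. Each vertex of $T$ therefore has a restricted neighbourhood on $C$. There are ten cycle vertices, each needing at least one red neighbour outside $C$ when its chord degree is small. So there are many $T$--$C$ edges, and by pigeonhole some $p\in T$ has at least three red neighbours on $C$. The constraints then force these neighbours into a short window such as $\{x_i,x_{i+1},x_{i+2}\}$ or antipodal pairs. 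I will combine this with Lemma~\ref{lem:successors}-style rerouting to make further chords blue.

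\emph{Step 3: reach a contradiction in blue.} The blue graph on $V(C)$ is very dense: each cycle vertex has at least five or six blue neighbours on $C$. Pick a vertex $p\in T$ whose red neighbours on $C$ lie in a window of at most three consecutive positions. Then $p$ is blue-adjacent to at least seven consecutive cycle vertices, plus more through the remaining structure. Mimicking the $\ell=11$ case of Lemma~\ref{w9:lem:short-long-cycle}, I aim to find eight vertices blue-adjacent to $p$ that carry a blue $C_8$. Such a cycle would use chords of distance two and three, which are all blue by Step 1. Together with the hub $p$ this gives a blue $W_9$. If no such $p$ exists, I expect the degree count $\delta(G)=5$ to yield a red $C_8$, either through a common $T$-neighbour of two vertices joined by a six-edge path, or by producing a red $C_9$ directly.

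The main obstacle is Step 2: converting the degree condition into a vertex of $T$ with few, clustered red neighbours. Vertices of $T$ whose red neighbours form antipodal distance-five pairs escape the window argument. For these I would first try showing that several such $T$-vertices with antipodal attachments, together with the distance-five chords, already close a red $C_8$ by alternating through $T$.
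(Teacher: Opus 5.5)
Your opening reduction and your two distance observations are correct and match the paper. Red chords of cyclic distance two or three are excluded. Two red neighbours on $C$ of a vertex $p\in T$ must lie at cyclic distance $1$, $2$ or $5$, so $N_G(p)\cap V(C)$ has at most two vertices or consists of three consecutive vertices.

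\textbf{The gap in Steps 2 and 3.} After this point the plan aims at the wrong vertices.
\begin{itemize}
\item The pigeonhole in Step~2 does not give what you claim. Ten $T$--$C$ edges spread over five vertices of $T$ force only two red neighbours for some $p$, not three.
\item More importantly, a $p$ with three consecutive red neighbours has exactly seven blue neighbours on $C$. So no blue $C_8$ on $V(C)$ can be its rim, and the $\ell=11$ argument does not transfer.
\end{itemize}
The argument that works goes the other way: a vertex $p\in T$ with \emph{at most two} red neighbours on $C$ is itself a blue hub.
\begin{itemize}
\item If $N_G(p)\cap V(C)\subseteq\{x_0,x_2\}$, the blue cycle $x_1x_3x_5x_8x_6x_4x_7x_9x_1$ uses only distance-two and distance-three chords.
\item If the set is $\{x_0,x_1\}$, the chord $x_4x_8$ is blue, since otherwise $p\,x_0x_9x_8x_4x_3x_2x_1\,p$ is a red $C_8$. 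Then $x_8x_4x_7x_9x_2x_5x_3x_6x_8$ is a blue rim.
\item The antipodal case $\{x_0,x_5\}$, which you single out as the main obstacle, goes the same way. Red $C_8$'s through $p$ force $x_2x_8$ and $x_3x_7$ to be blue, and then $x_2x_8x_1x_3x_7x_4x_6x_9x_2$ is a blue rim.
\end{itemize}
Hence every vertex of $T$ has exactly three consecutive red neighbours on $C$, and no chord classification or degree count is needed. Your Step~1 expectation is not forced by chord--arc considerations anyway: the chords $x_0x_4,x_0x_5,x_0x_6$ together create no red $C_8$ or $C_9$ on $V(C)$.

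\textbf{The missing endgame.}
\begin{itemize}
\item Suppose $p,q\in T$ are blue-adjacent and $N_G(p)\cap V(C)=\{x_0,x_1,x_2\}$. The blue short chords give the spanning path $x_3x_5x_8x_6x_4x_7x_9$. In this chord graph $x_3,x_8$ have identical neighbourhoods, as do $x_4,x_9$. Since $q$ cannot be red-adjacent to both vertices of either antipodal pair, the path closes through $q$ to a blue rim for $p$. So $T$ is a red $K_5$.
\item In this configuration every vertex of $T$ has red degree $3+4=7$ and hence blue degree exactly seven. No blue $W_9$ with hub in $T$ exists, so Step~3 cannot give a contradiction. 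The hypothesis $\delta(G)=5$ does not help; the paper's proof never uses it.
\item The contradiction must be red. If two attachment triples have centres at cyclic distance at least three, they contain antipodal attachments $a,b$ of distinct $t,t'\in T$. Then $t\,a$, a five-edge arc from $a$ to $b$, $b\,t'$ and $t't$ form a red $C_8$.
\item Otherwise all triples contain a common vertex $x_c$, so $T\cup\{x_c\}$ is a red $K_6$. One of $x_{c-1},x_{c+1}$ has at least three red neighbours in it, which gives a red Hamilton-connected seven-vertex subgraph, contrary to Corollary~\ref{w9:cor:no-core}.
\end{itemize}
Your proposal identifies neither the red-clique structure of $T$ nor this red contradiction, so as it stands it does not prove the lemma.
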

\begin{proof}
  By Lemma~\ref{w9:lem:short-long-cycle}, it suffices to rule out the
  case in which $G$ contains a red ten-cycle but no red nine-cycle.
  Write $C=x_0x_1\cdots x_9x_0$ for such a ten-cycle, with subscripts
  taken modulo ten, and put $T=V(G)\setminus V(C)$, so $|T|=5$.
  Every chord at cyclic distance two or three is blue, since a red
  one would give a red nine-cycle or eight-cycle, respectively.

  For $p\in T$, two red neighbors at cyclic distance three or four
  would give a red nine-cycle or eight-cycle by using the longer arc
  and the two edges through $p$. Thus the cyclic distance between
  two members of $N_G(p)\cap V(C)$ belongs to $\{1,2,5\}$.
  Such a set either has at most two vertices or consists of three
  consecutive vertices. Indeed, a set containing an antipodal pair cannot
  contain a third vertex, while a set without an antipodal pair is
  confined to three consecutive positions.

  Suppose first that $p$ has at most two red neighbors on $C$.
  If they lie in $\{x_0,x_2\}$, the blue cycle
  \[
  x_1x_3x_5x_8x_6x_4x_7x_9x_1
  \]
  has hub $p$. Rotation covers every pair at distance two and every
  set of size at most one.
  If the red-neighbor set is $\{x_0,x_1\}$, the chord $x_4x_8$ must be blue:
  otherwise
  \[
  p,x_0,x_9,x_8,x_4,x_3,x_2,x_1,p
  \]
  is a red eight-cycle. But then
  \[
  x_8,x_4,x_7,x_9,x_2,x_5,x_3,x_6,x_8
  \]
  is a blue eight-cycle with hub $p$.
  Finally, if the red-neighbor set is $\{x_0,x_5\}$, both $x_2x_8$ and
  $x_3x_7$ must be blue. A red one would give, respectively,
  \[
  p,x_0,x_1,x_2,x_8,x_7,x_6,x_5,p
  \quad\hbox{or}\quad
  p,x_0,x_1,x_2,x_3,x_7,x_6,x_5,p.
  \]
  The blue cycle
  \[
  x_2,x_8,x_1,x_3,x_7,x_4,x_6,x_9,x_2
  \]
  then has hub $p$.
  Consequently every vertex of $T$ has exactly three consecutive
  red neighbors on $C$.

  Suppose $p,q\in T$ are blue-adjacent, and rotate the labels so that
  $N_G(p)\cap V(C)=\{x_0,x_1,x_2\}$.
  Consider the graph on $x_3,\ldots,x_9$ whose edges are the blue
  chords of cyclic distance two or three. It has the spanning path
  \[
  x_3,x_5,x_8,x_6,x_4,x_7,x_9.
  \]
  In this graph, $x_3,x_8$ have identical neighborhoods, as do
  $x_4,x_9$. Swapping either pair therefore gives a spanning path
  between any prescribed vertex of $\{x_3,x_8\}$ and any prescribed
  vertex of $\{x_4,x_9\}$.
  The three consecutive red neighbors of $q$ cannot contain either
  antipodal pair. Choose a blue neighbor of $q$ in each pair and use
  the corresponding spanning path. Closing it through $q$ gives a
  blue eight-cycle with hub $p$, a contradiction.
  It follows that $G[T]=K_5$.

  Write the red-neighbor triple of $p\in T$ as
  $\{x_{c(p)-1},x_{c(p)},x_{c(p)+1}\}$.
  If two centers have cyclic distance at least three, the two triples
  contain antipodal vertices $a,b$, one in each triple.
  The red edge between the corresponding outside vertices, their
  edges to $a,b$, and a five-edge arc from $a$ to $b$ give a red
  eight-cycle. Thus all centers have pairwise cyclic distance at most
  two. They lie in three consecutive positions, and their red-neighbor
  triples have a common vertex $x$.
  Write these three possible center positions as $c-1,c,c+1$,
  so we may take $x=x_c$. Let $L,M,R$ count the outside vertices
  with these respective centers. Then $L+M+R=5$.
  The vertices $x_{c-1},x_{c+1}$ have, respectively, at least
  $L+M$ and $R+M$ red neighbors in $T$. One of these two numbers
  is at least three, since their sum is $5+M$.
  Thus one of these cycle vertices has at least three neighbors in
  the six-clique $T\cup\{x_c\}$.
  By the clique extension argument in Lemma~\ref{w9:lem:k6}, the
  resulting seven-vertex graph is Hamilton-connected, contrary to
  Corollary~\ref{w9:cor:no-core}.
\end{proof}

\begin{proof}[Proof of the third equality in Theorem~\ref{thm:main}]
  The lower bound was noted at the start of this section.
  In a hypothetical counterexample, Corollary~\ref{w9:cor:delta-five}
  gives $\delta(G)=5$, while Lemma~\ref{w9:lem:forced-nine-cycle}
  gives a red nine-cycle.
  Lemma~\ref{w9:lem:nine-degree-five} then yields a red eight-cycle,
  a contradiction. Therefore $R(C_8,W_9)=15$.
\end{proof}

\section{Concluding remarks}
Theorem~\ref{thm:eleven} extends the established formula for the
eleven-vertex wheel from $n\ge16$ to $n\ge14$.
In both boundary cases, we combine a lower bound on $\kappa(G)$
with a small vertex cover of $G[V(C),X]$, where $C$ lies in the
blue neighborhood of a vertex $v$ of minimum red degree and
$X=N_G(v)$. The resulting vertex cut of order nine separates two vertex sets,
and every vertex has at most four blue neighbors within its own set.
These degree bounds yield paths with suitable endpoints, and
matchings to the cut allow us to join them into a red cycle of
the required order. For the exceptional graphs of order ten that
satisfy Ore's degree-sum condition but are not Hamilton-connected,
we use explicit path constructions and the exclusion of a blue wheel.

The same core lemma is also used to prove the three smaller values
in Theorem~\ref{thm:main}. For $W_7$, the structure of critical
cycle colorings supplies the required Hamilton-connected cores.
For $W_9$, a local lemma and cycle-shortening arguments complete
the proof.
The complete classifications in Appendix~\ref{sec:classification}
explain the previously reported counts of critical colorings.

The present arguments do not settle $R(C_n,W_{11})$ for
$10\le n\le13$. The parameter range of Lemma~\ref{lem:core} already
includes all four cases, since $k=5$ requires only $n\ge10$.
The remaining difficulty is to force a suitable core or to construct
paths of the required orders with prescribed endpoints. For a hypothetical counterexample $G$ of order
$2n-1$ with no red $C_n$ and no blue $W_{11}$, the elementary
blue-neighborhood bound is
\[
\delta(G)\ge2n-1-R(C_n,C_{10}),
\]
which, by the cycle Ramsey formula~\cite{FSR}, gives the lower bounds
$5,2,7,6$ at $n=10,11,12,13$, respectively. These bounds alone do
not supply the connectivity and path structure used above.

One direction is to describe more precisely the graphs in blue
neighborhoods that avoid a blue $C_{10}$ and the relevant red cycle,
as the critical-coloring analysis does for $W_7$. Such descriptions
could yield stronger degree bounds or restrict the possible red
attachments. A second direction is to determine which pairs of core
vertices can be joined by a path of the required order, and relate
the exceptional pairs to the outside attachments. This could allow
a version of the core argument using a specified set of endpoint
pairs instead of full Hamilton-connectedness. These are possible
extensions of the method; the corresponding structural statements
for the four remaining parameters are not proved here.

\section*{Acknowledgements}
The authors used OpenAI's Codex to assist with literature searches,
to offer methodological suggestions for mathematical proofs, to
review proof logic,  and to revise the presentation. All conclusions
and mathematical proofs have been checked by the authors, who take full responsibility for the mathematical claims, proofs, references,
and final manuscript.

\appendix
\section{The complete critical classifications}\label{sec:classification}
We strengthen the core descriptions to complete classifications of
$\Hc_8$ and $\Hc_9$, recovering the counts structurally.
\begin{lemma}\label{lem:twoedges}
  Let $X$ be a vertex set of size at least seven, and let $p,q$
  be distinct vertices outside $X$.
  Suppose there is no blue $C_6$ and each of $p,q$ has at most one
  red neighbor in $X$. Then there are at most two blue edges within $X$.
  If there are two, $p,q$ have the same unique red neighbor in $X$,
  which has degree one in the graph formed by those two edges.
\end{lemma}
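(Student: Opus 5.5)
We would prove Lemma~\ref{lem:twoedges} by strengthening the argument of Lemma~\ref{lem:densecore}. That lemma already gives maximum blue degree at most two in $X$. The new ingredient is to handle a pair of blue edges $F$ in $X$ in the same spirit as the pair analysis of Lemma~\ref{w9:lem:three-attachments}. There, two colored edges plus isolated vertices are closed into a cycle through outside vertices; here the closure uses $p$, $q$ and common blue neighbors.

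The first step treats two blue edges forming a blue $P_3$ $u x w$ in $X$. We try to close a blue six-cycle $x,u,p,r,q,w,x$, or the variant with the roles of $u,w$ interchanged, with $r\in X\setminus\{x,u,w\}$ a common blue neighbor of $p,q$. Since $|X\setminus\{x,u,w\}|\ge4$ and at most two vertices fail to be common blue neighbors, such an $r$ always exists. So the cycle fails only if both orientations are blocked: either $pu$ and $pw$ are not both usable, or $q$ blocks as well. Each of $p,q$ has at most one red neighbor, so blocking both orientations forces $p$ and $q$ to have the same red neighbor, namely $u$ or $w$. That vertex is a leaf of the $P_3$, as required.

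The second step treats two disjoint blue edges $u_1w_1,u_2w_2$. Here we look for the blue six-cycle $p,u_1,w_1,q,w_2,u_2,p$, or its variants obtained by reorienting each edge and swapping the roles of $p,q$. This cycle needs $pu_1$, $qw_1$, $qw_2$ and $pu_2$ to be blue. There are four endpoints and each of $p,q$ excludes at most one of them. A short check should show that a valid choice exists unless $p$ and $q$ have the same red neighbor $s$ among these endpoints. In that degenerate case, if a choice still fails, we could instead use $p,u_1,w_1,r,\dots$ with an extra common blue neighbor $r\notin V(F)$. The main obstacle is making this case check airtight. For the conclusion, we only need that whenever no blue $C_6$ exists, $p$ and $q$ share a unique red neighbor $s\in V(F)$, and $s$ automatically has degree one in $2K_2$. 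We must also rule out the possibility that $p$ or $q$ has no red neighbor at all; in that case every required edge is available and a cycle forms.

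The last step shows there are at most two blue edges. Suppose there were three. Each pair among them forces a common red neighbor $s$ of $p$ and $q$, and $s$ must be a leaf of that pair. Since $p$ has at most one red neighbor in $X$, the same $s$ works for every pair. Among three edges, two are both incident with $s$ or both avoid it, so $s$ has degree two or zero in that pair, a contradiction. This is the identical pigeonhole argument that closes Lemma~\ref{w9:lem:three-attachments}.
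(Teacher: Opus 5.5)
Your proposal is correct and is essentially the paper's proof: the same six-cycle through a common blue neighbor $r$ for a blue $P_3$, independent orientation of each of two disjoint blue edges through $p$ and $q$, and the same pigeonhole step for three edges. The ``short check'' you flag is immediate: both orientations $p,a,b,q$ and $p,b,a,q$ of a blue edge $ab$ fail exactly when $p,q$ share their red neighbor in $\{a,b\}$. That degenerate case is precisely the lemma's conclusion, so your fallback with an extra vertex $r$ is unnecessary.
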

\begin{proof}
  For a blue edge $ab$ in $X$, both orientations $p,a,b,q$ and $p,b,a,q$
  fail to be blue paths if and only if $p,q$ have the same unique red
  neighbor in $\{a,b\}$.
  Two disjoint blue edges, each admitting an orientation, would therefore
  give a blue six-cycle through $p,q$.
  For two adjacent blue edges $ab,bc$, choose a common blue neighbor
  $r\in X\setminus\{a,b,c\}$ of $p,q$.
  The cycles $p,a,b,c,q,r,p$ and $q,a,b,c,p,r,q$ can both fail only if
  $p,q$ have the same unique red neighbor, equal to $a$ or $c$.
  Thus any pair of blue edges forces a common red attachment $s$ having
  degree one in that pair.
  If three blue edges exist, two are both incident with $s$ or both avoid
  $s$. The degree of $s$ in this pair is respectively two or zero,
  a contradiction.
\end{proof}

For $r\in\{7,8\}$, define a family $\mathcal F_r$ as follows.
Take a set $X$ of size $r$ and two additional vertices $y,z$, and put
$H[X]=K_r-F$, where $F$ is the graph of blue edges on $X$.
Each of $y,z$ has at most one red neighbor in $X$.
The symbol $\bot$ denotes the absence of a red neighbor in $X$;
an attachment $a$ denotes the unique red neighbor $a$.
Let $\varepsilon=1$ when $yz$ is red, and $\varepsilon=0$ otherwise.
When $F$ is the single edge $ab$, write $D=\{a,b\}$ and $O=X\setminus D$.
All vertices of $X$ not displayed in $F$ are isolated in $F$.
The permitted choices, up to isomorphism, are given in Table~\ref{tab:families}.

\begin{table}[htbp]
  \centering\small
  \renewcommand{\arraystretch}{1.25}
  \begin{tabular}{@{}lp{76mm}cc@{}}
    \toprule
    $F$ & Attachments of $y,z$ & $\varepsilon$ & Number\\
    \midrule
    $\varnothing$ & $(\bot,\bot),(a,\bot),(a,a)$ & $0,1$ & 6\\
    $\varnothing$ & $(a,b)$, $a\ne b$ & 0 & 1\\
    $ab$ & $(\bot,\bot)$ & $0,1$ & 2\\
    $ab$ & $(a',\bot)$; $a'\in D$ or $a'\in O$ & $0,1$ & 4\\
    $ab$ & $(a',a')$; $a'\in D$ or $a'\in O$ & $0,1$ & 4\\
    $ab$ & distinct attachments, of types $DD,DO,OO$ & 0 & 3\\
    $P_3$ & $(a',a')$, with $d_F(a')=1$ & $0,1$ & 2\\
    $2K_2$ & $(a',a')$, with $d_F(a')=1$ & $0,1$ & 2\\
    \midrule
    & Total & & 24\\
    \bottomrule
  \end{tabular}
  \caption{The family $\mathcal F_r$, for $r=7$ and $r=8$.}
  \label{tab:families}
\par\smallskip
  \begin{minipage}{\linewidth}
  \footnotesize
  \textit{Reading the table.} An attachment pair lists the unique red
  core neighbors of $y,z$; $\bot$ means that no such neighbor exists.
  For example, $(a,\bot)$ means that $ya$ is the only red edge from
  $y$ to $X$, while every $z$--$X$ edge is blue. When $F=ab$,
  $D=\{a,b\}$ and $O=X\setminus D$; $DD$, $DO$, and $OO$ specify
  whether the two distinct attachments lie in $D$, one in each set,
  or both in $O$. For example, $DO$ is represented by $(a,c)$ with
  $c\in O$. The column $\varepsilon$ records the color of $yz$
  ($1$ for red, $0$ for blue); the last column counts isomorphism
  classes, including both choices when $\varepsilon=0,1$ is listed.
  \end{minipage}
\end{table}

\begin{theorem}\label{thm:classification}
  Up to isomorphism,
  \[
  \Hc_8=\mathcal F_7,\qquad
  \Hc_9=\mathcal F_8\cup\{K_{5,5},K_{5,5}-e\}.
  \]
  In particular, $|\Hc_8|=24$ and $|\Hc_9|=26$.
\end{theorem}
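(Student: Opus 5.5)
The proof has two directions: every member of $\Hc_n$ lies in the displayed family, and every member of the family is valid, followed by an isomorphism count.

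\emph{Inclusion $\Hc_n\subseteq$ family.} Take $H\in\Hc_8$, or a nonbipartite $H\in\Hc_9$. Proposition~\ref{prop:core8} or~\ref{prop:core9} gives a red cycle $C$ on $r=n-1$ vertices. Set $X=V(C)$ and let $y,z$ be the two remaining vertices. Lemma~\ref{lem:attach7} or~\ref{lem:attach8} shows that each of $y,z$ has at most one red neighbor in $X$. Lemma~\ref{lem:twoedges} then says that the blue graph $F$ on $X$ has at most two edges. If $F$ has two edges, it is $P_3$ or $2K_2$, and $y,z$ have a common unique red attachment $a'$ with $d_F(a')=1$. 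Since $yz$ may be red or blue, every such $H$ fits a row of Table~\ref{tab:families} once we justify the restrictions on $\varepsilon$ and on the cases $F=\varnothing$, $F=ab$. For the remaining bipartite graphs, Proposition~\ref{prop:core9} gives exactly $K_{5,5}$ and $K_{5,5}-e$.

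\emph{Restriction on $\varepsilon$.} The table forces $\varepsilon=0$ when $y,z$ have distinct attachments. I would verify this directly: if $ya$ and $zb$ are red with $a\ne b$ and $yz$ is red, then $K_r-F$ with $|F|\le1$ contains a Hamilton $a$--$b$ path, since $r\ge7$ makes it Hamilton-connected by Lemma~\ref{lem:hc}. This path closes through $y,z$ to a red $C_{r+2}$. That is not $C_n$ ($n=r+1$), so it gives no contradiction. The correct argument is instead to take a red $a$--$b$ path in $X$ on $r-1$ vertices, which exists by deleting a suitable vertex and applying Hamilton-connectedness of $K_{r-1}-F$. Closing through $z,y$ gives a red cycle on $r+1=n$ vertices, which is forbidden.

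\emph{Converse.} For each row, no red $C_n$ exists. A red $C_n$ would have to use $y$ or $z$, since $|X|=n-1$. Each of $y,z$ has red degree at most $2$, with neighbors among $X\cup\{y,z\}$ and at most one in $X$. So $y$ can lie on a cycle only if $yz$ is red and both $y,z$ have attachments. If those attachments coincide, no cycle exists; if they differ, $\varepsilon=0$ by the table. For no blue $C_6$: any blue cycle must use $y$ or $z$, because the blue graph on $X$ is a forest with at most two edges. I would check that six blue vertices force a blue cycle through $p=y$ and $q=z$ of the shape used in Lemma~\ref{lem:twoedges}, and that the attachment conventions in each row block every such cycle. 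Applying the lemma's equivalences in reverse handles the two-edge rows, and the rows with $|F|\le1$ need a short path-counting check.

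\emph{Counting.} The last step is to show that the listed configurations are pairwise nonisomorphic and that the symmetries of $K_r-F$ collapse all choices to the stated representatives. Invariants such as $F$, the attachment types and $\varepsilon$ distinguish the rows. Summing the column gives $24$, and adding the two bipartite graphs gives $26$. The main obstacle is the converse blue-$C_6$ verification for each row. I would organize it by the number of blue edges in a putative blue $C_6$ that lie inside $X$.
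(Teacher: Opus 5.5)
Your inclusion argument matches the paper's. You take a red $C_r$ (from Lemma~\ref{lem:red7} or~\ref{lem:red8}), bound the attachments of $y,z$ by Lemma~\ref{lem:attach7} or~\ref{lem:attach8}, and restrict $F$ by Lemma~\ref{lem:twoedges}. In your corrected version, you forbid distinct attachments with $yz$ red using a Hamilton $a$--$b$ path on $r-1$ core vertices, where the red graph is $K_{r-1}$ minus at most one edge. The red half of your converse is also fine, with one slip. When $y,z$ share the attachment $a$ and $yz$ is red, there \emph{is} a red cycle through them, namely the triangle $ayz$; the point is that it is the only one. However, two steps that the statement needs are left open.

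\emph{The count.} You say that $F$, the attachment types and $\varepsilon$ distinguish the rows. These are invariants of $H$ only if every isomorphism between members of $\mathcal F_r$ maps core to core. Otherwise two rows could give isomorphic graphs, and you would only obtain $|\Hc_8|\le 24$. The missing observation is about red degrees. Every core vertex has red degree at least $r-1-2=r-3\ge4$, since $\Delta(F)\le2$, while $y,z$ have red degree at most two. So $X$ is determined by red degrees and is preserved by isomorphisms. You should also note that $K_{5,5}$ and $K_{5,5}-e$ differ from every member of $\mathcal F_8$, since the latter contain red triangles.

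\emph{The blue-$C_6$ half of the converse.} You call this the main obstacle and leave it as a plan, but it is immediate. Each outside vertex on a six-cycle meets exactly two of its edges, so a six-cycle with at most two vertices outside $X$ has at least two edges inside $X$. This rules out every row with $|E(F)|\le1$, and no path counting is needed. In the two-edge rows, the common attachment $a'$ is red to $y$, $z$, and all of $X$ except its single $F$-neighbor. So $a'$ has blue degree one and lies on no blue cycle. Deleting it leaves only one blue edge in the core, and the same count excludes a blue $C_6$.
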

\begin{proof}
  For $H\in\Hc_8$, Lemmas~\ref{lem:red7} and~\ref{lem:attach7} give
  a seven-vertex set $X$ such that each outside vertex has at most one
  red neighbor in $X$.
  For nonbipartite $H\in\Hc_9$, use Lemmas~\ref{lem:red8}
  and~\ref{lem:attach8} with $|X|=8$.
  Lemma~\ref{lem:twoedges} shows that $F=\cl H[X]$ has at most two edges.
  If it has two, the common attachment must be a degree-one vertex of
  $F$, exactly as in the last two rows of the table.
  If $F$ has at most one edge, the first six rows list all attachment
  orbits. Distinct attachments together with a red edge $yz$ are forbidden:
  on any $r-1$ core vertices containing both attachments, the red graph is
  $K_{r-1}$ with at most one edge removed and is Hamilton-connected by
  Lemma~\ref{lem:hc}. A spanning path between the attachments, closed
  through $y,z$, gives a red
  $C_{r+1}$. This proves necessity. The bipartite exceptions follow from
  Proposition~\ref{prop:core9}.

  Conversely, every graph in $\mathcal F_r$ avoids $C_{r+1}$.
  When $yz$ is blue, both outside vertices have red degree at most one.
  When $yz$ is red, they either form a component or a pendant path,
  or are both attached to the same core vertex and form a pendant triangle.
  Thus every red cycle has at most $r$ vertices.
  A blue $C_6$ must use at least two edges within $X$, since only two
  vertices lie outside $X$.
  This is impossible for $|E(F)|\le1$.
  For $|E(F)|=2$, the common attachment has total blue degree one,
  so it cannot lie on a blue cycle. Removing it leaves only one blue edge
  within $X$, again excluding $C_6$.

  Finally, every core vertex has red degree at least $r-3\ge4$, whereas
  the outside vertices have red degree at most two. Thus $X$ is
  determined by the red degrees, and every isomorphism preserves it.
  The four forms of $F$ are distinct.
  For $F=\varnothing$ the attachment types give seven orbits;
  for one edge they give $2+4+4+3=13$; for each two-edge form the
  degree-one vertices form a single orbit and the two choices of
  $\varepsilon$ give two further graphs. The two bipartite exceptions
  are mutually nonisomorphic and distinct from these nonbipartite graphs.
  The stated counts follow.
\end{proof}

\end{document}